\pdfoutput=1
\documentclass[twoside, 11pt]{article}
\usepackage{geometry}
\usepackage{xcolor} %

\usepackage{times}
\usepackage[utf8]{inputenc}
\usepackage[vietnamese,english]{babel}
\newif\ifdraftbib
\draftbibfalse
\ifdraftbib
  \AtBeginDocument{\renewcommand{\cite}[2][]{[\texttt{#2}\if\relax\detokenize{#1}\relax\else, #1\fi]}}
  \newcommand{\printbibliography}{}
\else
\usepackage[
    backend=biber,
    style=alphabetic,
    sorting=anyt,
    sortlocale=vi-VN,
    maxbibnames=99,     %
    maxalphanames=4,    %
    minalphanames=3     %
]{biblatex}
\fi

\usepackage{amsmath} %
\usepackage{amsthm}
\usepackage{amsfonts}
\usepackage{amssymb}
\usepackage{mathtools}
\usepackage[shortlabels]{enumitem}

\usepackage[bitstream-charter]{mathdesign}
\usepackage[T1]{fontenc}
\usepackage[new]{old-arrows}
\usepackage{graphicx}

\usepackage{fancyhdr}
\usepackage{sectsty}
\sectionfont{\fontsize{13}{15}\selectfont}

\usepackage[margin=1cm]{caption}
\usepackage{placeins} %
\usepackage[hidelinks]{hyperref}
\usepackage{cleveref}
\usepackage{aliascnt}

\numberwithin{equation}{section}

\newtheoremstyle{theorem}%
{3mm}%
{1mm}%
{\normalfont\itshape}%
{}%
{\normalfont\scshape}%
{.}%
{.5em}%
{\thmname{#1}\thmnumber{ #2}\thmnote{ (#3)}}%
\theoremstyle{theorem}

\newtheorem{theorem}{Theorem}
\numberwithin{theorem}{section}

\newaliascnt{lemma}{theorem}
\newtheorem{lemma}[lemma]{Lemma}
\aliascntresetthe{lemma}
\crefname{lemma}{Lemma}{Lemmas}
\Crefname{lemma}{Lemma}{Lemmas}

\newaliascnt{proposition}{theorem}
\newtheorem{proposition}[proposition]{Proposition}
\aliascntresetthe{proposition}
\crefname{proposition}{Proposition}{Propositions}
\Crefname{proposition}{Proposition}{Propositions}

\newaliascnt{corollary}{theorem}
\newtheorem{corollary}[corollary]{Corollary}
\aliascntresetthe{corollary}
\crefname{corollary}{Corollary}{Corollaries}
\Crefname{corollary}{Corollary}{Corollaries}

\newaliascnt{claim}{theorem}

\aliascntresetthe{claim}
\crefname{claim}{Claim}{Claims}
\Crefname{claim}{Claim}{Claims}

\newaliascnt{assumption}{theorem}

\aliascntresetthe{assumption}
\crefname{assumption}{Assumption}{Assumptions}
\Crefname{assumption}{Assumption}{Assumptions}

\newaliascnt{conjecture}{theorem}

\aliascntresetthe{conjecture}
\crefname{conjecture}{Conjecture}{Conjectures}
\Crefname{conjecture}{Conjecture}{Conjectures}

\newaliascnt{question}{theorem}

\aliascntresetthe{question}
\crefname{question}{Question}{Questions}
\Crefname{question}{Question}{Questions}

\newtheoremstyle{definition}%
{3mm}%
{1mm}%
{\normalfont\normalfont}%
{}%
{\normalfont\scshape}%
{.}%
{.5em}%
{\thmname{#1}\thmnumber{ #2}\thmnote{ (#3)}}%
\theoremstyle{definition}

\newaliascnt{definition}{theorem}
\newtheorem{definition}[definition]{Definition}
\aliascntresetthe{definition}
\crefname{definition}{Definition}{Definitions}
\Crefname{definition}{Definition}{Definitions}

\newaliascnt{notation}{theorem}

\aliascntresetthe{notation}
\crefname{notation}{Notation}{Notations}
\Crefname{notation}{Notation}{Notations}

\newaliascnt{example}{theorem}
\newtheorem{example}[example]{Example}
\aliascntresetthe{example}
\crefname{example}{Example}{Examples}
\Crefname{example}{Example}{Examples}

\newaliascnt{remark}{theorem}
\newtheorem{remark}[remark]{Remark}
\aliascntresetthe{remark}
\crefname{remark}{Remark}{Remarks}
\Crefname{remark}{Remark}{Remarks}
\theoremstyle{theorem}
\newtheorem{mainthm}{Theorem}

\crefname{mainthm}{Theorem}{Theorems}
\Crefname{mainthm}{Theorem}{Theorems}
\newaliascnt{mainprop}{mainthm}
\newtheorem{mainprop}[mainprop]{Proposition}
\aliascntresetthe{mainprop}
\crefname{mainprop}{Proposition}{Propositions}
\Crefname{mainprop}{Proposition}{Propositions}
\theoremstyle{definition}

\DeclareMathOperator{\SL}{SL}

\DeclareMathOperator{\Div}{div}

\DeclareMathOperator{\End}{End}
\DeclareMathOperator{\Hom}{Hom}

\newcommand{\pa}{\partial}

\newcommand{\C}{\mathbb C}

\newcommand\dqd{\foreignlanguage{vietnamese}{Đinh Quý Dương}}

\DeclareUnicodeCharacter{039F}{O}

\title{Stratification of opers with apparent singularities  \\
and isomonodromic bubbling}
\author{\dqd, Troy Figiel, Jörg Teschner}
\date{}

\begin{document}
\maketitle

\begin{abstract}
On a compact Riemann surface of genus $g \geq 2$, we study Simpson's  stratification of the de Rham moduli space of rank-2 flat connections.
Upon representing flat $\mathrm{SL}_2(\mathbb{C})$-connections by branched projective structures, or equivalently by opers with apparent singularities, the stratum containing a flat connection is determined by the minimal number of apparent singularities needed to represent it.
We construct explicit holomorphic families in which two simple apparent singularities collide and disappear, so that the family crosses into a smaller stratum; this realizes Simpson's closure relation between strata near generic points.
The analogous families of Higgs bundles are described by Baker--Akhiezer divisors on the spectral curve in which two points converge to a pair of points exchanged by the spectral involution.
We show that, before the collision, the two apparent singularities are joined by twin paths bounding a bubble, so that these families arise from the isomonodromic bubbling surgery discussed by Calsamiglia, Deroin and Francaviglia.
\end{abstract}

\tableofcontents

\section{Introduction}

Let $C$ be a compact Riemann surface of genus $g \geq 2$.
The Hitchin moduli spaces of Higgs bundles $(E, \phi)$ and the de Rham moduli spaces of flat connections $(E, \nabla)$ on $C$ 
have a natural stratification, 
defined by the Bia\l ynicki-Birula stratification on the Hodge moduli space of $\lambda$-connections.
The main goal of this paper is to explicitly describe families of Higgs bundles and flat connections, of rank 2 and with fixed $\det(E)$, 
that cross into smaller strata.
Our method is to parametrize Higgs bundles and flat connections by divisors 
on $C$ together with auxiliary parameters, 
namely lifts to spectral curves $\widetilde{C}$ and accessory parameters of apparent singularities, respectively.
The degeneration families then correspond to colliding simple points of the divisors
and have these points annihilate at the limit.

In the following, we introduce the setting in \Cref{subsec-intro-strata}, 
state the main results in \Cref{subsec-intro-results} 
and discuss relations to other subjects in \Cref{subsec-intro-discussion}.

\subsection{Natural stratification}\label{subsec-intro-strata}
\paragraph{Hodge bundles and strata}
Consider the Hodge moduli space $\mathcal{M}_{Hod}$ of rank-2 $\lambda$-connections $(\lambda, E, \nabla_\lambda)$ on $C$ with trivial determinant.
(For Higgs bundles alone, we also allow a fixed determinant $\Lambda$ of degree $1$; see \Cref{sect-preliminaries}.)
Its fibers over $\lambda = 0$ and $\lambda = 1$ are the Hitchin moduli space $\mathcal{M}_H$ of Higgs bundles and the de Rham moduli space $\mathcal{M}_{dR}$ of flat $\SL_2(\C)$-connections.
The $\C^\ast$-action, defined by 
$t.(\lambda, E, \nabla_\lambda) = (t\lambda, E, t\nabla_\lambda)$, 
has fixed points contained in the nilpotent cone of the Hitchin moduli space. 
We call such fixed points \textit{Hodge bundles}.
For all $x \in \mathcal{M}_{Hod}$, the limit $\underset{t \rightarrow 0}{\lim} t.x$ is a Hodge bundle.
The attractors associated to components of Hodge bundles define the Bia\l ynicki-Birula stratification of $\mathcal{M}_{Hod}$, 
the restriction of which to $\mathcal{M}_H$ and $\mathcal{M}_{dR}$ defines respectively the Bia\l ynicki-Birula stratification and Simpson's partial oper stratification on these moduli spaces \cite{simpson-iterated, collier-wentworth}.

In rank 2, the stratifications of $\mathcal{M}_H$ and $\mathcal{M}_{dR}$ 
are both governed by the maximal subbundles of the underlying bundles $E$, i.e. line subbundles of maximal degree.
Specifically, the largest open stratum consists of $(E, \phi)$ or $(E, \nabla)$ with $E$ stable,
and the strata with unstable underlying bundles are labelled by the degree of the maximal subbundle.
One can further refine the open stratum by the Segre stratification of the moduli space $\mathcal{N}$ of stable bundles.

A Higgs field $\phi$ (respectively, a holomorphic connection $\nabla$) on $E$ together with a line subbundle $L \hookrightarrow E$
induces a section of $KL^{-2}\det(E)$, 
which we denote by $s_L(\phi)$ (respectively, $s_L(\nabla)$).
When this section is nonzero, we denote its divisor of zeros by $D_L(\phi)$ (respectively, $D_L(\nabla)$).
In this paper, with $\Lambda = \det(E)$, we will label the strata by
\[ d = \deg(KL^{-2}\det(E)) = 2g-2-2\deg L + \deg(\Lambda), \qquad L \hookrightarrow E \text{ maximal}. \]
Note that $d \equiv \deg(\Lambda) \pmod 2$.
The strata with $d < 2g-2$ consist of Higgs bundles and flat connections with unstable underlying bundles. 
Simpson showed that, for $d < 2g-2$, the stratum of $\mathcal{M}_{dR}$ labelled by $d-2$ lies in the closure of the one labelled by $d$ \cite[Theorem~7.6]{simpson-iterated}.

\paragraph{Flat connections as branched projective structures.}
A rank-2 flat connection can be represented by a branched projective structure subordinate to $C$, or equivalently by an oper with apparent singularities (\Cref{subsec-projective-structures}).
A \emph{branched projective structure} is an atlas of local holomorphic maps from $C$ to $\mathbb{P}^1$ patched by M\"obius transformations. 
Its \emph{apparent singularities}, or branch points, are the critical points of these local maps,
and its \textit{developing map} from the universal cover of $C$ to $\mathbb{P}^1$ is defined by 
analytic continuation of one such local map.
An \emph{oper with apparent singularities} is an atlas of compatible local Schr\"odinger equations $(\partial_z^2 + q(z)) f = 0$, where $q$ has poles of order $2$ at the apparent singularities, with Laurent tails ensuring that the ratio of two solutions is holomorphic there.
For example, at a simple apparent singularity $x$,
\[ q(z) = -\frac{3}{4(z-z(x))^2} + \frac{\nu}{z-z(x)} + q_0 + \mathcal{O}(z - z(x)), \qquad \nu^2 + q_0 = 0, \]
and $\nu$ is called the \emph{accessory parameter} of $x$.
The two descriptions are related by taking the ratio $w = f_1/f_2$ of two independent local solutions.
This ratio is a local holomorphic map to $\mathbb{P}^1$ whose critical points are the apparent singularities, and a change of basis of solutions changes it by a M\"obius transformation; hence these local maps form the atlas of a branched projective structure.
Conversely, the oper is recovered from the atlas of the branched projective structure by taking Schwarzian derivatives of the local maps, $q = \frac{1}{2}\{w, z\}$.

A flat connection $(E, \nabla)$ together with a line subbundle $L \hookrightarrow E$ defines such a branched projective structure: parallel transport of the lines $L_x \hookrightarrow E_x$ gives local maps to $\mathbb{P}^1$ whose critical points, i.e.\ the apparent singularities, are the $2g-2-2\deg L$ points of $D_L(\nabla)$.
Hence the stratum of $\mathcal{M}_{dR}$ labelled by $d$ consists of the flat connections for which the minimal number of apparent singularities needed to represent them is $d$ (\Cref{par-PSL2-strata}).

\subsection{Main results}\label{subsec-intro-results}
We now state the main results: how colliding simple apparent singularities define a family that crosses to a smaller stratum. 
Fix a holomorphic projective structure on $C$, e.g.\ the uniformizing one. 
For $x_0 \in C$, let $(U, z)$ be a local coordinated neighborhood centred at $x_0$ given by a chart of this projective structure, 
and for $u \in z(U)$ let $x_\pm(u)$ be the points with $z(x_\pm(u)) = \pm u$.

\paragraph{Crossing to smaller partial oper strata}
\begin{mainprop}[Reduction of apparent singularities] \label{prop-A}
	Let $\mathcal{D}$ be an oper on $C$ 
    with $d-2$ simple apparent singularities $x_3, \dots, x_d$, where $d \leq 3g-3$, and let $x_0 \in C$ be generic.
	Then there exists a holomorphic family $\{\mathcal{D}_u\}_{u\in z(U)}$ with $\mathcal{D}_0 = \mathcal{D}$ such that, for $u \neq 0$, $\mathcal{D}_u$ has simple apparent 
    singularities at $x_+(u), x_-(u), x_3, \dots, x_d$, and the accessory parameters of $x_\pm(u)$ with respect to the projective chart $z$ are of the form
	\[ \nu_\pm(u) = \pm \frac{3}{4u} + \mathcal{O}(u). \]
	(See \Cref{prop-bb-limit-3}; the precise form of $\nu_\pm(u)$ is \eqref{eqn-form-B}.)
\end{mainprop}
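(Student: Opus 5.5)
We construct $\mathcal{D}_u$ explicitly as a deformation of the quadratic differential data: in the fixed holomorphic projective structure, an oper with apparent singularities is $\mathcal{D}_0 + q$, where $q$ is a meromorphic quadratic differential with double poles at the apparent singularities. Write $q_0$ for the (meromorphic) quadratic differential describing $\mathcal{D}$ relative to the reference projective structure. We seek $\mathcal{D}_u$ given by
\[ q_u = q_0 + \Bigl(-\frac{3}{4(z-u)^2}-\frac{3}{4(z+u)^2}+\frac{\nu_+}{z-u}+\frac{\nu_-}{z+u}\Bigr)^{\mathrm{glob}} + \delta_u , \]
where the bracket is globalized as a meromorphic quadratic differential on $C$ with the prescribed principal parts at $x_\pm(u)$, and $\delta_u$ is a holomorphic correction (allowing also simple poles at $x_3,\dots,x_d$ and corrections to their accessory parameters) chosen so that all apparent-singularity conditions hold. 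Because the chart $z$ is projective, the Schwarzian cocycle vanishes in it and principal parts can be written directly in $z$.

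Key steps, in order. First, note the local model: $-\frac34\bigl((z-u)^{-2}+(z+u)^{-2}\bigr)\pm\frac{3}{4u}(z\mp u)^{-1}$ is exactly the Schwarzian of $w=z^3/3-u^2 z$ (critical points at $\pm u$), which tends to $0$ as $u\to 0$, so the leading term of $\nu_\pm$ is forced and the local family is holomorphic in $u$ at $u=0$ after cancellation of the poles. Second, globalize: the principal parts at $x_\pm(u)$ determine, via the Mittag-Leffler problem for $K^2$, a meromorphic quadratic differential up to $H^0(K^2(D))$, with obstruction in $H^1(K^2(\cdot))$ which vanishes since $\deg K^2>2g-2$; expanding in $u$, the symmetric combination is holomorphic in $u$ (odd terms cancel by the $z\mapsto -z$ symmetry at leading order). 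Third, impose the apparent-singularity conditions $\nu^2+q_0=0$ at $x_+,x_-$ and $x_3,\dots,x_d$ (the $d-2$ old conditions plus two new ones) as equations on the correction $\delta_u$ and on $\nu_\pm=\pm\frac{3}{4u}+\mathcal{O}(u)$; solve by the implicit function theorem in $u$.

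The main obstacle is the third step: one needs the linearized system (evaluations of the constant terms at $x_\pm$, $x_i$ on the space $H^0(K^2(x_3+\dots+x_d))$ of allowed corrections, together with the accessory-parameter variations) to be surjective, uniformly as $u\to 0$. At $u=0$ the two new conditions at $x_\pm(u)$ collapse; I would rescale them as the even and odd parts in $u$, giving conditions on the value and a derivative of the correction at $x_0$. Surjectivity then amounts to the statement that $H^0(K^2)$ (dimension $3g-3$) separates the $d$ linear conditions at $x_0,x_0,x_3,\dots,x_d$, i.e.\ $h^0(K^2(-2x_0-x_3-\dots-x_d))=3g-3-d$. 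Since $d\le 3g-3$ this holds for generic $x_0$ provided it holds for the fixed $x_3,\dots,x_d$ (which is where the genericity of $x_0$ is used, by a Riemann--Roch/base-point argument on the dual). Once this is verified, the implicit function theorem yields the holomorphic family and the expansion of $\nu_\pm(u)$.
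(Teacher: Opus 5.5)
Your plan is workable. In substance it is the paper's construction run through the implicit function theorem:

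\begin{itemize}
\item your globalized principal parts are the building blocks $q^{(i)}_{u,\pm}$;
\item your $\delta_u$ is $q^{(0)}_u$;
\item the collapse of the two new conditions is the rank drop of \Cref{lemma-limit-linear-system};
\item your transversality hypothesis is \eqref{condition-neighborhood}.
\end{itemize}

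The paper instead solves the system, which is linear in $q^{(0)}_u$ for fixed $\nu_\pm$, explicitly in the basis of \Cref{lemma-basis-of-quadratic-differentials}. There the $2\times 2$ determinant has a simple zero, and Cramer's rule does the job of your even/odd rescaling. As written, however, one step of your sketch is false, and the step carrying the real content is missing.

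\textbf{The false step is the local model.} One has $\tfrac12\{z^3/3-u^2z,z\}=\frac{-2z^2-u^2}{(z^2-u^2)^2}$. Its residues at $\pm u$ are $\mp\frac{1}{4u}$ and it tends to $-2z^{-2}$, so it is the Form (D) channel \eqref{eqn-form-D}, which creates a double apparent singularity (compare \eqref{plug-in-db} and \Cref{prop-double-as}). The expression you actually wrote sums to $-\frac{3u^2}{(z^2-u^2)^2}=\tfrac12\{z/u+u/z,\,z\}$, the rescaled Joukowsky model of \Cref{sect-family-blown-up}.

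The correct reason the leading term is $\pm\frac{3}{4u}$ is as follows. The $u^{-2}$ part of the two apparency conditions is $a_\pm^2-\frac{3}{16}\pm\frac{a_\mp}{2}=0$. It has exactly the solutions $a_\pm=\pm\frac34$ and $a_\pm=\mp\frac14$, and only the first leaves no pole at $x_0$ in the limit.

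Note also that this expression is holomorphic in $u$ only on an annulus around $x_0$. That fact, together with $H^1(K^2)=0$, is what makes the Mittag-Leffler globalization holomorphic in $u$ and zero at $u=0$. The ambiguity in that globalization is $H^0(K^2)$, not $H^0(K^2(D))$, and no symmetry argument is needed.

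\textbf{The missing step is the base point for the implicit function theorem.} You never check that $\mathcal{D}$ solves the rescaled system at $u=0$, and in general it does not unless the subleading coefficients of $\nu_\pm$ are chosen for that purpose.

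\begin{itemize}
\item Write $\nu_\pm=\pm\frac{3}{4u}+\mu_\pm$. The $u^{-1}$ parts of the conditions $F_\pm$ at $x_\pm$ are $\pm\frac{3\mu_\pm+\mu_\mp}{2u}$, which forces $\mu_\pm=O(u)$.
\item Vanishing of both conditions at $u=0$ forces the $u^1$ coefficients to be opposite. So you must parametrize $\mu_\pm=\pm\nu'u+\nu_{\pm,2}u^2$, and only then is $(F_+-F_-)/u$ holomorphic.
\item Normalize the globalization to vanish at $u=0$. The two rescaled equations at $u=0$ then read $\nu'=-(q_0+\delta_0)(x_0)$ and $\nu_{+,2}+\nu_{-,2}=-\partial_z(q_0+\delta_0)(x_0)$.
\item The requirement $\mathcal{D}_0=\mathcal{D}$ forces $\delta_0=0$. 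Hence these coefficients are dictated by $\mathcal{D}$ and cannot be fixed in advance. For instance, with $\nu_\pm=\pm\frac{3}{4u}$ exactly, the limit is in general a different oper.
\end{itemize}

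This tuning is precisely the content of \Cref{lemma-tune-limit-hol-q}.

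Once you treat $\nu',\nu_{+,2},\nu_{-,2}$ as unknowns, they already map onto the two collapsed equations, each with coefficient $2$ and with no cross terms at $u=0$. Surjectivity is then needed only for evaluation at $x_3,\dots,x_d$ on $H^0(K^2)$. Your condition on $2x_0+\mathbf{x}'$ is sufficient but stronger than this route requires.

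That is what the implicit-function route buys over the paper's basis-dependent computation. In exchange, the paper's computation gives explicit formulas for $\nu'$ and for the limit.
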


\begin{mainthm}[Crossing to smaller strata]\label{thm-B}
	Suppose moreover that $d$ is even and that the projective monodromy of $\mathcal{D}$ is irreducible.
	Then the holomorphic family $\{\mathcal{D}_u\}$ of \Cref{prop-A} is defined by a holomorphic family $\{[E_u, \nabla_u]\}_u$ in $\mathcal{M}_{dR}$ 
	together with line subbundles $L_u \hookrightarrow E_u$ of degree
	\[ \deg L_u = g-1-\frac{d}{2} \quad \text{for } u \neq 0, \qquad \qquad \deg L_0 = g - \frac{d}{2}, \]
	and $D_{L_u}(\nabla_u) = x_+(u) + x_-(u) + x_3 + \dots + x_d$ for $u \neq 0$, $D_{L_0}(\nabla_0) = x_3 + \dots + x_d$.
	For $d < 2g-2$, $[E_u, \nabla_u]$ lies in the partial oper stratum labelled by $d$ for $u \neq 0$, 
	and in the partial oper stratum labelled by $d-2$ at $u = 0$.
	(See \Cref{prop-family-dR,cor-crossing-strata}.)
\end{mainthm}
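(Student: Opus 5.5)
The strategy is to pass from the family of opers with apparent singularities in \Cref{prop-A} to a family of flat bundles with distinguished line subbundles. The dictionary recalled in \Cref{subsec-intro-strata} does most of the work: an oper with apparent singularities is the same thing as a pair $(E,\nabla)$ together with $L\hookrightarrow E$, where $D_L(\nabla)$ is the divisor of apparent singularities. Once the family is in this form, the degree and stratum statements follow by bookkeeping.

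The first step is to reconstruct the bundle data from $\mathcal{D}_u$. Take local bases of solutions $(f_1,f_2)$ of the Schrödinger equations of $\mathcal{D}_u$. These form a local system $V_u$ of rank $2$, whose projective monodromy is that of the branched projective structure. With $E_u = V_u\otimes\mathcal{O}_C$, the evaluation/jet map gives a line subbundle $L_u \hookrightarrow E_u$: the fibre at $x$ is the line determined by the point $w(x)=f_1/f_2 \in \mathbb{P}^1$. The section $s_{L_u}(\nabla_u)$ is the derivative of the developing map, so it vanishes exactly at the critical points.

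Two facts need checking here.
\begin{itemize}
\item The projective monodromy lifts to $\SL_2(\C)$. The obstruction is a class in $H^2(C,\mathbb{Z}/2)$, and it is computed by the parity of the number of branch points. Since $d$ is even, the obstruction vanishes, so we obtain $(E_u,\nabla_u)$ with trivial determinant.
\item The lift depends holomorphically on $u$. The family $\mathcal{D}_u$ is holomorphic in $u$, including at $u=0$, and the sign choice is discrete, so the lift can be chosen holomorphically.
\end{itemize}
Irreducibility of the projective monodromy is an open condition. It therefore persists for small $u$ and makes $[E_u,\nabla_u]$ a stable point of $\mathcal{M}_{dR}$. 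The family $u\mapsto[E_u,\nabla_u]$ is then a holomorphic map into the moduli space. It is built from the monodromy of a holomorphic family of ODEs, and $\mathcal{M}_{dR}$ is locally the character variety.

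The degree statement comes from the identity $\deg D_L(\nabla) = \deg(KL^{-2}) = 2g-2-2\deg L$.
\begin{itemize}
\item For $u\neq0$ there are $d$ simple zeros, so $\deg L_u = g-1-d/2$.
\item At $u=0$ the limit oper $\mathcal{D}$ has only $d-2$ apparent singularities, so $\deg L_0 = g-d/2$.
\end{itemize}

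The main obstacle is reconciling the jump in $\deg L_u$ with the holomorphy of the family. The subbundles $L_u$ cannot form a holomorphic family of line subbundles across $u=0$. Instead, the limit of $L_u$ must be saturated, gaining the divisor $x_0$, because $x_\pm(u)\to x_0$. I would handle this with a local computation near $x_0$ in the chart $z$, using the asymptotics $\nu_\pm(u)=\pm\frac{3}{4u}+\mathcal{O}(u)$ from \eqref{eqn-form-B}.
\begin{itemize}
\item The specific residues $\pm 3/(4u)$ make the polar parts of $q_u$ at $x_\pm(u)$ cancel to leading order as $u\to0$. This should give $q_u\to q_0$ locally uniformly away from $x_0$, with no singularity left at $x_0$.
\item Consequently the monodromy, and hence $(E_u,\nabla_u)$, converges to that of $\mathcal{D}$.
\item Meanwhile the local developing map behaves like $w\approx z^3-3u^2z$. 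Its two critical points merge and then disappear after a Möbius renormalization.
\item In bundle terms, the section defining $L_u$ has a double zero in the limit, which is divided out by saturation. This is exactly the passage from $D_{L_u}(\nabla_u)$ to $D_{L_0}(\nabla_0)=x_3+\dots+x_d$.
\end{itemize}

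Finally, for the stratum statement assume $d<2g-2$, so $\deg L_u>0$. Since $\det E=\mathcal{O}$, any line subbundle of positive degree in $E$ is unique, so $L_u$ is the maximal subbundle and $E_u$ is unstable. The stratum label is therefore $2g-2-2\deg L_u$. This equals $d$ for $u\neq 0$ and $d-2$ at $u=0$, by the characterization in \Cref{par-PSL2-strata}.
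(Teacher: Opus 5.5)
Your argument is correct and is essentially the paper's proof of \Cref{prop-family-dR,cor-crossing-strata}. As there, you lift the holomorphically varying projective monodromy to $\SL_2(\C)$ using that the number of apparent singularities is even, and you get the holomorphic family $[E_u,\nabla_u]$ from openness of irreducibility together with the Riemann--Hilbert correspondence. You then take $L_u$ separately for each $u$, including $u=0$, from the developing section of $\mathcal{D}_u$, so that $\deg(KL_u^{-2})=\deg\mathrm{div}(\mathcal{D}_u)$. For $d<2g-2$ you conclude from the uniqueness of a positive-degree line subbundle.

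Your ``main obstacle'' paragraph is not needed, since the statement asserts nothing about $L_u\to L_0$. Its local model is also wrong. The map $z^3-3u^2z$ has $\tfrac12\{w,z\}=(-2z^2-u^2)/(z^2-u^2)^2$, which is the Form (D) doubling model; its limit $z^3$ keeps a double critical point that no M\"obius post-composition can remove. Form (B) instead corresponds to $z+u^2/z=uJ(z/u)$. This tends to $z$ away from $x_0$, while its pole and both critical points collapse into $x_0$.
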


We observe that \Cref{thm-B} gives explicit families realizing Simpson's closure relation near generic points of the smaller stratum.
On $\mathbb{P}^1$, a collision of two apparent singularities at a jump of the type of the underlying bundle was analyzed by Komyo and Saito for rank-2 parabolic connections with five regular singular points \cite[Theorem~5.5]{KomyoSaito19}; there too, the dual parameters of the colliding apparent singularities diverge.

\paragraph{Analogues for Higgs bundles}
\Cref{prop-A,thm-B} have exact analogues for Higgs bundles, in terms of Baker--Akhiezer divisors (\Cref{sect-degeneration-BA}).
If $(E, \phi)$ has a smooth spectral curve $\pi \colon \widetilde{C} \rightarrow C$, the points of $D_L(\phi)$ lift to the \emph{Baker--Akhiezer divisor} of $(L \hookrightarrow E, \phi)$ on $\widetilde{C}$ \cite{DT23}, which selects over each point one of the two sheets, i.e.\ one of the two eigenvalues of $\phi$ (\Cref{subsec-BA-divisors}).
In the analogue of \Cref{prop-A}, two points of the Baker--Akhiezer divisor on different sheets collide over $x_0$ and disappear; for $d < 2g-2$, 
the family crosses from the stratum labelled by $d$ to the one labelled by $d-2$ (\Cref{prop-Hitchin-bb}).
If the two points lie on the same sheet, they merge into a double point instead (\Cref{prop-Hitchin-db}); 
on the de Rham side, this corresponds to accessory parameters $\nu_\pm(u) = \mp \frac{1}{4u} + \mathcal{O}(1)$, which produce a double apparent singularity (\Cref{prop-double-as}).
For $d < 2g-2$, the limiting Hodge bundle of a family with a double point has a non-reduced divisor and is therefore \emph{wobbly}: its upward flow meets the nilpotent cone in more than one point \cite{HH21}.

It is clear that analogous results in higher rank hold for specific cases: 
one might degenerate Baker--Akhiezer divisors to have a summand of the form $\pi^\ast (p)$ for $p \in C$ to cross to a lower stratum.
We refer to \cite{HH21, DDX26} for the necessary background on Baker--Akhiezer divisors for higher rank cases. 

\paragraph{Relation to isomonodromic bubbling}
Bubbling is an isomonodromic surgery on branched projective structures \cite{CalsamigliaDeroinFrancaviglia2014}.
The input of the surgery is one branched projective structure and an embedded arc along which the developing map is injective.
We cut the surface along this arc 
and glue in a copy of $\mathbb{P}^1$ cut along the developed image of the arc.
The output is another branched projective structure with the same monodromy and two more apparent singularities at the end points of the arc, and is in general subordinate to a different complex structure.
The glued-in disc, a \emph{bubble}, is bounded by two \emph{twin paths}: paths between the two new branch points with the same developed image.

\begin{mainthm}[Bubbling]\label{thm-C}
	For sufficiently small $u \neq 0$, 
    the branched projective structure corresponding to $\mathcal{D}_u$ of \Cref{prop-A} 
    is the output of a bubbling surgery 
    that creates branch points that correspond to the apparent singularities $x_+(u)$ and $x_-(u)$.
    Specifically, this bubbling surgery takes as an input the branched projective structure $\sigma_u$, 
    which is obtained by removing the bubble bounded by the twin paths and identifying these twin paths, 
    and has simple apparent singularities at $x_3, \dots, x_d$.
	(See \Cref{thm-bubble}.)
\end{mainthm}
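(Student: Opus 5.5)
The plan is to work entirely in the local chart $(U,z)$ of the fixed holomorphic projective structure, where the family $\mathcal{D}_u$ of \Cref{prop-A} is given by a Schrödinger potential $q_u(z)$. This potential is holomorphic in $(u,z)$ away from $z=\pm u$, and it has the Laurent tails of simple apparent singularities at $\pm u$ with $\nu_\pm(u)=\pm\frac{3}{4u}+\mathcal{O}(u)$. The argument proceeds through the developing map. Choose the developing map $w_u$ of $\mathcal{D}_u$ on $U$, normalized so that $w_u\to w_0$ as $u\to 0$ on compacta of $U\setminus\{x_0\}$. Then $w_u$ has simple critical points at $\pm u$ and no other critical points in $U$, and the monodromy is constant in $u$ because the family is isomonodromic away from $U$.

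\emph{Step 1: local model.} First I compute the leading-order behaviour of $w_u$ on the rescaled disc $z=u\zeta$. Substituting into $q_u$ and using $\nu_\pm=\pm\frac{3}{4u}+\mathcal{O}(u)$, the rescaled potential $u^2q_u(u\zeta)$ converges to the Schwarzian of an explicit rational map $R\colon\mathbb{P}^1\to\mathbb{P}^1$ of degree $2$. This map has simple critical points at $\zeta=\pm1$, and it has a regular (unbranched) point at $\zeta=\infty$. The sign pattern $\pm\frac{3}{4u}$ is the one compatible with the outer region having no singularity, in contrast to the double-point case with $\mp\frac{1}{4u}$. Concretely, I expect $R$ to be conjugate to $\zeta\mapsto \zeta+1/\zeta$ up to a Möbius post-composition. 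I will check this by matching the residues $\pm\frac{3}{4}$ against the Schwarzian of $\zeta^3$-type local forms at $\pm1$.

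\emph{Step 2: twin paths.} For the model $R$, the two critical points $\pm1$ are joined by two paths, namely the upper and lower unit semicircles for $\zeta+1/\zeta$. Both map bijectively onto the segment $[-2,2]$. They are therefore twin paths, and they bound a disc that $R$ maps biholomorphically onto $\mathbb{P}^1$ cut along the segment. This is the model bubble. I then show by a perturbation argument (stability of nondegenerate critical points and of the preimage structure under $C^1$-small perturbations, using holomorphic dependence of $w_u\circ(u\,\cdot)$ on $u$) that for small $u\neq0$ the same picture persists for $w_u$. The segment is the image of an embedded arc, and the preimage of this image consists of two twin paths $\gamma_\pm(u)$ from $x_+(u)$ to $x_-(u)$ bounding a disc $B_u\subset U$. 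This disc maps bijectively onto $\mathbb{P}^1$ minus the arc.

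\emph{Step 3: de-bubbling and identification.} Next I remove $B_u$ and glue $\gamma_+(u)$ to $\gamma_-(u)$ via $w_u$. On the complement of $B_u$ the map $w_u$ is a local homeomorphism near the glued arc, because the angles at $x_\pm(u)$ are $4\pi$ and removing the bubble takes away $2\pi$ at each endpoint. Hence the glued surface inherits a projective structure $\sigma_u$ that is unbranched near the arc and keeps the branch points $x_3,\dots,x_d$. Its monodromy equals that of $\mathcal{D}_u$, and its developing image of the glued arc is injective. By the definition of bubbling in \cite{CalsamigliaDeroinFrancaviglia2014}, re-inserting the bubble along this arc reproduces $\mathcal{D}_u$, which proves \Cref{thm-C}. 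The underlying complex structure of $\sigma_u$ is a small deformation of $C$ supported near $x_0$.

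The main obstacle is Step 1 together with the uniformity needed in Step 2. I must control the error terms $\mathcal{O}(u)$ in $\nu_\pm$ and in the regular part of $q_u$ to show that the rescaled developing maps converge in $C^1$ on a fixed large disc. I must also rule out any additional preimages of the arc entering from the annular region $|u|\ll|z|<\epsilon$. For the annulus, I will use that $w_u$ is close to $w_0$ there, and $w_0$ is injective near $x_0$. A matching argument on the overlap $|z|\sim|u|^{1/2}$ then shows that the developed arc, of size $\mathcal{O}(u)$, has only the two inner preimages.
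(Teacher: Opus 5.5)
Your route is essentially the paper's proof of \Cref{thm-bubble}. You rescale by $z=u\zeta$ (the paper's $w=z/u$), recognize the limit potential $-3/((\zeta-1)^2(\zeta+1)^2)$ as $\tfrac12\{J,\zeta\}$ for $J(\zeta)=\zeta+\zeta^{-1}$, take the two preimages of the segment between the critical values as twin paths, and de-bubble as in \Cref{thm-twinpaths-bubble}. A few points need correcting.

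The remark that the monodromy is constant in $u$ is false. The differential $q_u$ varies globally with $u$, through $q^{(0)}_u$ and the building blocks. \Cref{prop-bb-limit-3} says $u\mapsto$ monodromy is injective for $d<2g-2$, and in \Cref{thm-B} the family even changes strata. What is isomonodromic is the surgery at fixed $u$: $\sigma_u$ and $\mathcal{D}_u$ share monodromy. You never use the false claim, so just drop it.

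Your normalization $w_u\to w_0$ on compacta of $U\setminus\{x_0\}$ is the wrong one for Step~2. With it, $w_u(u\zeta)$ collapses to the constant $w_0(x_0)$ away from $\zeta=0$, because the whole inner picture develops onto an $\mathcal{O}(u)$-neighbourhood of $w_0(x_0)$ plus the bubble's image. So ``holomorphic dependence of $w_u\circ(u\,\cdot)$'' gives a degenerate limit, not $J$. Normalize in the rescaled chart instead, as in \Cref{prop-family-developingmaps-1}. Since $u\nu_\pm(u)=\pm\tfrac34+\mathcal{O}(u^2)$ and $g_u$ is jointly holomorphic (\Cref{rem-gu-holomorphic}), solutions of the rescaled equation with $u$-independent initial data give $A_u$ holomorphic in $(u,\zeta)$, with $A_0=J$ after one fixed M\"obius map. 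At fixed $u$ the bubbling statement is M\"obius-invariant, so nothing is lost.

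The obstacle you single out is not one. ``Twin paths bounding a bubble'' is a local condition, and the bubble lies inside the fixed rescaled region $\mathcal{B}$. Other preimages of the arc in the annulus $|u|\ll|z|<\epsilon$ are therefore irrelevant, and no matching at $|z|\sim|u|^{1/2}$ is needed.

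The step that actually needs care is elsewhere. $J$ has a pole at $\zeta=0$ inside the bubble, so closeness must be measured in the chordal metric. Bijectivity of the bubble also needs a global count, not just local stability of critical points. The paper gets this in three steps:
\begin{enumerate}[(i)]
\item chordal uniform convergence on $\overline{\mathcal{B}}$;
\item Rouch\'e on the annulus $\mathcal{A}=J^{-1}(\mathbb{D}_M)$, which avoids the pole, giving a double cover $\mathcal{W}_u\to\mathbb{D}_{M'}$ ramified only at $\pm1$ (\Cref{lem-degree-2});
\item the observation that $A_u$ is a proper local homeomorphism from the bounded component $\Delta_u$ onto the simply connected $\mathbb{P}^1\setminus\overline{\gamma}_u$, hence a biholomorphism.
\end{enumerate}

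Finally, a minor slip: the local model at a simple apparent singularity is $\zeta^2$, not $\zeta^3$, since $\tfrac12\{\zeta^n,\zeta\}=(1-n^2)/(4\zeta^2)$. The exponent $n=3$ gives $-2$, a double apparent singularity. It is simplest to compute $\{J,\zeta\}$ directly.
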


\begin{figure}[ht]
  \centering
  \includegraphics[width=0.8\textwidth]{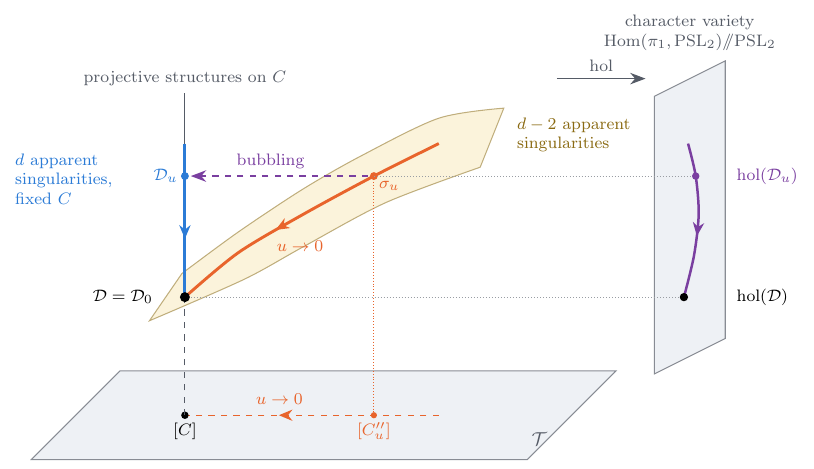}
  \caption{
The two families $\mathcal{D}_u$ on the fixed surface $C$ and $\sigma_u$ on $C''_u$ of \Cref{thm-bubble} over the Teichm\"uller space $\mathcal{T}$.
The inputs $(C''_u, \sigma_u)$ and outputs $\mathcal{D}_u$ of the bubbling operation lie on the same isomonodromic flow. 
These two families both have $\mathcal{D}_0$ as their limits.
}
  \label{fig-debubbling}
\end{figure}

\subsection{Discussion}
\label{subsec-intro-discussion}
We now briefly discuss the relation of the main objects and results of this paper with other subjects.

\paragraph{Apparent singularities as classical limits of degenerate fields}
Opers with apparent singularities arise in the classical limit of conformal field theory \cite[Section~3]{Te17}.
Consider Virasoro conformal blocks on the Riemann sphere with primary fields at marked points $p_1, \dots, p_N$, degenerate fields $V_{-1/2b}$ at points $x_1, \dots, x_d$, and one further degenerate field $V_{-b/2}$ at a variable point $y$.
The null vectors of the degenerate fields imply that the conformal block satisfies second-order Belavin--Polyakov--Zamolodchikov (BPZ) equations in $y$ and in each $x_r$ \cite{BPZ}.
In the classical limit $b \rightarrow 0$, with the ansatz
$e^{-S/b^2}\chi(y)$ for conformal blocks, the BPZ equation in $y$ 
degenerates into an oper equation for $\chi$ with simple apparent singularities at $x_1, \dots, x_d$.

The accessory parameters are the derivatives $\nu_r = -\partial_{x_r} S$ of the classical conformal block $S$, and the BPZ equations at the $x_r$ become exactly the constraints that make the $x_r$ apparent singularities.
Furthermore, the residues $\partial_{p_s} S$ at the marked points are the Hamiltonians of the isomonodromic deformation \cite{Te17}.
For higher rank, opers with apparent singularities and the conditions on their accessory parameters are studied in \cite{DM07, DDX26}, and one may expect them to arise in the classical limit of $\mathfrak{sl}_n$ Toda theories.

\paragraph{Collisions as fusion}
The outcome of the collision of two insertions of degenerate fields 
is described by the fusion rule in conformal field theories.
The degenerate field $V_{-1/2b}$ is labelled by the spin-$\frac12$ representation of $\mathfrak{sl}_2$, 
and its fusion with itself follows the Clebsch--Gordan rule 
\[ \frac12 \otimes \frac12 = 0 \oplus 1\]  
\cite{BPZ}.
The two channels are the vacuum representation, which corresponds to inserting no field at all, and the next degenerate representation $V_{-1/b}$, labelled by spin $1$, whose classical limit is a double apparent singularity.

Our results on the annihilation of two apparent singularities (\Cref{prop-A}) and on the formation of a double apparent singularity (\Cref{prop-double-as}) can be seen as the classical limit of this fusion rule, in the vacuum and the spin-$1$ channel respectively.

\paragraph{Neighborhoods of strata}
\Cref{prop-A,thm-B,thm-C} suggest a local picture of $\mathcal{M}_{dR}$ near a smaller stratum.
Let $\mathcal{D}$ be as in \Cref{prop-A}, with $d-2$ simple apparent singularities $x_3, \dots, x_d$, and suppose that it represents a flat connection in the stratum labelled by $d-2$.
We expect that every flat connection close to it can be represented by a branched projective structure whose apparent singularities consist of $d-2$ points close to $x_3, \dots, x_d$, together with pairs of nearby points, each pair joined by twin paths bounding a small bubble.
The family $\{\mathcal{D}_u\}_u$ of \Cref{prop-A} is of this form, with the single pair $x_\pm(u)$ (\Cref{thm-C}).

On the other hand, if the complex structure is allowed to vary, the pair can be removed: 
by \Cref{thm-C}, for small $u \neq 0$, $\mathcal{D}_u$ is obtained by bubbling a branched projective structure $\sigma_u$ on $C'_u$ 
with two fewer apparent singularities and the same monodromy.
Hence, for even $d < 2g-2$, the monodromy of $\mathcal{D}_u$ needs $d$ apparent singularities on $C$ by \Cref{thm-B}, but at most $d-2$ on $C'_u$.
This suggests describing a neighborhood of a stratum in terms of the space of branched projective structures with $d-2$ apparent singularities over Teichm\"uller space, 
which one may call a \emph{universal Bia\l ynicki-Birula stratum}. 
See \Cref{fig-debubbling} for an illustration.

\paragraph{Discontinuity of tangent spaces at smaller strata}
Let $[E_0, \nabla_0]$ lie in the stratum labelled by $d-2$, where $d < 2g-2$, let $L_0 \hookrightarrow E_0$ be its maximal subbundle, and for $x_0 \in C$ let $L_0' = L_0(-x_0)$.
Simpson proved the closure relation by deforming $(E_0, \nabla_0)$, together with the injection $L_0' \rightarrow L_0 \hookrightarrow E_0$ vanishing at $x_0$, 
into a family $(E_\tau, \nabla_\tau)$ with line subbundles $L_\tau \hookrightarrow E_\tau$ of degree $\deg L_0'$, which lies in the larger stratum labelled by $d$ for $\tau \neq 0$ \cite[Theorem~7.6]{simpson-iterated}, see also \cite[Proposition~2.2]{Huang26}.
In terms of apparent singularities, this is the mechanism of \Cref{prop-A,thm-B}: 
the section induced by $L_0' \rightarrow E_0$ has zero divisor $D_{L_0}(\nabla_0) + 2x_0$, 
and if $x_0$ is not a point of $D_{L_0}(\nabla_0)$, then for $\tau \neq 0$ the double point $2x_0$ splits into two simple apparent singularities, which collide at $x_0$ as $\tau \rightarrow 0$.

For $d \geq 4$, Huang chooses $x_0$ to be a point of $D_{L_0}(\nabla_0)$, so that three apparent singularities collide at $x_0$ \cite{Huang26}.
He shows that along this family that limits to $[E_0, \nabla_0]$, the tangent spaces along the Lagrangian leaves associated to Hodge bundles display a discontinuity at the collision limit \cite[Theorem~1.1]{Huang26}.
Hence these Lagrangian leaves do not form a $C^1$ foliation of the stable locus of $\mathcal{M}_{dR}$, which answers the foliation part of Simpson's question \cite[Question~7.4]{simpson-iterated} negatively; the closedness part holds in rank $2$ \cite{DS24}.
Our results suggest an intuitive picture of this collision (\Cref{fig-three-collision}): 
two simple apparent singularities, joined by twin paths bounding a small bubble, collide with a third apparent singularity, which is itself joined by twin paths to another apparent singularity.

\begin{figure}[ht]
  \centering
  \includegraphics[width=0.9\textwidth]{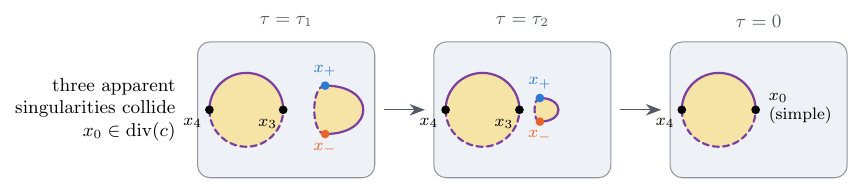}
  \caption{
Collision of three apparent singularities in Huang's deformation \cite{Huang26}
}
  \label{fig-three-collision}
\end{figure}

\paragraph{Beilinson--Drinfeld approach to the geometric Langlands correspondence}
The geometric Langlands correspondence (GLC), recently proved by Gaitsgory--Raskin et al.~\cite{GLC},
was pioneered by Drinfeld \cite{Dr83} and Beilinson--Drinfeld \cite{BD}.
The special case Beilinson--Drinfeld proved belongs to that of holomorphic opers (i.e. those \textit{without} apparent singularities):
given a complex semisimple group $G$ and a holomorphic oper $\sigma$ on $C$ with gauge group being the Langlands dual $\check{G}$,
using techniques from conformal field theories they constructed the corresponding Hecke eigensheaf $\mathcal{F}_\sigma$ on the moduli stack $\mathrm{Bun}_G$ of $G$-bundles on $C$ \cite{BD}, see also \cite{Fr07}.
In particular, $\mathcal{F}_\sigma$ can be realized by a holonomic system of eigenvalue equations of differential operators called \textit{quantum Hitchin Hamiltonians} that act on $K^{1/2}_{\mathrm{Bun}_G}$.

From this point of view, one might be motivated to understand the construction of the Hecke eigensheaves $\mathcal{F}_\sigma$ 
for a general local system $\sigma$ in the proximity of holomorphic opers.
Frenkel and Teleman extended the equivalence of Beilinson and Drinfeld on the locus of opers to the formal neighborhood of this locus \cite{FT13}. 
In contrast, for $\check{G} = \mathrm{PSL}_2(\C)$ and odd $g$, our results suggest a concrete description of an analytic neighborhood of the locus of holomorphic opers.
Namely, we expect that a generic flat connection in a neighborhood of opers can be represented 
by an oper with $3g-3$ apparent singularities, 
forming $\frac{3g-3}{2}$ pairs of nearby points, each joined by twin paths bounding a small bubble.
Debubbling all of these bubbles gives a holomorphic oper with the same monodromy on a nearby Riemann surface.

\paragraph{Acknowledgement} We thank Dima Arinkin, Pengfei Huang and Motohico Mulase for helpful discussions on related topics. 

\vspace{-10pt}
\paragraph{Tool and computational resource disclosure}
The explicit families of projective structures and an approximate calculation evidencing a relation to isomonodromic bubbling were worked out during the PhD studies of the first and second author with the third author. 
These results were worked out without AI and recorded in the first author's thesis \cite{dinh-thesis}.

By using the AI tool Claude Fable 5 in the summer 2026, the authors realized that our approximate developing map (cf. page 100 in \cite{dinh-thesis}) is M\"obius equivalent to the Joukowsky function $J(w) = w + w^{-1}$ 
and, after a blow up, is the limit on the exceptional divisor $\mathbb{E} \cong \mathbb{P} ^1$ of the family of developing maps (cf. \Cref{prop-family-developingmaps-1}).
The authors then used AI assistance in arriving at the proof of \Cref{thm-bubble} as presented in Sections \ref{sect-family-blown-up} -- \ref{sect-existence-bubble} and in finalizing the paper, in particular making the figures. 

We have checked all statements and proofs of the paper and take full responsibility for them.

\section{Preliminaries}\label{sect-preliminaries}
Let $C$ be a compact Riemann surface of genus $g \geq 2$, and $K$ denotes its canonical line bundle.
In this section, we recall the Bia\l ynicki-Birula stratification of the Hodge moduli space of $\lambda$-connections \cite{simpson-hodge-filtration, simpson-iterated}, following \cite{collier-wentworth, HH21}.
This Hitchin moduli space of Higgs bundles and the de Rham moduli space of flat connections are respectively the fibers $\lambda = 0$ and $\lambda = 1$ of the Hodge moduli space. 
Restricting the Bia\l ynicki-Birula stratification to these fibers then define the Bia\l ynicki-Birula stratification of the Hitchin moduli space and Simpson partial oper stratification of the de Rham moduli space.

Throughout this paper, we will consider rank-2 Higgs bundles and flat bundles with fixed determinants $\Lambda$.
Our convention is that, whenever we consider these as special cases of $\lambda$-connections, we let $\Lambda = \mathcal{O}_C$.
On the other hand, when we consider holomorphic bundles and Higgs bundles in and of themselves as in \Cref{subsec-BA-divisors}, \Cref{sect-degeneration-BA} and \Cref{subsec-moduli-bundles}, we can let $\Lambda$ be a fixed line bundle of degree $\deg(\Lambda) \in \{0, 1\}$.

\subsection{$\lambda$-connections and the Bia\l ynicki-Birula stratification}\label{subsubsec-natural-stratification}
\label{subsec-moduli-Higgs}
\paragraph{Moduli spaces}
Throughout our paper, a $\lambda$-connection is a triple $(\lambda, E, \nabla_\lambda)$, 
where $\lambda \in \C$, $E$ is a holomorphic rank-$2$ bundle with $\det(E) \cong \mathcal{O}_C$, 
and $\nabla_\lambda : E \rightarrow E \otimes K$ satisfies the twisted Leibniz rule 
\[ \nabla_\lambda(f s) = \lambda \; \partial f \otimes s + f \nabla_\lambda (s) \]
for local holomorphic functions $f$ and local sections $s$ of $E$, and induces a fixed $\lambda$-connection on $\det(E)$.
A $\lambda$-connection with $\lambda = 0$ is an $\SL_2(\C)$-Higgs bundle $(E, \phi)$ with $\phi \in H^0(\End_0(E) \otimes K)$, a traceless endomorphism of $E$ twisted by holomorphic differentials.
A $\lambda$-connection with $\lambda = 1$ is an $\SL_2(\C)$-holomorphic connection $(F, \nabla)$.

We denote by $\mathcal{M}_H(\Lambda)$ the moduli space of stable $\mathrm{SL}(2, \mathbb{C})$-Higgs bundles $(E, \phi)$ with $\det(E) \cong \Lambda$, 
and by $\mathcal{M}_{dR}$ the moduli space of irreducible $\mathrm{SL}(2, \mathbb{C})$-holomorphic connections $(E, \nabla)$ with $\det(E) \cong \mathcal{O}_C$.
Denote by $\mathcal{M}_{Hod}$ the moduli space of $\lambda$-connections $(\lambda, E, \nabla_\lambda)$ with $\det(E) \cong \mathcal{O}_C$.
The map $\underline{\lambda} : \mathcal{M}_{Hod} \rightarrow \C$ has fibers $\underline{\lambda}^{-1}(0) = \mathcal{M}_H(\mathcal{O}_C)$ and $\underline{\lambda}^{-1}(1) = \mathcal{M}_{dR}$.
Without loss of generality, we let $\deg(\Lambda) \in \{0, 1\}$ and write $\mathcal{M}_H \equiv \mathcal{M}_H(\Lambda)$ when the choice of $\Lambda$ does not matter. 

\paragraph{The $\C^\ast$-action and Hodge bundles}
The Hodge moduli space $\mathcal{M}_{Hod}$ admits a natural $\mathbb{C}^\ast$-action via 
\[
	t . [\lambda, E, \nabla_\lambda] \coloneqq [t\lambda, E, t\nabla_\lambda], \qquad t \in \C^\ast.
\]
On $\underline{\lambda}^{-1}(0) = \mathcal{M}_H(\mathcal{O}_C)$ and, in general, on $\mathcal{M}_H(\Lambda)$, 
the $\mathbb{C}^\ast$-action is $t.[E, \phi] = [E, t\phi]$.

Recall the Hitchin map 
\[ h: \mathcal{M}_H(\Lambda) \longrightarrow H^0(C, K^2), \qquad \qquad [E, \phi] \longmapsto \det(\phi),  \]
is a proper completely integrable system.
We say that a fixed point $\alpha \in \mathcal{M}_H^{\mathbb{C}^\ast}$, which is contained in the nilpotent cone $h^{-1}(0)$, is a Hodge bundle.
Given $x \in \mathcal{M}_{Hod}$, the limit $\underset{t \rightarrow}{\lim} \, t.x$ exists and is a Hodge bundle.
With $\mathcal{N}$ denoting the moduli space of rank-2 stable bundles with determinant $\Lambda$, one can write 
\begin{equation}\label{fixed-points}
	\mathcal{M}_H^{\C^\ast} = \mathcal{N} \sqcup \left( \sqcup_{d} N_d \right).
\end{equation}
Here, $\mathcal{N}$ parametrizes Higgs bundles of the form $(E, 0)$ with $E$ stable and of determinant $\Lambda$, and $N_d$ parametrizes Higgs bundles $(E_M, \phi_D)$ of the form 
\begin{align}\label{C*-fixed}
	&E_M = M \oplus M^{-1} \Lambda,
	& &\phi_{D} = \begin{pmatrix} 0 & 0 \\ s_D & 0 \end{pmatrix},
	& &s_D \in H^0(K M^{-2} \Lambda) \cong \mathcal{O}_C(D),
	& &D \in C^{(d)}.
\end{align}
Note that $d \equiv \deg(\Lambda)$ and that stability of $(E_M, \phi_D)$ requires $d < 2g-2$.

\paragraph{Upward flows and Bia\l ynicki-Birula stratification}
For a Hodge bundle $\alpha$, let
\[ \alpha^+ \coloneqq \left\{ x \in \mathcal{M}_{Hod} \, \big| \, \underset{t \rightarrow 0}{\lim} t.x = \alpha \right\} \]
--the upward flow of $\alpha$ in $\mathcal{M}_{Hod}$, and
\[ \alpha^+_\lambda \coloneqq \alpha^+  \cap \underline{\lambda}^{-1}(\lambda). \]
For a component $X \in \pi_0(\mathcal{M}_H^{\mathbb{C}^\ast})$, let
\[ X^+ \coloneqq \left\{ x \in \mathcal{M}_{Hod} \big| \underset{t \rightarrow 0}{\lim} t.x \in X \right\} \]
--the attractor of $X$ in $\mathcal{M}_{Hod}$, and
\[ X^+_\lambda \coloneqq X^+ \cap \underline{\lambda}^{-1}(\lambda). \]
We use the same notations $\alpha^+_0$ and $X^+_0$ respectively for the upward flow of $\alpha$ and attractor of $X$ in $\mathcal{M}_H(\Lambda)$ for general $\Lambda$. 
In particular, for $\alpha \in N_0$ which consists of $2^{2g}$ points, 
$\alpha^+_0$ and $\alpha^+_1$ correspond to a Hitchin section and space of holomorphic opers respectively.

\begin{theorem}[\cite{simpson-iterated, collier-wentworth}]\label{thm-CW}
	Let $\alpha$ be a stable Hodge bundle, contained in $X \in \pi_0(\mathcal{M}_H^{\mathbb{C}^\ast})$. 
	Then $\alpha^+_0 \subset \mathcal{M}_H$ and $\alpha^+_1 \subset \mathcal{M}_{dR}$ are holomorphic Lagrangian submanifolds, isomorphic as varieties with $\mathbb{C}^\ast$-action to the positive weighted subspace of $T_\alpha \mathcal{M}_H$.
	In particular, for $G = \mathrm{SL}(2, \mathbb{C})$, $\alpha^+_0$ and $\alpha^+_1$ are isomorphic to $\C^{3g-3}$. 
\end{theorem}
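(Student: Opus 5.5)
The statement to prove is \Cref{thm-CW}, quoted from Simpson and Collier--Wentworth. I would prove it by Bia\l ynicki-Birula theory on the smooth locus, combined with a gauge-theoretic slice at the fixed point.

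\textbf{Step 1: the $\C^\ast$-weight decomposition.} Since $\alpha=(E_M,\phi_D)$ (or $(E,0)$ with $E$ stable) is stable, $\mathcal{M}_{Hod}$ is smooth near $\alpha$. The $\C^\ast$-action lifts to the deformation complex
\[ C^\bullet:\ \End_0(E)\xrightarrow{[\phi,\cdot]}\End_0(E)\otimes K, \]
with $\lambda$ adjoined as an extra weight-one direction. Because $\alpha$ is fixed, there is an infinitesimal gauge transformation, given by the splitting $E=M\oplus M^{-1}\Lambda$, whose action intertwines $t\phi$ with $\phi$. This grades the complex as $C^\bullet=\bigoplus_k C^\bullet_k$, and hence grades $T_\alpha\mathcal{M}_{Hod}=\bigoplus_k T_k$ by weights.

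\textbf{Step 2: the slice.} Following Collier--Wentworth, I would write each point of $\alpha^+_\lambda$ uniquely as
\[ \big(\bar\partial_{E}+\beta,\ \phi+\lambda\,\partial\text{-part}+\psi\big), \]
with $(\beta,\psi)$ in the positive-weight part of a Coulomb/harmonic slice. In this slice the Maurer--Cartan equation $\bar\partial\psi+[\beta,\psi]=0$ becomes triangular with respect to the weight filtration, so it can be solved uniquely and algebraically, weight by weight. The resulting map $\bigoplus_{k>0}T_k\cap T_\alpha\mathcal{M}_H\to\alpha^+_\lambda$ is $\C^\ast$-equivariant and bijective. Properness of the Hitchin map guarantees that the limits $t\to0$ exist, which gives surjectivity. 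This yields isomorphisms $\alpha^+_0\cong\alpha^+_1\cong T^+_\alpha$.

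\textbf{Step 3: the Lagrangian property.} The holomorphic symplectic form $\omega$ has weight $1$, that is $t^\ast\omega=t\,\omega$. Hence $\omega(T_j,T_k)=0$ unless $j+k=1$. For positive weights $j,k\ge1$ this forces $\omega|_{T^+}=0$. Moreover, $T_\alpha\mathcal{M}_H=\bigoplus_{k\le0}T_k\oplus\bigoplus_{k\ge1}T_k$ pairs $T_{1-k}$ nondegenerately with $T_k$, so $\dim T^+=\tfrac12\dim\mathcal{M}_H=3g-3$. Isotropy propagates along the flow because $\omega$ is $\C^\ast$-homogeneous and closed, and it holds at the limit point. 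For the fiber $\lambda=1$ I would use the same argument with the de Rham symplectic form, which is compatible with the grading via the $\lambda$-connection family.

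\textbf{Main obstacle.} The hard part is Step 2: proving that the slice map is a global algebraic isomorphism onto all of $\alpha^+_1$, and not merely a local one near $\alpha$. I would first try to use the contracting $\C^\ast$-action, since every point flows into the slice neighborhood, together with injectivity from uniqueness of the limit to extend the local isomorphism equivariantly.
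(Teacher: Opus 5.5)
Your outline is essentially the Collier--Wentworth argument to which the paper defers; the paper gives no proof of its own. That argument consists of a positive-weight Coulomb slice at the stable fixed point, solved triangularly in the weights, and identified globally with $\alpha^+_\lambda$ by flowing into a neighbourhood of $\alpha$ where the slice is a local Kuranishi chart, with weight-one homogeneity of the symplectic form giving the Lagrangian property.

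Two details need correcting. First, both injectivity and surjectivity of the slice map come from that equivariant flow argument; existence of the limit is already part of the definition of $\alpha^+_\lambda$, and properness of the Hitchin map plays no role at $\lambda=1$. Second, $\mathbb{C}^\ast$ does not preserve $\mathcal{M}_{dR}$, so on that fibre the isotropy argument must be run with the weight-one relative symplectic form on $\alpha^+ \subset \mathcal{M}_{Hod}$; equivalently, check directly that the trace pairing vanishes on positive-weight deformations.
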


The decompositions  
\[ \mathcal{M}_{Hod} = \mathcal{N}^+ \sqcup \left( \sqcup_d N_d^+ \right),
\qquad \qquad \mathcal{M}_{H} = \mathcal{N}^+_0 \sqcup \left( \sqcup_d (N_d)^+_0 \right) \] 
define the Bia\l ynicki-Birula stratification on the Hodge moduli spaces and Hitchin moduli spaces, respectively.
The intersection of the former with $\mathcal{M}_{dR} \equiv \underline{\lambda}^{-1}(1)$ defines the Simpson partial oper stratification
\begin{equation}\label{BB-stratifications}
	\mathcal{M}_{dR} = \mathcal{N}^+_1 \sqcup \left( \sqcup_d (N_d)^+_1 \right).
\end{equation}

In all of these cases, the largest dimensional open stratum
\[ \mathcal{N}^+, \qquad \qquad \mathcal{N}^+_0, \qquad \qquad \mathcal{N}^+_1, \]
whose closures are the whole respective moduli spaces, parametrize $\lambda$-connections with stable underlying bundles.
The fibrations 
\[ \mathcal{N}^+_0 \longrightarrow \mathcal{N}, 
\qquad \qquad \mathcal{N}^+_1 \longrightarrow \mathcal{N} \]
are respectively the cotangent bundle $T^\ast \mathcal{N}$ and the twisted cotangent bundle of $\mathcal{N}$, i.e.
for $\alpha = E \in \mathcal{N}$,
\[ \alpha^+_0 = \{ (E, \phi) \in \mathcal{M}_H \}, 
\qquad \qquad \alpha^+_1 = \{ (E, \nabla) \in \mathcal{M}_{dR} \}. \]

\paragraph{Divisors from triples $(L \hookrightarrow E, \phi)$ and $(L \hookrightarrow E, \nabla)$}
Consider a rank-2 holomorphic bundle $E$ together with a line subbundle $L \hookrightarrow E$, i.e. an extension
\begin{equation} \label{bundle-extension}
	0 \rightarrow L \rightarrow E \rightarrow L^{-1}\Lambda \rightarrow 0.
\end{equation}
Given a Higgs field $\phi$ or a holomorphic connection $\nabla$ on $E$, consider the morphism
\begin{subequations}\label{eqn-section}
\begin{align}
	s_{L}( \phi): L \hookrightarrow E \overset{\phi}{\rightarrow} E \otimes K \rightarrow L^{-1} \Lambda K, \label{lower-left-component} \\
	s_{L}(\nabla): L \hookrightarrow E \overset{\nabla}{\rightarrow} E \otimes K \rightarrow L^{-1} K. \label{lower-left-dR-0}
\end{align}
\end{subequations}
One can check that the latter is also a $\mathcal{O}_C$-linear morphism,
and so these morphisms correspond to sections of $K L^{-2} \Lambda$ and $K L^{-2}$ respectively. 
Concretely, in local frames adapted to $L \hookrightarrow E$, one can write
\[ \phi = \begin{pmatrix}
 \ast & \ast \\ s_L(\phi) & \ast	
\end{pmatrix},
\qquad \qquad \qquad
\nabla = \partial + \begin{pmatrix}
	\ast & \ast \\ s_L(\nabla) & \ast
\end{pmatrix}.
\]
It is shown in \cite{DT23, Dinh24} that the extension class $[E]_L \in H^1(C, L^2 \Lambda^{-1})$ defined by \eqref{bundle-extension} pairs with these sections via Serre duality to
\[ \langle [E]_L, s_L(\phi) \rangle = 0, 
\qquad \qquad \langle [E]_L, s_L(\nabla) \rangle = \deg(L).  \]

Note that if $\det(\phi)$ is a quadratic differential with simple zeroes, i.e. the corresponding Hitchin fiber is smooth, (respectively, $\nabla$ is irreducible), 
then $s_L(\phi)$ (respectively, $s_L(\nabla)$) is not identically vanishing.
In this case, we denote by $D_L(\phi)$ and $D_L(\nabla)$ the corresponding zero divisors of these sections. 
The embedding of the subbundle $L$ into $E$ is unique up to scaling, which rescales $s_L$, so these zero divisors depend only on $L$.
In \Cref{subsec-BA-divisors} we also consider injections $L \overset{i}{\rightarrow} E$ that may vanish at some points, and then write $s_i$ and $D_i$.

\paragraph{Destabilizing subbundles.}
Let $E$ be strictly unstable, i.e. $E$ admits a unique destabilizing line subbundle $L_E \hookrightarrow E$ with $\deg(L_E^{-2}\Lambda) < 0$.
It follows that a Higgs bundle $(E, \phi)$ is stable if and only if $s_{L_E}(\phi) \neq 0$.

\begin{proposition}\label{prop-rank2-limits}
	Let $E$ be strictly unstable with destabilizing line subbundle $L_E \hookrightarrow E$.
	\begin{enumerate}[(i)]
		\item Let $[E, \phi] \in \mathcal{M}_H(\Lambda)$ and $\alpha = [E_M, \phi_D] \in N_d$ for some $d < 2g-2$.
		Then $[E, \phi] \in \alpha^+_0$ 
		if and only if $M$ is the unique destabilizing line subbundle of $E$ and 
		\[ D_{M}(\phi) = D. \]

		\item Let $[E, \nabla] \in \mathcal{M}_{dR}$ and $\alpha = [E_M, \phi_D] \in N_d$ for some $d < 2g-2$.
		Then $[E, \nabla] \in \alpha^+_1$ 
		if and only if $M$ is the unique destabilizing line subbundle of $E$ and 
		\[ D_{M}(\nabla) = D. \]
	\end{enumerate}
\end{proposition}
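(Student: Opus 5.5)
We compute the limit $\lim_{t\to 0} t.x$ explicitly in a $C^\infty$ splitting adapted to the destabilizing subbundle, and compare it with the normal form \eqref{C*-fixed}. Both parts follow the same pattern.

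For (i), choose a smooth splitting $E \cong L_E \oplus L_E^{-1}\Lambda$ as $C^\infty$ bundles. In it the holomorphic structure is
\[
\bar\partial_E = \begin{pmatrix} \bar\partial_{L_E} & \beta \\ 0 & \bar\partial_{L_E^{-1}\Lambda}\end{pmatrix},
\qquad
\phi = \begin{pmatrix} a & b \\ s_{L_E}(\phi) & -a\end{pmatrix}.
\]
We apply the complex gauge transformation $g_t = \mathrm{diag}(t^{1/2}, t^{-1/2})$, or $\mathrm{diag}(1,t^{-1})$ up to scalars, which is legitimate since only the projective action matters. This gives
\[
g_t \cdot (\bar\partial_E, t\phi) = \left(\begin{pmatrix} \bar\partial & t\beta \\ 0 & \bar\partial\end{pmatrix}, \begin{pmatrix} ta & t^2 b \\ s_{L_E}(\phi) & -ta\end{pmatrix}\right).
\]
As $t\to 0$ this converges to $(L_E\oplus L_E^{-1}\Lambda, \phi_{D})$ with $D = D_{L_E}(\phi)$. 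Stability of $[E,\phi]$ forces $s_{L_E}(\phi)\neq 0$, and since $\deg L_E > \deg(\Lambda)/2$ the limit is a stable Hodge bundle in $N_d$, where $d = \deg(KL_E^{-2}\Lambda) < 2g-2$. Because $\mathcal{M}_H$ is separated, limits are unique, so $[E,\phi]\in\alpha^+_0$ if and only if this limit equals $\alpha = [E_M,\phi_D]$.

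It then remains to check when two such Hodge bundles are isomorphic. An isomorphism $(L_E\oplus L_E^{-1}\Lambda,\phi_{D'}) \cong (M\oplus M^{-1}\Lambda, \phi_D)$ must identify the unique destabilizing subbundles, since $d<2g-2$ makes both bundles strictly unstable. Hence $M\cong L_E$, and the isomorphism is diagonal up to a unipotent part, which preserves $s$ up to scaling. Therefore $D'=D$. Finally, $L_E$ is the unique destabilizing subbundle of $E$, which gives (i).

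For (ii) we repeat the argument on $\mathcal{M}_{Hod}$ with $t\nabla$. Write
\[
\nabla = \partial + \begin{pmatrix} a & b \\ s_{L_E}(\nabla) & -a\end{pmatrix}
\]
in the $C^\infty$ splitting, where $\partial$ is built from $C^\infty$ connections on the factors. Conjugating $t\nabla$ by $g_t$, the lower-left entry stays $s_{L_E}(\nabla)$. The diagonal $\lambda$-connection part becomes $t\partial + ta$, which tends to $0$. The upper-right entry picks up a factor $t^2$, and $\beta$ picks up a factor $t$. So the limit is again $(L_E\oplus L_E^{-1},\phi_{D_{L_E}(\nabla)})$. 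The entry $s_{L_E}(\nabla)$ is nonzero by irreducibility of $\nabla$, because otherwise $L_E$ would be $\nabla$-invariant. The same uniqueness argument then finishes the proof.

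The main obstacle is justifying convergence in the moduli space rather than just of representatives. For this we would invoke that the family $g_t\cdot(t\cdot x)$ extends to a holomorphic family over $t\in\C$ with a stable central fibre. By openness of stability and separatedness of $\mathcal{M}_{Hod}$, its image then gives the limit. We also need care with the half-integer powers, which we avoid by working with $\mathrm{diag}(1,t^{-1})$ and noting that scalars act trivially on the data.
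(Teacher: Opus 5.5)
Your proposal is correct. The paper states this proposition without proof and recalls it from \cite{simpson-iterated, collier-wentworth}. Your argument is the standard one from those sources: conjugate the orbit $t\cdot x$ by the constant gauge $\mathrm{diag}(1,t^{-1})$ in a splitting adapted to $L_E$, so that it becomes a family polynomial in $t$ whose central fibre is the associated graded $(L_E\oplus L_E^{-1}\Lambda,\phi_{D_{L_E}(\phi)})$. Then compare Hodge bundles through upper-triangular isomorphisms, which rescale the lower-left entry by $\psi_{22}\psi_{11}^{-1}$.

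Two minor points. First, openness of stability is not actually needed: every fibre of the extended family is already stable (irreducible for $t\neq 0$, a stable Hodge bundle at $t=0$), so the family directly gives a morphism $\mathbb{C}\to\mathcal{M}_{Hod}$ extending the orbit map. Second, you should state the converse in the comparison step explicitly. If $M\cong L_E$ and $D_{L_E}(\phi)=D$, then $s_{L_E}(\phi)$ and $s_D$ are sections of $KM^{-2}\Lambda$ differing by a nonzero constant, which a diagonal automorphism absorbs.
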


For rank $2$, Simpson proves that $(N_d)^+_1 \subset \overline{(N_{d+2})^+_1}$ for $d \leq 2g-6$ \cite[Theorem~7.6]{simpson-iterated}.
An explicit description of this closure relation and the relation to bubbling surgery is the subject of our paper. 

\paragraph{Very stable and wobbly Hodge bundle}
We say that a stable Hodge bundle $\alpha \in X \in \pi_0(\mathcal{M}_H^{\mathbb{C}^\ast})$ is \textit{very stable} if its upward flow $\alpha^+$ intersects the nilpotent cone $h^{-1}(0)$ exactly at $\alpha$; otherwise, we say that $\alpha$ is \textit{wobbly}.
These terminology and notion were first introduced by Drinfeld--Laumon \cite{Lau88} and Donagi--Pantev \cite{DP09} respectively for the case $X$ is the moduli space of stable bundles--a bundle is called \textit{wobbly} if it admits nonzero nilpotent Higgs fields and \textit{very stable} otherwise--and generalized to other components of Hodge bundles by Hausel--Hitchin \cite{HH21}. 

\begin{theorem} Let $\alpha$ be a stable Hodge bundle.
\begin{enumerate}[(i)]
		\item The Lagrangian leaf $\alpha^+_0$ is closed in $\mathcal{M}_H$ if and only if $\alpha$ is very stable \cite{PPe19, HH21}. 
		For $\alpha = [E_M, \phi_D] \in N_d$, this holds if and only if $D$ is reduced \cite{HH21}.
		\item The Lagrangian leaf $\alpha^+_1$ is closed in $\mathcal{M}_{dR}$ \cite{DS24}, but the leaves do not form a $C^1$ foliation \cite{Huang26}. 
\end{enumerate}
\end{theorem}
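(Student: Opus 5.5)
The statement collects results from \cite{PPe19, HH21, DS24, Huang26}, so the proof I have in mind mostly assembles known arguments. I would structure it around the equivariant map $h|_{\alpha^+_0}$ for (i), and reduce (ii) to the cited works.

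For the first equivalence in (i), I would use \Cref{thm-CW}: $\alpha^+_0\cong\C^{3g-3}$, $\C^\ast$-equivariantly, with all weights positive. The Hitchin map $h$ restricts to a $\C^\ast$-equivariant polynomial map $\alpha^+_0\to H^0(K^2)$, where the target has weight $2$ and the same dimension $3g-3$.

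Suppose first that $\alpha$ is very stable, so $h|_{\alpha^+_0}^{-1}(0)=\{\alpha\}$. Because all weights are positive, this map is quasi-finite at the origin. A weighted-homogeneous polynomial map with finite zero fibre is finite, hence proper. Then take a sequence in $\alpha^+_0$ converging in $\mathcal{M}_H$. Its images under $h$ converge, so by properness a subsequence converges inside $\alpha^+_0$, and $\alpha^+_0$ is closed.

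Conversely, suppose there is a point $\beta\neq\alpha$ in $\alpha^+_0\cap h^{-1}(0)$. Its whole orbit $\C^\ast.\beta$ stays in this intersection. Since the nilpotent cone is compact, the limit $\lim_{t\to\infty}t.\beta$ exists. This limit is a fixed point different from $\alpha$ (it lies strictly higher for the Morse function $\|\phi\|^2$), so it is not in $\alpha^+_0$. Therefore $\alpha^+_0$ is not closed.

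For the criterion for $\alpha=[E_M,\phi_D]\in N_d$, I would use \Cref{prop-rank2-limits}(i): points of $\alpha^+_0\cap h^{-1}(0)$ are nilpotent $\phi$ on some $E$ with destabilizing subbundle $M$ and $D_M(\phi)=D$.

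I would then write such a $\phi$ through its kernel subbundle $N\hookrightarrow E$. Nilpotency gives $\phi\colon E\to N\otimes K$, and the induced section $s_M(\phi)$ factors as a product $s_{D_1}\cdot s_{D_2}^{\,2}$, where $D_2$ measures the failure of $M\to E/N$ to be an isomorphism. Hence a second nilpotent point with divisor $D$ requires $D\ge 2x$ for some $x$, which shows that reduced $D$ implies very stable.

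For the converse, if $D=D'+2x$, I would build an explicit point. Take $N=M(-x)$ and let $E$ be the nontrivial extension whose class is determined by $x$, following Hausel--Hitchin. This produces a nilpotent point of $\alpha^+_0$ other than $\alpha$, so $\alpha$ is wobbly.

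Part (ii) I would not reprove beyond indicating the mechanism. Closedness of $\alpha^+_1$ in rank $2$ is \cite{DS24}. The failure of the $C^1$ foliation property is Huang's computation \cite{Huang26}. That computation uses the three-point collision family described in the introduction, together with \Cref{prop-rank2-limits}(ii) to identify the leaves.

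The step I expect to be the main obstacle is the factorization of $s_M(\phi)$ and the explicit converse construction in the reduced-divisor criterion. Everything else is formal once properness is established.
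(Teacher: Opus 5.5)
The paper does not prove this statement. It only collects \cite{PPe19, HH21} for (i) and \cite{DS24, Huang26} for (ii), so deferring (ii) to the literature is exactly what the paper does.

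Your argument for the first equivalence in (i) is the Hausel--Hitchin argument and works, with one repair. The paper's $\mathcal{M}_H(\Lambda)$ is the stable locus, and its nilpotent cone is not compact when $\deg\Lambda=0$. So instead of appealing to compactness, compute the limit directly: for nilpotent $\beta=(E,\phi)$ with kernel $N$, one has $\lim_{t\to\infty}t.\beta=(N\oplus E/N,\bar\phi)$. This limit is stable, because $N$ is $\phi$-invariant and hence $\deg N<\deg\Lambda/2$.

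Your factorization in the forward direction of the reduced-divisor criterion is also correct. The reason for the square is that $M\to E/N$ and $N\to E/M$ are the same section $s\colon m\otimes n\mapsto m\wedge n$ of $M^{-1}N^{-1}\Lambda$. Hence $s_M(\phi)=\pm s^2\psi$ with $\psi\colon E/N\to NK$. If $s$ has no zero, then $E=M\oplus N$ and $(E,\phi)\cong\alpha$.

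The genuine gap is the converse construction, which fails as written. With kernel $N=M(-x)$, the section $s$ lives in $M^{-1}N^{-1}\Lambda=M^{-2}\Lambda(x)$. This has degree $d-(2g-2)+1\le 0$, since $d<2g-2$. So $s$ is one of two things:
\begin{enumerate}[(a)]
\item Zero. Then $s_M(\phi)=0$, $M$ is $\phi$-invariant, and $(E,\phi)$ is unstable.
\item Nowhere vanishing. This is possible only for $d=2g-3$, and then $E=M\oplus N$ and you recover $\alpha$.
\end{enumerate}
In neither case does $s$ vanish at $x$, so no new nilpotent point of $\alpha^+_0$ is produced.

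What you need is $N\cong M^{-1}\Lambda(-x)$, so that $M^{-1}N^{-1}\Lambda\cong\mathcal{O}(x)$ and $s$ vanishes exactly at $x$. Take $E$ to be the elementary modification of $M\oplus M^{-1}\Lambda(-x)$ at $x$ along a line in the fibre that is neither coordinate line. Then both $M$ and $N$ stay saturated, with $E/M\cong M^{-1}\Lambda$ and $E/N\cong M(x)$. Set $\phi=\psi\circ(E\to E/N)$ with $\psi=s_{D'}\in H^0(N^2\Lambda^{-1}K)=H^0(KM^{-2}\Lambda(-2x))$. Then:
\begin{enumerate}[(a)]
\item $s_M(\phi)=\pm s_x^2\,s_{D'}$ has divisor $D$.
\item $(E,\phi)$ is stable, since its only invariant subbundle is $N$ and $\deg N<\deg\Lambda/2$.
\item It lies in $\alpha^+_0$ by \Cref{prop-rank2-limits}(i).
\item It differs from $\alpha$, because its kernel does not map isomorphically onto $E/M$.
\end{enumerate}

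Note also that this $E$ need not be a nontrivial extension. Under Serre duality, its class as an extension of $M^{-1}\Lambda$ by $M$ spans the evaluation functional at $x$ on $H^0(KM^{-2}\Lambda)$. That functional vanishes when $x$ is a base point of $|D|$, for instance whenever $h^0(\mathcal{O}(D))=1$. In that case $E\cong E_M$, and the new point is a different nilpotent Higgs field on the split bundle. So the object to construct is the new kernel line $N$, not an extension class ``determined by $x$''.
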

We remark that part (ii) answers the two parts of Simpson's question \cite[Question~7.4]{simpson-iterated}.
A similar description for $\alpha \in \mathcal{N}$ in terms of divisors of quadratic differentials is given in \cite{Dinh24wobbly}.

\subsection{Moduli spaces of rank-2 bundles}
\label{subsec-moduli-bundles}
Given a rank-2 holomorphic bundle $E$,
we say a subbundle $M$ of $E$ is a maximal subbundle of $E$ if $\deg(M) \geq \deg(L)$ for all other subbundles $L$ of $E$.
Clearly $E$ is stable if its Segre invariant 
\begin{equation*}
	s(E) \coloneqq \deg(\Lambda M^{-2}) 
	= \underset{L \hookrightarrow E}{\min} \; \deg(\Lambda L^{-2}),
\end{equation*} 
where $M$ is a maximal subbundle of $E$, is positive.
It follows by a theorem of Nagata \cite{Nagata} that $s(E) \leq g$.
As $s(E) \equiv \deg(\Lambda)$ mod $2$, we have 
\begin{equation}
	\max \: s(E) = 
	\begin{cases}
		g  & \text{for } g \equiv \deg(\Lambda) \pmod 2 \\
		g-1 & \text{for } g \equiv \deg(\Lambda)-1 \pmod 2 
	\end{cases}.
\end{equation}

The moduli space $\mathcal{N} \equiv \mathcal{N}_\Lambda$ of stable bundles with fixed determinant $\Lambda$ can be decomposed into 
\[ \mathcal{N} = \underset{s}{\sqcup} \, \mathcal{N}_s, 
\qquad \qquad \qquad 1 \leq s \leq g, \quad s \equiv \deg(\Lambda), \]
where $\mathcal{N}_s$ is the open Segre stratum defined by 
$\mathcal{N}_s = \{ [E] \in \mathcal{N} \mid s(E) = s \}$.
Then $\mathcal{N}_s$ is an irreducible algebraic variety of dimension $2g + s -2$.
These define a nested stratification called Segre stratification \cite{Lange-Narasimhan}.

It is known that if $s(E) = g$, then the set of its maximal subbundles is of dimension $1$ \cite{Lange-Narasimhan}.
On the other hand, a generic bundle $E$ with $s(E) = g-1$ only has a finite number of maximal subbundles,
and a generic bundle $E$ with $s(E) < g-1$ has only one maximal subbundle.
If however $E$ is strictly unstable, i.e. $s(E) < 0$,
then $E$ has a unique maximal subbundle $L_E$.
Regardless of the value of $s(E)$, 
if $M$ is a maximal subbundle of $E$ then its embedding into $E$ is unique up to a scaling, i.e. $h^0(M^{-1} E) = 1$.

We note that the Segre invariants also label the components of Hodge bundles with underlying unstable bundles. 
The Segre invariants therefore induce a finer stratification on $\mathcal{M}_H$ and $\mathcal{M}_{dR}$ by respectively refining the open stratum $\mathcal{N}^+_0$ and $\mathcal{N}^+_1$.
{%
\paragraph{Closed strata.}\label{par-closed-strata}
For $[E, \nabla] \in \mathcal{M}_{dR}$ we have $\Lambda = \mathcal{O}_C$, so $2g-2+s(E) = \min_{L \hookrightarrow E} \deg(KL^{-2})$, and the refined stratification of $\mathcal{M}_{dR}$ is labelled by this number $d$:
\begin{itemize}
	\item if $d < 2g-2$, i.e.\ $s(E) < 0$, then $E$ is strictly unstable and $[E, \nabla] \in (N_d)^+_1$ by \Cref{prop-rank2-limits}(ii);
	\item if $d = 2g-2$, then $E$ is strictly semistable;
	\item if $d > 2g-2$, then $E$ is stable and $[E, \nabla]$ lies in the Segre stratum $(\mathcal{N}_s)^+_1 \coloneqq \{ [E, \nabla] \in \mathcal{N}^+_1 \mid E \in \mathcal{N}_s \}$ with $s = d - (2g-2)$.
\end{itemize}
For an even integer $d \geq 0$, we call
\begin{align}
	\mathcal{M}^{\leq d}_{dR} 
	&\coloneqq \left\{ [E, \nabla] \in \mathcal{M}_{dR} \;\middle|\; E \text{ admits a line subbundle } L \text{ with } \deg(KL^{-2}) \leq d \right\} \nonumber \\
	&= \left\{ [E, \nabla] \in \mathcal{M}_{dR} \;\middle|\; s(E) \leq d - (2g-2) \right\} \label{eqn-closed-strata}
\end{align}
the closed stratum labelled by $d$.
It is closed, since the Segre invariant is lower semicontinuous in families, and $\mathcal{M}^{\leq d-2}_{dR} \subset \mathcal{M}^{\leq d}_{dR}$.
For $d < 2g-2$, it is the union $\bigsqcup_{d' \leq d} (N_{d'})^+_1$ of partial oper strata, which equals $\overline{(N_d)^+_1}$ by \cite[Theorem~7.6]{simpson-iterated}.
For $d \geq 2g-2$, it consists of all partial oper strata $(N_{d'})^+_1$ together with the connections on semistable bundles $E$ with $s(E) \leq d-(2g-2)$.
For $d > 2g-2$, its intersection with the open stratum $\mathcal{N}^+_1$ is the closure of $(\mathcal{N}_s)^+_1$, $s = d-(2g-2)$, in $\mathcal{N}^+_1$, by the nestedness of the Segre stratification \cite{Lange-Narasimhan}, since $\mathcal{N}^+_1 \rightarrow \mathcal{N}$ is an affine bundle.
}%

\subsection{Spectral correspondence and integrable structure}\label{subsec-spectral}
The Hitchin fibration $h: \mathcal{M}_H \rightarrow H^0(K^2)$ associates to the isomorphism class of a Higgs bundle $(E, \phi)$ the quadratic differential $q = \det(\phi)$.
A generic Hitchin fiber is isomorphic to an abelian variety, namely the Prym variety of the associated spectral curve.
This endows $\mathcal{M}_H$ with the structure of an algebraic integrable system \cite{Hit87b}.

\paragraph{Spectral curves.}
We recall that associated to a quadratic differential $q = \det(\phi)$
is a spectral curve $\widetilde{C}_{q}$ embedded in the total space of $T^\ast C$.
The spectral curve encodes the eigen-values of the Higgs field:
concretely, if $z$ is the coordinate of an open set $U \subset C$,
$v$ the fiber coordinate of the restriction of $T^\ast C$ to $U$
and $\phi (z) = \begin{pmatrix} a(z) & b(z) \\ c(z) & - a(z) \end{pmatrix}$ locally,
then locally $\widetilde{C}_q$ is defined by
\begin{align}
	v^2 + q(z) = v^2  - a(z)^2 - b(z) c(z) = 0.
	\label{spectral-curve-eqn}
\end{align}
Here $v$ is the tautological section of $T^\ast C$, and $v^2 + q$ is the characteristic polynomial of $\phi$.
The morphism $\widetilde{C}_q \overset{\pi}{\rightarrow} C$ induced by $T^\ast C \rightarrow C$ is a $2:1$ covering that branches at the zeroes of $\det(\phi)$.
The involution $\sigma$ of $\widetilde{C}_q$ interchanges points
corresponding to the eigenvalues $v = \pm (-q(z))^{1/2}$ of $\phi(z)$.
We say a quadratic differential $q$ and its associated spectral curve $\widetilde{C}_q$ are non-degenerate if the zeroes of $q$ are all simple.
In this case, $\widetilde{C}_q$ is a smooth compact Riemann surface of genus $\tilde{g} = 4g-3$,
and in particular $\pi^\ast(K)$ has a canonical section defined by $v$ that vanishes precisely at the ramification divisor $R_q$ of $\widetilde{C}_q$.
A spectral curve $\widetilde{C}_q$ is called degenerate when $q$ has zeroes of non-trivial multiplicity;
the most degenerate case is $q=0$, and we call the fiber $h^{-1}(0)$ the nilpotent cone.

\paragraph{Spectral line bundles.}
Let $(E, \phi) \in \mathcal{M}_H(\Lambda)$ be a Higgs bundle with
non-degenerate spectral curve $\widetilde{C}_q \overset{\pi}{\rightarrow} C$.
The cokernel of the map
$ \pi^\ast(E K^{-1}) \overset{\pi^\ast(\phi) - v}{\longrightarrow} \pi^\ast(E) $
is a line bundle $\mathcal{L}_{(E,\phi)}$ on $\widetilde{C}_q$
which we will call the spectral line bundle of $(E,\phi)$.
{%
At a point $\tilde{p} = (z, v)$ of $\widetilde{C}_q$ which is not a ramification point, the image of $\pi^\ast(\phi) - v$ is the eigen-line of $\phi(z)$ with eigen-value $-v$, so the fiber of $\mathcal{L}_{(E,\phi)}$ at $\tilde{p}$ is the quotient of $E_z$ by this eigen-line.
}%
The spectral correspondence \cite{Hit87a, BNR}
is a correspondence between Higgs bundles and their spectral line bundles,
namely that the underlying bundle can be recovered by the direct image
$$ E \simeq \pi_\ast \mathcal{L}_{(E, \phi)},
\qquad \qquad \deg \left( \mathcal{L}_{(E,\phi)} \right) = 2g-2 +\deg(\Lambda), \qquad \qquad \phi = \pi_\ast v. $$
It follows that the fiber $h^{-1}(q)$ of the Hitchin fibration
$h: \mathcal{M}_H(\Lambda) \rightarrow H^0(C, K^2)$ over a non-degenerate $q$
is canonically isomorphic to
\begin{equation}\label{L-L-sigma}
	\mathrm{Prym}^{2g - 2 +\deg(\Lambda)}(\widetilde{C}_q, \Lambda) \coloneqq \left\{ \mathcal{L} \in \mathrm{Pic}^{2g - 2 +\deg(\Lambda)}(\widetilde{C}_q)
	\ \middle| \ \mathcal{L} \otimes \sigma^\ast\mathcal{L} \simeq \pi^\ast(K \Lambda) \right\},
\end{equation}
which is a torsor over the Prym variety of $\widetilde{C}_q \overset{\pi}{\rightarrow} C$.

\section{Baker--Akhiezer divisors of Higgs bundles and apparent singularities of projective connections}\label{sect-BA-apparent}

\subsection{Baker--Akhiezer divisors}\label{subsec-BA-divisors}
In this subsection we recall the notion of Baker--Akhiezer divisors on non-degenerate spectral curves.

The input data to define these divisors
consist of a Higgs bundle $(E,\phi)$ with non-degenerate associated spectral curve and an injection of a line bundle $L$ to $E$.
While the terminology for these divisors is recent \cite{DT23} and inspired by the literature on integrable systems,
the usage of these divisors is not entirely new.
Hitchin in his original paper \cite{Hit87a} already characterized Higgs bundles with underlying unstable bundles in terms of these divisors,
and the more recent work \cite{HH21} of Hausel-Hitchin on very stable Higgs bundles $\alpha$ and Fourier--Mukai transforms of $\alpha^+$
also made extensive use of related divisors.

\subsubsection{Definitions and basic properties}
Let $(E, \phi) \in \mathcal{M}_H(\Lambda)$ be a Higgs bundle with non-degenerate spectral curve $\widetilde{C} \overset{\pi}{\rightarrow} C$.
Consider an injection $L \overset{i}{\rightarrow} E$ from a line bundle $L$ into $E$,
which a priori is not necessarily an embedding and can vanish at some point on $C$.
We will use the notation $L \overset{i}{\hookrightarrow} E$ 
for an injection which has no zero, i.e. an embedding that makes $L$ into a subbundle of $E$.

\begin{definition}\label{def-BA-divisor-formal}
	The \textit{Baker--Akhiezer divisor}%
	\footnote{In local frames adapted to $\pi^\ast L$, the line bundle $\pi^\ast(K^{-1}) \mathcal{L}_{(E,\phi)}$ of eigenvectors of $\pi^\ast(\phi)$ has sections of the form $\left(\begin{smallmatrix} v + a(z) \\ c(z) \end{smallmatrix} \right)$,
	reminiscent of the Baker--Akhiezer function in the theory of integrable systems \cite{DT23}, \cite[Remark~2.3]{Dinh24wobbly}.}
	$\widetilde{D}_i(\phi)$ of the data $(L \overset{i}{\rightarrow} E, \phi)$ is the zero divisor of the morphism of line bundles on $\widetilde{C}$
	\begin{equation} \label{BA-div-def-composition}
		\pi^\ast(L) \overset{\pi^\ast i}{\longrightarrow} \pi^\ast(E) \longrightarrow \mathcal{L}_{(E, \phi)}.
	\end{equation}
\end{definition}
We note that this definition of Baker--Akhiezer divisors carries straightforwardly to higher rank cases \cite{HH21, DDX26}.

If $L \overset{i}{\rightarrow} E$ vanishes at $p \in C$, then $\pi^\ast(p) < \widetilde{D}_i(\phi)$ (this holds even if $p$ is a branch point).
In case $i$ is an injection, similar to \eqref{eqn-section} we can consider the composition
\begin{equation*}
	s_i(\phi): L \overset{i}{\longrightarrow} E \overset{\phi}{\longrightarrow} E \otimes K \longrightarrow L^{-1} \Lambda K,
\end{equation*}
where the last morphism is induced from the quotient of $i$; 
if $i$ is an embedding this coincides with \eqref{lower-left-component}.
We regard $s_i(\phi)$ as a section of $K L^{-2} \Lambda$ and denote by $D_i(\phi)$ its zero divisor.

\begin{proposition}\cite{DT23}
	\label{prop-basic-properties-BA-divisor}
	Let $(E, \phi)$ be a Higgs bundle with smooth spectral curve and 
	$L \overset{i}{\rightarrow} E$ an injection. 
	Then $\pi^\ast(p) < \widetilde{D}_i(\phi)$ if and only if $i$ vanishes at $p$ and 
	\begin{equation*}
		D_i(\phi) = \mathrm{Nm}(\widetilde{D}_i(\phi)).
	\end{equation*}
\end{proposition}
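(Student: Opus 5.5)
Both claims are local over $C$, so I will work locally, in a trivialization of $L$ and $E$ near a point $p\in C$.

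\textbf{Local form of the composition.} Write $i = (i_1, i_2)^T$ in a local frame $e_1, e_2$ of $E$, and $\phi = \begin{pmatrix} a & b \\ c & -a\end{pmatrix}$. Away from ramification, the fiber of $\mathcal{L}_{(E,\phi)}$ at $\tilde p=(z,v)$ is $E_z$ modulo the image of $\phi - v$. The map $E\to\mathcal{L}$ can therefore be represented by a covector $\ell_v$ annihilating that image. A convenient choice is the row of the adjugate of $\phi - v$, namely $\ell_v = (c,\; v - a)$, up to sign conventions. One checks directly that $\ell_v(\phi - v) = 0$ on the spectral curve $v^2 = a^2 + bc$. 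The composition \eqref{BA-div-def-composition} is then locally
\[ f(z,v) = c\, i_1 + (v-a)\, i_2 , \]
a function on $\widetilde C$. Where $\ell_v$ vanishes (when $c = 0$ and $v = a$), I will use the other available row, $(v+a,\; b)$. These two local expressions agree up to a unit on the overlap, so $f$ represents the morphism everywhere, including ramification points. I expect this point to need care, and I will phrase it as: the kernel of $\pi^*(\phi)-v$ on the smooth curve $\widetilde C$ is a line subbundle, and $\mathcal L$ is dual to it up to twist.

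\textbf{Norm computation.} Since the involution acts by $v \mapsto -v$, the norm of $f$ is
\[ f(z,v)\,f(z,-v) = (c\,i_1 - a\, i_2)^2 - v^2 i_2^2 = c^2 i_1^2 - 2ac\, i_1 i_2 - bc\, i_2^2 , \]
after substituting $v^2 = a^2 + bc$. This equals $c\,(c\, i_1^2 - 2a\, i_1 i_2 - b\, i_2^2)$. Next I compare with $s_i(\phi)$. Choosing the quotient map $E \to L^{-1}\Lambda$ as $e \mapsto i\wedge e$, we get
\[ s_i(\phi) = i \wedge \phi(i) = i_1(c\,i_1 - a\,i_2) - i_2(a\,i_1 + b\,i_2) = c\,i_1^2 - 2a\,i_1 i_2 - b\,i_2^2 . \]
Thus $\mathrm{Nm}(f) = c\cdot s_i(\phi)$. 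The stray factor $c$ comes from using $\ell_v$ rather than a genuine frame of $\mathcal{L}$. To remove it, note that the covector $\ell_v$ as a section of $\pi^*E^\vee \otimes \mathcal{L}^{-1}$-type data vanishes exactly where $c=0, v=a$. Its norm $c\cdot(\text{unit})$ accounts for exactly this factor. Equivalently, I compute with the $(v+a, b)$ row and obtain $b\cdot s_i(\phi)$, then divide. Because $b$ and $c$ cannot both vanish on a fiber where the other row is degenerate, the generated normalization gives $\mathrm{Nm}(\widetilde D_i) = \mathrm{div}(s_i(\phi)) = D_i(\phi)$. A cleaner alternative I may use instead is a degree and line-bundle argument: $\mathrm{Nm}$ of the divisor of a section of $\mathcal{L}\otimes\pi^*L^{-1}$ is the divisor of the norm section of $\mathrm{Nm}(\mathcal L)L^{-2} = K\Lambda L^{-2}$, using $\mathcal L\otimes\sigma^*\mathcal L\simeq \pi^*(K\Lambda)$ from \eqref{L-L-sigma}. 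This identifies that norm section with $s_i(\phi)$ up to a constant via the local formula above.

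\textbf{Vanishing criterion.} If $i(p)=0$, then $f$ vanishes on both points over $p$, so $\pi^*(p) \le \widetilde D_i$. Conversely, suppose $\pi^*(p) \le \widetilde D_i$. At an unramified $p$, $f$ vanishes at both $(z,\pm v)$, so $i(p)$ lies in the kernel of both covectors $\ell_{\pm v}$. These covectors annihilate the images of $\phi \mp v$, i.e. they cut out the two distinct eigenlines. Hence the two covectors are independent, and $i(p)=0$. At a ramification point, $\pi^*(p) = 2\tilde p$, so $f$ vanishes to order $\ge 2$ there. Writing $f = (c\,i_1 - a\,i_2) + v\, i_2$ with $v$ a local uniformizer, both $c\,i_1 - a\,i_2$ and $i_2$ must vanish at $p$. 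Since $\ell_0$ is nonzero, $c(p)\neq 0$ in this chart (or I swap to the other row), which forces $i_1(p) = 0$. I expect this ramification case and the well-definedness of the local frame for $\mathcal L$ to be the main obstacle; the rest is algebra.
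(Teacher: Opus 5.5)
Your overall route is sound: you compute a local equation $f=\ell_v\cdot i$ of $\widetilde D_i(\phi)$ in an arbitrary frame and compare $f(z,v)f(z,-v)$ with $s_i(\phi)=i\wedge\phi(i)$. The algebra is also right: $\mathrm{Nm}(f)=c\,s_i(\phi)$, and $-b\,s_i(\phi)$ for the other row.

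\textbf{The gap.} The problem is how you remove the stray factor $c$. Your justification, that $b$ and $c$ cannot both vanish on a fiber where the other row is degenerate, is false. Suppose that in your frame $\phi(p)=\mathrm{diag}(a_0,-a_0)$ with $a_0\neq 0$. Then $b(p)=c(p)=0$, the row $(c,v-a)$ vanishes at $(p,a_0)$, and the row $(v+a,b)$ vanishes at $(p,-a_0)$. A genuine local equation of $\widetilde D_i(\phi)$ near $\pi^{-1}(p)$ must then use one row on each sheet. Both $\mathrm{Nm}(f)=c\,s_i(\phi)$ and $\mathrm{Nm}(f')=-b\,s_i(\phi)$ carry extra zeros at $p$, and nothing you wrote tells you what to divide by.

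Your other assertion is true: the norm of the function relating $\ell_v$ to a local generator is $c$ times a unit. On an unramified sheet where $v+a$ is a unit, $(v-a)(v+a)=bc$ gives $\mathrm{ord}(v-a)\geq \mathrm{ord}(c)$. At a branch point, $c(p)=0$ forces $a(p)=0$ and forces $c$ to have a simple zero. But the assertion needs this computation, which you do not give. The line-bundle alternative does not bypass the issue, because identifying the norm section with $s_i(\phi)$ ``via the local formula'' meets the same factor $c$. A side remark: what you actually need is that $\mathcal{L}^{-1}\subset\pi^\ast E^\vee$ is the annihilator of the image of $\pi^\ast\phi-v$. $\mathcal{L}$ is not dual to the kernel of $\pi^\ast\phi-v$; it is isomorphic to it up to a twist by a power of $\pi^\ast K$.

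\textbf{The fix.} Choose the frame better; then your argument goes through verbatim.
\begin{itemize}
\item Since $\widetilde C$ is smooth, $\phi(p)\neq 0$ for every $p$; otherwise $\det\phi=-a^2-bc$ would vanish to order at least $2$ at $p$.
\item A nonzero traceless endomorphism has at most two eigenlines. So you may take $e_1(p)$ not an eigenvector of $\phi(p)$, i.e.\ $c(p)\neq 0$.
\item Then on a small neighbourhood $V$ of $p$ the row $\ell_v=(c,v-a)$ is nowhere zero on $\pi^{-1}(V)$. It is therefore a generator of the annihilator $\mathcal{L}^{-1}$, and $f$ is a local equation of $\widetilde D_i(\phi)$.
\item With $c$ a unit, $\mathrm{Nm}(f)=c\,s_i(\phi)$ gives $\mathrm{Nm}(\widetilde D_i(\phi))=\mathrm{div}(\mathrm{Nm} f)=\mathrm{div}(s_i(\phi))=D_i(\phi)$ over $V$.
\item The same choice makes your vanishing criterion rigorous without swapping rows. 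In the unramified case $\ell_{\pm v}(p)\neq 0$. At a branch point $\ell_0=(c(p),-a(p))$ has $c(p)\neq 0$ automatically.
\end{itemize}

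\textbf{Comparison with the paper.} The paper does not prove this statement; it quotes it from its source. The closest it comes is the description in frames adapted to an embedding: there $s_i(\phi)=c$, and each zero of $c$ lifts to the point with $v=-a$. Your computation treats injections with zeros and branch points uniformly through $s_i(\phi)=i\wedge\phi(i)$. It needs no factoring of $i$ through its saturation $L(D_0)\hookrightarrow E$, which adds $\pi^\ast D_0$ to $\widetilde D_i$ and $2D_0$ to $D_i$. The adapted-frame picture, in exchange, tells you on which sheet each point lies.
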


\paragraph{Inverse construction}
The construction of Baker--Akhiezer divisors can be inverted: given the data $\left(q, \widetilde{D} \right)$
where $q$ is a quadratic differential with simple zeroes and $\widetilde{D}$ an effective divisor on the spectral curve $\widetilde{C}_q \overset{\pi}{\rightarrow} C$,
we can construct the data $(L \rightarrow E, \phi)$ that defines $\widetilde{D}$ as its Baker--Akhiezer divisor.
The solutions to this inverse problem are not unique, since twisting one solution with a line bundle defines another.
Solutions that define Higgs bundle in the same moduli space $\mathcal{M}_H( \Lambda)$ differ exactly by twisting by a square-root of the trivial line bundle.

\begin{proposition}\cite{DT23}
	\label{prop-inverse-BA}
	Let $\widetilde{D}$ be an effective divisor on a non-degenerate spectral curve 
	$\widetilde{C}_q \overset{\pi}{\rightarrow} C$.
	Let $L$ and $\Lambda$ be line bundles on $C$ satisfying 
	$\mathcal{O}_C(D) \simeq K L^{-2} \Lambda$ 
	where $D = \mathrm{Nm}(\widetilde{D})$.
	Then there exists a triple $(L \overset{i}{\rightarrow} E, \phi)$ 
	unique up to scaling of $i$ such that $\widetilde{D}_i(\phi) = \widetilde{D}$
	and $\det(E) \cong \Lambda$.
	In particular, $i$ is an embedding if and only if 
	$\widetilde{D}$ contains no summand of the form $\pi^\ast(p)$ for $p \in C$.
\end{proposition}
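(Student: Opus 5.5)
The plan is to reconstruct $(E,\phi)$ from $(q,\widetilde{D})$ through the spectral line bundle and then recover $L \to E$ by adjunction. Set $\mathcal{L} := \pi^\ast(L)(\widetilde{D})$, a line bundle on $\widetilde{C}_q$, and $E := \pi_\ast \mathcal{L}$, $\phi := \pi_\ast v$. The tautological section $v$ of $\pi^\ast K$ acts on $\mathcal{L}$ by multiplication, and pushing forward gives a Higgs field $\phi$ on $E$ whose characteristic polynomial is $v^2+q$. The canonical section $1_{\widetilde{D}}$ of $\mathcal{O}(\widetilde{D})$ gives a morphism $\pi^\ast L \to \mathcal{L}$, and adjunction turns it into $i\colon L \to \pi_\ast\mathcal{L} = E$. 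By construction $\mathcal{L}$ is the cokernel of $\pi^\ast(EK^{-1}) \xrightarrow{\pi^\ast\phi - v} \pi^\ast E$; this is the spectral correspondence \cite{Hit87a, BNR} used in reverse. The composition $\pi^\ast L \to \pi^\ast E \to \mathcal{L}$ is therefore exactly the section $1_{\widetilde{D}}$ twisted by $\pi^\ast L$, so $\widetilde{D}_i(\phi) = \widetilde{D}$.

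Next I check the determinant. Since $\pi$ is a double cover with branch divisor of degree $4g-4$, we have $\det(\pi_\ast \mathcal{M}) \cong \mathrm{Nm}(\mathcal{M}) \otimes K^{-1}$ for any line bundle $\mathcal{M}$ on $\widetilde{C}_q$, because $\det \pi_\ast\mathcal{O} = K^{-1}$. Applying this to $\mathcal{L}$ gives $\det E \cong L^2 \otimes \mathcal{O}(\mathrm{Nm}\,\widetilde{D}) \otimes K^{-1} = L^2 \otimes KL^{-2}\Lambda \otimes K^{-1} = \Lambda$, using the hypothesis $\mathcal{O}_C(D)\cong KL^{-2}\Lambda$. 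Stability of $(E,\phi)$ is automatic, since the spectral curve is smooth, hence integral.

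For uniqueness, suppose $(L \xrightarrow{i'} E', \phi')$ is another triple with $\widetilde{D}_{i'}(\phi') = \widetilde{D}$ and $\det E' \cong \Lambda$. Its spectral line bundle $\mathcal{L}'$ receives the map $\pi^\ast L \to \mathcal{L}'$ with zero divisor $\widetilde{D}$, so $\mathcal{L}' \cong \pi^\ast L(\widetilde{D}) = \mathcal{L}$ compatibly with the sections. The spectral correspondence then gives an isomorphism $(E',\phi') \cong (E,\phi)$ carrying $i'$ to $i$ up to a scalar; the scalar ambiguity comes from $\mathrm{Aut}(\mathcal{L}) = \C^\ast$. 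The step I expect to need the most care is the adjunction identification: I must verify that the map $\pi^\ast\pi_\ast\mathcal{L} \to \mathcal{L}$ (counit) is the same as the quotient map defining the spectral line bundle, including over the ramification points. I would check this locally, in the coordinate $v^2 = -q$, where $\pi_\ast\mathcal{O} = \mathcal{O}\oplus v\mathcal{O}$.

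Finally I show that $i$ vanishes at $p$ exactly when $\pi^\ast(p) \le \widetilde{D}$. The fibre map $i_p\colon L_p \to E_p = H^0(\mathcal{L}|_{\pi^{-1}(p)})$ sends a generator to the restriction of $1_{\widetilde{D}}$ to the scheme $\pi^{-1}(p)$ (two reduced points, or one double point at a branch point). So $i_p = 0$ if and only if $1_{\widetilde{D}}$ vanishes on the whole scheme-theoretic fibre $\pi^{-1}(p)$, that is, if and only if $\pi^\ast(p) \le \widetilde{D}$. This also agrees with \Cref{prop-basic-properties-BA-divisor}.
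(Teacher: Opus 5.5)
Your proposal is correct. The paper gives no proof of its own here; it only cites \cite{DT23}. Your construction, the BNR spectral correspondence applied to $\mathcal{L}=\pi^\ast(L)\otimes\mathcal{O}_{\widetilde{C}_q}(\widetilde{D})$, is the same identification the paper relies on elsewhere: the spectral line bundle is taken to be $\pi^\ast(L)\otimes\mathcal{O}(\widetilde D)$ both in \Cref{subsec-reduction-degree-BA} and in \Cref{thm-wobbly-Q-special}.

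The step you flag as delicate does go through. At a ramification point, use the basis $(1,v)$ of $\pi_\ast\mathcal{O}$. Then $\pi^\ast\phi-v=\left(\begin{smallmatrix}-v & v^2\\ 1 & -v\end{smallmatrix}\right)$, and its image is the saturated line spanned by $(-v,1)^T$. This is exactly the kernel of the counit $(f_1,f_2)\mapsto f_1+f_2v$.

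One justification needs adjusting. The isomorphism $\det\pi_\ast\mathcal{O}\cong K^{-1}$ follows from $\pi_\ast\mathcal{O}_{\widetilde C_q}\cong\mathcal{O}_C\oplus K^{-1}$, which comes from the embedding of $\widetilde C_q$ in $T^\ast C$. The degree of the branch divisor alone does not give it, since it only determines $\det\pi_\ast\mathcal{O}$ up to $2$-torsion.
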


\paragraph{Concrete formulation in terms of the matrix form of Higgs fields.}
Suppose that $L \overset{i}{\hookrightarrow} E$ is an embedding
and that in the local frames adapted to $i$ we have
$\phi =
\left( \begin{smallmatrix} a(z) & b(z) \\ c(z) & -a(z) \end{smallmatrix} \right)$,
where $c(z)$ is the local expression of $s_i(\phi)$ defined in \eqref{lower-left-component}.
Note that $s_i(\phi) \neq 0$: otherwise $q = -a^2$ and the zeroes of $q$ would have multiplicity.
At a point $\tilde{p} \in \widetilde{C}$, the image of $\pi^\ast \phi - v$
coincides with $\pi^\ast L\mid_{\tilde{p}}$
if and only if $c(p) = 0$ for $p = \pi(\tilde{p})$
and $\tilde{p}$ is defined by the eigenvalue $v = -a(z(p))$.
Concretely, let $D_i(\phi) = \sum_{k=1}^d x_k$ be the zero divisor of $s_i(\phi)$, counted with multiplicity.
At each $x_k$, equation \eqref{spectral-curve-eqn} reduces to $v^2 - a(z(x_k))^2 = 0$.
If $x_k$ is not a branch point, then the two points in $\pi^{-1}(x_k)$ are unambiguously labeled by $v = \pm a(z(x_k))$;
in this case let $\tilde{x}_k$ be the point defined by $v = -a(z(x_k))$.
If $x_k$ is a branch point then let $\tilde{x}_k$ be the  ramification point $\pi^{-1}(x_k)$.
Then
\begin{equation}
	\widetilde{D}_i(\phi) = \sum_{k=1}^d \tilde{x}_k.
	\label{BA_divisor_explicit_def}
\end{equation}
In particular, the sheet of $\widetilde{C}$ on which a point of $\widetilde{D}_i(\phi)$ lies is determined by the diagonal entry $a$ of $\phi$ at the corresponding zero of $s_i(\phi)$.

\subsubsection{Wobbly bundles and $Q$-special divisors}\label{subsubsec-wobbly-Q-special}
Following Laumon \cite{Lau88}, a vector bundle $E$ is \textit{very stable} if it does not admit nonzero nilpotent Higgs fields;
we say that $E$ is \textit{wobbly} if it is semistable but not very stable.
{%
For an effective divisor $\mathbf{x}$ on $C$, let $Q_{\mathbf{x}} \subset H^0(K^2)$ be the space of quadratic differentials vanishing on $\mathbf{x}$, as in \Cref{sect-degeneration-apparent-singularities}.
}%
{%
For unstable underlying bundles, Hausel--Hitchin characterize very stability of the limiting Hodge bundle by reducedness of the Baker--Akhiezer divisor of the destabilizing subbundle \cite{HH21} (cf. \Cref{subsubsec-natural-stratification}).
}%
It turns out that the condition $Q_{\mathrm{Nm}(\widetilde{D})} \neq 0$ is the proper analogue of the condition on $\widetilde{D}$ to be non-reduced when the underlying bundle is semistable.

\begin{theorem}[{\cite[Theorem~1.2]{Dinh24wobbly}}]\label{thm-wobbly-Q-special}
	Let $\widetilde{C}_q \overset{\pi}{\rightarrow} C$ be a non-degenerate spectral curve
	and $\mathcal{L}$ a line bundle on $\widetilde{C}_q$.
	Assume that $E = \pi_\ast \mathcal{L}$ is semistable, and let $\det(E) = \Lambda$.
	Then $E$ is wobbly if and only if
	\begin{equation*}
		\mathcal{L} \simeq \pi^\ast(L) \otimes \mathcal{O}_{\widetilde{C}_q}(\widetilde{D})
	\end{equation*}
	where $L$ is a line bundle on $C$ and $\widetilde{D}$ is an effective divisor on $\widetilde{C}_q$ with $Q_{\mathrm{Nm}(\widetilde{D})} \neq 0$.
	Furthermore, if $\widetilde{D}$ has no summand of the form $\pi^\ast(P)$
	for some effective divisor $P$ on $C$, then
	$E$ admits an embedding $L \overset{i}{\hookrightarrow} E$
	and there is a canonical bijection between the corresponding space of nilpotent Higgs fields
	\begin{equation*}
		H^0\left( C, K L^2 \Lambda^{-1} \right) \simeq
		\left\{ \phi \in H^0(C, \mathrm{End}(E)\otimes K) \text{ nilpotent},
		\ker(\phi) = i(L) \right\}
	\end{equation*}
	and $Q_{\mathrm{Nm}(\widetilde{D})}$.
\end{theorem}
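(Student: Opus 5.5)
The plan is to reduce both directions to the adjunction $\pi^\ast \dashv \pi_\ast$ together with a local matrix computation in frames adapted to a line subbundle.

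\textbf{Step 1 (injections as sections).} Since $E = \pi_\ast \mathcal{L}$, adjunction gives
\[ \Hom_C(L, E) \cong \Hom_{\widetilde{C}_q}(\pi^\ast L, \mathcal{L}) = H^0(\widetilde{C}_q, \mathcal{L} \otimes \pi^\ast L^{-1}). \]
Under this identification the section attached to an injection $L \overset{i}{\rightarrow} E$ is exactly the composition \eqref{BA-div-def-composition}. Its zero divisor is therefore the Baker--Akhiezer divisor $\widetilde{D}_i(\phi)$, where $\phi = \pi_\ast v$. Consequently, $\mathcal{L} \simeq \pi^\ast(L)\otimes\mathcal{O}(\widetilde{D})$ with $\widetilde{D}$ effective holds precisely when there is an injection $i \colon L \rightarrow E$ with $\widetilde{D} = \widetilde{D}_i(\phi)$. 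By \Cref{prop-basic-properties-BA-divisor}, $i$ is an embedding exactly when $\widetilde{D}$ has no summand $\pi^\ast(p)$, and in that case $\mathrm{Nm}(\widetilde{D}) = D_i(\phi)$ is the divisor of $c = s_i(\phi)$.

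\textbf{Step 2 (the bijection for an embedding).} Let $L \overset{i}{\hookrightarrow} E$ be an embedding and write $\phi = \left(\begin{smallmatrix} a & b \\ c & -a\end{smallmatrix}\right)$ in adapted frames. We have $c \neq 0$, since $q$ has simple zeros.

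A traceless nilpotent $\psi$ with $\ker\psi = i(L)$ has vanishing first column. Nilpotency and tracelessness then force $\psi = \left(\begin{smallmatrix} 0 & \beta \\ 0 & 0\end{smallmatrix}\right)$, where $\beta \in H^0(KL^2\Lambda^{-1})$ is the induced map $E/L \cong L^{-1}\Lambda \to LK$.

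Next we send $\beta \mapsto \mathrm{tr}(\phi\psi) = c\beta \in H^0(K^2)$. Since $c$ is a section of $KL^{-2}\Lambda$ with divisor $D = \mathrm{Nm}(\widetilde{D})$, multiplication by $c$ is an isomorphism
\[ H^0(KL^2\Lambda^{-1}) \xrightarrow{\ \sim\ } H^0(K^2(-D)) = Q_D. \]
This gives the canonical bijection in the second part of the statement. It also proves the "if" direction whenever $\widetilde{D}$ has no $\pi^\ast$-summand: $Q_D \neq 0$ produces a nonzero nilpotent Higgs field, so $E$ is wobbly.

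\textbf{Step 3 (general $\widetilde{D}$ and the converse).} Suppose $\widetilde{D} = \widetilde{D}' + \pi^\ast P$ with $\widetilde{D}'$ free of $\pi^\ast$-summands. By \Cref{prop-basic-properties-BA-divisor}, $i$ vanishes exactly on $P$, so it saturates to an embedding $L(P) \hookrightarrow E$ whose Baker--Akhiezer divisor is $\widetilde{D}'$, and $\mathcal{L} \simeq \pi^\ast(L(P))(\widetilde{D}')$. Since $\mathrm{Nm}(\widetilde{D}) = \mathrm{Nm}(\widetilde{D}') + 2P$, we get $Q_{\mathrm{Nm}\widetilde{D}} \subset Q_{\mathrm{Nm}\widetilde{D}'}$. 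Hence $Q_{\mathrm{Nm}\widetilde{D}} \neq 0$ implies $Q_{\mathrm{Nm}\widetilde{D}'} \neq 0$, and Step 2 applied to $L(P)$ shows $E$ is wobbly.

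For the converse, let $\psi \neq 0$ be nilpotent, and let $L' \subset E$ be the line subbundle saturating $\ker\psi$. Step 1 gives $\mathcal{L} \simeq \pi^\ast L' (\widetilde{D}_{i'}(\phi))$ with $\widetilde{D}_{i'}(\phi)$ effective and free of $\pi^\ast$-summands. Step 2 turns $\psi$ into a nonzero element of $Q_{\mathrm{Nm}\widetilde{D}_{i'}}$.

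\textbf{Main obstacle.} I expect the delicate point to be the bookkeeping in Step 3, namely checking that saturation removes exactly the $\pi^\ast P$ part. I would settle this using the "if and only if" in \Cref{prop-basic-properties-BA-divisor}, including the case where $p$ is a branch point. The semistability hypothesis is used only so that "wobbly" is defined; it plays no role in the argument.
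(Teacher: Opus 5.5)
Your proof is correct and complete. Note that the paper does not prove this theorem itself: it imports it from \cite[Theorem~1.2]{Dinh24wobbly}. The only part of the argument it records is the remark after the statement. There, $\widetilde{D}$ is identified as the Baker--Akhiezer divisor of $(L \overset{i}{\rightarrow} E, \pi_\ast v)$ with $\mathrm{Nm}(\widetilde{D}) = D_i(\pi_\ast v)$, deduced from \Cref{prop-inverse-BA} and \Cref{prop-basic-properties-BA-divisor}.

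That remark is your Step~1. You obtain it more directly from the adjunction $\pi^\ast \dashv \pi_\ast$, using that the quotient $\pi^\ast E \rightarrow \mathcal{L}_{(E,\phi)}$ is the counit under $E \cong \pi_\ast \mathcal{L}$. This has the advantage of producing the injection into the given bundle $E = \pi_\ast\mathcal{L}$, rather than into a triple reconstructed abstractly and then matched with $(E, \pi_\ast v)$.

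Your Steps~2 and~3 supply what the paper leaves to the reference:
\begin{itemize}
\item the identification of nilpotent fields $\psi$ with $\ker\psi = i(L)$ with $H^0(KL^2\Lambda^{-1})$;
\item the map $\psi \mapsto \mathrm{tr}(\phi\psi) = c\beta$ onto $Q_{\mathrm{Nm}(\widetilde{D})}$.
\end{itemize}
The map $\psi \mapsto \mathrm{tr}(\phi\psi)$ is intrinsic, hence genuinely canonical, even though $c$ and $\beta$ separately depend on the scaling of $i$. It equals $\det\phi - \det(\phi+\psi)$, which is the natural way to see it through the Hitchin map.

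The point you flag as the main obstacle is not actually needed. For the ``if'' direction, let $P_0$ be the vanishing divisor of the injection $i$ and let $i' \colon L(P_0) \hookrightarrow E$ be its saturation. Factoring $i$ through $i'$ gives directly $\widetilde{D} = \pi^\ast P_0 + \widetilde{D}_{i'}(\phi)$. Hence $\mathrm{Nm}(\widetilde{D}) = \mathrm{Nm}(\widetilde{D}_{i'}(\phi)) + 2P_0$ and $Q_{\mathrm{Nm}(\widetilde{D})} \subset Q_{\mathrm{Nm}(\widetilde{D}_{i'}(\phi))}$. You never have to match $P_0$ with a chosen decomposition $\widetilde{D} = \widetilde{D}' + \pi^\ast P$.

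In any case that decomposition is unique. Over an unramified point, $P$ has multiplicity $\min(m_+, m_-)$. Over a branch point whose ramification point has multiplicity $m$ in $\widetilde{D}$, $P$ has multiplicity $\lfloor m/2 \rfloor$. So your version also goes through; the only fact about $i'$ used is the pointwise statement of \Cref{prop-basic-properties-BA-divisor}, that an embedding has no $\pi^\ast(p)$ in its divisor.
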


{%
By \Cref{prop-inverse-BA} and \Cref{prop-basic-properties-BA-divisor}, $\widetilde{D}$ in \Cref{thm-wobbly-Q-special} is the Baker--Akhiezer divisor of $(L \overset{i}{\rightarrow} E, \pi_\ast v)$, and $\mathrm{Nm}(\widetilde{D}) = D_i(\pi_\ast v)$.
For divisors of degree $3g-3$, the condition $Q_{\mathbf{x}} \neq 0$ is precisely $Q$-specialness in the sense of \Cref{sect-degeneration-apparent-singularities}, since the expected dimension of $Q_{\mathbf{x}}$ is then $0$; this is the condition excluded for the collision site $2x_0 + \mathbf{x'}$ by \eqref{condition-neighborhood} when $d = 3g-3$.
}%

\subsection{Holomorphic connections, projective connections, projective structures and opers}\label{subsec-projective-structures}
\subsubsection{Projective connections and projective structures}\label{subsubsec-projective-structures}
In the following, we recall the notions of projective connections and projective structures, 
two geometric objects that naturally realize monodromy representations in $\mathrm{PSL}_2(\C) \cong \mathrm{PGL}_2(\C)$.

\begin{definition}
	A projective connection on $C$ is a pair $(P, s)$ where
	$P$ is a flat $\mathrm{PSL}_2(\C)$-bundle on $C$ with $\mathbb{P}^1$-fibers
	and $s: C \rightarrow P$ is a global holomorphic section which is not parallel w.r.t. the flat structure of $P$.
	The points where $ds = 0$ are called apparent singularities;
	the order of an apparent singularity is the order of the zero of $ds$ at that point.
\end{definition}

We denote by $\Div((P,s))$ the divisor of apparent singularities, counted with multiplicity,
of a projective connection $(P,s)$.
Two projective connections $(P_1, s_1)$ and $(P_2, s_2)$ are isomorphic
if there exists an isomorphism $P_1 \rightarrow P_2$ of holomorphic fiber bundles
that commutes with $s_1$ and $s_2$.
Clearly if $(P_1, s_1)$ is isomorphic to $(P_2, s_2)$, then $\Div((P_1, s_1)) = \Div((P_2, s_2))$.

\begin{definition}
	A projective structure is a maximal atlas $\left\{ U_\alpha \right\}_{\alpha \in \mathcal{I}}$ of $C$
	together with local holomorphic functions $\{ w_\alpha: U_\alpha \rightarrow \C\}$
	the values of which are related by Möbius transformations, 
	i.e. for all $x\in U_\alpha \cap U_\beta$,
	\begin{align}\label{PSL_2-action-projective-structure}
		&w_\beta (x) = \frac{a_{\beta \alpha} w_\alpha (x) + b_{\beta \alpha}}{c_{\beta \alpha} w_\alpha (x) + d_{\beta \alpha}}, 
		&\begin{pmatrix} a_{\beta \alpha} & b_{\beta \alpha} \\ c_{\beta \alpha} & d_{\beta \alpha} \end{pmatrix} \in \SL_2(\C).
	\end{align}
	Apparent singularities are where $d w_\alpha = 0$;
	the order of an apparent singularity is the order of the zero of $d w_\alpha$ at that point.
\end{definition}
Note that $w_\alpha$ can serve as a local coordinate on $U_\alpha$ if and only if $U_\alpha$ contains no apparent singularity.
{%
In the language of opers, a projective structure in this sense, i.e.\ possibly with apparent singularities, is a \emph{$\mathrm{PSL}_2(\C)$-oper with apparent singularities} \cite{Dinh24}; it is a holomorphic oper, or a \emph{holomorphic projective structure}, when it has no apparent singularity.
Following \cite{CalsamigliaDeroinFrancaviglia2014}, the same object is also called a \emph{branched projective structure}, whose \emph{branch points} are its apparent singularities; we use this name when discussing bubbling, where the underlying complex structure is not fixed.
}%
It is clear that a projective structure is determined once a local holomorphic function $w_\alpha: U_\alpha \rightarrow \C$ is determined.
Given a point $x$ and the analytic continuation $[\gamma].w_\alpha$ of $w_\alpha$ along a closed path $\gamma$ containing $x$,
the composition along $\gamma$ of the Möbius transformations 
maps $([\gamma].w_\alpha)(x)$ back to $w_\alpha(x)$.
In other words, the composition along a closed path of the Möbius transformations
is the inverse of the $\mathrm{PSL}_2(\C)$-action defined by analytic continuation.
The holomorphic function $\widehat{w}_\alpha: \widehat{C} \rightarrow \mathbb{P}^1$,
where $\widehat{C}$ is the universal cover of $C$,
defined by analytically continuing $w_\alpha$ 
is called a developing map of the projective structure.

Given a projective connection $(P,s)$,
by letting $s$ be represented by local holomorphic functions on charts of a sufficiently refined atlas of $C$, 
one can define local holomorphic functions the values of which are related by constant $\mathrm{PSL}_2(\C)$-valued transition functions.
A maximal atlas together with these local holomorphic functions define a projective structure.
Conversely, from the local holomorphic functions of a projective structure one can define a section of a flat bundle with $\mathbb{P}^1$-fibers,
the constant $\mathrm{PSL}_2(\C)$-valued transition functions of which are the Möbius transformations.
In other words, a projective structure is equivalent to an isomorphism class of projective connections.

\subsubsection{Opers with apparent singularities}\label{subsubsec-SL-operators}

\begin{definition}
	Let $N$ be a line bundle on $C$, defined via transition functions $(N)_{\alpha \beta}$ over a coordinate covering $\mathcal{U} = \{ (U_\alpha, z_\alpha) \}_{\alpha \in \mathcal{I}}$ of $C$. 
	An \emph{oper} $\mathcal{D}$ on $N$ for the coordinate covering $\mathcal{U}$ is a collection of meromorphic differential operators $\{ \mathcal{D}_\alpha = \partial_{z_\alpha}^2 + q_\alpha(z_\alpha)\}_{\alpha \in \mathcal{I}}$ 
	such that $f_\alpha(z_\alpha)$ is a solution to $\mathcal{D}_\alpha$ 
	if and only if $(N)_{\beta \alpha} f_\alpha$ is a solution to $\mathcal{D}_\beta \mid_{U_\alpha \cap U_\beta}$. 
	The points at which $\left( \frac{f_{\alpha,1}}{f_{\alpha,2}} \right)' = 0$, where $f_{\alpha,1}$ and $f_{\alpha,2}$ are two linearly independent solutions of $\mathcal{D}_\alpha$, 
	are called apparent singularities,
	with the order of the apparent singularity defined to be the order of the zero of $\left( \frac{f_{\alpha,1}}{f_{\alpha,2}} \right)'$. 
	Two opers on $N$ for two coordinate coverings are considered equivalent if their union is also an oper. 
	An oper on $N$ is an equivalence class of opers on $N$ for different coverings.
\end{definition}
{%
Opers in this sense are allowed to have apparent singularities, and we will say \emph{opers with apparent singularities} when we want to stress this.
An oper without apparent singularities is called \emph{holomorphic}; these are the opers in the usual sense \cite{BD, Fr07}.
The same objects are called \emph{$SL$-operators} by Iwasaki \cite{iwasaki91, iwasaki92}; Yoshida relates them to meromorphic projective connections and shows that every irreducible representation $\pi_1(C) \rightarrow \mathrm{PSL}_2(\C)$ is the monodromy of one with at most $3g-3$ apparent singularities \cite[Theorem~B]{Yoshida97}.
}%

We denote by $\Div(\mathcal{D})$ the divisor formed by apparent singularities, counted with multiplicity,
of the oper $\mathcal{D}$.

\begin{remark}
	It was shown in \cite{iwasaki91} that an oper exists on a line bundle $N$ if and only if $\deg(N) = 1-g$, 
	such as $N \cong K^{-1/2}$. 
	Such an oper then can be regarded as a differential operator $N \rightarrow N K^2$ whose principal symbol is 1 and subprincipal symbol is 0 \cite{Fr07}.
\end{remark}

It is clear from the definition of opers on $N$ that,
if $N'$ is a flat line bundle with constant transition functions,
the same collection of local differential operators 
would define opers on both $N$ and $N \otimes N'$.
Hence the specific line bundle on which these local differential operators act is not of interest to us.
From now on, we will fix $N = K^{-1/2}$ and by an oper we will mean an oper on $K^{-1/2}$.

We now elaborate on the explicit forms of an oper before showing its equivalence to the notions of projective connections and projective structures.

\paragraph{Transformation rules of local differential operators.}
Similar to projective connections and projective structures,
an oper is determined once we know its local form $\mathcal{D}_\alpha = \pa_{z_\alpha}^2 + q_\alpha(z_\alpha)$ 
over one coordinate open set $\left( U_\alpha, z_\alpha \right)$.
This is because on the overlap $U_\alpha \cap U_\beta$,
\begin{align}\label{pre2_SL_operator_transit}
	2 q_\beta(z_\beta(z_\alpha)) (z_\beta'(z_\alpha) )^2 = 2 q_\alpha(z_\alpha) - \left\{z_\beta, z_\alpha \right\}
\end{align}
where $\left\{ g(z), z \right\} \coloneqq \frac{g'''}{g'} - \frac{3}{2} \left( \frac{g''}{g'} \right)^2$ is the Schwarzian derivative of a function $g(z)$. 
This follows from $\left\{ w(z_\alpha) , z_\alpha \right\} = 2 q_\alpha (z_\alpha)$, where $w \coloneqq \frac{f_{\alpha,1}}{f_{\alpha,2}}$ is the ratio of two linearly independent solutions to $\mathcal{D}_\alpha$ \cite{W15}, and the transformation rule of a Schwarzian derivative upon a change of coordinates
{%
(cf. \cite[(3.1)--(3.2)]{Dinh24})%
}%
. 

There are two important consequences of \eqref{pre2_SL_operator_transit} that we will exploit heavily in this work.
One is that if an oper $\mathcal{D}$ has no apparent singularity on $U_\alpha$ and hence the ratio $w_\alpha$ of two linearly independent local solutions can serve as a local coordinate, 
then $\mathcal{D}$ takes the simple form $\partial_{w_\alpha}^2$ in terms of the local coordinate $w_\alpha$.  
Another is that in case the change of coordinate $z_\alpha \mapsto z_\beta(z_\alpha)$ is a Möbius transformation,
which occurs if and only if $\{z_\beta, z_\alpha\} = 0$,
it follows from \eqref{pre2_SL_operator_transit} that $\{q_\alpha(z_\alpha)\}$ glue into a quadratic differential whose local forms are $q_\alpha(z_\alpha) dz_\alpha^2$.

Hence if the coordinate covering $\{(U_\alpha, w_\alpha)\}$ is a holomorphic projective structure, i.e. one that has no apparent singularity, 
then the collection of local differential operators $\{ \pa_{w_\alpha}^2 \}$
defines an oper.
Any other collection of local differential operators $\{ \pa_{w_\alpha}^2 + q_\alpha(w_\alpha) \}$
defines an oper if and only if
$\{ q_\alpha(z_\alpha) dz_\alpha^2 \}$ glue into a meromorphic quadratic differential.

\paragraph{Local forms near apparent singularities.}
 It follows from \eqref{pre2_SL_operator_transit} that, given a local coordinate $z$, the oper takes the form $D_z = \partial_z^2 + \frac{1}{2}\left\{ w(z), z \right\}$ for $w$ being the ratio of two linearly independent solutions. 
{%
The Laurent tail of $q(z) = \frac{1}{2}\left\{ w(z), z \right\}$ at an apparent singularity $x$ depends on its order, i.e. the order of the zero of $w'$ at $x$ \cite[Section~3.1]{Dinh24}, \cite[Chapter~5]{dinh-thesis}:
}%
\begin{subequations}
	\begin{align}
		q (z) =  -  \frac{3}{4(z - z(x))^2}
		+  \frac{\nu_{z}}{z - z(x)}
		+ q_{z, 0}
		+ \mathcal{O}(z - z(x))  &
		\label{SL-operator_tail-1} \\
		\text{with } \nu_{z}^2 + q_{z, 0} = 0 &
		\label{SL-operator_residue-constraint-1}
	\end{align}
\end{subequations}
if $x$ is a simple apparent singularity, and
\begin{subequations}
\begin{align}
	 q (z) = - \frac{2}{(z - z(x))^2} + \frac{\mu_{z}}{z - z(x)} + q_{z, 0} + q_{z, 1} (z - z(x)) + \mathcal{O}( (z - z(x))^2 ) &
	\label{SL-operator_tail-2} \\
	\text{ with } \mu_{z}^3 +  4 q_{z, 0} \mu_{z} + 4 q_{z, 1} = 0 &
	\label{SL-operator_residue-constraint-2}
\end{align}
\end{subequations}
if $x$ is a double apparent singularity.
Although the specific coefficients of the Laurent tails \eqref{SL-operator_tail-1} and \eqref{SL-operator_tail-2}
depend on the coordinate $z$ via the expansion of $w(z)$,
the polynomial constraints \eqref{SL-operator_residue-constraint-1} and \eqref{SL-operator_residue-constraint-2}
they satisfy are invariant upon a change of coordinates.
These Laurent tails ensure that even though solutions to $\mathcal{D}_\alpha$ are singular and might
have non-trivial monodromy around these singularities,
their ratios are holomorphic there.

\paragraph{Correspondence to projective structures.}
The two equivalent
{%
descriptions
}%
are related
by taking the ratio $w_\alpha = f_{\alpha,1}/f_{\alpha,2}$ of two linearly
independent solutions to $\mathcal{D}_\alpha$
and by noting that $q_\alpha(z_\alpha) = \frac{1}{2} \left\{ w_\alpha(z_\alpha), z_\alpha \right\}$.
Hence a projective structure is equivalent to an isomorphism class of opers,
with the positions and order of apparent singularities matched.  
The projective monodromy representation $\pi_1(C) \rightarrow \mathrm{PSL}_2$ 
that are inherent in the notions of a projective connection and projective structure 
is realized in an oper via the ratios $w_\alpha$ of local solutions.
Note that these ratios of local solutions are holomorphic at apparent singularities,
and hence the projective monodromy representations do not detect these singularities.

\begin{example}
	On a compact Riemann surface $C$ of genus $g\geq 2$, the most distinguished projective connection is induced by the uniformization theorem, which realizes $C$ as a quotient of the upper-half plane
	and equips on it a distinguished maximal coordinate atlas $\{\left( U_\alpha, x_\alpha \right) \}$ from the upper-half plane.
	We call this the uniformizing projective structure of $C$.
	Since nowhere $dx_\alpha = 0$, the projective structure $\{\left( U_\alpha, x_\alpha \right) \}$ has no apparent singularity. 
	The corresponding oper takes the form $\partial_{x_\alpha}^2$ in each $U_\alpha$. 
	This uniformizing projective structure is distinguished in the sense that the projective coordinates $x_\alpha$ all take values in the upper half-plane.
	
	It follows from \eqref{pre2_SL_operator_transit} that if $q$ is a holomorphic quadratic differential, then $\mathcal{D}_\alpha \coloneqq \partial_{x_\alpha}^2 + q(x_\alpha)$ glue into an oper that has no apparent singularity. 
	These form the space of holomorphic projective connections,
	i.e. those that have no apparent singularities, which is an affine space modeled over $H^0(K^2) \cong \mathbb{C}^{3g-3}$.
	The isomorphism class of each such oper is equivalent
	to a maximal coordinate atlas that is also a projective structure.
\end{example}

\subsubsection{Holomorphic connections and monodromy representations to $\pmb{\SL_2(\C)}$}\label{subsubsec-holomorphic-connections}
Let $F^s$ be a smooth rank-2 bundle on $C$ with a fixed isomorphism 
between its determinant line bundle $\det(F^s)$ and the trivial line bundle. 
We will let $F^s$ be the underlying smooth objects for the following holomorphic objects.

An $\SL_2(\C)$-holomorphic connection $\nabla$ on an $\SL_2(\C)$-holomorphic bundle $F$ 
is a $\C$-linear map $\nabla: F \rightarrow FK$ 
satisfying the Leibniz rule $\nabla(fs) = \partial f\otimes s + f \nabla s$ 
for any local holomorphic function $f$ and local section $s$ of $F$,
such that the induced connection on $\det(F^s)$ is the trivial connection $\partial: f \mapsto f(z) dz$. 
Since there is no $(2,0)$-form on a Riemann surface, 
$(F, \nabla)$ is automatically a flat connection 
and gives rise to a monodromy representation 
$\check{\rho}_{(F,\nabla)}: \pi_1 \rightarrow \SL_2(\C)$ via developing local parallel frames. 
In this case, we say $(F, \nabla)$ realizes the monodromy representation $\check{\rho} = \check{\rho}_{(F,\nabla)}$.

By changing to local frames of $F$ that consist of local parallel sections 
and using a new set of transition functions with respect to these local flat frames,
one can define a flat bundle $F^\nabla$ with constant $\SL_2(\C)$-valued transition functions,
the composition of which along a closed path $\gamma$ is $\check{\rho}_{(F,\nabla)}([\gamma])$.
Since the transition functions of $F^\nabla$ are constant, 
$\partial$ is a well-defined holomorphic connection on $F^\nabla$.
The changes of local frames define an isomorphism $F \overset{f}{\rightarrow} F^\nabla$ 
such that $\nabla = f^{-1} \circ \partial \circ f$,
and we say $(F, \nabla)$ and $(F^\nabla, \partial)$ are isomorphic as holomorphic connections. 
In general, for two $\SL_2(\C)$-holomorphic connections $(F_1, \nabla_1)$ and $(F_2, \nabla_2)$,
we say they are isomorphic if there exists a smooth automorphism 
of the underlying bundle $F^s$ relating $\nabla_1$ and $\nabla_2$.

On the other hand, given a monodromy representation $\check{\rho}: \pi_1(C) \rightarrow \SL_2(\C)$,
it is straightforward to define a flat rank-2 bundle $F^{\check{\rho}}$ with constant $\SL_2(\C)$-valued transition functions 
that realizes $\check{\rho}$.
Clearly a holomorphic connection $(F,\nabla)$ realizes $\check{\rho}$ 
if and only if $(F,\nabla)$ is isomorphic to $(F^{\check{\rho}}, \partial)$ as holomorphic connections.

\paragraph{Moduli spaces.}
{%
Recall from \Cref{subsubsec-natural-stratification} that
}%
$\mathcal{M}_{dR}$ is the moduli space of irreducible $\SL_2(\C)$-holomorphic connections,
i.e. ones that leave no holomorphic subbundle of the underlying $\SL_2(\C)$-holomorphic bundle invariant.
It is known that $\mathcal{M}_{dR}$ is a smooth complex analytic space of complex dimension $6g-6$ \cite{simpson-moduli2} \cite{collier-wentworth}.

Consider the set $\Hom(\pi_1, \SL_2(\C))$ of homomorphisms from $\pi_1$ to $\SL_2(\C)$.
It has the structure of an affine variety, and admits an action from $\SL_2(\C)$ by conjugation.
We define the $\SL_2(\C)$-representation variety by
\begin{align*}
	\mathcal{R}_{\SL_2(\C)} = \Hom^{irr}(\pi_1, \SL_2(\C)) // \SL_2(\C)
\end{align*}
where $\Hom^{irr}(\pi_1, \SL_2(\C))$ is the subspace of irreducible monodromy representations 
and the double slash indicates invariant theoretic quotient \cite{simpson-moduli2} \cite{W15}.
It is known that $\mathcal{R}_{\SL_2(\C)}$ is an irreducible affine variety of complex dimension $6g-6$.
Taking the monodromy representation of an irreducible $\SL_2(\C)$-holomorphic connection gives an
 irreducible monodromy representation in $\SL_2(\C)$,
and this  defines a homeomorphism
$\mathcal{M}_{dR} \rightarrow \mathcal{R}_{\SL_2(\C)}$ \cite{W15}.

\subsubsection{From holomorphic connections to projective connections}
\paragraph{Lifts of projective monodromy representations.}
Given a projective monodromy representation $\rho: \pi_1 \rightarrow \mathrm{PSL}_2(\C)$,
we say $\check{\rho}: \pi_1 \rightarrow \SL_2(\C)$ is a lift of $\rho$ if
$\rho$ is equal to the composition of $\check{\rho}$ with the projection $\SL_2(\C) \rightarrow \mathrm{PSL}_2(\C)$.
Concretely, it means that for generators $A_{i= 1, ...,g}$, $B_{j=1, ...,g}$ of $\pi_1$ representing a basis of cycles of $C$,
one can find $\check{\rho}(A_i), \check{\rho}(B_j) \in \SL_2(\C)$ 
that projects to $\rho(A_i), \rho(B_j) \in \mathrm{PSL}_2(\C)$
and satisfy
\begin{equation}\label{lift-condition}
	\left[ \check{\rho}(A_g), \check{\rho}(B_g) \right] ... \left[ \check{\rho}(A_1), \check{\rho}(B_1) \right] = 1.
\end{equation}

A projective monodromy representation $\rho$, if it has a lift to $\SL_2(\C)$, 
has in total $2^{2g}$ lifts modulo conjugation:
these lifts correspond to the freedom to choose the sign $\{ \pm\}$ for the lifts of $\rho(A_i)$ and $\rho(B_j)$ to $\SL_2(\C)$,
since these signs cancel after taking the commutator in \eqref{lift-condition}.
On the other hand, two monodromy representations $\check{\rho}_1, \check{\rho}_2: \pi_1 \rightarrow \SL_2(\C)$
are lifts of the same projective monodromy representation
if and only if
$\check{\rho}_1([\gamma]) = \pm \check{\rho}_2([\gamma])$ for any generator $\gamma \in \{A_1, ..., A_g, B_1, ..., B_g\}$.
In practice, we will care about lifts of projective monodromy representations modulo conjugation.

\begin{lemma}\label{lemma-twist-monodromy}
	Two $\SL_2(\C)$-holomorphic connections $(F_1, \nabla_1)$, $(F_2, \nabla_2)$ 
	have monodromy representations $\check{\rho}_1, \check{\rho}_2: \pi_1 \rightarrow \SL_2(\C)$
	that are lifts of the same projective monodromy representation up to conjugation
	if and only if $F_1 \cong F_2 \otimes N$ for some line bundle $N$ with $N^2 \cong \mathcal{O}_C$. 
\end{lemma}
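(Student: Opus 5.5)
We prove the two directions separately, working throughout with the flat models $(F^{\check\rho}, \partial)$ of \Cref{subsubsec-holomorphic-connections}. Since isomorphic holomorphic connections have conjugate monodromy, we may replace each $(F_k, \nabla_k)$ by $(F^{\check\rho_k}, \partial)$. Both conditions in the statement are invariant under this replacement: conjugating $\check\rho_k$ does not change $F^{\check\rho_k}$ up to holomorphic isomorphism, and replacing $F_k$ by an isomorphic bundle does not change whether $F_1 \cong F_2 \otimes N$.

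For the direction ($\Rightarrow$), suppose $\check\rho_1$ and $\check\rho_2$ lift the same $\rho$ up to conjugation. After conjugating, $\check\rho_1(\gamma) = \epsilon(\gamma)\check\rho_2(\gamma)$ with $\epsilon(\gamma) \in \{\pm 1\}$. We check that $\epsilon$ is a homomorphism $\pi_1 \to \{\pm1\}$: the signs are central, so the relation $\epsilon(\gamma\delta)\check\rho_2(\gamma\delta) = \epsilon(\gamma)\epsilon(\delta)\check\rho_2(\gamma)\check\rho_2(\delta)$ forces $\epsilon(\gamma\delta) = \epsilon(\gamma)\epsilon(\delta)$. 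This step needs $\check\rho_2(\gamma\delta) \neq 0$, which holds since $\check\rho_2(\gamma\delta) \in \SL_2(\C)$. Let $N \coloneqq F^{\epsilon}$ be the flat line bundle with constant transition functions $\pm1$; then $N^2$ has trivial monodromy, so $N^2 \cong \mathcal{O}_C$. The flat bundle with monodromy $\epsilon \otimes \check\rho_2$ is $F^{\check\rho_2} \otimes F^{\epsilon}$, as one sees by multiplying the constant transition functions. Hence $F_1 \cong F^{\check\rho_1} = F^{\epsilon\check\rho_2} \cong F_2 \otimes N$, and this isomorphism even intertwines $\nabla_1$ with $\nabla_2 \otimes \partial_N$.

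For the direction ($\Leftarrow$), the point needing care is that the statement only asks for an abstract isomorphism $F_1 \cong F_2 \otimes N$ of holomorphic bundles, with no compatibility with the connections. Read literally, this direction would be false: many non-isomorphic connections live on the same bundle, and their monodromies need not be projectively equivalent. I would therefore interpret the condition as an isomorphism of flat bundles, $(F_1, \nabla_1) \cong (F_2 \otimes N, \nabla_2 \otimes \nabla_N)$, where $N$ carries its canonical flat structure with $\pm1$ transition functions. Every $N$ with $N^2 \cong \mathcal{O}_C$ is of this form, since $2$-torsion line bundles correspond to characters $\pi_1 \to \{\pm1\}$; this matches the paper's context of flat connections with fixed trivial determinant. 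The main obstacle is pinning down this intended reading and noting that the unique flat structure on $N$ with trivial induced connection on $N^2$ is the one with $\pm1$ transitions. Once this is fixed, taking monodromy gives $\check\rho_1 = \epsilon \check\rho_2$ up to conjugation, where $\epsilon$ is the monodromy of $N$. Projecting to $\mathrm{PSL}_2(\C)$ then shows that $\check\rho_1$ and $\check\rho_2$ lift the same projective representation, as required.
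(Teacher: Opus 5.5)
Your proof is correct and is essentially the paper's argument: the paper likewise identifies the projective-equivalence condition with an isomorphism of flat bundles $F_1^{\nabla_1} \cong F_2^{\nabla_2} \otimes N$, where $N$ is a flat line bundle with $\pm 1$ transition functions, and then observes that such $N$ are exactly the line bundles with $N^2 \cong \mathcal{O}_C$. Your caveat about the converse is well founded: it fails with only an abstract holomorphic isomorphism $F_1 \cong F_2 \otimes N$ (e.g.\ distinct opers on the same bundle), the paper's proof tacitly establishes exactly the flat-bundle reading you adopt, and your explicit check that $\epsilon$ is a character fills in a step the paper leaves implicit.
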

\begin{proof}
	The condition $\check{\rho}_1([\gamma]) = \pm \check{\rho}_2([\gamma])$ for any closed path $\gamma$
	is equivalent to the fact that the two flat bundles $F_1^{\nabla_1}$ and $F_2^{\nabla_2}$
	are such that $F_1^{\nabla_1} \cong F_2^{\nabla_2} \otimes N$ 
	where $N$ is a flat line bundle with constant transition functions valued in $\{\pm 1 \}$.
	This occurs if and only if $N^2 \cong \mathcal{O}_C$.
\end{proof}

\paragraph{Projectivization.}
Given a holomorphic connection $(F, \nabla)$,
we can projectivize the $\SL_2(\C)$-flat bundle $F^\nabla$ to obtain a $\mathrm{PSL}_2(\C)$-flat bundle $\mathbb{P}(F^\nabla)$, 
the $\mathbb{P}^1$-fibers of which are the projectivization of the fibers of $F^\nabla$.
The constant $\mathrm{PSL}_2(\C)$-valued transition functions of $\mathbb{P}(F^\nabla)$ are induced by the 
constant $\SL_2(\C)$-valued transition functions of $F^\nabla$ via the projection $\SL_2(\C) \rightarrow \mathrm{PSL}_2(\C)$.
Let $L \hookrightarrow F$ be a subbundle that is not invariant by $\nabla$,
and $L \cong L^\nabla \hookrightarrow F^\nabla$ the corresponding subbundle induced by the isomorphism $F \rightarrow F^\nabla$.
Then $\mathbb{P}(L^\nabla)$ defines a section of $\mathbb{P}(F^\nabla)$ and hence a projective connection.
Conversely, if $(P,s)$ is a projective connection with projective monodromy representation which has a lift to $\SL_2(\C)$,  
then $(P,s)$ is equivalent to $(\mathbb{P}(F^\nabla), \mathbb{P}(L^\nabla))$ for some holomorphic connection $(F, \nabla)$ and subbundle $L$ of $F$.

\paragraph{Irreducible projective monodromy representations.}
We say a projective monodromy representation $\rho: \pi_1 \rightarrow \mathrm{PSL}_2(\C)$ is reducible 
if $\rho(\pi_1) \subset \mathrm{PSL}_2(\C) \equiv Aut(\mathbb{P}^1)$ has a fixed point on $\mathbb{P}^1$,
and irreducible if it is not reducible.
Clearly if $\rho$ has a lift to $\SL_2(\C)$,
it is irreducible if and only if its lift is irreducible.

Similar to how irreducible monodromy representations in $\SL_2(\C)$ are in 1-1 correspondence 
with irreducible flat $\SL_2(\C)$-connections,
irreducible projective monodromy representation
are in 1-1 correspondence with 
flat $\mathrm{PSL}_2(\C)$-bundles with $\mathbb{P}^1$-fibers that have no global parallel section.
Furthermore, if an irreducible projective monodromy representation $\rho$ has a lift $\check{\rho}$ 
in $\SL_2(\C)$ which is realized by an irreducible holomorphic connection $(F, \nabla)$,
then $P = \mathbb{P}(F^\nabla)$ realizes $\rho$.

\paragraph{Inducing apparent singularities.}
Given an $\SL_2(\C)$-holomorphic connection $(F,\nabla)$ and a subbundle $L$ of $F$,
the composition
\begin{equation}\label{lower-left-dR}
	s_{L}(\nabla): L \hookrightarrow F \overset{\nabla}{\rightarrow} F  K \rightarrow {L}^{-1} K,
\end{equation}
where the last arrow is induced by the quotient $F \rightarrow L^{-1}$ of the embedding $L \hookrightarrow F$, 
is nonzero if $L$ is not invariant by $\nabla$.
Hence in particular if $(F,\nabla)$ is irreducible then for all subbundles $L$ of $F$ we have $s_{L}(\nabla) \neq 0$.
A priori, unlike \eqref{lower-left-component} which is $\mathcal{O}_C$-linear,
\eqref{lower-left-dR} is only $\mathbb{C}$-linear since it involves $\nabla$. 
However, since $FK \rightarrow L^{-1}K$ is induced from the quotient map of $L \hookrightarrow F$,
overall \eqref{lower-left-dR} is $\mathcal{O}_C$-linear.

The following proposition shows that the zero divisor of \eqref{lower-left-dR} is the loci of apparent singularities.
In this sense, \eqref{lower-left-dR} is the analogue of \eqref{lower-left-component}, 
and apparent singularities are the analogues of the projection to $C$ of Baker--Akhiezer divisors.

\begin{proposition}
	Suppose $(F, \nabla)$ is a holomorphic connection and $L \hookrightarrow F$ a subbundle not invariant by $\nabla$.
	Then the divisor of apparent singularities $\Div( (\mathbb{P}(F^\nabla), \mathbb{P}(L^\nabla)))$
	of the projective connection $(\mathbb{P}(F^\nabla), \mathbb{P}(L^\nabla))$
	coincides with the zero divisor of $s_{L}(\nabla)$.
\end{proposition}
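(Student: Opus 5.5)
The plan is to work in local flat frames, where the projective connection becomes concrete. Cover $C$ by small simply connected open sets $U_\alpha$ with coordinates $z_\alpha$. On each $U_\alpha$ the flat bundle $F^\nabla$ is trivialized by a parallel frame $(e_1,e_2)$, and $\nabla$ becomes $\partial$. Near a point $p$, choose a local nonvanishing holomorphic section $\ell$ of $L$. In the parallel frame write $\ell = f_1 e_1 + f_2 e_2$ with $f_1,f_2$ holomorphic and never simultaneously zero, since $L$ is a subbundle. The section $\mathbb{P}(L^\nabla)$ of the flat $\mathbb{P}^1$-bundle is then represented locally by $w = f_1/f_2$, or by $f_2/f_1$ where $f_2$ vanishes. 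By definition, the divisor of apparent singularities near $p$ is the zero divisor of $dw$. The claim will follow once we identify this with the local zero divisor of $s_L(\nabla)$.

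Next compute $s_L(\nabla)$ in the same frame. Complete $\ell$ to a local holomorphic frame $(\ell, m)$ of $F$; its determinant $\det(\ell,m)$ with respect to the parallel frame is a nonvanishing function $\delta$. The quotient map $F \to L^{-1}$ sends $m \mapsto 1$ and $\ell \mapsto 0$, so up to the unit $\delta^{-1}$ it is $v \mapsto \det(\ell, v)$. Since $\nabla \ell = (f_1' e_1 + f_2' e_2)\,dz$ in the parallel frame,
\[ s_L(\nabla)(\ell) \;=\; \delta^{-1}\det\begin{pmatrix} f_1 & f_1' \\ f_2 & f_2'\end{pmatrix} dz \;=\; \delta^{-1}\,(f_1 f_2' - f_2 f_1')\,dz . \]
This expression also gives an independent check that $s_L(\nabla)$ is $\mathcal{O}_C$-linear: replacing $\ell$ by $h\ell$ multiplies the Wronskian by $h^2$, with the $h'$ terms cancelling.

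Now compare the two divisors. Where $f_2(p) \neq 0$ we have $w' = (f_1' f_2 - f_1 f_2')/f_2^2$. Its order of vanishing at $p$ therefore equals that of the Wronskian $W = f_1 f_2' - f_2 f_1'$, which by the formula above equals the order of vanishing of $s_L(\nabla)$ at $p$. Where $f_2(p) = 0$, we have $f_1(p) \neq 0$, and the same computation for $1/w$ gives $(1/w)' = -W/f_1^2$. The order of vanishing again matches.

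Finally, check independence of the choices. Changing the parallel frame by a constant matrix in $\SL_2(\C)$ changes $w$ by a Möbius transformation, which preserves the order of zeros of $dw$, and multiplies $W$ by a constant. Changing $\ell$ multiplies $W$ by a unit. Hence the local zero orders agree everywhere, the two divisors coincide, and both are well defined. The argument has no real obstacle; the only point that needs care is identifying the quotient map with the determinant pairing, which is what makes the Wronskian appear.
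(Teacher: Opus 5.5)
Your proof is correct. In a parallel frame the section $\mathbb{P}(L^\nabla)$ is $w=f_1/f_2$, and the quotient $F\to L^{-1}$ is, up to a unit, $v\mapsto\det(\ell,v)$. Hence $s_L(\nabla)(\ell)$ is a unit times the Wronskian $W=f_1f_2'-f_2f_1'$. Then $w'=-W/f_2^2$, respectively $(1/w)'=W/f_1^2$ where $f_2$ vanishes, gives equality of the local orders at every point. Your minus sign in the second formula is a slip, but it is harmless. You should add one sentence noting that the hypothesis that $L$ is not $\nabla$-invariant is exactly what makes $W\not\equiv 0$, so that both divisors are defined.

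The paper gives no in-text argument for this proposition; it only refers to \cite[Section~4.1]{Dinh24} and \cite[Chapter~5]{dinh-thesis}. The framework it sets up right afterwards works in frames adapted to $L$ rather than parallel frames. There $\nabla=\partial_z+\left(\begin{smallmatrix} a & b\\ c & -a\end{smallmatrix}\right)$ and $s_L(\nabla)$ is the entry $c$. The developing map is, up to a M\"obius transformation, the ratio of the $L^{-1}$-components $\chi_1,\chi_2$ of two independent flat sections $\Psi_1,\Psi_2$. The flatness equations then give $\chi_1\chi_2'-\chi_2\chi_1'=c\,\det(\Psi_1,\Psi_2)$, with constant determinant. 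This is the same Wronskian identity as yours, read in the other frame.

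Your parallel-frame version makes the projective-connection side fully explicit, since the section is literally $[f_1:f_2]$. It handles zeros of any order and the chart at $w=\infty$ uniformly, and it exhibits the $\mathcal{O}_C$-linearity of $s_L(\nabla)$ via $\det(h\ell,\nabla(h\ell))=h^2\det(\ell,\nabla\ell)$. The adapted-frame version is the one that connects directly to the associated oper \eqref{potential-after-merom-gauge} and the accessory parameters used later in the paper.
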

\begin{proof}
{%
		See \cite[Section~4.1]{Dinh24} and \cite[Chapter~5]{dinh-thesis}.
}%
\end{proof}

Let $(F_1, \nabla_1)$ and $(F_2, \nabla_2)$ be two irreducible $\SL_2(\C)$-holomorphic connections
with monodromy representations that project to the same projective monodromy up to conjugation.
By \Cref{lemma-twist-monodromy},
$F_1 \cong F_2 \otimes N$ where $N^2 \cong \mathcal{O}_C$.
If $L_2$ is a subbundle of $F_2$, then there exists a subbundle $L_1 \cong L_2 \otimes N$ of $F_1$
such that $s_{L_1}(\nabla_1)$ is identified with 
$s_{L_2}(\nabla_2)$ via the isomorphism $KL_1^{-2} \cong KL_2^{-2}$.
In particular, the projective connections 
$(\mathbb{P}(F_1^{\nabla_1} ), \mathbb{P}(L_1^{\nabla_1}) )$ and $(\mathbb{P}(F_2^{\nabla_2} ), \mathbb{P}(L_2^{\nabla_2}) )$
are isomorphic.

We say two data $(L_1 \hookrightarrow F_1, \nabla_1)$ and $(L_2 \hookrightarrow F_2, \nabla_2)$ 
are isomorphic if there is an isomorphism $L_1 \overset{\sim}{\rightarrow} L_2$ 
that commutes with an isomorphism $F_1 \overset{\sim}{\rightarrow} F_2$  
which makes $(F_1, \nabla_1)$ isomorphic to $(F_2, \nabla_2)$.
The projective connections $(\mathbb{P}(F_1^{\nabla_1}), \mathbb{P}(L_1^{\nabla_1}))$ 
and $(\mathbb{P}(F_2^{\nabla_2}), \mathbb{P}(L_2^{\nabla_2}))$ 
clearly are isomorphic if and only if $(L_1 \hookrightarrow F_1, \nabla_1)$ 
and $(L_2 \hookrightarrow F_2, \nabla_2)$ are isomorphic. 

\begin{corollary}\label{cor-isom-projective-connections}
	Two projective connections 
	with the same irreducible projective monodromy representations up to conjugation
	that have lifts to $\SL_2(\C)$
	and the same divisors of apparent singularities
	of degree $< 2g-2$
	are isomorphic.
\end{corollary}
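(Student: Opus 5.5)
We would prove \Cref{cor-isom-projective-connections} by lifting both projective connections to holomorphic connections with subbundles, and then using the degree bound to force the subbundle to be the unique destabilizing one. This reduces the statement to the uniqueness of a destabilizing subbundle together with the flatness of the connection.

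Let $(P_1,s_1)$ and $(P_2,s_2)$ be the two projective connections, with common irreducible projective monodromy $\rho$ admitting a lift to $\SL_2(\C)$, and common divisor $D$ of apparent singularities with $\deg D<2g-2$.

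First, by the projectivization paragraph, write $(P_j,s_j)\cong(\mathbb{P}(F_j^{\nabla_j}),\mathbb{P}(L_j^{\nabla_j}))$ for holomorphic connections $(F_j,\nabla_j)$ and subbundles $L_j\hookrightarrow F_j$. Since $\rho$ is irreducible, the lifts are irreducible, so each $\nabla_j$ leaves no $L_j$ invariant. Both monodromies lift $\rho$ up to conjugation, so \Cref{lemma-twist-monodromy} gives $F_1\cong F_2\otimes N$ with $N^2\cong\mathcal{O}_C$. By the discussion preceding the corollary, replacing $(F_2,\nabla_2,L_2)$ by its twist with $N$ does not change the isomorphism class of the projective connection. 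We may therefore assume $\check\rho_1=\check\rho_2$ up to conjugation, and hence fix an isomorphism $(F_1,\nabla_1)\cong(F_2,\nabla_2)=:(F,\nabla)$.

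Second, by the proposition identifying apparent singularities with zeros of $s_L(\nabla)$, both $s_{L_1}(\nabla)$ and $s_{L_2}(\nabla)$ are sections of $KL_j^{-2}$ with zero divisor $D$. This gives $\deg(KL_j^{-2})=\deg D<2g-2$, so $\deg L_j>0$. Since $\det F\cong\mathcal{O}_C$, each $L_j$ is a destabilizing subbundle with $\deg(L_j^{-2})<0$. Such a subbundle is unique (\Cref{subsec-moduli-bundles}: a strictly unstable bundle has a unique maximal subbundle). Concretely, if $L_1\neq L_2$, the composition $L_1\to F\to F/L_2\cong L_2^{-1}$ would be a nonzero map of negative degree, which is impossible. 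Hence $L_1=L_2$ as subbundles of $F$.

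Finally, the data $(L_1\hookrightarrow F,\nabla)$ and $(L_2\hookrightarrow F,\nabla)$ are now isomorphic via the identity, and the remark preceding the corollary turns this into an isomorphism of the projective connections.

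The main subtlety is the first step. We must ensure that the sign twist by $N$ is compatible with the subbundles and that the isomorphism of connections can be chosen on the nose; the paper's preceding paragraph handles this by transporting $L_2$ to $L_2\otimes N$. Note that equality of the divisors is used only to obtain $\deg L_j>0$, and not beyond that, since uniqueness of the destabilizing subbundle does the rest.
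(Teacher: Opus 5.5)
Your proof is correct and follows essentially the same route as the paper: realize both projective connections by subbundles $L_1, L_2$ of a single irreducible $(F,\nabla)$, use $\deg(KL_j^{-2}) = \deg D < 2g-2$ to see that both are destabilizing, and conclude $L_1 = L_2$ by uniqueness of the destabilizing subbundle. You spell out the reduction to a common $(F,\nabla)$ via \Cref{lemma-twist-monodromy} and the twisting remark, which the paper leaves implicit, and you correctly note that equality of the divisors is only needed for the degree bound.
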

\begin{proof}
	Two such projective connections are the projectivization of a holomorphic connection $(F,\nabla)$ and subbundles $L_1$, $L_2$ of $F$ 
	such that $s_{L_1}(\nabla)$, $s_{L_2}(\nabla)$ both vanish at $\mathbf{x}$ and are identified
	via the isomorphism $KL_1^{-2} \cong \mathcal{O}_C(\mathbf{x}) \cong KL_2^{-2}$. 
	But $\deg(\mathbf{x}) < 2g-2$, therefore $L_1$ and $L_2$ destabilize $F$ and must be the same subbundle of $F$.
\end{proof}

{%
\paragraph{Stratification of $\mathrm{PSL}_2(\C)$-flat bundles.}\label{par-PSL2-strata}
Let $\mathcal{M}_{dR}(C, \mathrm{PSL}_2(\C))$ be the moduli space of irreducible flat $\mathrm{PSL}_2(\C)$-bundles on $C$ whose monodromy lifts to $\SL_2(\C)$.
By \Cref{lemma-twist-monodromy}, projectivization identifies it with the quotient of $\mathcal{M}_{dR}$ by twisting with $2$-torsion line bundles $N$.
Twisting maps subbundles $L$ of $F$ to subbundles $L \otimes N$ of $F \otimes N$ of the same degree, hence preserves the strata of \Cref{par-closed-strata}; we denote by $\mathcal{M}_d$ the image of the stratum labelled by $d$.
Since $\deg \Div(\mathbb{P}(F^\nabla), \mathbb{P}(L^\nabla)) = \deg(KL^{-2}) = 2g-2-2\deg L$, the projective connections realizing a point of $\mathcal{M}_d$ have at least $d$ apparent singularities, with equality exactly when $L$ is a maximal subbundle of $F$.
In particular, the closed stratum $\mathcal{M}^{\leq d}_{dR}$ consists of the flat connections that can be represented by an oper with at most $d$ apparent singularities.
}%

\subsubsection{From holomorphic connections to opers}
\paragraph{Associated opers.}
Given the initial data $\left(L \hookrightarrow F, \nabla \right)$, we want to have a concrete construction of an oper with apparent singularities that corresponds up to equivalence to the projective connection $(\mathbb{P}(F^\nabla), \mathbb{P}(L^\nabla) )$.
To this end, suppose $\nabla = \partial_z + \begin{pmatrix} a(z) & b(z) \\ c(z) & -a(z) \end{pmatrix}$ in certain local frames adapted to $L$.
Consider the local differential operator $\partial_z^2 + q(z)$ where
\begin{align}\label{potential-after-merom-gauge}
	q(z) = - b(z) c(z) - \left( a(z) - \frac{c'(z)}{2c(z)} \right)^2 - \left( a(z) - \frac{c'(z)}{2c(z)} \right)'.
\end{align}
It is straightforward to check that $q(z)$ is invariant upon a change of
local frames adapted to $L$
w.r.t.
{%
which
}%
the local $1$-forms $a(z)$, $b(z)$ and $c(z)$ are defined.
One can also check that, upon a change of coordinates and transformations of $1$-forms
\begin{align*}
	&a_w(w) = a(z(w)) z'(w), &
	&b_w(w) = b(z(w)) z'(w), &
	&c_w(w) = c(z(w)) z'(w),
\end{align*}
{%
$q$ transforms as in \eqref{pre2_SL_operator_transit}.
}%
We conclude that the collection of local differential operators
$\partial_z^2 + q(z)$ defines an oper, the isomorphism class of which depends only on the isomorphism class of the data $\left(L \hookrightarrow F, \nabla \right)$ \cite{Dinh24}.

Now suppose $z=0$ is a simple zero of $c(z)$. 
Then the Laurent expansion of \eqref{potential-after-merom-gauge}
\begin{equation}\label{pre2_expansion_of_q}
	q(z) = \frac{-3}{4}\frac{1}{z^2} + \left[  a(0) -   \frac{c''(0)}{4c'(0)} \right]\frac{1}{z}   -  a(0)^2 +   \frac{a(0) c''(0)}{2c'(0)} -   \left( \frac{c''(0)}{4c'(0)} \right)^2 + \mathcal{O}(z)
\end{equation}
makes $z=0$ an apparent singularity.
While apparent singularities can be regarded as the analogues of the projection to $C$ of Baker--Akhiezer divisors, 
the accessory parameter
\begin{equation}\label{accessory-parameter}
	a(0) -  \frac{c''(0)}{4c'(0)}
\end{equation}
can be regarded as the analogue of the coordinate $-a(0)$ of the point in the Baker--Akhiezer divisor projecting to $z=0$.
The Laurent expansion of $q(z)$ also makes a double zero of $s_L(\nabla)$ a double apparent singularity.
This enables us to make the following definition.

\begin{definition}\label{def-associated-SL-operator}
	Given an irreducible $\SL_2(\C)$-holomorphic connection $(F,\nabla)$ and a subbundle $L \hookrightarrow F$,
	the associated oper $\mathcal{D}_{(L\hookrightarrow F, \nabla)}$
	 takes the local form $\partial_z^2 + q(z)$ where $q(z)$ is defined in \eqref{potential-after-merom-gauge}.
	The divisor of apparent singularities $\Div(\mathcal{D}_{(L\hookrightarrow F, \nabla)})$ coincides with the zero divisor of $s_L(\nabla)$.
\end{definition}

\begin{proposition}
	Let $(F,\nabla)$ be an irreducible $\SL_2(\C)$-holomorphic connection 
	and $L \hookrightarrow F$ a subbundle.
	Then the associated oper $\mathcal{D}_{(L \hookrightarrow F, \nabla)}$ 
	is equivalent to the isomorphism class 
	of the projective connection $(\mathbb{P}(F^\nabla), \mathbb{P}(L^\nabla))$.	
\end{proposition}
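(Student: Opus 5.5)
The plan is to show directly that the local maps defining the projective structure $(\mathbb{P}(F^\nabla), \mathbb{P}(L^\nabla))$ have Schwarzian derivative $2q$, with $q$ given by \eqref{potential-after-merom-gauge}. Since an oper is determined by its local potentials and these transform by \eqref{pre2_SL_operator_transit}, it suffices to do this on a small coordinate disc $(U,z)$ with a local frame $(e_1,e_2)$ of $F$ adapted to $L$, so that $e_1$ spans $L$. Write $\nabla = \partial_z + A$ with $A = \begin{pmatrix} a & b \\ c & -a\end{pmatrix}dz$.

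First I will take a fundamental matrix of parallel sections. Let $\Psi$ solve $\partial_z \Psi = -A\Psi$ with $\det \Psi = 1$; its columns express the parallel frame of $F^\nabla$ in terms of $(e_1,e_2)$. Then $\Psi^{-1}$ expresses $e_1,e_2$ in the parallel frame, so $\mathbb{P}(L^\nabla)$ is the point $[\Psi^{-1}e_1] = [(\Psi^{-1})_{11} : (\Psi^{-1})_{21}]$ of $\mathbb{P}^1$. Since $\det\Psi=1$, this gives the local developing map $w = -\Psi_{21}/\Psi_{22}$, up to a Möbius transformation coming from the choice of $\Psi$.

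Second, I will reduce the first-order system to a scalar second-order equation. The second-row functions $f = \Psi_{2j}$ satisfy $f' = -c\,g + a f$, where $g=\Psi_{1j}$, and $g' = -a g - b f$. Eliminating $g$ gives $f'' - \frac{c'}{c}f' + (\text{terms in } a,b,c)f = 0$. The substitution $f = c^{1/2}h$ removes the first-derivative term and leaves $h'' + q h = 0$. Expanding the computation should reproduce \eqref{potential-after-merom-gauge}, including the combination $a - \frac{c'}{2c}$. The two functions $h_j = c^{-1/2}\Psi_{2j}$ are then independent solutions of $\partial_z^2 + q$, and $w = -h_1/h_2$ is their ratio. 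Hence $\frac12\{w,z\} = q$, and on $U$ away from zeros of $c$ the oper $\mathcal{D}_{(L\hookrightarrow F,\nabla)}$ agrees with the projective structure.

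Third, I will deal with the zeros of $c$, that is, the points of $D_L(\nabla)$. There $h_j$ may be multivalued, but $w$ is a ratio of holomorphic entries of $\Psi$. So $w$ is holomorphic, and the identity $\frac12\{w,z\}=q$ extends by continuity as an identity of meromorphic functions. The apparent singularities therefore match, consistent with the earlier proposition identifying $\Div$ with the zeros of $s_L(\nabla)$.

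Finally, I will check that the choices made do not affect the result. Changing the adapted frame or $\Psi$ changes $w$ by a Möbius map, and $q$ is already known to be invariant. Changes of coordinate are governed by \eqref{pre2_SL_operator_transit} on both sides. Together these show the two local identifications glue to an equivalence. The main obstacle is the elimination computation in the second step, namely getting signs and the $c^{1/2}$ normalization to match \eqref{potential-after-merom-gauge} exactly. I would verify this first by the direct substitution.
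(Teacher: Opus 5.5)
Your proof is correct. Since the paper gives no argument of its own here (it only cites \cite[Proposition~4.1]{Dinh24} and the thesis), your computation serves as a self-contained substitute. Eliminating $g=\Psi_{1j}$ and then substituting $f=c^{1/2}h$ amounts to a meromorphic gauge transformation of $\nabla$ into oper (companion) form. Working with the entries of $\Psi$ has one advantage: the identification with the section $\mathbb{P}(L^\nabla)$ is explicit through $w=-\Psi_{21}/\Psi_{22}$.

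I checked the computation. Elimination gives $f''-\frac{c'}{c}f'-\bigl(a^2-a\frac{c'}{c}+bc+a'\bigr)f=0$, and the substitution $f=c^{1/2}h$ then produces exactly \eqref{potential-after-merom-gauge}, with the correct signs.

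Two steps you only assert follow in one line from $\det\Psi=1$, which gives $\Psi_{21}\Psi_{22}'-\Psi_{21}'\Psi_{22}=c$:
\begin{itemize}
\item $W(h_1,h_2)=1$, so $h_1,h_2$ are independent.
\item $w'=c/\Psi_{22}^2$ (and $(1/w)'=-c/\Psi_{21}^2$ near poles of $w$). So the critical points of $w$ are exactly the zeros of $c$, with the same orders, and you do not need the earlier proposition to match the apparent singularities.
\end{itemize}

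Finally, state explicitly that irreducibility is what guarantees $c\not\equiv 0$. This is the only place the hypothesis is used, and it ensures $c^{\pm 1/2}$ is defined off a discrete set.
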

\begin{proof}
{%
		See \cite[Proposition~4.1]{Dinh24} and \cite[Chapter~5]{dinh-thesis}.
}%
\end{proof}

\begin{remark}
	Twisting the data $(L \hookrightarrow F, \nabla)$
	by a square-root of $\mathcal{O}_C$
{%
induces equivalent opers.
}%
	There exist in total $2^{2g}$ such data, which differ from each other by a twist
	by a square-root of $\mathcal{O}_C$,
	that define the same oper up to equivalence.
\end{remark}

\paragraph{Terminology.}
By \Cref{subsubsec-projective-structures} and the proposition above, a projective structure on $C$ possibly with apparent singularities, an isomorphism class of projective connections, and an oper are equivalent descriptions of the same object, a $\mathrm{PSL}_2(\C)$-oper with apparent singularities in the terminology of \cite{Dinh24}; we simply call it an \emph{oper}, or an \emph{oper with apparent singularities} for emphasis.
When its monodromy lifts to $\SL_2(\C)$, it is given by data $(L \hookrightarrow F, \nabla)$, up to twisting by a square root of $\mathcal{O}_C$, which we call an \emph{$\SL_2(\C)$-oper with apparent singularities}.
We denote it by $\mathcal{D}$ and use its local form $\partial_z^2 + q(z)$ when computing in a local coordinate $z$.

This terminology matches the partial oper stratification \eqref{BB-stratifications}.
Let $d = \deg \Div(\mathcal{D}) = 2g-2-2\deg L$.
If $d \leq 2g-2$, i.e.\ $\deg L \geq 0$, the associated graded Higgs bundle $(L \oplus L^{-1}, s_L(\nabla))$ is semistable, so $(L \hookrightarrow F, \nabla)$ is a partial oper in the sense of \cite[Definition~2.4]{simpson-iterated}.
If moreover $d < 2g-2$, then $L$ is the destabilizing subbundle of $F$, and by \Cref{prop-rank2-limits}(ii) the flat bundle $(F, \nabla)$ lies in the leaf of $(N_d)^+_1$ over $[E_L, \phi_{\Div(\mathcal{D})}]$.
Hence, for $d < 2g-2$, the stratum $(N_d)^+_1$ is the space of $\SL_2(\C)$-opers with $d$ apparent singularities, and the divisor of apparent singularities determines the leaf; for $d = 0$ this is the stratum of holomorphic opers.
For $d > 2g-2$, the data $(L \hookrightarrow F, \nabla)$ is not a partial oper, and the stratum of $(F, \nabla)$ is determined by the minimal number of apparent singularities (\Cref{par-PSL2-strata}).

\paragraph{Varying apparent singularities.}
In general, there exist different opers having different apparent singularities that realize the same projective monodromy.
Indeed, an irreducible $\SL_2(\C)$-holomorphic connection $(F, \nabla)$ together with different subbundles of $F$ induce different isomorphism classes
of opers with different apparent singularities.
However, in case the number of apparent singularities, counted with multiplicity, is less than $2g-2$,
there exists a unique subbundle that is maximal and destabilizes
$F$.
Hence in this case, given a projective monodromy representation with lift to $\SL_2(\C)$, 
the isomorphism class of the opers realizing it is unique (cf. \Cref{cor-isom-projective-connections}).

\section{Degeneration of Baker--Akhiezer divisors of Higgs bundles}
\label{sect-degeneration-BA}

In this section we describe the analogues for Higgs bundles  
of the families in \Cref{sect-degeneration-apparent-singularities}.
Specifically, we discuss families of Higgs bundles in $\mathcal{M}_H \equiv \mathcal{M}_H(\Lambda)$, the underlying bundles of which admit subbundles of the same degree, that limit to a Higgs bundle with the underlying bundle admitting a subbundle of higher degree. 
These types of families are understood most easily in terms of the corresponding Baker--Akhiezer divisors,
so we start from this perspective.

\subsection{Reduction of the degree of Baker--Akhiezer divisors}\label{subsec-reduction-degree-BA}

Let $(E,\phi) \in \mathcal{M}_H$ be a Higgs bundle
with associated non-degenerate spectral curve 
$\widetilde{C} \overset{\pi}{\rightarrow} C$ with involution $\sigma$, 
$L \hookrightarrow E$ a subbundle and $\widetilde{D}$ the Baker--Akhiezer divisor of $\left( L \hookrightarrow E, \phi \right)$. 
Recall from \Cref{subsec-BA-divisors} that 
$\widetilde{D}$ does not contain any summand being the pull-back of a divisor on $C$ and satisfies $L^2 
\cong K\Lambda \otimes \mathcal{O}_C(-\mathrm{Nm}(\widetilde{D}))$,
and the spectral line bundle of $(E, \phi)$ is isomorphic to $
	\pi^\ast(L) \otimes \mathcal{O}_{\widetilde{C}}(\widetilde{D})$.

Consider a sufficiently small, simply connected open set $U \subset C$ that does not contain a branch point of $\widetilde{C}$.
Consider an effective divisor $\tilde{x}_+ + \tilde{x}_-$ on $\widetilde{C}$
where $\tilde{x}_\pm$ lie in distinct components of $\pi^{-1}(U)$
and are such that $\tilde{x}_+ \neq \sigma(\tilde{x}_-)$.
In other words, for $x_\pm = \pi(\tilde{x}_\pm)$
we have $x_+ \neq x_-$.
By \Cref{prop-inverse-BA}, $\widetilde{D} + \tilde{x}_+ + \tilde{x}_-$ is the Baker--Akhiezer divisor 
of some Higgs bundle in $\mathcal{M}_H$ induced by some subbundle which is of degree $\deg(L) -1$ and is a square-root of $K \Lambda \otimes \mathcal{O}_C(-\mathrm{Nm}(\widetilde{D} + \tilde{x}_+ + \tilde{x}_-))$.
For each pair $(x_+, x_-)$, there are $2^{2g}$ such square-roots.
Varying $(x_+, x_-)$ then
induces $2^{2g}$ families of line bundles of degree $\deg(L)-1$.
If $U$ is sufficiently small, these families define distinct subsets
in $\mathrm{Pic}^{\deg(L)-1}(C)$, 
each of which, given a point $x_0 \in U$, 
contains upon closure a unique square-root of $K\Lambda \otimes \mathcal{O}_C(-\mathrm{Nm}(\widetilde{D}) - 2x_0)$ in the limit $x_+ \rightarrow x_0 \leftarrow x_-$.
One particular family would contain $L(-x_0)$ upon closure.
Let $L_{x_+, x_-}$ be the square-root of $K \Lambda \otimes \mathcal{O}_C(-\mathrm{Nm}(\widetilde{D} + \tilde{x}_+ + \tilde{x}_-))$
contained in that family,
i.e.
\begin{subequations}
	\begin{equation}\label{limit-subbundle}
		\underset{x_+ \rightarrow x_0 \leftarrow x_-}{\lim} [L_{x_+, x_-}] = [L(-x_0)] \in \mathrm{Pic}^{\deg(L)-1}(C)
\end{equation}
for any $x_0 \in U$.

Let $(L_{x_+, x_-} \hookrightarrow E_{x_+, x_-}, \phi_{x_+, x_-})$ be the data that induces $\widetilde{D} + \tilde{x}_+ + \tilde{x}_-$ as its Baker--Akhiezer divisor.
The spectral line bundle of $(E_{x_+, x_-}, \phi_{x_+, x_-})$ is isomorphic to $\pi^\ast(L_{x_+, x_-}) \otimes \mathcal{O}_{\widetilde{C}}(\widetilde{D} + \tilde{x}_+ + \tilde{x}_-)$.
It follows from \eqref{limit-subbundle} that as $x_+ \rightarrow x_0 \leftarrow x_-$,
this family of line bundles limits to $\pi^\ast(L) \otimes \mathcal{O}_{\widetilde{C}}(\widetilde{D})$, 
which is the spectral line bundle of $(E, \phi)$.
In other words,
	\begin{equation}
		\underset{x_+ \rightarrow x_0 \leftarrow x_-}{\lim} (E_{x_+, x_-}, \phi_{x_+, x_-}) = (E, \phi) \in \mathcal{M}_H
	\end{equation}
	for any $x_0 \in U$.
\end{subequations}
This is clear in terms of Baker--Akhiezer divisors:
for any $x_0 \in U$, $\widetilde{D} + \pi^\ast(x_0)$ 
is the Baker--Akhiezer divisor induced by $(E,\phi)$ via the injection $L(-x_0) \rightarrow L \hookrightarrow E$ which vanishes at $x_0$.
The following proposition summarizes our discussion.

\begin{proposition}\label{prop-limit-BA-divisor}
	Let $(E, \phi) \in \mathcal{M}_H$ with 
	associated non-degenerate spectral curve $\widetilde{C} \overset{\pi}{\rightarrow} C$, 
	and $L \hookrightarrow E$ a line subbundle with Baker--Akhiezer divisor $\widetilde{D}$. 
	Let $U \subset C$ be a simply connected open set that contains no branch point of $\widetilde{C}$ and no point of $\mathrm{Nm}(\widetilde{D})$, 
	and let $U' \coloneqq \left\{ (x_+, x_-) \in U^2 \mid  x_+ \neq x_- \right\}$.
	Then there exist a holomorphic family $\{(E_{x_+, x_-}, \phi_{x_+, x_-})\}_{(x_+, x_-) \in U'}$ in $\mathcal{M}_H$ 
	and line subbundles $L_{x_+, x_-} \hookrightarrow E_{x_+, x_-}$ of degree $\deg(L) - 1$, depending holomorphically on $(x_+, x_-)$, 
	such that 
	\begin{enumerate}[(a)]
		\item for all $(x_+, x_-) \in U'$, the Baker--Akhiezer divisor of $(L_{x_+, x_-} \hookrightarrow E_{x_+, x_-}, \phi_{x_+, x_-} )$ is $\widetilde{D} + \tilde{x}_+ + \tilde{x}_-$, 
		where $\tilde{x}_\pm \in \pi^{-1}(x_\pm)$ lie in two fixed distinct components of $\pi^{-1}(U)$; 
		\item for all $x_0 \in U$, 
		$\underset{x_+ \rightarrow x_0 \leftarrow x_-}{\lim} [L_{x_+, x_-}] = [ L(-x_0)]$ in $\mathrm{Pic}^{\deg(L)-1}(C)$
		and $\underset{x_+ \rightarrow x_0 \leftarrow x_-}{\lim}(E_{x_+, x_-}, \phi_{x_+, x_-}) = (E, \phi)$ in $\mathcal{M}_H$.
	\end{enumerate}
\end{proposition}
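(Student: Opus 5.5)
The construction goes through the spectral correspondence, with the Higgs bundles built from their spectral line bundles on the fixed spectral curve $\widetilde{C}$. Fix the two components $U_+$, $U_-$ of $\pi^{-1}(U)$. Since $U$ is simply connected and contains no branch point, each is a copy of $U$, and we write $\tilde{x}_\pm \in U_\pm$ for the lift of $x_\pm \in U$. We first construct the family of line subbundles. Consider the holomorphic map
\[ U \times U \to \mathrm{Pic}^{2\deg L - 2}(C), \qquad (x_+, x_-) \mapsto K\Lambda \otimes \mathcal{O}_C\bigl(-\mathrm{Nm}(\widetilde{D}) - x_+ - x_-\bigr). \]
The squaring map $\mathrm{Pic}^{\deg L -1}(C) \to \mathrm{Pic}^{2\deg L-2}(C)$ is an unramified covering with fiber a torsor over the $2$-torsion. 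Because $U\times U$ is simply connected, this map lifts holomorphically, and the lifts are parametrised by the fiber over any base point. On the diagonal point $(x_0, x_0)$ the target equals $(L(-x_0))^2$. We therefore choose the unique lift $L_{x_+,x_-}$ whose value at $(x_0,x_0)$ is $L(-x_0)$.

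Connectedness of the diagonal, together with the fact that $x_0 \mapsto L(-x_0)$ is itself a continuous lift along the diagonal, shows that $L_{x_0,x_0} = L(-x_0)$ for every $x_0 \in U$. The lift is defined on all of $U\times U$, including the diagonal, so the limit statement in (b) for $L$ is simply continuity. This is the point where uniqueness of lifts is used; I expect no real difficulty, and it makes rigorous the informal argument given before the statement.

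Next, define the spectral line bundle
\[ \mathcal{L}_{x_+,x_-} \coloneqq \pi^\ast(L_{x_+,x_-}) \otimes \mathcal{O}_{\widetilde{C}}(\widetilde{D} + \tilde{x}_+ + \tilde{x}_-), \]
a holomorphic family over $U \times U$. Using $\sigma^\ast \mathcal{O}(\tilde{x}) \otimes \mathcal{O}(\tilde x) = \pi^\ast\mathcal{O}(x)$, the same identity for $\widetilde{D}$, and the defining relation for $L_{x_+,x_-}^2$, one checks that $\mathcal{L}_{x_+,x_-}$ lies in $\mathrm{Prym}^{2g-2+\deg\Lambda}(\widetilde{C}, \Lambda)$ of \eqref{L-L-sigma}. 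Applying $\pi_\ast$ with $\phi = \pi_\ast v$ then gives a holomorphic family of Higgs bundles in the Hitchin fiber over $q$, which is contained in $\mathcal{M}_H$.

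On the diagonal, $\mathcal{O}(\tilde{x}_0^+ + \tilde{x}_0^-) = \pi^\ast \mathcal{O}(x_0)$, where $\tilde x_0^\pm$ are the two lifts of $x_0$. Hence $\mathcal{L}_{x_0,x_0} \cong \pi^\ast(L(-x_0)(x_0)) \otimes \mathcal{O}(\widetilde{D}) = \pi^\ast L \otimes \mathcal{O}(\widetilde{D})$, which is the spectral line bundle of $(E,\phi)$. Continuity of $\pi_\ast$ in families, or equivalently the fact that the Hitchin fiber is the Prym torsor, gives the limit of the Higgs bundles in (b).

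For (a), I apply \Cref{prop-inverse-BA} to the divisor $\widetilde{D} + \tilde{x}_+ + \tilde{x}_-$ with the line bundle $L_{x_+,x_-}$. This produces $(L_{x_+,x_-} \overset{i}{\to} E', \phi')$ with exactly this Baker--Akhiezer divisor, and its spectral line bundle is $\pi^\ast L_{x_+,x_-} \otimes \mathcal{O}(\widetilde{D} + \tilde x_+ + \tilde x_-)$, using the description of spectral line bundles recalled at the start of \Cref{subsec-reduction-degree-BA}. So the triple coincides with our family, and the map $i$ can be taken to be the pushforward of the tautological section, which varies holomorphically.

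It remains to see that $i$ is an embedding when $x_+ \neq x_-$. By \Cref{prop-inverse-BA}, this requires that the divisor contain no $\pi^\ast(p)$. Since $U$ avoids $\mathrm{Nm}(\widetilde{D})$, the only possible such summand would be $\tilde{x}_+ + \tilde{x}_- = \pi^\ast(p)$, which forces $x_+ = x_-$. This is excluded on $U'$, so $i$ is an embedding there. The genuinely subtle step is ensuring that the chosen lift matches $L(-x_0)$ simultaneously for all $x_0$, and that is handled by simple connectivity as described above.
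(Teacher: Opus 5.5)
Your proposal is correct and follows essentially the paper's route: choose the branch $L_{x_+,x_-}$ of square roots of $K\Lambda \otimes \mathcal{O}_C(-\mathrm{Nm}(\widetilde{D}) - x_+ - x_-)$ that specializes to $L(-x_0)$, realize $\widetilde{D} + \tilde{x}_+ + \tilde{x}_-$ via \Cref{prop-inverse-BA}, and observe that $\pi^\ast L_{x_+,x_-} \otimes \mathcal{O}_{\widetilde{C}}(\widetilde{D} + \tilde{x}_+ + \tilde{x}_-)$ restricts to $\pi^\ast L \otimes \mathcal{O}_{\widetilde{C}}(\widetilde{D})$ on the diagonal. Your covering-space lift over the simply connected $U \times U$, pinned down along the connected diagonal, makes rigorous the paper's informal claim that for small $U$ one of the $2^{2g}$ families contains $L(-x_0)$ in its closure. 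In the embedding check, the case $p \notin U$ also needs that $\widetilde{D}$ itself has no $\pi^\ast(p)$ summand, which holds because $L$ is a subbundle.
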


In
{%
\Cref{sect-degeneration-apparent-singularities}
}%
where we analogously analyze the collision limit of apparent singularities of opers,
we will find it convenient for calculation 
to specialize to the case where $U$ is equipped with a coordinate $z$ 
with $z(x_0) = 0$ for some $x_0\in U$ and $z(x_\pm(u)) = \pm u$.
It is instructive to now spell out the specialization of \Cref{prop-limit-BA-divisor} to this case
in order to later see the analogy between colliding points
in the projection of Baker--Akhiezer divisors and apparent singularities.

\begin{proposition}\label{prop-Hitchin-bb}
	Let $(E, \phi) \in \mathcal{M}_H$ with 
	associated non-degenerate spectral curve $\widetilde{C} \overset{\pi}{\rightarrow} C$, 
	and let $L \hookrightarrow E$ be a line subbundle with Baker--Akhiezer divisor $\widetilde{D}$. 
	Given $x_0 \in C$, neither a branch point of $\widetilde{C}$ nor a point of $\mathrm{Nm}(\widetilde{D})$, 
	let $(U, z)$ be a coordinate neighborhood of $x_0$,
	where $z(x_0)=0$, $U$ is simply connected and contains no branch point of $\widetilde{C}$ and no point of $\mathrm{Nm}(\widetilde{D})$.
	For $u \in z(U)$, let $x_\pm(u)$ be the points with $z(x_\pm(u)) = \pm u$.
	Then there exist a holomorphic family $\{ (E_u, \phi_u)\}_{u \in z(U)}$ in $\mathcal{M}_H$ with spectral curve $\widetilde{C}$ and $(E_0, \phi_0) = (E, \phi)$, 
	and a holomorphic family of line bundles $\{L_u\}_{u \in z(U)}$ of degree $\deg(L) - 1$ with $L_0 \cong L(-x_0)$, 
	such that, for all $u \neq 0$, $L_u \hookrightarrow E_u$ is a line subbundle and 
	the Baker--Akhiezer divisor of $(L_u \hookrightarrow E_u, \phi_u)$ is $\widetilde{D} + \tilde{x}_+(u) + \tilde{x}_-(u)$, 
	where $\tilde{x}_\pm(u) \in \pi^{-1}(x_\pm(u))$ lie in two fixed distinct components of $\pi^{-1}(U)$.
\end{proposition}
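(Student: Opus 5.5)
We specialize \Cref{prop-limit-BA-divisor} along the holomorphic curve $u \mapsto (x_+(u), x_-(u))$ in $U^2$. Its values lie in $U'$ for $u \neq 0$ and tend to the diagonal point $(x_0, x_0)$ as $u \to 0$. The main work is to get a family that is holomorphic at $u=0$ itself, and not merely on the punctured disc with a limit.

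\textbf{Step 1: a holomorphic family of line bundles.} The divisor $\widetilde{D} + \tilde{x}_+(u) + \tilde{x}_-(u)$ is defined for all $u \in z(U)$, including $u = 0$. There it equals $\widetilde{D} + \tilde{x}_+(0) + \tilde{x}_-(0) = \widetilde{D} + \pi^\ast(x_0)$, because the two fixed components of $\pi^{-1}(U)$ are exchanged by $\sigma$. Its norm is $\mathrm{Nm}(\widetilde{D}) + x_+(u) + x_-(u)$, and this divisor varies holomorphically in $u$. Hence $K\Lambda(-\mathrm{Nm}(\widetilde{D}) - x_+(u) - x_-(u))$ is a holomorphic map $z(U) \to \mathrm{Pic}(C)$. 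Since $z(U)$ is simply connected and squaring $\mathrm{Pic}^{\deg L - 1} \to \mathrm{Pic}^{2\deg L - 2}$ is an unramified covering, this map lifts uniquely to a holomorphic family $L_u$ of square roots with $L_0 \cong L(-x_0)$. This is the family in \eqref{limit-subbundle}, now extended across $u=0$.

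\textbf{Step 2: a holomorphic family of Higgs bundles.} Set $\mathcal{L}_u \coloneqq \pi^\ast(L_u) \otimes \mathcal{O}_{\widetilde{C}}(\widetilde{D} + \tilde{x}_+(u) + \tilde{x}_-(u))$. This is a holomorphic family of line bundles on $\widetilde{C}$ lying in $\mathrm{Prym}^{2g-2+\deg\Lambda}(\widetilde{C}, \Lambda)$. Indeed,
\[ \mathcal{L}_u \otimes \sigma^\ast\mathcal{L}_u \cong \pi^\ast\bigl(L_u^2 \otimes \mathcal{O}_C(\mathrm{Nm}(\widetilde{D}) + x_+(u) + x_-(u))\bigr) \cong \pi^\ast(K\Lambda). \]
Through the spectral correspondence, $(E_u, \phi_u) \coloneqq (\pi_\ast \mathcal{L}_u, \pi_\ast v)$ is then a holomorphic family in the Hitchin fibre $h^{-1}(q) \subset \mathcal{M}_H$, where $q$ is the quadratic differential of $\widetilde{C}$.

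At $u = 0$ we have
\[ \mathcal{L}_0 \cong \pi^\ast(L(-x_0)) \otimes \mathcal{O}(\widetilde{D} + \pi^\ast x_0) \cong \pi^\ast L \otimes \mathcal{O}(\widetilde{D}), \]
which is the spectral line bundle of $(E, \phi)$. Hence $(E_0, \phi_0) = (E, \phi)$.

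\textbf{Step 3: the subbundles.} For $u \neq 0$, the divisor $\widetilde{D} + \tilde{x}_+(u) + \tilde{x}_-(u)$ contains no summand $\pi^\ast(p)$. This uses three facts: $\widetilde{D}$ has no such summand, $x_\pm(u) \notin \mathrm{Nm}(\widetilde{D})$, and $x_+(u) \neq x_-(u)$. We apply \Cref{prop-inverse-BA} with the line bundle $L_u$. It gives an injection $L_u \to E_u$, unique up to scaling, which is an embedding and has Baker--Akhiezer divisor $\widetilde{D} + \tilde{x}_+(u) + \tilde{x}_-(u)$. The Higgs bundle produced there is $(\pi_\ast\mathcal{L}_u, \pi_\ast v)$, because its spectral line bundle is $\pi^\ast(L_u)(\widetilde{D} + \tilde{x}_+ + \tilde{x}_-)$. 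So it agrees with the Higgs bundle of Step 2.

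\textbf{Expected obstacle.} The subtle point is the consistency of the choice of square root in Step 1. The lift must be chosen so that $L_0 \cong L(-x_0)$ rather than a twist of it by a $2$-torsion line bundle. Otherwise $\mathcal{L}_0$ would differ from the spectral line bundle of $(E, \phi)$ by the pull-back of a $2$-torsion bundle, and $(E_0, \phi_0)$ would not be $(E, \phi)$. Simple connectivity of $U$, together with uniqueness of lifts through the étale squaring map, settles this. One must also check that holomorphy of $(E_u, \phi_u)$ in $\mathcal{M}_H$ follows from holomorphy of $\mathcal{L}_u$. This holds because the spectral correspondence identifies the smooth Hitchin fibre analytically with the Prym torsor.
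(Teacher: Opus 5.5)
Your proposal is correct and follows essentially the same route as the paper. The paper derives this proposition from the discussion preceding \Cref{prop-limit-BA-divisor}: among the $2^{2g}$ families of square roots of $K\Lambda(-\mathrm{Nm}(\widetilde{D}) - x_+ - x_-)$ it picks the one whose closure contains $L(-x_0)$, uses that $\pi^\ast(L_u)\otimes\mathcal{O}_{\widetilde{C}}(\widetilde{D}+\tilde{x}_+ + \tilde{x}_-)$ tends to the spectral line bundle $\pi^\ast(L)\otimes\mathcal{O}_{\widetilde{C}}(\widetilde{D})$, and invokes \Cref{prop-inverse-BA} for $u \neq 0$. Your unique lift through the \'etale squaring map over the simply connected $z(U)$ is a cleaner way of making the paper's choice of square root, and it gives holomorphy across $u=0$ directly rather than only a limit.
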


Note that if $\deg(\widetilde{D}) < 2g-2$, these families limit to smaller Bia\l ynicki-Birula strata, i.e. the strata of Higgs bundles with increasingly unstable underlying bundles.
In the notation of the stratification defined by Hodge bundles in \Cref{subsubsec-natural-stratification}, 
by \Cref{prop-rank2-limits}(i), the limit $(E, \phi)$ lies in $(N_{\deg(\widetilde{D})})^+_0$, while for $u \neq 0$ the Higgs bundle $(E_u, \phi_u)$ of \Cref{prop-Hitchin-bb} lies in $(N_{\deg(\widetilde{D})+2})^+_0$ if $\deg(\widetilde{D}) + 2 < 2g-2$.
The limits of these families compactify the open dense locus consisting of Higgs bundles with stable underlying bundles into the whole Prym variety.

\subsection{Double point in Baker--Akhiezer divisors}\label{subsec-double-point-BA}
In the above discussion, we have analyzed families of Baker--Akhiezer divisors
whose limits contain a summand of the form $\pi^\ast(x_0)$
for some $x_0 \in C$. 
The following proposition is the counter-part of \Cref{prop-Hitchin-bb} for the case where the limit contains a double point that projects to $2x_0$.
In this case, 
unlike in \Cref{prop-Hitchin-bb}, 
the underlying bundle of the corresponding Higgs bundle at the limit
admits a subbundle of the same degree as in the family.
\begin{proposition}\label{prop-Hitchin-db}
	Let $(E, \phi) \in \mathcal{M}_H$ with the associated non-degenerate spectral curve $\widetilde{C} \overset{\pi}{\rightarrow} C$ 
	and let $L \hookrightarrow E$ be a line subbundle
	such that $s_L(\phi)$ has a double zero at $x_0 \in C$ which is not a branch point of $\widetilde{C}$. 
	Let $\widetilde{D}$ be the Baker--Akhiezer divisor of $(L \hookrightarrow E, \phi)$
	and $\tilde{x}_0$ be the point with multiplicity 2 in $\widetilde{D}$
	with $\pi(\tilde{x}_0) = x_0$.
	Let $(U,z)$ be a coordinate neighborhood of $x_0$, where $z(x_0) = 0$, $U$ is simply connected and contains no branch point of $\widetilde{C}$.
	For $u \in z(U)$, let $x_\pm(u)$ be the points with $z(x_\pm(u)) = \pm u$.
	Then there exist a family of Higgs bundles $\{ (E_u, \phi_u)\}_{u \in z(U)}$ and a family of line bundles $\{L_u\}_{u \in z(U)}$ of the same degree as $L$ 
	such that 
	\begin{enumerate}[(i)]
		\item $[L_0] = [ L ]$ in $\mathrm{Pic}^{\deg(L)}(C)$
		and $(E_0, \phi_0) = (E, \phi)$ in $\mathcal{M}_H$;
		\item for all $u \in z(U)$, $E_u$ admits $L_u$ as a subbundle;
		\item for all $u \neq 0$, the Baker--Akhiezer divisor of $(L_u \hookrightarrow E_u, \phi_u)$ is $\widetilde{D} - 2\tilde{x}_0 + \tilde{x}_+(u) + \tilde{x}_-(u)$, 
		where $\tilde{x}_\pm(u) \in \pi^{-1}(x_\pm(u))$ lie in the component of $\pi^{-1}(U)$ containing $\tilde{x}_0$.
	\end{enumerate}
\end{proposition}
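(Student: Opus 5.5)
Apply the inverse construction of \Cref{prop-inverse-BA} on the fixed spectral curve $\widetilde{C}$ to the divisors
\[ \widetilde{D}_u \coloneqq \widetilde{D} - 2\tilde{x}_0 + \tilde{x}_+(u) + \tilde{x}_-(u), \]
and choose the square roots $L_u$ continuously. Let $V$ be the component of $\pi^{-1}(U)$ containing $\tilde{x}_0$; since $U$ is simply connected and contains no branch point, $\pi|_V : V \to U$ is a biholomorphism. Define $\tilde{x}_\pm(u) \coloneqq (\pi|_V)^{-1}(x_\pm(u))$. Then $u \mapsto \tilde{x}_+(u)+\tilde{x}_-(u)$ is a holomorphic family of effective divisors on $\widetilde{C}$. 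It extends holomorphically over $u=0$ with value $2\tilde{x}_0$, because the symmetric functions of $\pm u$ are holomorphic in $u^2$. Hence $\widetilde{D}_u$ is a holomorphic family of effective divisors of constant degree $\deg \widetilde{D}$ with $\widetilde{D}_0 = \widetilde{D}$. Their norms $\mathrm{Nm}(\widetilde{D}_u) = \mathrm{Nm}(\widetilde{D}) - 2x_0 + x_+(u) + x_-(u)$ form a holomorphic family in $C^{(d)}$.

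The first step is to choose the line bundles $L_u$. The family $u \mapsto K\Lambda(-\mathrm{Nm}(\widetilde{D}_u))$ is a holomorphic map $z(U) \to \mathrm{Pic}(C)$, and it takes the value $L^2$ at $u=0$. Squaring $\mathrm{Pic}(C) \to \mathrm{Pic}(C)$ is an unramified covering and $z(U)$ is simply connected, so there is a unique holomorphic lift $u \mapsto [L_u]$ with $L_0 = L$. This gives (i) for the line bundles. Note that $\mathcal{O}(x_+(u)+x_-(u)-2x_0)$ is trivial at $u=0$.

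Next, \Cref{prop-inverse-BA} applied to each $(q, \widetilde{D}_u, L_u)$ produces a triple $(L_u \overset{i_u}{\to} E_u, \phi_u)$ with Baker--Akhiezer divisor $\widetilde{D}_u$ and $\det E_u \cong \Lambda$, unique up to scaling $i_u$. Its spectral line bundle is $\pi^\ast(L_u) \otimes \mathcal{O}_{\widetilde{C}}(\widetilde{D}_u)$, which depends holomorphically on $u$ in $\mathrm{Prym}$. The spectral correspondence then makes $(E_u, \phi_u) = (\pi_\ast \mathcal{L}_u, \pi_\ast v)$ a holomorphic family in $h^{-1}(q) \subset \mathcal{M}_H$. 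At $u=0$ the spectral bundle is $\pi^\ast L \otimes \mathcal{O}(\widetilde{D})$, the spectral bundle of $(E,\phi)$, so $(E_0,\phi_0) = (E,\phi)$. The injections $i_u$ can be taken as a holomorphic family, namely as the pushforward of the canonical section $\pi^\ast L_u \to \mathcal{L}_u$ vanishing on $\widetilde{D}_u$.

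The step I expect to need the most care is (ii) together with (iii): showing that $i_u$ is an embedding for every $u$, including $u=0$. This follows from the last clause of \Cref{prop-inverse-BA}, which says $i_u$ vanishes at $p$ exactly when $\pi^\ast(p) \le \widetilde{D}_u$. Two facts are needed.
\begin{itemize}
\item $\widetilde{D}$ has no summand $\pi^\ast(p)$, since $L$ is a subbundle. The same holds for $\widetilde{D} - 2\tilde{x}_0$.
\item The new points $\tilde{x}_\pm(u)$ both lie on the sheet $V$, so they never form a fiber $\pi^\ast(p)$ with each other. They can form one with a point of $\widetilde{D}-2\tilde{x}_0$ only if that point lies on $\sigma(V)$ over $x_\pm(u)$.
\end{itemize}
The second fact is where genericity is used. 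If $\widetilde{D}$ has points on $\sigma(V)$ over $U$, shrink $U$ so that it contains no point of $\mathrm{Nm}(\widetilde{D}) - 2x_0$. Then no $\pi^\ast(p)$ occurs, each $i_u$ is an embedding, and the Baker--Akhiezer divisor is exactly $\widetilde{D}_u$. This gives (ii) and (iii).
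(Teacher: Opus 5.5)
Your proposal is correct and follows essentially the same route as the paper. The paper's proof simply invokes the discussion preceding \Cref{prop-limit-BA-divisor}: apply the inverse construction of \Cref{prop-inverse-BA} on the fixed spectral curve to $\widetilde{D} - 2\tilde{x}_0 + \tilde{x}_+(u) + \tilde{x}_-(u)$ with $\tilde{x}_\pm(u)$ on the sheet of $\tilde{x}_0$, choose the square roots $L_u$ continuously with $L_0 = L$, and conclude via the spectral line bundles. Your lift through the squaring isogeny and your explicit check that $i_u$ is an embedding just fill in details the paper leaves implicit. This includes shrinking $U$ so that it avoids other points of $\mathrm{Nm}(\widetilde{D})$ on the opposite sheet, which the paper's ``$U$ sufficiently small'' implicitly requires.
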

\begin{proof}
	The proposition follows from the discussion preceding \Cref{prop-limit-BA-divisor} by requiring
	$\tilde{x}_\pm$ now to be contained in the same component of $\pi^{-1}(U)$ containing $\tilde{x}_0$.
\end{proof}

\section{Degeneration of apparent singularities of projective structures}\label{sect-degeneration-apparent-singularities}

\subsection{Meromorphic quadratic differentials and opers with apparent singularities}
{%
Throughout this section, $\mathcal{D}$ denotes an oper with apparent singularities, and $\mathrm{div}(\mathcal{D})$ denotes the divisor of its apparent singularities, a double apparent singularity being counted twice.
}%
In this and the next sections, we will fix a holomorphic projective structure $\{(U_\alpha, z_\alpha)\}_{\alpha \in \mathcal{I}}$
and work with opers on this maximal coordinate atlas.
In these coordinate charts, an oper $\mathcal{D}$
with apparent singularities $x_1$, ..., $x_d$
 takes the form $\{ \mathcal{D}_{\alpha} =  \partial_{z_\alpha}^2 +  q_\mathcal{D} (z_\alpha) \}_{\alpha \in \mathcal{I}}$ 
 where $\{q_\mathcal{D}(z_\alpha) dz_\alpha^2 \}$ 
 define a meromorphic quadratic differential $q_\mathcal{D}$
 that has poles precisely at the apparent singularities $x_1$, ..., $x_d$.
{%
The Laurent tails of $q_\mathcal{D}$ at the apparent singularities are given by \eqref{SL-operator_tail-1}--\eqref{SL-operator_residue-constraint-2}.
}%

\paragraph{Building blocks of meromorphic quadratic differentials.}
For each $x \in C$, there exists quadratic differentials that have poles at $x$ and is holomorphic everywhere else. 
We will be interested in those that have simple and double poles at $x$.
We denote the space of all such meromorphic quadratic differentials as $\Omega_x^{\otimes 2}$ and $\Omega_{2x}^{\otimes 2}$ respectively.
Clearly $\Omega_x^{\otimes 2} \cong H^0\left(K_C^2 \otimes \mathcal{O}_C(x) \right)$ and $\Omega_{2x}^{\otimes 2} \cong H^0 \left(K_C^2 \otimes \mathcal{O}_C(2x) \right)$. 
In particular, 
elements of $\Omega_x^{\otimes 2}$ ($\Omega_{2x}^{\otimes 2}$) that are holomorphic at $x$ 
and hence on all of $C$ correspond to sections of $K^2_C \otimes \mathcal{O}_C(x)$ (respectively, $K^2_C \otimes \mathcal{O}_C(2x)$) 
that admit $x$ as a zero of multiplicity at least 1 (respectively, 2).

In the following we discuss, given a local coordinate $z$ of $x$,
certain quadratic differentials that have simple and double pole at $x$
with specific Laurent tails in the expansion w.r.t. $z$.
We will think of these specific quadratic differentials as building blocks, 
out of which we can construct all other elements of $\Omega_x^{\otimes 2}$ and $\Omega_{2x}^{\otimes 2}$
in a way such that we can control their Laurent tails w.r.t. $z$.

To this end, first note that since $h^0\left(K_C^2 \otimes \mathcal{O}_C(x) \right) = 3g - 2$, an element in $\Omega_x^{\otimes 2}$ is fixed up to scaling upon fixing $3g-3$ out of its $4g-3$ zeroes in total.
Similarly, an element in $\Omega_{2x}^{\otimes 2}$ is fixed up to scaling upon fixing $3g-2$ out of its $4g-2$ zeroes.
To characterize the meromorphic quadratic differentials that have non-trivial simple and double poles at $x$, let us first recall 
the notion of $Q$-generic divisors
{%
\cite{dinh-thesis}, \cite[Section~3.2]{Dinh24}%
}%
.
We say that an effective divisor $\mathbf{x}$ on $C$ is $Q$-generic if the space
$Q_\mathbf{x} = \{ q \in H^0(C, K^2) \mid \mathbf{x} \leq \mathrm{div}(q) \} \cup \{ 0 \}$
of quadratic differentials vanishing at $\mathbf{x}$, with multiplicity counted,
is of expected, i.e. minimal, dimension $\max\{0, 3g-3 - \deg(\mathbf{x})\}$.
We say that $\mathbf{x}$ is $Q$-special otherwise.
{%
For a reduced $Q$-generic divisor, opers with simple apparent singularities at its points are characterized by their accessory parameters, up to adding elements of $Q_\mathbf{x}$ \cite[Propositions~3.2 and 3.3]{Dinh24}; the proof uses ``building blocks'' of quadratic differentials, an idea first used by Iwasaki \cite{iwasaki91}.
}%
Then an element of $\Omega_x^{\otimes 2}$ ($\Omega_{2x}^{\otimes 2}$) that has a non-trivial simple (respectively, double) pole at $x$ is characterized by the fact that 
if an effective divisor of degree  $3g-3$ (respectively, $3g-2$) is contained in its zero divisor (and hence determines it up to scaling), 
then this divisor is $Q$-generic. 
It is almost obvious that 
by varying the pole $x$ in its neighborhood 
we can construct a family of quadratic differentials that is holomorphically parameterized by this neighborhood,
while keeping intact the freedom to choose the zeroes.
\Cref{prop-merom-q-existence-1} formalizes this intuition.

\begin{proposition}\label{prop-merom-q-existence-1}	
	Let $U \subset C$ be an open subset, $i\in \{1, 2\}$, $d = 3g-4 + i$, and
	$\mathbf{x}^d: U \rightarrow C^{[d]}$, $x \mapsto \mathbf{x}^d(x) = x_1(x) + ... + x_d(x)$,
	be a holomorphic map such that, 
	for all $x\in U$ and $r \in \{1 , ..., d\}$,
	$x_r(x) \neq x$ 
	and $\mathbf{x}^d(x)$ is $Q$-generic.
	Then there exists a family of meromorphic quadratic differentials $\left\{ q^{(i)}_{x, \mathbf{x}^d} \right\}_{x \in U}$ 
	holomorphically parameterized by $U$ such that 
		\begin{enumerate}[(a)]
			\item $q^{(i)}_{x, \mathbf{x}^d}$ has a simple (double) pole at $x$ if $i=1$ (respectively, $i=2$) and is holomorphic on $C\setminus \{x\}$;
			\item $\mathbf{x}^d(x)$ is contained in the support of the zero divisor of $q^{(i)}_{x, \mathbf{x}^d}$.
		\end{enumerate}
\end{proposition}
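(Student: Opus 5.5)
The plan is to realize $q^{(i)}_{x,\mathbf{x}^d}$ as a section of the holomorphic line bundle $K_C^2 \otimes \mathcal{O}_C(i x)$, varying with $x$, and to single it out by the linear conditions of vanishing along $\mathbf{x}^d(x)$. Concretely, consider on $U \times C$ the line bundle $\mathcal{K}_i \coloneqq p_2^\ast K_C^2 \otimes \mathcal{O}(i\Delta)$, where $\Delta \subset U \times C$ is the graph of the inclusion. Riemann--Roch gives $h^0(K_C^2(ix)) = 3g-3+i$ for every $x$, since $\deg K^2(ix) > 2g-2$ kills $h^1$. Hence $V_i \coloneqq (p_1)_\ast \mathcal{K}_i$ is a holomorphic vector bundle of rank $3g-3+i = d+1$ on $U$ by Grauert's theorem.

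Next, I impose the vanishing conditions. The relative divisor $\mathcal{X} \subset U \times C$ given by $\mathbf{x}^d$ is flat of degree $d$ over $U$ and is disjoint from $\Delta$ by the hypothesis $x_r(x) \neq x$. Restriction therefore gives a morphism of vector bundles
\[
\mathrm{ev}\colon V_i \longrightarrow (p_1)_\ast\bigl(\mathcal{K}_i|_{\mathcal{X}}\bigr),
\]
with target of rank $d$; this handles points of $\mathbf{x}^d(x)$ with multiplicity uniformly. Its kernel at $x$ consists of the sections of $K^2(ix)$ vanishing on $\mathbf{x}^d(x)$, so it has dimension at least $1$.

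The key step is to show the kernel is exactly one-dimensional. Suppose $s_1, s_2$ were independent in the kernel. A suitable combination is holomorphic at $x$: for $i=1$ one kills the simple pole with one linear condition, and for $i=2$ one kills the leading coefficient at $x$. This combination gives a nonzero element of $H^0(K^2((i-1)x))$ vanishing on $\mathbf{x}^d(x)$. For $i=1$ it is a nonzero element of $Q_{\mathbf{x}^d(x)}$, which has degree $3g-3$, contradicting $Q$-genericity. For $i=2$ the same reduction lands in the $i=1$ space for $x$ together with $\mathbf{x}^d(x)$; there I would iterate, or argue directly that the kernel meets $H^0(K^2)$ in $Q_{\mathbf{x}^d(x)} = 0$, so it injects into the $2$-dimensional polar-part space. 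To sharpen this to rank $1$, note that $Q_{\mathbf{x}^d(x)} = 0$ for $\deg = 3g-2$ means the $d$ evaluation conditions are independent on $H^0(K^2)$. Since $H^0(K^2)$ is a codimension-$i$ subspace of $V_i|_x$, the conditions are then independent on $V_i|_x$ as well, so $\mathrm{ev}$ is surjective and the kernel has dimension exactly $d+1-d=1$. The same surjectivity argument covers $i=1$. So in both cases $\mathrm{ev}$ has constant rank $d$, and its kernel is a holomorphic line subbundle $\ell \subset V_i$.

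Since $U$ may be shrunk or assumed to be a disc, I trivialize $\ell$ by a nowhere-vanishing holomorphic section. This yields the family $q^{(i)}_{x,\mathbf{x}^d}$, which satisfies (b) by construction.

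For (a), I must check that the pole at $x$ has exact order $i$. If the pole order were smaller, then for $i=1$ we would get a nonzero holomorphic $q$ vanishing on $\mathbf{x}^d(x)$, contradicting genericity. For $i=2$ we would get an element of the $i=1$ type vanishing on a $Q$-generic divisor of degree $3g-2$. Such an element is holomorphic only if it is zero, because $Q_{\mathbf{x}^d(x)} = 0$. It is excluded from having a simple pole as well, since the kernel is already the unique line and its leading coefficient cannot vanish: otherwise the kernel would lie in $K^2(x)$, which would make the $3g-2$ conditions dependent on that $(3g-2)$-dimensional space. I expect this last exactness check for $i=2$ to be the main subtle point. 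I will handle it by the dimension count that $\mathrm{ev}$ restricted to $H^0(K^2(x))$, a map from a $(3g-2)$-dimensional space to a $(3g-2)$-dimensional one, is injective by $Q$-genericity.
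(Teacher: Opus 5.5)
\textbf{Review of the proposal.}

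Your argument for $i=1$ is sound. $Q$-genericity of $\mathbf{x}^d(x)$, of degree $3g-3=\dim H^0(K^2)$, makes $\mathrm{ev}|_{H^0(K^2)}$ an isomorphism. Hence $\mathrm{ev}$ has constant rank $d$, and its kernel line meets $H^0(K^2)$ trivially, which forces an honest simple pole.

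For $i=2$ there is a genuine gap. Your sentence ``$Q_{\mathbf{x}^d(x)}=0$ for $\deg=3g-2$ means the $d$ evaluation conditions are independent on $H^0(K^2)$'' is impossible for dimension reasons: $3g-2$ functionals cannot be independent on a $(3g-3)$-dimensional space. $Q$-genericity only says that $\mathrm{ev}|_{H^0(K^2)}$ is injective, with image a hyperplane. Your closing step, that $\mathrm{ev}|_{H^0(K^2(x))}$ is injective ``by $Q$-genericity'', has the same problem. Writing $\mathbf{x}=\mathbf{x}^d(x)$, it conflates $H^0(K^2(-\mathbf{x}))=0$ with $H^0(K^2(x)(-\mathbf{x}))=0$. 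The latter is exactly what you need, for both the constant rank and the exact double pole, but it does not follow from the former.

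In fact no argument can get it from the stated hypothesis. Here is a counterexample.
\begin{itemize}
\item Fix $x$ and a generic effective divisor $Y$ of degree $g-1$, so that $h^0(K(-x-Y))=0$ and hence $h^0(K(-Y))=1$.
\item Take $q_1\in H^0(K^2(x)(-Y))$ with an honest simple pole at $x$; such $q_1$ exist because $h^0(K^2(x)(-Y))=2g-1>2g-2=h^0(K^2(-Y))$.
\item Set $\mathbf{x}=\mathrm{div}(q_1)-Y$, of degree $3g-2$, with $x\notin\mathbf{x}$ and $\mathcal{O}(\mathbf{x})\cong K^2(x)(-Y)$.
\item Riemann--Roch gives $h^0(K^2(-\mathbf{x}))=h^0(K(x)(-Y))-1=h^0(K(-Y))-1=0$, so $\mathbf{x}$ is $Q$-generic.
\item It also gives $h^0(K^2(2x)(-\mathbf{x}))=1+h^0(K(-x-Y))=1$, so the kernel of $\mathrm{ev}$ at $x$ is $\mathbb{C}q_1$.
\end{itemize}
Every element of $H^0(K^2(2x))$ vanishing on $\mathbf{x}$ therefore has at most a simple pole, and (a) fails. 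If instead $h^0(K(-x-Y))=1$, the kernel jumps to dimension $2$ and constant rank fails. Taking $\mathbf{x}^d\equiv\mathbf{x}$ and $U$ a small disc around $x$ gives a counterexample to the $i=2$ statement as written.

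The hypothesis you need, and should prove under, is $H^0(K^2(x)(-\mathbf{x}^d(x)))=0$ for all $x\in U$. By Riemann--Roch this is equivalent to $h^0(K^{-1}(\mathbf{x}^d(x)-x))=0$. It implies $Q$-genericity and makes $\mathrm{ev}|_{H^0(K^2(x))}$ an isomorphism. With it, your kernel-line-bundle argument goes through verbatim. The paper itself only cites the thesis for this proposition, so there is no in-paper route to compare with.
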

\begin{proof}%
{%
See \cite[Proposition~6.1]{dinh-thesis}.%
}%
\end{proof}

The families parameterized by $U$ that satisfies conditions (a) and (b) in \Cref{prop-merom-q-existence-1} are far from unique: 
scaling one such family by a function holomorphic on $U$ produces another.
Given a local coordinate $z$ on $U$, however, we can look to control the Laurent tails of these quadratic differentials w.r.t. $z$.
The resulting ``normalized'' families would be unique,
but we would be able to impose less zeroes.
  
\begin{proposition}	\label{building-blocks-2}
	Let $(U, z)$ be an open coordinated subset of $C$. \\
	1. Let $\mathbf{x}^{3g-3}: U \rightarrow C^{[3g-3]}$, 
	$x \mapsto \mathbf{x}^{3g-3}(x) = x_1(x) + ... + x_{3g-3}$, 
	be a holomorphic map 
	such that for all $x\in U$ and $r \in \{1, ..., 3g-3\}$,
	$x_r(x) \neq x$ and $\mathbf{x}^{3g-3}(x)$ is $Q$-generic.
	Then there exists a unique family 
	of meromorphic quadratic differentials 
	$\left\{ q^{(i)}_{z, x, \mathbf{x}} \right\}_{x \in U}$ 
	holomorphically parameterized by $U$ such that,
	for each $x \in U$,
	$q^{(i)}_{z, x, \mathbf{x}^{3g-3}}$ satisfies conditions (a), (b) in \Cref{prop-merom-q-existence-1}, 
	and furthermore takes the local form 
		\begin{align}
			\frac{dz^2}{( z - z(x))^i} + R^{(i)}_{z, x, \mathbf{x}^{3g-3}} (z) dz^2,
			\label{normalised-q-1}
		\end{align}
	where $R^{(i)}_{z, x, \mathbf{x}^{3g-3}} (z)$ is the restriction 
	of a holomorphic function $R^{(i)}(z_1, z_2)$ defined on $(U, z) \times (U, z)$
	to the slice $\{ z_1 = z(x), z_2 \equiv z \}$. \\ %
	2. Let $\mathbf{x}^{3g-4}: U \rightarrow C^{[3g-4]}$, 
	$x \mapsto \mathbf{x}^{3g-4}(x) = x_1(x) + ... + x_{3g-4}$, 
	be a holomorphic map 
	such that for all $x\in U$ and $r \in \{1, ..., 3g-4\}$,
	$x_r(x) \neq x$ and $x + \mathbf{x}^{3g-4}(x)$ is $Q$-generic.
	Then there exists a unique family of meromorphic quadratic differentials  
	$\left\{ q^{(i)}_{0, z, x, \mathbf{x}^{3g-4}} \right\}_{x \in U}$ 
	holomorphically parameterized by $U$ such that,
	for each $x \in U$,
	$q^{(i)}_{0, z, x, \mathbf{x}^{3g-4}}$ satisfies conditions (a), (b) in \Cref{prop-merom-q-existence-1}, 
	and furthermore takes the local form 
		\begin{align}
			\frac{dz^2}{\left( z - z(x) \right)^i} + R^{(i)}_{0, z, x, \mathbf{x}^{3g-4}}(z) dz^2,
			\label{normalised-q-2}
		\end{align}
	where $R^{(i)}_{0, z, x, \mathbf{x}^{3g-4}} (z)$ is the restriction 
	of a function $R^{(i)}_0 (z_1, z_2)$, which is defined on $(U, z_1) \times (U, z_2)$
	and vanishes at the diagonal $\{ z_1 = z_2 \}$,
	to the slice $\{ z_1 = z(x), z_2 \equiv z \}$.
\end{proposition}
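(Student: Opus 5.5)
Both parts follow the same pattern: use Riemann--Roch dimension counts to show that the conditions single out one differential, then use the $Q$-genericity hypotheses to get the normalization and holomorphic dependence on $x$.

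\textbf{Part 1.} Fix $x\in U$ and let $V_x = H^0(K^2\otimes\mathcal{O}_C(ix))$, of dimension $3g-3+i$. Let $W_x\subset V_x$ be the subspace of sections vanishing on $\mathbf{x}^{3g-3}(x)$. Vanishing on a degree-$(3g-3)$ divisor imposes at most $3g-3$ linear conditions, so $\dim W_x\geq i$.

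The map $W_x\to\mathbb{C}^i$ sending a differential to its polar coefficients at $x$ in the coordinate $z$ (the coefficients of $(z-z(x))^{-i},\dots,(z-z(x))^{-1}$) is injective. Its kernel consists of holomorphic quadratic differentials vanishing on $\mathbf{x}^{3g-3}(x)$, which is $Q_{\mathbf{x}^{3g-3}(x)}=0$ by $Q$-genericity. Hence $\dim W_x=i$ and the polar map is an isomorphism. In particular, the conditions on $\mathbf{x}^{3g-3}(x)$ are independent.

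So there is a unique element of $W_x$ whose polar part is exactly $(z-z(x))^{-i}$, i.e. with the $(z-z(x))^{-1}$ coefficient equal to zero when $i=2$. This gives existence and uniqueness pointwise in $x$.

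\textbf{Holomorphic dependence.} The spaces $V_x$ form a holomorphic vector bundle over $U$, since $h^0$ is constant. The vanishing conditions along $\mathbf{x}^{3g-3}(x)$ form a holomorphic bundle map of constant rank; for coincident points one uses the symmetric-product description of the divisor. Therefore $W_x$ is a holomorphic subbundle. Inverting the polar-part isomorphism, which depends holomorphically on $x$, yields a holomorphic family.

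Subtracting the polar part gives $R(z)$ holomorphic in $z$ on $U$ and holomorphic in $z(x)$. This defines $R^{(i)}(z_1,z_2)$, and one can take $\mathbf{x}^{3g-3}$ fixed in the pole variable, as the statement's notation suggests.

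\textbf{Part 2.} The same count applies with $3g-4$ zero conditions plus one extra linear condition: the constant term of the normalized expansion vanishes, i.e. $R(z(x))=0$. So $W_x$ now consists of sections vanishing on $\mathbf{x}^{3g-4}(x)$ and satisfying this condition, and $\dim W_x\geq i$ as before.

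For injectivity of the polar map, an element of the kernel is holomorphic, so its constant term is simply its value at $x$. It is therefore a holomorphic quadratic differential vanishing on $x+\mathbf{x}^{3g-4}(x)$, which is zero by the assumed $Q$-genericity of $x+\mathbf{x}^{3g-4}(x)$. The rest of the argument is as in Part 1, and the vanishing of $R_0^{(i)}$ on the diagonal is exactly the imposed constant-term condition.

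The main technical point is the holomorphic dependence on $x$ when the pole and the prescribed zeros move together. Here I would rely on \Cref{prop-merom-q-existence-1}, which already gives holomorphic families spanning the relevant spaces, and then normalize by inverting a holomorphically varying $i\times i$ matrix of polar coefficients.
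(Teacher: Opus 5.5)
Your proof is correct. The paper gives no argument here beyond a citation to the thesis. The surrounding text indicates that the normalized families are obtained by rescaling and combining the non-unique holomorphic families of \Cref{prop-merom-q-existence-1} so as to fix the Laurent tail in $z$, which is essentially the route in your last paragraph.

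Your main argument is instead intrinsic. You have $h^0(K^2\otimes\mathcal{O}_C(ix))=3g-3+i$. The polar-part map on the subspace cut out by the zero conditions (plus, in Part 2, the constant-term condition) is injective exactly because $Q_{\mathbf{x}^{3g-3}(x)}=0$, resp.\ $Q_{x+\mathbf{x}^{3g-4}(x)}=0$, so it is bijective. Holomorphy then follows from Grauert together with the fact that the kernel of a constant-rank bundle map is a subbundle. This yields existence, uniqueness and holomorphic dependence at once. It also shows why Part 2 needs genericity of $x+\mathbf{x}^{3g-4}(x)$: on holomorphic differentials, vanishing of $R^{(i)}_0$ on the diagonal is just one more zero at $x$.

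The building-block route needs extra bookkeeping that your closing remark glosses over. Except for $i=1$ in Part 1, the number of prescribed zeros is smaller than the $3g-4+i$ that \Cref{prop-merom-q-existence-1} requires. So you would have to add auxiliary zeros outside $U$ and combine families of different pole orders. In Part 2 you would also add the generator of the line $Q_{\mathbf{x}^{3g-4}(x)}$, which is nonzero at $x$, to kill the constant term. Your subbundle argument avoids all of this.

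One detail to write out is joint holomorphy of $R^{(i)}$ across the diagonal. Separate holomorphy in $z_1$ for fixed $z_2$ is not automatic there, since the pole passes through $z_2$. Write the section on $U\times U$ as $f(z_1,z_2)(z_2-z_1)^{-i}\,dz_2^2$ with $f$ holomorphic. The normalization says $f-1$ vanishes to order $i$ along $z_1=z_2$, so $R^{(i)}=(f-1)(z_2-z_1)^{-i}$ is holomorphic on $U\times U$.
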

\begin{proof}%
{%
See \cite[Proposition~6.2]{dinh-thesis}.%
}%
\end{proof}

\paragraph{Some technical results}
In the following we will be interested in families of quadratic differentials of certain forms.
The idea, based on \cite{iwasaki91,iwasaki92}, is to work with families of ``building blocks'' of quadratic differentials and analyze the limit of these families as the poles collide.

Let $(U,z)$ be a coordinated neighborhood of $x_0 \in C$
and $ \mathbf{x'} = x_3 + ... + x_{d}$ for $d \leq 3g-3$ a reduced effective divisor on $C$ 
such that, for all $x_1, x_2 \in U$, $x_1 + x_2 + \mathbf{x'}$ is $Q$-generic.
For each $r\in \{3, ..., d\}$, let $\mathbf{x_r'} \coloneqq \mathbf{x'} - x_r$.
Then, refining $U$ if necessary, 
we can choose some effective divisors $\mathbf{w} = w_0 + w_{d+1} + ... + w_{3g-3}$ on $C$ with support distinct from $U \cup \{x_3, ..., x_d \}$
such that,
for all $x_1$, $x_2 \in U$ and $r\in \{3, ..., d\}$,
$x_1 + \mathbf{x'} + \mathbf{w}$ and $x_1 + x_2 + \mathbf{x_r'} + \mathbf{w}$ are $Q$-generic.

Now, for each $u\in z(U)$,
let $x_\pm(u)$ be points in $U$ having coordinates $\pm u$ respectively.
We use the following short-hand notations for the unique families of quadratic differentials from \Cref{building-blocks-2},
\begin{align}\label{building-blocks-0}
	&q^{(i)}_{u, \pm} \coloneqq q^{(i)}_{0, z, x_\pm, \mathbf{w} + \mathbf{x'}} 
	\overset{ \text{on } (U,z)}{\equiv} \frac{dz^2}{(z \mp u)^i} + R^{(i)}_{u, \pm}(z) dz^2,
	&i \in \{1,2\}, 
\end{align}
where $R^{(i)}_{u, \pm}(z)$ is the restriction 
to the slice $\{(u, z) \}$ of a function $R^{(i)}_0 (z_1, z)$
which is holomorphic in both variables 
and vanishes at the diagonal $\{z = u \}$.

\begin{lemma}\label{lemma-linear-system-right-side}
	Fix $i \in \{1,2\}$. \\
	1. The functions $\frac{1}{u} R^{(i)}_{u, +}(-u)$ and $-\frac{1}{u} R^{(i)}_{u,-}(u)$ are holomorphic on $U$ with variable $u = z(x_+)$. 
	Furthermore, their evaluations at $u = 0$ are equal. \\
	2. For each fixed $z$, the limit 
	\begin{align}
		\check{R}^{(i)} (z) \coloneqq \underset{u \rightarrow 0}{\lim} \frac{1}{u}\left( R^{(i)}_{u,+} (z) - R^{(i)}_{u,-}(z) \right)
		\label{shift-after-collision}
	\end{align}
	exists. 
	These limits define a holomorphic function on $U$ with variable $z$.
\end{lemma}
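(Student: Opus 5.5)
The whole lemma should follow from holomorphy of the two-variable function $R^{(i)}_0(z_1, z_2)$ given by \Cref{building-blocks-2}(2), together with its vanishing on the diagonal, after one small check relating the two building blocks. The plan is to write everything in terms of $R^{(i)}_0$ and apply the elementary fact that a holomorphic function of $u$ vanishing at $u=0$ is $u$ times a holomorphic function.

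\textbf{Identification of $R^{(i)}_{u,\pm}$.} I first check that $R^{(i)}_{u,+}$ and $R^{(i)}_{u,-}$ come from the same two-variable function. Both families in \eqref{building-blocks-0} are the unique normalized family of \Cref{building-blocks-2}(2) attached to the same holomorphic map $x \mapsto \mathbf{w} + \mathbf{x'}$, which is constant in $x$. They differ only in evaluating the pole at $x = x_+(u)$ or $x = x_-(u)$. By uniqueness, I therefore expect
\[ R^{(i)}_{u,+}(z) = R^{(i)}_0(u, z), \qquad R^{(i)}_{u,-}(z) = R^{(i)}_0(-u, z), \]
where $R^{(i)}_0$ is holomorphic on $z(U)\times z(U)$ and satisfies $R^{(i)}_0(t,t) = 0$.

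\textbf{Part 1.} Set $f(u) \coloneqq R^{(i)}_{u,+}(-u) = R^{(i)}_0(u,-u)$ and $g(u) \coloneqq R^{(i)}_{u,-}(u) = R^{(i)}_0(-u,u)$. Both are holomorphic in $u$, since they are compositions of a holomorphic function with holomorphic maps. Both vanish at $u = 0$ because $R^{(i)}_0(0,0) = 0$, so $f(u)/u$ and $-g(u)/u$ extend holomorphically across $u = 0$.

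Their values at $0$ are $f'(0) = \partial_1 R^{(i)}_0(0,0) - \partial_2 R^{(i)}_0(0,0)$ and $-g'(0) = \partial_1 R^{(i)}_0(0,0) - \partial_2 R^{(i)}_0(0,0)$, which agree. Differentiating $R^{(i)}_0(t,t)=0$ gives the further relation $\partial_1 R^{(i)}_0(0,0) = -\partial_2 R^{(i)}_0(0,0)$, so the common value is $2\partial_1 R^{(i)}_0(0,0)$, though this is not needed.

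\textbf{Part 2.} For fixed $z$, the function $h_z(u) \coloneqq R^{(i)}_0(u,z) - R^{(i)}_0(-u,z)$ is holomorphic and odd in $u$, hence vanishes at $u=0$. Consequently $\check R^{(i)}(z) = h_z'(0) = 2\,\partial_1 R^{(i)}_0(0,z)$, which is holomorphic in $z$ because partial derivatives of holomorphic functions of two variables are holomorphic.

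\textbf{Main obstacle.} The only nontrivial point is the identification step: that uniqueness in \Cref{building-blocks-2} really yields a single jointly holomorphic $R^{(i)}_0$ valid for both signs. This needs $x_\pm(u) + \mathbf{x'} + \mathbf{w}$ to be $Q$-generic for all $u$, which is ensured by the choice of $\mathbf{w}$ made before \eqref{building-blocks-0}, after refining $U$ so that both $\pm u$ lie in $z(U)$.
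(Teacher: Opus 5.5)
Your proposal is correct and follows the paper's proof. The paper writes $R^{(i)}_{u,\pm}(z)=R^{(i)}_0(\pm u,z)$ and uses $R^{(i)}_0(0,0)=0$ with a Taylor expansion, obtaining the common value $R^{(i)}_{1,0}-R^{(i)}_{0,1}$ in part 1 and $\check R^{(i)}(z)=2\sum_n R^{(i)}_{1,n}z^n$ in part 2; these are exactly your $\partial_1 R^{(i)}_0(0,0)-\partial_2 R^{(i)}_0(0,0)$ and $2\,\partial_1 R^{(i)}_0(0,z)$. Your identification step does not even need uniqueness, since $q^{(i)}_{u,\pm}$ are by definition the same family of \Cref{building-blocks-2}(2), evaluated at $x_\pm(u)$.
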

{%
The proof is given in \Cref{app-technical-proofs}.
}%

In the following, we analyze the limit when two simple apparent singularities collide.
As two simple apparent singularities approach each other, 
each contributes and blows-up the coefficient $q_{z,0}$ in the Laurent expansion in \eqref{SL-operator_tail-1} at the other one.
The constraint \eqref{SL-operator_residue-constraint-1} implies that  their accessory parameters will also blow-up.
We will show that, if these respective accessory parameters blow-up in a particular way,
the limit upon collision will be either an oper with a double apparent singularity at the collision site,
or an oper with two less apparent singularities.

\subsection{Setup}
\subsubsection{Conditions on the collision site.}
For $2 \leq d \leq 3g-3$,
let $x_0$ and $\mathbf{x}' = x_3 + ... + x_d$
be reduced effective divisors on $C$ 
such that $x_0 \nleq \mathbf{x}'$ and
\begin{align}\label{condition-neighborhood}
	2x_0 + \mathbf{x'} \text{ is } Q\text{-generic.}
\end{align} 
A consequence of this condition is that 
for all pairs of distinct points $x_1$ and $x_2$ in a sufficiently small neighborhood $U$ of $x_0$,
we have
\begin{align}\label{condition-neighborhood-2}
	x_1 + x_2 + \mathbf{x'} \text{ is } Q\text{-generic.}
\end{align} 
Condition \eqref{condition-neighborhood-2} is equivalent to requiring that, given a basis $\mathbf{q} = \left( q_1, ..., q_{3g-3} \right)$ of $H^0\left(K_C^2 \right)$,
the $d \times (3g-3)$ matrix $\mathbf{q_{r,k}} \coloneqq q_k( z_r(x_r))^{1 \leq k \leq 3g-3}_{1 \leq r \leq d}$,
defined by evaluating $q_k$ at $x_r$ using some local coordinate $z_r$,
is of maximal rank $d$.

This condition is satisfied by a generic choices of $x_0$ and $\mathbf{x'}$.

\paragraph{Determinant with simple zero at $x_0$.}
Consider the case where $d = 3g-3$, and $x_0$, $\mathbf{x'}$ satisfy condition \eqref{condition-neighborhood}.
Then upon choosing local coordinates and a basis $\mathbf{q} = (q_k)$ of $H^0(K^2)$,
any $x_1$, $x_2 \in U$
define a $(3g-3)\times (3g-3)$ non-degenerate matrix $\mathbf{q_{r,k}}(x_1, x_2)$.

We will in the following evaluate $q_k$ at $x_1$ and $x_2$ using the same local coordinate $z$ on $U$.
Furthermore, we will make the following choices of $x_1$ and $x_2$.
W.l.o.g., suppose $z(x_0) = 0$.
Then for each $u \in z(U)$, let $x_1(u)$ and $x_2(u)$ be points in $U$ that have coordinates $\pm u$ respectively;
in particular, $x_0 = x_1(0) = x_2(0)$.
Then a choice of a basis $\mathbf{q} = (q_k)$ of $H^0(K^2)$ and local coordinates $z_3$, ..., $z_{3g-3}$ for $x_3$, ..., $x_{3g-3}$
together define a function $\det(\mathbf{q_{r,k}})(u): U \cong z(U) \rightarrow \C$.
This function is holomorphic in $u$ and vanishes only at $u=0$,
since at $u = 0$ the matrix $\mathbf{q_{r,k}}$ is degenerate.
The following proposition shows that if we use a certain family $(q_{k, u})_{k = 1}^{3g-3}$ of basis of $H^0(K^2)$ that also varies holomorphically in $u$, 
the zero at $u = 0$ of the corresponding determinant is simple.

\begin{lemma}\label{lemma-basis-of-quadratic-differentials}
	Let $x_0$ and $\mathbf{x'} = x_3 + ... + x_{3g-3}$ satisfy \eqref{condition-neighborhood},
	and $(U,z)$, $x_1(u)$, $x_2(u)$ be constructed as above.
	Then there exists a family $\mathbf{q}_{u} \coloneqq \{ (q_{k, u})_{k = 1}^{3g-3} \}_{u \in z(U)}$ of basis of $H^0(K^2)$ that is parameterized by $U$ and is such that the function
	$\det (\mathbf{q_{r,k}})(u)$,
	where $\mathbf{q_{r,k}} = q_{k, u}(z_r(x_r))$,
	has a simple zero at $u = 0$.
\end{lemma}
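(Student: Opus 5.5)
The plan is to show that a \emph{constant} family $q_{k,u} \equiv q_k$ already works, where $(q_k)_{k=1}^{3g-3}$ is any fixed basis of $H^0(K^2)$. The only input is condition \eqref{condition-neighborhood}: $2x_0 + \mathbf{x'}$ is $Q$-generic. In the notation above, the rows of $\mathbf{q_{r,k}}(u)$ are:
\begin{itemize}
\item the first row $\bigl(q_k(u)\bigr)_k$, from evaluating at $x_1(u)$, whose coordinate is $z = u$;
\item the second row $\bigl(q_k(-u)\bigr)_k$, from $x_2(u)$;
\item the remaining rows $\bigl(q_k(z_r(x_r))\bigr)_k$ for $r = 3, \dots, 3g-3$, which are independent of $u$.
\end{itemize}
Here each $q_k$ is written as a holomorphic function of $z$ on $U$ via $q_k = q_k(z)\,dz^2$.

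The first step is an elementary row operation. Replace the second row by $\frac{1}{2u}\bigl(q_k(u) - q_k(-u)\bigr)_k$. This function of $u$ is the odd part of $q_k$ divided by $u$, so it extends holomorphically across $u=0$ with value $q_k'(0)$. Call the resulting holomorphic matrix $M(u)$. Multilinearity of the determinant gives
\[
\det(\mathbf{q_{r,k}})(u) = -2u\,\det M(u),
\]
first for $u \neq 0$, and then identically since both sides are holomorphic.

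The second step is to show $\det M(0) \neq 0$. The rows of $M(0)$ are $\bigl(q_k(0)\bigr)_k$, $\bigl(q_k'(0)\bigr)_k$ and $\bigl(q_k(z_r(x_r))\bigr)_k$ for $r \geq 3$. Suppose a vector $(c_k)$ lies in the kernel of $M(0)$. Then $q = \sum_k c_k q_k$ satisfies $q(0) = q'(0) = 0$, so it vanishes to order $2$ at $x_0$, and it also vanishes at each $x_r$. Hence $q \in Q_{2x_0 + \mathbf{x'}}$. Since $\deg(2x_0 + \mathbf{x'}) = 3g-3$, $Q$-genericity in \eqref{condition-neighborhood} forces $Q_{2x_0+\mathbf{x'}} = 0$, so $q = 0$ and therefore $c = 0$. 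Thus $M(0)$ is invertible, and $\det(\mathbf{q_{r,k}})(u)$ has a simple zero at $u=0$. Shrinking $U$ so that $\det M$ has no zero on $z(U)$ recovers the statement that $u=0$ is the only zero.

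I expect no real obstacle. The one point to be careful about is bookkeeping: the first two rows must be evaluated in the \emph{same} coordinate $z$, which is exactly what makes the difference quotient equal to $q_k'(0)$ at $u=0$. The other rows use $u$-independent local coordinates, so they contribute only a nonzero constant rescaling.

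If a family genuinely varying in $u$ is wanted, for instance one adapted to later computations, the same argument shows it suffices to choose $\mathbf{q}_u$ with $q_{k,u}$ holomorphic in $u$ and $(q_{k,0})$ a basis. A convenient choice is a basis in which $q_{3,0}, \dots, q_{3g-3,0}$ are dual to the evaluation functionals at $x_3, \dots, x_{3g-3}$ and vanish to second order at $x_0$, while $q_{1,0}$ and $q_{2,0}$ are normalized by $q_{1,0}(0) = 1$ and $q_{2,0}'(0) = 1$. Then $M(0)$ is block triangular with identity diagonal blocks.
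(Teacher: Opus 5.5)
Your proof is correct, but it follows a different route from the paper's. The paper does not keep the basis fixed. It takes $q_{k,u}$ for $k\geq 3$ to vanish at $x_1(u)$ and $x_2(u)$ with $q_{k,u}(z_r(x_r))=\delta_{kr}$, which uses $Q$-genericity of $x_1(u)+x_2(u)+\mathbf{x'}$ along the family. It also takes $u$-independent $q_1,q_2$ vanishing on $\mathbf{x'}$, with $q_1$ having a simple zero at $x_0$ and $q_2(0)\neq 0$, obtained via an auxiliary point $x_0'$. The matrix is then block diagonal, and the determinant reduces to $q_1(u)q_2(-u)-q_1(-u)q_2(u)=2q_1'(0)q_2(0)u+\mathcal{O}(u^2)$.

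Your difference-quotient row operation and kernel argument need only $Q_{2x_0+\mathbf{x'}}=0$, and they are more conceptual. Passing to any other holomorphic family of bases multiplies the determinant by a nowhere-vanishing holomorphic function of $u$. So your argument shows that simplicity of the zero is intrinsic, and no special family is needed for it, despite what the sentence preceding the lemma suggests.

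What the paper's family buys is the block structure it exploits immediately afterwards. That structure decouples \eqref{linear-system-total} into the $2\times 2$ system \eqref{linear-system-decoupled-1} and the trivial equations \eqref{linear-system-decoupled-2}. It also yields the explicit formulas \eqref{E1-E2}, with $q_1(0)=0$, $q_1'(0)\neq 0$ and $q_2(0)\neq 0$, which are used in the proof of \Cref{lemma-tune-limit-hol-q}. A constant basis does not give this decoupling for $u\neq 0$, and neither does your family adapted only at $u=0$.

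There is a small slip in your last paragraph. The normalizations $q_{1,0}(0)=1$ and $q_{2,0}'(0)=1$ alone do not make the upper-left $2\times 2$ block of $M(0)$ the identity; its determinant is $1-q_{1,0}'(0)\,q_{2,0}(0)$. You should instead take the basis dual to $q\mapsto\bigl(q(0),q'(0),(q(z_r(x_r)))_{r\geq 3}\bigr)$. This basis exists precisely because $2x_0+\mathbf{x'}$ is $Q$-generic.
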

{%
The proof is given in \Cref{app-technical-proofs}.
The family constructed there is of the form $\mathbf{q}_u = (q_1, q_2, q_{3,u}, \dots, q_{3g-3,u})$, where $q_1$ and $q_2$ do not depend on $u$ and vanish at $\mathbf{x'}$, $q_1$ has a simple zero at $x_0$, $q_2$ does not vanish at $x_0$, and, for $3 \leq k \leq 3g-3$, $q_{k,u}$ vanishes at $x_1(u)$, $x_2(u)$ and satisfies $q_{k,u}(z_r(x_r)) = \delta_{kr}$ for $3 \leq r \leq 3g-3$.
}%
{%
For $d < 3g-3$, the same construction applies after replacing $\mathbf{x'}$ by $\mathbf{x'} + w_{d+1} + \dots + w_{3g-3}$, where $w_{d+1}, \dots, w_{3g-3}$ are auxiliary points such that $2x_0 + \mathbf{x'} + w_{d+1} + \dots + w_{3g-3}$ is $Q$-generic.
}%

\subsubsection{Families of meromorphic quadratic differentials.}
Given $x_0$ and $\mathbf{x'}$ satisfying condition \eqref{condition-neighborhood}, 
let $(U,z)$, $(U_3, z_3)$, ..., $(U_d, z_d)$ be some respective coordinate neighborhoods of $x_0$, $x_3$, ..., $x_d$, 
which do not intersect each other and are subordinate to a fixed holomorphic projective structure.
W.l.o.g., let $z(x_0) = 0$; 
for each $u\in z(U)$, let $x_\pm (u) \in U$ be defined by $z(x_\pm) = \pm u$.
Let $\mathbf{x_r'} \coloneqq \mathbf{x}' - x_r$.

Let $\nu_\pm(u)$, $\nu_3(u)$, ..., $\nu_{d}(u)$ be holomorphic functions from $U \setminus \{x_0\}$ to $\mathbb{C}$.
We want to characterize a family of opers $\{ \mathcal{D}_u \}_{u \in z(U), u\neq 0}$ holomorphically parameterized by $U \setminus \{x_0\}$,
where $\mathcal{D}_u$ has simple apparent singularities at $x_\pm(u)$, $x_3$, ..., $x_d$
and respective accessory parameters $\nu_\pm(u)$, $\nu_3(u)$, ..., $\nu_d(u)$
w.r.t. local coordinates $z$, $z_3$, ..., $z_d$.
We do this by writing $\mathcal{D}_u = \mathcal{D}_{pr} + q_u$,
where $\mathcal{D}_{pr}$ is an oper defined by the chosen holomorphic projective structure, and $q_u$ is a meromorphic quadratic differential built in terms of the ``building blocks''.
One set of the ``building blocks'' comes from \eqref{building-blocks-0},
some $u \rightarrow 0$ limits of which are discussed in \Cref{lemma-linear-system-right-side}.
We also use the following short-hand notations 
\begin{align*}
	&q^{(i)}_{u, x_r} \coloneqq q^{(i)}_{0, z_r, x_r, x_+ + x_- + \mathbf{x_r'} + \mathbf{w} - w_0},
	&3 \leq r \leq d,
\end{align*}
for ``normalized'' quadratic differentials that have simple/double pole at $x_r$
and vanishes at 
$$x_+ + x_-  + x_3 + ... + x_{r-1} + x_{r+1} + ... + x_d + w_{d+1} + ... + w_{3g-3}.$$ 
We also denote the $u \rightarrow 0$ limits 
of these families by 
\begin{align}\label{common-building-block-limit}
	&q^{(i)}_{0} \coloneqq q^{(i)}_{0, z, x_0, \mathbf{w} + \mathbf{x'}} 
	\overset{ \text{on } (U,z)}{\equiv} \frac{dz^2}{z^i} + R^{(i)}_{0}(z) dz^2,
	&q^{(i)}_{0, x_r} \coloneqq q^{(i)}_{0, z_r, x_r, 2x_0 + \mathbf{x_r'} + \mathbf{w} - w_0}.
\end{align}

\begin{remark}
	We have defined $q^{(i)}_{u, \pm}$ upon choosing a divisor $\textbf{w} = w_0 + w_{d+1} + ... + w_{3g-3}$.
	Choosing a different choice of $\mathbf{w}$ amounts to translating the definitions of $q^{(i)}_{u, \pm}$ and $q^{(i)}_{u, x_r}$ 
	by a holomorphic quadratic differential that vanishes at $\mathbf{x'}$ and $x_+ + x_- + \mathbf{x_r'}$ respectively.
\end{remark}

Now, to construct $q_u$ from the ``building blocks'', for each nonzero $u \in z(U)$, let
\begin{align}
	q_u = q^{(0)}_u 
	+ \sum_{s\in\{ \pm\}} \nu_{s}(u) q^{(1)}_{u, s}
	- \frac{3}{4} \sum_{s\in \{\pm\}} q^{(2)}_{u, s}
	+ \sum_{r=3}^{d} \nu_r(u) q_{u, x_r}^{(1)} - \frac{3}{4} \sum_{r=3}^{d} q^{(2)}_{u, x_r},
	\label{q-u-setup}
\end{align}
where $q^{(0)}_u$ is the holomorphic quadratic differential 
that makes $q_u$ satisfy condition \eqref{SL-operator_residue-constraint-1},
which now due to the construction of $q^{(i)}_{u, \pm}$ and $q^{(i)}_{u, x_r}$ takes the form
\begin{subequations}\label{linear-system-total}
	\begin{align}
		q^{(0)}_u(z) \mid_{z=\pm u} &=  -\nu_\pm(u)^2 - \nu_\mp(u) q^{(1)}_{u,\mp}(z)\mid_{z=\pm u} + \frac{3}{4} q^{(2)}_{u, \mp}(z)\mid_{z=\pm u} \eqqcolon C_\pm(u),
		 \label{linear-eqn-1} \\
		q_{u}^{(0)} (z_r) \mid_{z_r = z_r(x_r)} 
		&= - \nu_r(u)^2, & \text{ for } 3 \leq r \leq d. \label{linear-eqn-far}
	\end{align}
\end{subequations}
The functions $\nu_\pm(u)$, $\nu_3(u)$, ..., $\nu_{d}(u)$ define via these constraints \eqref{linear-system-total}
for each $u \neq 0$ a non-homogeneous linear system in $H^0(K_C^2)$.
Due to condition \eqref{condition-neighborhood}, the homogeneous linear system is of maximal rank $d$.
It follows that, for each $u\neq 0$, 
the non-homogeneous linear system \eqref{linear-system-total} defines via its solutions a $(3g-3-d)$-dimensional subspace in $H^0(K^2)$, 
and hence a $(3g-3-d)$-dimensional family of meromorphic quadratic differentials $q_u$ with the appropriate Laurent tails at $x_\pm$, $x_3$, ..., $x_d$ defined by $\nu_\pm(u)$, $\nu_3(u)$, ..., $\nu_{d}(u)$.

We can represent the linear system \eqref{linear-system-total} more explicitly 
by using the family $\mathbf{q}_{u} = \{ (q_{k, u})_{k = 1}^{3g-3} \}_{u \in z(U)}$ of basis of $H^0(K^2)$ %
constructed in \Cref{lemma-basis-of-quadratic-differentials}.
For $u\neq 0$,
let $q^{(0)}_u = \sum_{k=1}^{3g-3} E_k(u) q_{k, u}$.
Then \eqref{linear-system-total} decouples into 
a $2\times 2$ linear system representing \eqref{linear-eqn-1}, 
\begin{subequations}\label{linear-system-decoupled}
	\begin{equation}\label{linear-system-decoupled-1}
		\begin{pmatrix}
			&q_1(z)\mid_{z=u} & q_2(z)\mid_{z=u}  \\
			&q_1(z)\mid_{z=-u} & q_2(z)\mid_{z=-u} 
		\end{pmatrix}
		\begin{pmatrix}
			E_1(u) \\
			E_2(u) 		
		\end{pmatrix}
		=  \begin{pmatrix}
			C_+(u) \\
			C_-(u) 
		\end{pmatrix},
	\end{equation}
and a trivial $(d-2) \times (d-2)$ one representing \eqref{linear-eqn-far},
\begin{equation}\label{linear-system-decoupled-2}
	E_{r}(u) = -\nu_r(u)^2 \text{ for } 3 \leq r \leq d.
\end{equation}
\end{subequations}
For $d < 3g-3$, the expansion coefficients $E_r(u)$ with $r$ in the range $d < r \leq 3g-3$ are not constrained and parameterize the $(3g-3-d)$-dimensional 
subspace in $H^0(K^2)$ consisting of solutions to \eqref{linear-system-total}.

\paragraph{Limits of the solutions to the linear systems.}
We will be interested in 
families of meromorphic quadratic differentials $\{q_u\}_{u \in z(U), u\neq 0}$ 
that have well-defined limits $q_0 \coloneqq \underset{u \rightarrow 0}{\lim} q_u$ 
with the Laurent tails at the poles having the appropriate forms to make these poles apparent singularities.
In other words, we are interested in families $\{\mathcal{D}_u\}_{u \in z(U)}$  of opers parameterized by $U$,
with $\mathcal{D}_u$ for $u \neq 0$ having simple apparent singularities at $x_\pm$, $x_3$, ..., $x_d$ 
and $\mathcal{D}_0$ being the limit as $x_+$ and $x_-$ collide.

More specifically, we will study families determined by $\nu_3(u)$, ..., $\nu_d(u)$ being holomorphic for all $u \in z(U)$,
and $\nu_\pm(u)$ holomorphic at $u \neq 0$. 
On $U$, the quadratic differential $q_u$ then takes the form
\begin{equation}\label{eqn-qu-expansion}
	q_u(z) = \frac{-3/4}{(z-u)^2} + \frac{-3/4}{(z+u)^2} + \frac{\nu_+(u)}{z-u} + \frac{\nu_-(u)}{z+u} + g_u(z),
\end{equation}
where $g_u(z)$ is holomorphic in $z$; for the families of Form (B) in \Cref{prop-bb-limit-3}, it is holomorphic in both $u$ and $z$ (\Cref{rem-gu-holomorphic}).
We consider accessory parameters $\nu_\pm(u)$ with Laurent expansions at $u = 0$ of one of the following two forms: Form (B), for ``bubbling'' (\Cref{prop-bb-limit-3} and \Cref{thm-bubble}),
\begin{equation}\label{eqn-form-B}
	\nu_\pm(u) = \pm \frac{3}{4u} \pm \nu' u + \mathcal{O}(u^2),
\end{equation}
and Form (D), for ``doubling'' (\Cref{prop-double-as}),
\begin{equation}\label{eqn-form-D}
	\nu_\pm(u) = \mp \frac{1}{4u} + \nu_0^\mathcal{D} \pm \nu' u + \mathcal{O}(u^2),
\end{equation}
where $\nu'$ and $\nu_0^\mathcal{D}$ are constants.

\begin{lemma}\label{lemma-limit-linear-system}
	Suppose $\nu_3(u)$, ..., $\nu_d(u): U \cong z(U) \rightarrow \C$ are holomorphic functions,
	and $\nu_\pm(u): U \cong z(U) \rightarrow \C$ are meromorphic functions with simple poles and Laurent expansions at $u = 0$ of either the form \eqref{eqn-form-B} or \eqref{eqn-form-D}.
	Then the induced non-homogeneous linear system \eqref{linear-system-total} limits to a degenerate $d \times (3g-3)$ linear system of rank $d-1$. 
\end{lemma}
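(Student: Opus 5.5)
The plan is to work in the decoupled form \eqref{linear-system-decoupled}. The far block \eqref{linear-system-decoupled-2} has holomorphic right-hand sides $-\nu_r(u)^2$ and identity coefficients, so it has a holomorphic limit of rank $d-2$. The whole question therefore reduces to the $2\times 2$ block \eqref{linear-system-decoupled-1}. Since the basis $\mathbf{q}_u$ of \Cref{lemma-basis-of-quadratic-differentials} is holomorphic in $u$, the coefficient matrix $\begin{pmatrix} q_1(u) & q_2(u)\\ q_1(-u) & q_2(-u)\end{pmatrix}$ extends holomorphically to $u=0$. Its limit has two equal rows $(q_1(0), q_2(0)) = (0, q_2(0))$ with $q_2(0)\neq 0$, so it has rank exactly $1$. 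Together with the far block, and because $q_{k,0}$ for $k\ge 3$ vanishes at $x_0$ so the blocks stay decoupled, the homogeneous limit system has rank $d-1$.

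It remains to show that the right-hand sides $C_\pm(u)$ have finite limits and that these limits are equal. Only then does the limiting non-homogeneous system make sense, and it is consistent and degenerate of rank $d-1$. I would expand $C_+(u) = -\nu_+^2 - \nu_- q^{(1)}_{u,-}(u) + \tfrac34 q^{(2)}_{u,-}(u)$ using \eqref{building-blocks-0}:
\[
q^{(1)}_{u,-}(u) = \frac{1}{2u} + R^{(1)}_{u,-}(u), \qquad q^{(2)}_{u,-}(u) = \frac{1}{4u^2} + R^{(2)}_{u,-}(u).
\]
By \Cref{lemma-linear-system-right-side}(1), $R^{(i)}_{u,-}(u) = -u\,\rho_i + \mathcal{O}(u^2)$, where $\rho_i$ is the common value at $0$ of $\tfrac1u R^{(i)}_{u,+}(-u)$ and $-\tfrac1u R^{(i)}_{u,-}(u)$. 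Symmetrically, $R^{(i)}_{u,+}(-u) = u\rho_i + \mathcal{O}(u^2)$.

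Next I would check the pole cancellation in each form. In Form (B), $\nu_\pm = \pm\tfrac{3}{4u} \pm \nu' u + \dots$, and the $u^{-2}$ coefficient of $C_+$ is
\[
-\tfrac{9}{16} + \tfrac{3}{4}\cdot\tfrac12 + \tfrac{3}{16} = 0.
\]
In Form (D) it is $-\tfrac1{16} - \tfrac18 + \tfrac3{16} = 0$. The $u^{-1}$ terms cancel or vanish by the parity built into the forms: in (D), the contributions $\nu_0^{\mathcal D}\cdot(\tfrac{1}{2u}\tfrac{1}{u}\dots)$ from $-\nu_+^2$ and from $-\nu_- q^{(1)}$ must be tracked, and they give $\tfrac{\nu_0}{2u} - \tfrac{\nu_0}{2u} = 0$. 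The constant terms combine $-2(\pm\tfrac34)(\pm\nu')$-type cross terms, the product of the $\mathcal{O}(u^{-1})$ part of $\nu_-$ with $R^{(1)}_{u,-}(u) \sim -u\rho_1$, and $\tfrac34 R^{(2)}_{u,-}(u)\to 0$. Hence $C_+(u)$ has a finite limit.

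The same computation for $C_-(u)$, using $R^{(i)}_{u,+}(-u)\sim u\rho_i$ and $\nu_\mp$ in place of $\nu_\pm$, gives the same constant. This is because the forms are symmetric under $u\mapsto -u$ together with $\pm\mapsto\mp$: $\nu_-(u) = \nu_+(-u)$ in both (B) and (D). The main care point is this bookkeeping of the constant terms, specifically verifying that the cross term $\nu_\mp\cdot R^{(1)}$ contributes identically to both rows. That is exactly what the equality of evaluations in \Cref{lemma-linear-system-right-side}(1) provides.

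With $\lim C_+ = \lim C_-$ finite and the coefficient matrix limiting to one with two equal rank-one rows, the limiting $2\times2$ block is consistent of rank $1$. Adding the rank $d-2$ far block gives a degenerate $d\times(3g-3)$ system of rank $d-1$.
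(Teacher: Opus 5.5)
Your proposal is correct and follows the paper's proof. You substitute Forms (B) and (D) into $C_\pm(u)$, check that the $u^{-2}$ and $u^{-1}$ terms cancel, and use part 1 of \Cref{lemma-linear-system-right-side} to get $C_+(0)=C_-(0)$; your symmetry remark $\nu_-(u)=\nu_+(-u)$ holds only modulo $\mathcal{O}(u^2)$, but that is all you need. The one cosmetic difference is that you read off the rank $d-1$ from the adapted basis of \Cref{lemma-basis-of-quadratic-differentials}, whereas the paper uses $\dim Q_{x_0+\mathbf{x}'} = 3g-2-d$; both rest on the same $Q$-genericity of $2x_0+\mathbf{x}'$.
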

{%
The proof is given in \Cref{app-technical-proofs}.
In the limit, $C_+(u)$ and $C_-(u)$ in \eqref{linear-eqn-1} tend to the same value $C_0$ given in \eqref{limit-RHS}, and the limit linear system \eqref{linear-system-total-limit} reads $q^{(0)}_0(0) = C_0$, $q^{(0)}_0(z_r(x_r)) = -\nu_r(0)^2$ for $3 \leq r \leq d$.
}%

\begin{lemma}\label{lemma-tune-limit-hol-q}
	Let $x_0$, $x_3$, ..., $x_{d}$ satisfy condition \eqref{condition-neighborhood},
	$\nu_0\in \C$, and $q \in H^0(K^2)$ a holomorphic quadratic differential.
	Then there exist holomorphic functions $\nu_3(u)$, ..., $\nu_{d}(u): z(U) \rightarrow \C$ 
	and $\nu_\pm(u): z(U\setminus \{x_0\}) \rightarrow \C$ with Laurent expansions at $u =0$ of the forms \eqref{eqn-form-B} or \eqref{eqn-form-D} (with $\nu_0$ as the coefficient of order $u^0$),
	and a family of holomorphic quadratic differentials $\{q^{(0)}_u\}_{u\in z(U)}$ such that
	\begin{enumerate}[(i)]
		\item $q^{(0)}_u$ solves the linear system \eqref{linear-system-total} defined by $\nu_\pm(u)$, $\nu_3(u)$, ..., $\nu_d(u)$ for $u\neq 0$;
		\item $q^{(0)}_0 = q$ solves the corresponding limit linear system \eqref{linear-system-total-limit}.
	\end{enumerate}
\end{lemma}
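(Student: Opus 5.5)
We prove the lemma by working in the $u$-dependent basis $\mathbf{q}_u = (q_1, q_2, q_{3,u}, \dots, q_{3g-3,u})$ of \Cref{lemma-basis-of-quadratic-differentials}. In this basis the system \eqref{linear-system-total} decouples into \eqref{linear-system-decoupled-1} and \eqref{linear-system-decoupled-2}. The idea is to prescribe the whole family $q^{(0)}_u$ first, as a holomorphic deformation of $q$, and then solve for the accessory parameters.

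\textbf{Step 1: far coefficients and the free coefficients.} Expand $q = \sum_k E_k(0) q_{k,0}$. For $3 \le r \le d$ we need $E_r(u) = -\nu_r(u)^2$, holomorphic with $E_r(0) = q(z_r(x_r))$. We take $E_r(u) \equiv E_r(0)$ and choose $\nu_r(u) \equiv \nu_r$ to be a constant square root of $-q(z_r(x_r))$. For $d < r \le 3g-3$ we keep $E_r(u) \equiv E_r(0)$ as well. This settles \eqref{linear-eqn-far}. At $u=0$ the limit system \eqref{linear-system-total-limit} consists of these equations together with the single equation $q(0) = C_0$.

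\textbf{Step 2: the $2\times 2$ block.} It remains to arrange \eqref{linear-eqn-1}. Here the unknowns are effectively $\nu_\pm(u)$, while $E_1(u), E_2(u)$ should be holomorphic with the prescribed values $E_1(0), E_2(0)$. We take $E_1, E_2$ constant and solve \eqref{linear-eqn-1} for $\nu_\pm(u)$ order by order in $u$. Substituting the Laurent ansatz \eqref{eqn-form-B} or \eqref{eqn-form-D} into $C_\pm(u)$, we expand $q^{(1)}_{u,\mp}(\pm u) = \mp\frac{1}{2u} + R^{(1)}_{u,\mp}(\pm u)$ and $q^{(2)}_{u,\mp}(\pm u) = \frac{1}{4u^2} + R^{(2)}_{u,\mp}(\pm u)$. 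We then use \Cref{lemma-linear-system-right-side}(1): the terms $R^{(i)}_{u,\mp}(\pm u)$ are $\mathcal{O}(u)$, with matched leading coefficients. At this point the computation in the proof of \Cref{lemma-limit-linear-system} applies.
\begin{itemize}
\item The orders $u^{-2}$ cancel exactly for both Forms (B) and (D). This is precisely what selects the coefficients $\pm\frac{3}{4}$ and $\mp\frac14$.
\item The orders $u^{-1}$ cancel by the $\pm$-symmetry of the ansatz.
\item At order $u^0$, $C_\pm(0) = C_0$ is an explicit expression in $\nu'$ (Form B), respectively in $\nu_0^{\mathcal{D}} = \nu_0$ and $\nu'$ (Form D).
\end{itemize}
We choose $\nu'$ so that $C_0 = q(0)$. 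This is possible because $C_0$ depends affinely on $\nu'$ with a nonzero coefficient (e.g.\ $-2\cdot\frac{3}{4}\nu'\cdot(\pm)^2$-type terms). The required identity $C_0 = q(0)$ is exactly statement (ii).

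\textbf{Step 3: higher orders and holomorphy.} We now treat $\nu_+$ and $\nu_-$ as independent unknowns and write $\nu_\pm(u) = \pm\frac{3}{4u} + \eta_\pm(u)$, or the analogue for Form (D), with $\eta_\pm$ holomorphic. The equations \eqref{linear-eqn-1} become two equations $F_\pm(u, \eta_+, \eta_-) = 0$. These equations are holomorphic in $u$ after the singular terms are cancelled, and their linear part in $\eta_\pm$ is dominated by $-2\nu_\pm\eta_\pm \pm \frac{1}{2u}\eta_\mp$. Multiplying by $u$, the Jacobian at $u=0$ with respect to $(\eta_+, \eta_-)$ is a constant matrix, for instance $\begin{pmatrix} -3/2 & 1/2 \\ 1/2 & -3/2\end{pmatrix}$ for Form (B). This matrix is invertible, and invertibility is what allows the holomorphic implicit function theorem to produce holomorphic $\eta_\pm(u)$ near $0$ solving the equations exactly.

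Two points remain. The first is the residual form $\pm\nu' u + \mathcal{O}(u^2)$ required by \eqref{eqn-form-B} and \eqref{eqn-form-D}; it follows from the $u \mapsto -u$ symmetry exchanging $x_+$ and $x_-$, which maps solutions to solutions, together with uniqueness. The second is that the order-$0$ condition of Step 2 must be compatible with the initial value at $u=0$; this compatibility is the content of Step 2.

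The main obstacle is the bookkeeping in Step 3. One has to verify that, after multiplication by $u$, the equations are genuinely holomorphic at $u=0$ with an invertible Jacobian. This needs the precise cancellation and matching supplied by \Cref{lemma-linear-system-right-side} and \Cref{lemma-limit-linear-system}. If the symmetric Jacobian fails to be invertible for Form (D), with entries $\frac12, \frac12$, I would instead let $E_1, E_2$ vary, using the simple zero of the determinant from \Cref{lemma-basis-of-quadratic-differentials} to absorb the degenerate direction.
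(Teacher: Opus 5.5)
Your treatment of Form (B) is correct and takes a genuinely different route from the paper. You hold $q^{(0)}_u=E^0_1q_1+E^0_2q_2+\sum_{k\ge3}E^0_kq_{k,u}$ fixed and solve \eqref{linear-eqn-1} exactly for $\nu_\pm=\pm\frac{3}{4u}+\eta_\pm$.

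\begin{itemize}
\item After multiplying by $u$, the system is holomorphic in $(u,\eta_+,\eta_-)$. Its linearization at $u=0$ is $\begin{pmatrix}-3/2&-1/2\\ 1/2&3/2\end{pmatrix}$. Your signs are off because $q^{(1)}_{u,\mp}(\pm u)=\pm\frac{1}{2u}+\dots$, but the matrix is invertible either way, so the implicit function theorem gives holomorphic $\eta_\pm$ with $\eta_\pm(0)=0$.
\item Since $q^{(0)}_u(\pm u)=(E^0_1q_1+E^0_2q_2)(\pm u)$ and $q^{(i)}_{-u,\pm}=q^{(i)}_{u,\mp}$, the pair $(\nu_-(-u),\nu_+(-u))$ is again a solution. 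Uniqueness then gives $\eta_+(u)=\eta_-(-u)$, hence the $\pm\nu'u$ term.
\end{itemize}

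The paper argues the other way round. It prescribes finitely many Laurent coefficients of $\nu_\pm$ and obtains $E_1(u),E_2(u)$ from Cramer's rule \eqref{E1-E2}. It then uses the simple zero of the determinant from \Cref{lemma-basis-of-quadratic-differentials} and L'H\^opital, tuning $\nu_1$ and $\nu_{+,2}+\nu_{-,2}$ to reach $E^0_2$ and $E^0_1$. Your version gives a bit more: the holomorphic part can be frozen, and $\nu_\pm$ is then unique. The paper's version needs no nondegeneracy and treats both forms uniformly.

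The gap is Form (D). There the linearization is $\frac12\begin{pmatrix}1&-1\\1&-1\end{pmatrix}$, so the implicit function theorem does not apply. Its kernel, $\eta_+=\eta_-$, is exactly the direction of the common $u^0$-coefficient $\nu_0$. Going further in the expansion, the equations first fix $\nu'$ in terms of $\nu_0$ and $q(0)$. At the next order they impose a cubic equation on $\nu_0$ whose coefficients depend on $q$ only through the limits $E^0_1,E^0_2$. So your fallback of letting $E_1(u),E_2(u)$ vary with these limits fixed cannot absorb the degenerate direction.

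In fact no argument can produce Form (D) with $\nu_0$ prescribed independently of $q$ (and $q^{(0)}_u\to q$, as the lemma is used). Replace $q$ by $q+c\,q_1$. Because $q_1$ vanishes on $x_0+\mathbf{x'}$, this still solves the limit system \eqref{linear-system-total-limit}. But it shifts the coefficient $q_{0,1}$ in \eqref{db-condition-limit} by $c\,q_1'(0)\neq0$ and leaves $q_{0,0}$ unchanged. Meanwhile \Cref{prop-db-limit-2} forces every such limit to satisfy \eqref{db-condition-limit}, so this fails for all but one value of $c$.

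This is why the paper's proof tunes $(\nu_0,\nu_1)$ and not $\nu_1$ alone; the resulting condition on $\nu_0$ is a cubic, which always has a root. It is also why the prescribed $\nu^{\mathcal D}_0$ in \Cref{prop-double-as} is admissible: it comes from an actual double apparent singularity. To complete Form (D), you must treat $\nu_0$ as an unknown or assume this compatibility. The paper's matching of the $u^1$-coefficients of $N_1,N_2$ then finishes the argument. An exact implicit-function argument with $E_1,E_2$ frozen would additionally need the chosen root of the cubic to be simple.
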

{%
The proof is given in \Cref{app-technical-proofs}.
}%

\subsection{Double apparent singularity as the limit}

In this subsection we will analyze the limit $\underset{u \rightarrow 0}{\lim} q_u$ 
if $\nu_\pm(u)$ take the form \eqref{eqn-form-D}.

\begin{lemma}\label{lemma-db-limit-1}
	Let $x_0$, $x_3$, ..., $x_d$ be distinct points on $C$, 
	$(U,z)$ a coordinate neighborhood of $x_0$ with $z(x_0) = 0$,
	$\nu_{\pm}(u) = \mp \frac{1}{4u} + \nu_0 \pm \nu' u + \mathcal{O}(u^2)$ holomorphic functions on $U\setminus \{x_0 \}$,
	and $q^{(i)}_{u, \pm}$ defined as in \eqref{building-blocks-0} for $u \in z\left( U \right)$, $u \neq 0$.
	Then the family of meromorphic quadratic differentials	
	\begin{align} \label{family-merom-q-db}
		&\sum_{s\in \{\pm\}} \nu_s(u) q^{(1)}_{u, s} - \frac{3}{4} \sum_{s\in \{\pm\}} q^{(2)}_{u, s},
	\end{align}
	which is parameterized by $U \setminus \{x_0\}$, 
	extends to a family parameterized by $U$.
	The meromorphic quadratic differential $q^{(2)}_{x_0}$ 
	corresponding to the extension to $x_0$
	is holomorphic on $C \setminus \{x_0\}$,
	vanishes at $x_3$, ..., $x_{d}$,
	and on $U$ takes the form
	\begin{align}\label{q_x_0-local-form}
		q^{(2)}_{x_0} 
		\equiv q^{(2)}_{x_0}(\nu_0) 
		= \left[ - \frac{2 }{z^2}  + \frac{2 \nu_0 }{z} 
		+ 2\nu_0  R^{(1)}_{0}(z)
		-\frac{1}{4} \check{R}^{(1)}(z) 
		- \frac{3}{2}R^{(2)}_0(z) \right] dz^2,
	\end{align}
	where $\check{R}^{(1)}(z)$ is defined in part 2 of \Cref{lemma-linear-system-right-side}
	and $R^{(i)}_0(z)$ defined in \eqref{common-building-block-limit}.
\end{lemma}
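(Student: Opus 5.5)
The plan is to expand the family \eqref{family-merom-q-db} on $U$ using the normalized building blocks \eqref{building-blocks-0} and to show that every singular-in-$u$ term cancels or combines into the stated limit. Away from $U$ nothing needs checking. For fixed $u$, each $q^{(i)}_{u,\pm}$ is holomorphic on $C\setminus\{x_\pm\}$ and vanishes at $\mathbf{w}+\mathbf{x'}$. Hence any limit of a linear combination of them is holomorphic on $C\setminus\{x_0\}$ and vanishes at $x_3,\dots,x_d$, since vanishing is a closed condition. So it suffices to prove that the family converges as $u\to 0$, uniformly on compacta of $C\setminus\{x_0\}$. On $U$ we will read off the limiting Laurent tail, using that the coefficients are holomorphic in $u$ after multiplying by suitable powers of $z^2-u^2$.

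On $(U,z)$ we write the family as
\[
\frac{\nu_+(u)}{z-u}+\frac{\nu_-(u)}{z+u}-\frac34\left(\frac{1}{(z-u)^2}+\frac{1}{(z+u)^2}\right)
+\nu_+R^{(1)}_{u,+}+\nu_-R^{(1)}_{u,-}-\frac34\left(R^{(2)}_{u,+}+R^{(2)}_{u,-}\right).
\]
Substituting $\nu_\pm=\mp\frac{1}{4u}+\nu_0\pm\nu'u+\mathcal{O}(u^2)$, the polar part becomes
\[
-\frac{1}{4u}\left(\frac{1}{z-u}-\frac{1}{z+u}\right)+\nu_0\left(\frac{1}{z-u}+\frac{1}{z+u}\right)+\mathcal{O}(u)\cdot(\text{bounded for } z\neq 0).
\]
This equals $-\frac{1}{2(z^2-u^2)}+\frac{2\nu_0 z}{z^2-u^2}+\dots$, which tends to $-\frac{1}{2z^2}+\frac{2\nu_0}{z}$. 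The double-pole terms tend to $-\frac{3}{2z^2}$, so the total leading coefficient is $-2/z^2$, as claimed.

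For the regular part, $\nu_0(R^{(1)}_{u,+}+R^{(1)}_{u,-})\to 2\nu_0R^{(1)}_0$, and $-\frac34(R^{(2)}_{u,+}+R^{(2)}_{u,-})\to-\frac32R^{(2)}_0$. The remaining term is $-\frac{1}{4u}(R^{(1)}_{u,+}-R^{(1)}_{u,-})$, which converges to $-\frac14\check R^{(1)}$ by part 2 of \Cref{lemma-linear-system-right-side}. The $\pm\nu'u$ terms contribute $\mathcal{O}(u)$ and vanish in the limit.

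The main obstacle is making this argument global and uniform: the term $\frac{1}{u}(q^{(1)}_{u,+}-q^{(1)}_{u,-})$ must converge as a quadratic differential on all of $C$, not only pointwise on $U$. I would handle it by noting that $q^{(1)}_{u,\pm}=q^{(1)}_{0,z,x,\mathbf{w}+\mathbf{x'}}$ evaluated at $x=x_\pm(u)$ comes from a single family that is holomorphic in the pole parameter $x$ by \Cref{building-blocks-2}. The difference quotient in $u$ is then a derivative in the pole position, with remainder $\mathcal{O}(u)$ uniform on compacta of $C\setminus\{x_0\}$. Its limit is $2\,\partial_x q^{(1)}|_{x=x_0}$, which is again holomorphic off $x_0$ and vanishes at $\mathbf{x'}$. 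On $U$ this derivative reproduces $\frac{2}{z^2}+2\,\partial_{z_1}R^{(1)}_0$, consistent with the combination computed above, which is where $\check R^{(1)}$ enters. Holomorphy of the extended family in $u$ at $u=0$ then follows by Riemann's removable singularity theorem, applied to the coefficients in a fixed basis of $H^0(K^2(2x_0))$-type expansions away from the collision.
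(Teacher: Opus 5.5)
Your proposal is correct and follows the paper's proof essentially step by step. On $U$ you obtain the same expansion: the polar part $\frac{-2z^2-u^2}{(z^2-u^2)^2}+\frac{2\nu_0 z}{z^2-u^2}$ plus the regular terms, with $-\frac{1}{4u}(R^{(1)}_{u,+}-R^{(1)}_{u,-})\to-\frac14\check R^{(1)}$. Off $U$ you reduce to convergence of the leading term $\frac{1}{4u}\bigl(q^{(1)}_{u,-}-q^{(1)}_{u,+}\bigr)$, and vanishing at $x_3,\dots,x_d$ passes to the limit, exactly as in the paper. Your identification of that limit as twice the derivative of $q^{(1)}_{0,z,x,\mathbf{w}+\mathbf{x'}}$ in the pole position at $x_0$, with locally uniform convergence giving holomorphy off $x_0$, just makes explicit what the paper calls an argument similar to part~2 of \Cref{lemma-linear-system-right-side}.
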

{%
The proof is given in \Cref{app-technical-proofs}.
}%

In the following, we fix a holomorphic projective structure 
and a corresponding oper $\mathcal{D}_{pr}$ (which has no apparent singularity).
Recall again that any other oper can be written as $\mathcal{D}_{pr} + q$ 
where $q$ is a meromorphic quadratic differential having double poles with appropriate Laurent tails in coordinates subordinate to the chosen holomorphic projective structure.

\begin{proposition}\label{prop-db-limit-2}
	Let $x_0$, $x_3$, ..., $x_{d}$ be distinct points on $C$ such that $2x_0 + x_3 + ... + x_d$ is $Q$-generic.
	Let $z$ be a coordinate on $U$ subordinate to the chosen holomorphic projective structure, with $z(x_0) = 0$.
	Let $\nu_3(u)$, ..., $\nu_d(u)$ be holomorphic functions on $U$,
	$\nu_{\pm}(u) = \mp \frac{1}{4u} + \nu_0 \pm \nu' u + \mathcal{O}(u^2)$ holomorphic functions on $U\setminus \{x_0 \}$,
	and $ \{\mathcal{D}_u = \mathcal{D}_{pr} + q_u \}_{u \in z(U), u \neq 0}$ the corresponding family of opers parameterized by $U\setminus \{x_0\}$
	where $q_u$ is defined as in \eqref{q-u-setup},
	with $\{ q^{(0)}_u \}_{u \in z(U), u \neq 0}$ the corresponding family of holomorphic quadratic differentials.
	
	If there exists a holomorphic quadratic differential $q^{(0)}_0$ with  $q^{(0)}_0 = \underset{u \rightarrow 0}{\lim} q^{(0)}_u$, 
	then $\{ \mathcal{D}_u \}_{u \in z(U), u\neq 0}$ extends to a family of opers parameterized by $U$.
	The oper corresponding to the extension to $x_0$ is $\mathcal{D}_0 = \mathcal{D}_{pr} + q_0$, where
	\begin{align}\label{db-limit}
		q_0 \overset{\text{on } (U,z)}{=} {q^{(0)}_0} + q^{(2)}_{x_0}(\nu_0) 
		+  \sum_{r=3}^{d} \nu_r(0) q_{0, x_r}^{(1)} - \frac{3}{4} \sum_{r=3}^{d} q^{(2)}_{0, x_r}.
	\end{align}
	(Recall the definition of $q^{(2)}_{0, x_r}$ in \eqref{common-building-block-limit}.)
	In particular, the apparent singularities of $\mathcal{D}_0$ consist of
	a double apparent singularity at $x_0$ with accessory parameter $\nu_0$,
	and simple apparent singularities at $x_3$, ..., $x_d$ with respective accessory parameters $\nu_3(0)$, ..., $\nu_d(0)$.
\end{proposition}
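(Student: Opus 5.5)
The plan is to take the limit $u \to 0$ term by term in the decomposition \eqref{q-u-setup} of $q_u$, and then to read off the Laurent tail of the limit at $x_0$ and at each $x_r$.

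First, the terms belonging to the far apparent singularities. The building blocks $q^{(i)}_{u,x_r}$ are the normalized families of \Cref{building-blocks-2}. Their prescribed zero divisor $x_+ + x_- + \mathbf{x_r'} + \mathbf{w} - w_0$ varies holomorphically in $u$ and tends to $2x_0 + \mathbf{x_r'} + \mathbf{w} - w_0$, which is still $Q$-generic by our choice of $\mathbf{w}$ and \eqref{condition-neighborhood}. The uniqueness part of \Cref{building-blocks-2} should therefore give $q^{(i)}_{u,x_r} \to q^{(i)}_{0,x_r}$ holomorphically. To justify this, I would apply the uniqueness statement to the holomorphic family of divisors in $C^{[3g-4]}$ parametrized by $u$ together with the pole.

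Second, the terms belonging to the colliding pair. The combination $\sum_{s} \nu_s(u) q^{(1)}_{u,s} - \frac34 \sum_s q^{(2)}_{u,s}$ extends holomorphically to $u=0$ with value $q^{(2)}_{x_0}(\nu_0)$; this is exactly \Cref{lemma-db-limit-1}. By hypothesis $q^{(0)}_u \to q^{(0)}_0$, and $\nu_r(u)$ is holomorphic. Summing, $q_u$ extends to a holomorphic family over $U$ whose value at $u=0$ is \eqref{db-limit}. Each $q_u$ is meromorphic with poles only at the apparent singularities, so $\mathcal{D}_{pr}+q_u$ is a holomorphic family of operators in the sense of \eqref{pre2_SL_operator_transit}.

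Third, I would verify that $\mathcal{D}_0$ is an oper with the claimed apparent singularities, i.e.\ that the Laurent tails satisfy \eqref{SL-operator_tail-1}--\eqref{SL-operator_residue-constraint-2}.

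At $x_r$, the constraint $\nu_r^2 + q_{z_r,0} = 0$ holds for every $u \neq 0$ by \eqref{linear-eqn-far}. Both sides are continuous in $u$, since the regular parts of the building blocks converge locally uniformly near $x_r$. Hence the constraint persists at $u=0$ with accessory parameter $\nu_r(0)$.

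At $x_0$, the leading term $-2/z^2$ and the residue $2\nu_0$ come directly from \eqref{q_x_0-local-form}. Note that the residue $\mu$ in \eqref{SL-operator_tail-2} is then $2\nu_0$; I would reconcile this with the paper's convention calling $\nu_0$ the accessory parameter. The substantive point is the cubic constraint $\mu^3 + 4q_{z,0}\mu + 4q_{z,1} = 0$. To get it, I would expand the two constraints at $z=\pm u$ given by \eqref{linear-eqn-1}, namely $q^{(0)}_u(\pm u) = C_\pm(u)$, in powers of $u$. Their sum and difference at orders $u^0$ and $u^1$ should yield relations between $q_0$'s coefficients $q_{z,0}$, $q_{z,1}$ and $\nu_0,\nu'$. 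Eliminating $\nu'$ should then produce the cubic constraint.

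This expansion is the main obstacle. It requires tracking $C_\pm(u)$ up to order $u$, including the cross terms $\nu_\mp q^{(1)}_{u,\mp}(\pm u)$, whose prefactor is singular like $1/u$. That in turn requires the second-order Taylor data of $R^{(i)}_0(z_1,z_2)$ near the diagonal, as in \Cref{lemma-linear-system-right-side}. If the direct elimination becomes unwieldy, I would instead argue more geometrically. For $u \neq 0$ the local ratio of solutions $w_u$ is holomorphic near $x_0$, with monodromy-free solutions around both $\pm u$. If $q_u \to q_0$ uniformly on a small circle around $x_0$, the solutions of $\partial_z^2 + q_u$ on that circle converge and stay monodromy-free, so the limiting projective monodromy around $x_0$ is trivial. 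A double pole with leading coefficient $-2$ and trivial projective monodromy (exponents $-1,2$ differing by an integer with no logarithm) is exactly an apparent singularity of order two. This would bypass the explicit constraint altogether.
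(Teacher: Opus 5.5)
Your proposal is correct. Its main line is the paper's argument: \eqref{db-limit} follows from \eqref{q-u-setup} and \Cref{lemma-db-limit-1}, and the cubic constraint at $x_0$ comes from expanding the apparency conditions at $z=\pm u$ to orders $u^0$ and $u^1$ and then eliminating $\nu'$.

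The step you flag as the main obstacle disappears with the paper's packaging. Do not work with $q^{(0)}_u(\pm u)=C_\pm(u)$. Instead put everything except the explicit polar terms at $\pm u$ into the regular part $q^{reg}_u$ of $q_u$ on $U$. This is jointly holomorphic, $q^{reg}_u(z)=\sum q_{m,n}u^m z^n$, and $q^{reg}_0$ is the regular part of $q_0$. The constraints then read $\nu_\pm^2-\frac{3}{16u^2}\pm\frac{\nu_\mp}{2u}+q^{reg}_u(\pm u)=0$.
\begin{itemize}
\item Order $u^0$ gives $\nu'=\nu_0^2+q_{0,0}$.
\item The difference of the two order-$u^1$ equations gives $2\nu_0\nu'+q_{0,1}=0$. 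Here $q_{1,0}$ and the $u^2$-coefficients of $\nu_\pm$ cancel.
\item Together these give $\nu_0^3+q_{0,0}\nu_0+q_{0,1}/2=0$, with no Taylor data of the $R^{(i)}$ needed.
\end{itemize}
Your version also works, with two adjustments. First, use only the sum at order $u^0$ and the difference at order $u^1$, since the sum at order $u^1$ involves $\partial_u q^{(0)}_u|_{u=0}$. Second, convert to the Taylor coefficients of $q_0$ via \eqref{q_x_0-local-form}, using that the far blocks $q^{(i)}_{0,x_r}$ vanish to order two at $x_0$. Carrying this out gives $q_{z,0}=\nu'-\nu_0^2$ and $q_{z,1}=-2\nu_0\nu'$, which satisfy the constraint.

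Your monodromy fallback is a genuinely different and valid route. For $u\neq0$ the $\SL_2$ monodromy of solutions around a fixed small circle enclosing $\pm u$ is $(-I)(-I)=I$, so it is $I$ at $u=0$ by continuous dependence on the potential. With exponents $-1,2$ this rules out a logarithm, which is exactly the order-two apparency. This argument avoids all coefficient bookkeeping and explains why any such limit with leading coefficient $-2$ must be apparent. What it does not give is the explicit relation between $\nu'$ and the Taylor data of $q_0$, which the expansion produces as a by-product.

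Your remark on conventions is also right. The residue at $x_0$ is $\mu=2\nu_0$, consistent with \Cref{prop-double-as}, where $2\nu_0^{\mathcal{D}}$ is called the accessory parameter.
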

\begin{proof}
	The existence of $\mathcal{D}_0 = \mathcal{D}_{pr} + q_0$ with $q_0$ of the form \eqref{db-limit} follows from
	the definition \eqref{q-u-setup} of $q_u$
	and \Cref{lemma-db-limit-1}.
	Let $q^{reg}_0(z)$ be the regular part on $U$ of $q_0(z)$, i.e.
	\begin{align*}
		&q^{reg}_0(z) \coloneqq q_0(z) - \left(- \frac{2 }{z^2}   + \frac{2 \nu_0 }{z}  \right) dz^2 \nonumber \\
		= &\hspace{3pt} q^{(0)}_0(z)
		+ \left[ 2\nu_0  R^{(1)}_{0}(z)
		-\frac{1}{4} \check{R}^{(1)}(z) 
		- \frac{3}{2}R^{(2)}_0(z) \right]  dz^2
		+  \sum_{r=3}^{d} \nu_r(0) q_{0, x_r}^{(1)} (z) - \frac{3}{4} \sum_{r=3}^{d} q^{(2)}_{0, x_r} (z).
	\end{align*}
	To prove the proposition, it remains to show that the coefficients of the Laurent tail of $q_0(z)$ satisfy the condition making $x_0$ a double apparent singularity, 
	i.e. for $q^{reg}_0(z) =  \sum_{k\geq 0} q_{0, k} z^k$ we have
	\begin{align}\label{db-condition-limit}
		\nu_0^3 + q_{0, 0} \nu_0 + \frac{q_{0,1}}{2} = 0.
	\end{align}
	To this end, for each $u\neq 0$, let $q^{reg}_u(z)$ be the regular part on $U$ of $q_u(z)$, i.e.
	\begin{align*}
		q^{reg}_u(z) 
		&\coloneqq q_u(z) -
		\left( - \frac{3}{4(z-u)^2} - \frac{3}{4(z+u)^2}
		+ \frac{\nu_+(u)}{z-u} + \frac{\nu_-(u)}{z+u} \right)  \nonumber \\
		&= q^{(0)}_u (z)
		+ \sum_{s \in \{\pm\}} \nu_s(u) R^{(1)}_{u,s}(z) 
		- \frac{3}{4} \sum_{s \in \{\pm\}} R^{(2)}_{u,s} (z) 
		+ \sum_{r=3}^{d} \nu_r(u) q_{u, x_r}^{(1)}(z) - \frac{3}{4} \sum_{r=3}^{d} q^{(2)}_{u, x_r} (z).
	\end{align*}
	As a function of both $u$ and $z$, the leading term in $u$ of $q^{reg}_u(z)$ 
	comes from the leading terms in $\nu_\pm(u) R^{(1)}_{u, \pm}(z)$ and is equal to $-(R^{(1)}_{u,+}(z) - R^{(1)}_{u,-}(z) )/( 4u)$.
	By part 2 of \Cref{lemma-linear-system-right-side}, 
	this has a well-defined $u\rightarrow 0$ limit.
	Hence $q^{reg}_u(z)$ has a well-defined $u\rightarrow 0$ limit,
	and in fact $q^{reg}_u(z)  \overset{ u \rightarrow 0}{\longrightarrow} q^{reg}_0(z)$. 
	
	Now, this enables us to expand $q^{reg}_u(z) = \sum_{m, n \geq 0} q_{m,n} u^m z^n$,
	where letting $m=0$ gives the coefficients $q_{0, n}$ of the Taylor expansion $q^{reg}_0(z) =  \sum_{n \geq 0} q_{0, n} z^n$.
	Plugging this expansions of $q^{reg}_0(z)$ and $\nu_{\pm}(u)$ into condition \eqref{SL-operator_residue-constraint-1} for $z = \pm u$, 
	\begin{align*}
		&\nu_\pm(u)^2 - \frac{3}{4} \frac{1}{(2u)^2} + \frac{\nu_\mp(u)}{(\pm 2 u)} + q^{reg}_u(\pm u) = 0,
		&u \neq 0,
	\end{align*}
	we obtain a series of constraints by order of $u$ starting from the order $u^{-2}$.
	The constraints up to order $u^{-1}$ are automatically satisfied by the leading terms of $\nu_\pm(u)$.
	Solving the constraints at order $u^0$ and $u^1$, 
	we obtain \eqref{db-condition-limit}.
\end{proof}

\begin{proposition} \label{prop-double-as}
	Let $\mathcal{D}$ be a branched projective structure with $\mathrm{div}(\mathcal{D}) = 2x_0 + x_3 + ... + x_d$ being $Q$-generic and $d \leq 3g-3$.
	Then there exists a coordinate neighborhood $(U, z)$ centred at $x_0$ 
	and a family of branched projective structures $\{\mathcal{D}_u\}_{u\in z(U)}$ parametrized by $U$ such that
	\begin{enumerate}[(i)]
		\item $\mathcal{D}_0 = \mathcal{D}$;
		\item for $u\neq 0$, $\mathrm{div}(\mathcal{D}_u) = x_+ + x_- + x_3 + \dots x_d$ with $x_\pm \in U$ and $z(x_\pm) = \pm u$;
		\item for $u\neq 0$, the accessory parameters $\nu_\pm(u)$ of $x_\pm$ w.r.t. the local coordinate $z$ have Laurent expansions as functions of $u$ of Form (D) \eqref{eqn-form-D},
		where $2\nu_0^\mathcal{D}$ is the accessory parameter of the double apparent singularity $x_0$ of $\mathcal{D}$.
	\end{enumerate}
	Furthermore, this family defines via taking monodromy a holomorphic map $U \rightarrow \mathrm{Hom}(\pi_1, \mathrm{PSL}_2)$, which is injective for $d < 2g-2$.     
\end{proposition}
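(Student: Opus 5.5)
Fix the holomorphic projective structure and write $\mathcal{D} = \mathcal{D}_{pr} + q$, where $q$ is a meromorphic quadratic differential with a double pole at $x_0$ of the form \eqref{SL-operator_tail-2} and simple poles at $x_3, \dots, x_d$ of the form \eqref{SL-operator_tail-1}. Call $\mu$ the accessory parameter at $x_0$ and $\nu_r$ those at $x_r$. The plan is to build the family as in \Cref{prop-db-limit-2}, choosing the data so that the limit is exactly $q$.

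Set $\nu_0 \coloneqq \mu/2$; this is why $2\nu_0^{\mathcal{D}}$ appears in (iii), since the residue in \eqref{q_x_0-local-form} is $2\nu_0$. Take constant functions $\nu_r(u) \equiv \nu_r$ for $3 \le r \le d$. Next consider
\[
  q - q^{(2)}_{x_0}(\nu_0) - \sum_{r=3}^{d} \nu_r\, q^{(1)}_{0,x_r} + \frac34 \sum_{r=3}^{d} q^{(2)}_{0,x_r}.
\]
This differential has no principal parts: the $1/z^2$ and $1/z$ terms at $x_0$ cancel by \eqref{q_x_0-local-form}, and the tails at the $x_r$ cancel by the normalization of the building blocks. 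Hence it is a holomorphic quadratic differential, which I call $q^\ast$.

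Apply \Cref{lemma-tune-limit-hol-q} with $q^\ast$ and this $\nu_0$, in Form (D). It supplies $\nu_\pm(u)$ of Form \eqref{eqn-form-D}, the functions $\nu_r(u)$, and a holomorphic family $q^{(0)}_u$ solving \eqref{linear-system-total} for $u \ne 0$, with $q^{(0)}_0 = q^\ast$. One point needs checking: that the lemma may be applied, or adapted, so that $\nu_r(0) = \nu_r$ are the given accessory parameters. I expect the proof of that lemma to choose $\nu_r(u)$ holomorphic with prescribed value $\nu_r(0)$, which must be compatible with $q^\ast(z_r(x_r)) = -\nu_r^2$. That compatibility is exactly the apparent-singularity condition at $x_r$ for $q$, together with the vanishing of the other building blocks at $x_r$.

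Now \Cref{prop-db-limit-2} applies. The family $\mathcal{D}_u = \mathcal{D}_{pr} + q_u$ extends holomorphically across $u = 0$. Its limit $q_0$ given by \eqref{db-limit} equals $q$ by the construction of $q^\ast$, which gives (i). Items (ii) and (iii) hold by construction for $u \ne 0$: the constraints \eqref{linear-system-total} make $x_\pm$ and $x_r$ simple apparent singularities. One must also rule out extra poles, which holds because every building block is holomorphic away from its own pole. The $Q$-genericity of $2x_0 + \mathbf{x}'$ is used to shrink $U$ so that \eqref{condition-neighborhood-2} holds.

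\emph{Main obstacle: the injectivity claim.} Monodromy depends holomorphically on $u$, since $q_u$ does and the developing map solves a holomorphic ODE away from the poles, with holomorphic ratios at the poles. For injectivity with $d < 2g-2$, suppose $\mathcal{D}_u$ and $\mathcal{D}_{u'}$ have conjugate projective monodromy. The monodromy of $\mathcal{D}$ is irreducible, being a partial oper with $d < 2g-2$ on a destabilizing subbundle, so the lifts exist and, for $U$ small, remain irreducible. The divisors $\mathrm{div}(\mathcal{D}_u)$ have degree $d < 2g-2$, so \Cref{cor-isom-projective-connections} gives $\mathcal{D}_u \cong \mathcal{D}_{u'}$. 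Their divisors then agree: $\{x_+(u), x_-(u)\} = \{x_+(u'), x_-(u')\}$, i.e.\ $u' = \pm u$.

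It remains to exclude $u' = -u$. Swapping the two points exchanges $\nu_+$ and $\nu_-$. Form (D) has $\nu_+(u) - \nu_-(u) = -\tfrac{1}{2u} + \mathcal{O}(u)$, while equality of the opers requires $\nu_+(-u) = \nu_-(u)$. The accessory parameter at a given point is intrinsic in the fixed chart $z$, so this equality must hold.

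I expect the care to lie in this last step. Should $\mathcal{D}_{-u} = \mathcal{D}_u$ actually occur by symmetry of the construction, the map is injective only modulo $u \mapsto -u$. In that case I would either verify the asymmetry from the $\nu_0$ term in the expansion, or restate the claim for the parameter $u^2$.
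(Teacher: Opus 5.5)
Your construction of the family matches the paper's proof. You write $q_{\mathcal D}$ in the form \eqref{db-limit} with a holomorphic remainder $q^\ast$, feed $q^\ast$ to \Cref{lemma-tune-limit-hol-q}, and conclude with \Cref{prop-db-limit-2}. Your check that $\nu_r(0)$ can be taken to be the given accessory parameter (a choice of square root of $-q^\ast(z_r(x_r))$) is something the paper leaves implicit.

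The same check is needed, more seriously, at $x_0$. In Form (D) the proof of \Cref{lemma-tune-limit-hol-q} adjusts both $\nu_0$ and $\nu'$, because the $u^2$-coefficients of $\nu_\pm$ do not enter the order-$u$ terms of $N_1,N_2$. So once $\nu_0=\nu_0^{\mathcal D}$ is prescribed, only $\nu'$ is free, and it is fixed by $C_0=q^\ast(0)$. The remaining condition $\lim_{u\to0}E_1(u)=E_1^0$ is then equivalent to the cubic \eqref{db-condition-limit} for $q_{\mathcal D}$. That cubic holds because $x_0$ is a double apparent singularity of $\mathcal D$, and this is what justifies applying the lemma with your $\nu_0$; the lemma as stated for an arbitrary pair $(q,\nu_0)$ does not.

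\textbf{Gap: injectivity.} The paper only cites \cite[Corollary~5.4]{dinh-thesis} here, and your attempt to exclude $u'=-u$ fails. Form (D) gives
\[ \nu_+(-u)=\frac{1}{4u}+\nu_0^{\mathcal D}-\nu' u+\mathcal{O}(u^2)=\nu_-(u)+\mathcal{O}(u^2), \]
so no contradiction arises. The $\nu_0$ term is even under the swap, so it cannot supply one either.

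In fact the failure is real. If the higher coefficients are chosen symmetrically, $\nu_-(u)=\nu_+(-u)$, then $q_{-u}=q_u$ identically. Indeed $q^{(i)}_{-u,\pm}=q^{(i)}_{u,\mp}$, the blocks $q^{(i)}_{u,x_r}$ and the basis $q_{k,u}$ depend only on the divisor $x_++x_-$, and $C_\pm(-u)=C_\mp(u)$, so the construction gives $q^{(0)}_{-u}=q^{(0)}_u$. The monodromy map is then two-to-one.

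The missing step is to use the freedom the lemma leaves. In Form (D), the $u^2$-coefficients $\nu_{\pm,2}$ shift $C_+$ and $C_-$ by the same amount $\tfrac12(\nu_{+,2}-\nu_{-,2})u$, which does not change $\lim E_{1,2}(u)$. So you may take $\nu_{+,2}\neq\nu_{-,2}$. Then $\nu_+(-u)-\nu_-(u)=(\nu_{+,2}-\nu_{-,2})u^2+\mathcal{O}(u^3)\neq0$ for small $u\neq0$, hence $\mathcal D_{-u}\neq\mathcal D_u$ after shrinking $U$. Without such a choice, injectivity only holds in the parameter $u^2$.

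Two smaller corrections in the same step:
\begin{itemize}
\item \Cref{cor-isom-projective-connections} assumes the divisors are already equal, so it cannot be used to deduce that they are. What you need is its proof: both opers come from line subbundles of the same flat bundle inducing fewer than $2g-2$ apparent singularities. Such a subbundle is the unique destabilizing one, hence $\mathcal D_u=\mathcal D_{u'}$.
\item For odd $d$ the projective monodromy does not lift to $\SL_2(\C)$, since the paper's correspondence forces $d$ even. For odd $d$ the argument has to be run with a rank-$2$ bundle of odd-degree determinant instead of $\SL_2(\C)$-lifts.
\end{itemize}
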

\begin{proof}
	Fix a holomorphic projective structure with corresponding oper $\mathcal{D}_{pr}$, 
	and choose coordinate neighborhoods $(U_r, z_r)$ of $x_r$ for $3 \leq r \leq d$ 
	and $(U,z)$ of $x_0$ subordinate to this holomorphic projective structure,
	where $U$ satisfies condition \eqref{condition-neighborhood}.
	We can write $\mathcal{D} = \mathcal{D}_{pr} + q_{\mathcal{D}}$, where $q_\mathcal{D}$ is a meromorphic quadratic differential
	with local expression $-\frac{3}{4(z-z_r(x_r))^2} dz_r^2 + \frac{\nu^\mathcal{D}_r}{z-z_r(x_r)} dz_r^2 + ...$ on $U_{r}$, $3 \leq r \leq d$,
	and $-\frac{2}{z^2} dz^2 + \frac{2 \nu^\mathcal{D}_0}{z} dz^2 + ...$ on $U$.
	Then there exists a unique holomorphic quadratic differential $q^{(0)}_0 (\mathcal{D})$ such that $q_{\mathcal{D}}$ can be written in the form \eqref{db-limit}, i.e.
	\begin{align*}
		q_{\mathcal{D}} = 
		q^{(0)}_0 (\mathcal{D}) + q^{(2)}_{x_0}(\nu^\mathcal{D}_0)
		+  \sum_{r=3}^{d} \nu^\mathcal{D}_r q_{0, x_r}^{(1)} - \frac{3}{4} \sum_{r=3}^{d} q^{(2)}_{0, x_r}.
	\end{align*}
	By \Cref{lemma-tune-limit-hol-q}, there exist holomorphic functions $\nu_3(u)$, ..., $\nu_{d}(u): z(U) \rightarrow \C$, 
	$\nu_\pm(u): z(U\setminus \{x_0\}) \rightarrow \C$ with Laurent expansions at $u =0$ of the forms \eqref{eqn-form-D} 
	(in particular, with $\nu_0^\mathcal{D}$ as the coefficient of order $u^0$),
	and a family $\{q^{(0)}_u\}_{u\in z(U)}$ of holomorphic quadratic differentials with $\underset{u \rightarrow 0}{\lim} q^{(0)}_u = q^{(0)}_0 (\mathcal{D})$ such that 
	\begin{align*}
		&q_u = q^{(0)}_u 
		+ \sum_{s\in\{ \pm\}} \nu_{s}(u) q^{(1)}_{u, s}
		- \frac{3}{4} \sum_{s\in \{\pm\}} q^{(2)}_{u, s}
		+ \sum_{r=3}^{d} \nu_r(u) q_{u, x_r}^{(1)} - \frac{3}{4} \sum_{r=3}^{d} q^{(2)}_{u, x_r},
		&u \neq 0,
	\end{align*}
	is a meromorphic quadratic differential with simple apparent singularities at $x_3$, ..., $x_d$ and $x_\pm \in U$ with $z(x_\pm) = \pm u$.
	By \Cref{prop-db-limit-2}, $q_0 \coloneqq \underset{u \rightarrow 0}{\lim} q_u = q_\mathcal{D}$, 
	and hence $\{\mathcal{D}_u \coloneqq \mathcal{D}_{pr} + q_u \}_{u\in z(U)}$ defines a family we seek.
	
	To see that taking the projective monodromy representation 
	of $\mathcal{D}_u$ defines a holomorphic map $U \rightarrow \Hom(\pi_1, \mathrm{PSL}_2)$,
	consider a coordinate neighborhood $(V, w)$,
	where $V$ is distinct from $U\cup \{x_3, ..., x_d\}$ and $w$ is subordinate to the chosen holomorphic projective structure.
	On $(V, w)$, $\mathcal{D}_u$ takes the form $\pa_w^2 + q_u(w)$ 
	where $q_u(w)$ is a function holomorphic on $U \times V$.
	By standard results on differential equations that vary holomorphically with respect to deformation parameters \cite{ilyashenko-yakovenko},
	the local solutions to $\pa_w^2 + q_u(w)$ are holomorphic functions on $U \times V$.
	Analytically continuing the ratio of two such linearly independent solutions defines the projective monodromy representation of $\mathcal{D}_u$, 
	which is now holomorphic in $u$.
	By
{%
\cite[Corollary~5.4]{dinh-thesis}%
}%
, if the number of apparent singularities is less than $2g-2$ to start with, 
	this holomorphic map is injective.
\end{proof}

\subsection{Reduction of the number of apparent singularities}

In this subsection we will analyze the limit $\underset{u \rightarrow 0}{\lim} q_u$ 
if $\nu_\pm(u)$ take the form \eqref{eqn-form-B}.

\begin{lemma}\label{lemma-bb-limit-1}
	Let $x_0$, $x_3$, ..., $x_d$ be distinct points on $C$, 
	$(U,z)$ a coordinate neighborhood of $x_0$ with $z(x_0) = 0$,
	$\nu_{\pm}(u) = \pm \frac{3}{4u} \pm \nu' u + \mathcal{O}(u^2)$ holomorphic functions on $U \setminus \{x_0\}$,
	and $q^{(i)}_{u, \pm}$ defined as in \eqref{building-blocks-0} for $u \in z\left( U \right)$, $u \neq 0$.
	Then the family of meromorphic quadratic differentials	
	\begin{align} \label{family-merom-q-bb}
		\sum_{s\in \{\pm\}} \nu_s(u) q^{(1)}_{u, s} - \frac{3}{4} \sum_{s\in \{\pm\}} q^{(2)}_{u, s},
	\end{align}
	which is parameterized by $U \setminus \{x_0\}$, 
	extends to a family parameterized by $U$.
	The quadratic differential $\Delta q^{(0)}_{x_0}$ 
	corresponding to the extension to $x_0$
	is holomorphic on $C$,
	vanishes at $x_3$, ..., $x_{d}$,
	and on $U$ takes the form
	\begin{align}\label{delta-q}
		\Delta q^{(0)}_{x_0} = \left[ \frac{3}{4} \check{R}^{(1)}(z) - \frac{3}{2} R^{(2)}_0(z) \right] dz^2.
	\end{align}
\end{lemma}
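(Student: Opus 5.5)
The proof is a direct computation with the normalized building blocks \eqref{building-blocks-0}, and it mirrors the proof of \Cref{lemma-db-limit-1}. On $U$ we write the family \eqref{family-merom-q-bb} as a sum of a principal part and a regular part:
\[
\frac{\nu_+(u)}{z-u} + \frac{\nu_-(u)}{z+u} - \frac{3}{4}\Big(\frac{1}{(z-u)^2}+\frac{1}{(z+u)^2}\Big) + \sum_{s}\nu_s(u) R^{(1)}_{u,s}(z) - \frac34\sum_s R^{(2)}_{u,s}(z).
\]
Outside $U$ each building block is holomorphic, and the building blocks depend holomorphically on $u$ away from $x_\pm$. So the only issue is the behaviour near $z=0$ as $u\to 0$, and we treat the principal part and the regular part separately.

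\textbf{Principal part.} Substitute $\nu_\pm(u)=\pm\frac{3}{4u}\pm\nu' u+\mathcal{O}(u^2)$ and expand in $u$ for fixed $z\neq 0$. We have
\[
\frac{1}{z-u}-\frac{1}{z+u}=\frac{2u}{z^2}+\mathcal{O}(u^3),
\]
so the pole-order terms contribute
\[
\frac{3}{4u}\cdot\frac{2u}{z^2}=\frac{3}{2z^2}+\mathcal{O}(u^2).
\]
This exactly cancels the limit $-\frac34\cdot\frac{2}{z^2}$ of the double-pole terms. The $\pm\nu' u$ terms contribute $\mathcal{O}(u^2)/z^2$, and the $\mathcal{O}(u^2)$ remainders in $\nu_\pm$ contribute $\mathcal{O}(u^2)$ times $1/(z\mp u)$. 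All of these vanish in the limit. The principal part therefore tends to $0$, uniformly on compact subsets of $U\setminus\{0\}$. It is cleaner to write the principal part as a single rational function in $(u,z)$ and check that the residue combination $\nu_++\nu_-=\mathcal{O}(u^2)$ behaves correctly.

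\textbf{Regular part.} We have
\[
\sum_s\nu_s R^{(1)}_{u,s}=\frac{3}{4u}\big(R^{(1)}_{u,+}-R^{(1)}_{u,-}\big)+\mathcal{O}(u)\big(R^{(1)}_{u,+}-R^{(1)}_{u,-}\big)+\mathcal{O}(u^2).
\]
By part 2 of \Cref{lemma-linear-system-right-side}, the first term tends to $\frac34\check R^{(1)}(z)$ and the remaining terms tend to $0$. The double-pole regular part $-\frac34\sum_s R^{(2)}_{u,s}$ tends to $-\frac32 R^{(2)}_0(z)$, because both $R^{(2)}_{u,\pm}$ are restrictions of the single holomorphic function $R^{(2)}_0(z_1,z)$ to $z_1=\pm u$. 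These two limits give the formula \eqref{delta-q} on $U$.

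\textbf{Global statement.} To see that the limit is a global holomorphic quadratic differential, consider the family on all of $C$. For $u\neq0$ it lies in $H^0(K^2(2x_++2x_-))$. The combination is holomorphic in $u$ on compact sets away from $x_0$, and on a small circle $|z|=\epsilon$ around $x_0$ it converges uniformly by the two computations above. By the maximum principle, applied to the difference with the candidate limit $\frac34\check R^{(1)}-\frac32R^{(2)}_0$ inside the circle, it converges uniformly on all of $U$. This shows that the limit $\Delta q^{(0)}_{x_0}$ is holomorphic at $x_0$, hence on $C$, and that the family extends holomorphically across $u=0$ by Riemann's removable singularity theorem in $u$. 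Finally, each $q^{(i)}_{u,\pm}$ vanishes at $\mathbf{w}+\mathbf{x'}$, which contains $x_3,\dots,x_d$, so the limit vanishes there too.

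The main obstacle is justifying the limit uniformly near $z=0$, where the pointwise expansion in $u$ at fixed $z\neq0$ is not uniform. I would handle this by the maximum-principle argument on the circle $|z|=\epsilon$, using that each member of the family is holomorphic in $z$ on $|z|<\epsilon$ apart from the poles at $\pm u$. The expression $(z^2-u^2)^2\times$(family) is holomorphic on the disc, and it converges to $z^4\,\Delta q^{(0)}_{x_0}$ by the computations above. Since the limit is divisible by $z^4$, the quotient converges as well.
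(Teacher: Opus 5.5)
Your computation on $U$ is the paper's: the polar part is exactly $-\frac{3u^2}{(z^2-u^2)^2}+\mathcal{O}(u)$, which tends to $0$ for fixed $z\neq 0$. The regular part tends to $\frac34\check R^{(1)}-\frac32 R^{(2)}_0$ by part~2 of \Cref{lemma-linear-system-right-side}, and vanishing at $x_3,\dots,x_d$ is inherited from the building blocks.

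However, your claim that ``the only issue is the behaviour near $z=0$'' does not follow from holomorphic dependence of the building blocks on $u$. The coefficients $\nu_\pm(u)$ have a pole at $u=0$, and this pole multiplies $q^{(1)}_{u,\pm}$ on all of $C$. At $p\in C\setminus U$ the family \eqref{family-merom-q-bb} equals $\frac{3}{4u}\bigl(q^{(1)}_{u,+}(p)-q^{(1)}_{u,-}(p)\bigr)+\mathcal{O}(1)$, so you need the same cancellation you used on $U$. By \Cref{building-blocks-2}, $q^{(1)}_{u,\pm}$ are the values at $t=\pm u$ of one family $F(t)\coloneqq q^{(1)}_{0,z,x,\mathbf{w}+\mathbf{x'}}$, where $x\in U$ is the point with $z(x)=t$. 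This family is holomorphic in $t$ on compacta of $C\setminus\{x_0\}$. Hence $F(u)-F(-u)$ is odd in $u$, and $\frac1u\bigl(F(u)-F(-u)\bigr)\to 2F'(0)$ locally uniformly. This is the step the paper supplies for $C\setminus U$.

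Second, the maximum-principle step fails as stated. The difference between \eqref{family-merom-q-bb} and $\frac34\check R^{(1)}-\frac32R^{(2)}_0$ has double poles at $z=\pm u$ inside $|z|<\epsilon$. So the maximum principle does not apply, and convergence on all of $U$ is false. For example, at $z=2u$ the polar part is $-1/(3u^2)$. In $w=z/u$ this polar part is exactly the bubble potential \eqref{eqn-limit-P1}, which is why \Cref{sect-family-blown-up} needs the blow-up.

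Your repair with $(z^2-u^2)^2$ correctly gives uniform convergence of that product on the disc. But dividing back only recovers convergence on compacta of the punctured disc, which you already had.

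None of this is needed. The pointwise limit on $U\setminus\{x_0\}$ is the restriction of $\frac34\check R^{(1)}-\frac32 R^{(2)}_0$. This function is holomorphic on all of $U$, by part~2 of \Cref{lemma-linear-system-right-side} and holomorphy of $R^{(2)}_0(z_1,z)$, so the limit extends holomorphically across $x_0$. That is all the lemma asserts, and it is how the paper concludes. You should drop the uniform-convergence claim near $x_0$.
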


{%
The proof is given in \Cref{app-technical-proofs}.
}%

Fix a holomorphic projective structure, and denote by $\mathcal{D}_{pr}$ the corresponding oper that has no apparent singularity. 
In the following we use coordinates subordinate to the chosen holomorphic projective structure.

\begin{proposition}\label{prop-bb-limit-2}
	Let $x_0$, $x_3$, ..., $x_{d}$ be distinct points on $C$ such that $2x_0 + x_3 + ... + x_d$ is $Q$-generic.
	Let $z$ be a coordinate on $U$ subordinate to the chosen holomorphic projective structure, with $z(x_0) = 0$.
	Let $\nu_3(u)$, ..., $\nu_d(u)$ be holomorphic functions on $U$,
	$\nu_{\pm}(u) = \pm \frac{3}{4u} \pm \nu' u + \mathcal{O}(u^2)$ holomorphic functions on $U\setminus \{x_0\}$,
	and $ \{\mathcal{D}_u = \mathcal{D}_{pr} + q_u \}_{u \in z(U), u \neq 0}$ the corresponding family of opers parameterized by $U\setminus \{x_0\}$
	where $q_u$ is defined as in \eqref{q-u-setup},
	with $\{ q^{(0)}_u \}_{u \in z(U), u \neq 0}$ the corresponding family of holomorphic quadratic differentials.
	
	If there exists a holomorphic quadratic differential $q^{(0)}_0$ with  $q^{(0)}_0 = \underset{u \rightarrow 0}{\lim} q^{(0)}_u$, 
	then $\{ \mathcal{D}_u \}_{u \in z(U), u\neq 0}$ extends to a family of opers parameterized by $U$.
	The oper corresponding to the extension to $x_0$ is $\mathcal{D}_0 = \mathcal{D}_{pr} + q_0$, where
	\begin{align}\label{bb-limit}
		q_0 \overset{\text{on } (U,z)}{=} {q^{(0)}_0} + \Delta q^{(0)}_{x_0}
		+  \sum_{r=3}^{d} \nu_r(0) q_{0, x_r}^{(1)} - \frac{3}{4} \sum_{r=3}^{d} q^{(2)}_{0, x_r},
	\end{align}
	where $\Delta q^{(0)}_{x_0}$ is defined as in \eqref{delta-q}.
	In particular, the apparent singularities of $\mathcal{D}_0$ are simple and located at $x_3$, ..., $x_d$ with accessory parameters $\nu_3(0)$, ..., $\nu_d(0)$.
\end{proposition}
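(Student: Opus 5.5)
The plan mirrors the proof of \Cref{prop-db-limit-2}, with \Cref{lemma-bb-limit-1} in place of \Cref{lemma-db-limit-1}.

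\emph{Existence and form of the limit.} Split $q_u$ from \eqref{q-u-setup} into three parts:
\begin{itemize}
\item the holomorphic part $q^{(0)}_u$, which converges to $q^{(0)}_0$ by hypothesis;
\item the far part $\sum_{r\ge3}\nu_r(u)q^{(1)}_{u,x_r}-\frac34\sum_{r\ge3} q^{(2)}_{u,x_r}$, which converges to $\sum_{r\ge3}\nu_r(0)q^{(1)}_{0,x_r}-\frac34\sum_{r\ge3}q^{(2)}_{0,x_r}$, because the normalized building blocks of \Cref{building-blocks-2} depend holomorphically on the positions of the imposed zeros $x_\pm(u)$;
\item the colliding part \eqref{family-merom-q-bb}, which extends across $u=0$ with value $\Delta q^{(0)}_{x_0}$ by \Cref{lemma-bb-limit-1}.
\end{itemize}
Adding these gives \eqref{bb-limit}. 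Since $\Delta q^{(0)}_{x_0}$ is holomorphic on all of $C$, $q_0$ has no pole at $x_0$. Its only poles are at $x_3,\dots,x_d$, with Laurent tails $-\frac{3}{4(z_r-z_r(x_r))^2}+\frac{\nu_r(0)}{z_r-z_r(x_r)}+\dots$. These tails come from the normalized forms \eqref{normalised-q-2}: the regular parts $R_0$ vanish on the diagonal, and every other summand in \eqref{bb-limit} is holomorphic at $x_r$.

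\emph{The apparent-singularity condition at $x_r$.} For $u\neq0$, the constraint \eqref{SL-operator_residue-constraint-1} at $x_r$ reads $\nu_r(u)^2+q^{reg}_{u,r}(z_r(x_r))=0$, where $q^{reg}_{u,r}$ is the regular part of $q_u$ at $x_r$. This is precisely \eqref{linear-eqn-far}, once one uses that all other building blocks vanish at $x_r$ and that each $R_0$ vanishes on the diagonal.
\begin{itemize}
\item $\nu_r(u)\to\nu_r(0)$.
\item $q^{reg}_{u,r}$ converges locally uniformly near $x_r$, because every piece of $q_u$ converges uniformly on a neighbourhood of $x_r$ disjoint from $U$. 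For the colliding part this uniformity has to be read off from the proof of \Cref{lemma-bb-limit-1}: the convergence there comes from holomorphic dependence on $u$ of families that are holomorphic away from $U$.
\end{itemize}
Passing to the limit therefore gives $\nu_r(0)^2+q^{reg}_{0,r}(z_r(x_r))=0$. Equivalently, $q^{(0)}_0$ satisfies the limit system \eqref{linear-system-total-limit} at the far points, and these are apparent singularities of $\mathcal{D}_0$.

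\emph{The point $x_0$ and the main obstacle.} There is no constraint to check at $x_0$, since $q_0$ is holomorphic there. What remains is to see that the limiting operator has no singularity at $x_0$ at all: the pair of double poles with residues $\pm\frac{3}{4u}$ must cancel, not merely combine into something of lower order.

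This cancellation is the content of \Cref{lemma-bb-limit-1}, and it is the step I expect to be the real obstacle. Locally the pieces of \eqref{family-merom-q-bb} combine as
\[
-\tfrac34\bigl[(z-u)^{-2}+(z+u)^{-2}\bigr]+\tfrac{3}{4u}\bigl[(z-u)^{-1}-(z+u)^{-1}\bigr]=O(u^2)
\]
uniformly on compact subsets away from $z=0$. Since the expression is holomorphic in $z$ for $|z|>|u|$, the maximum principle on annuli upgrades this to convergence to zero of the singular part on all of $U$.

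The regular terms then contribute as follows.
\begin{itemize}
\item $\frac{3}{4u}\bigl(R^{(1)}_{u,+}-R^{(1)}_{u,-}\bigr)\to\frac34\check R^{(1)}$, by \Cref{lemma-linear-system-right-side}(2).
\item $-\frac34\bigl(R^{(2)}_{u,+}+R^{(2)}_{u,-}\bigr)\to-\frac32R^{(2)}_0$.
\item The $\nu' u$ corrections vanish in the limit.
\end{itemize}
This produces \eqref{delta-q}.

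Finally, the family is holomorphic in $u$ on all of $U$ by Riemann's removable singularity theorem applied to the bounded holomorphic family, so $\{\mathcal{D}_u\}$ extends as claimed.
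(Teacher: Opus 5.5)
Your proposal is correct and follows essentially the paper's proof: decompose $q_u$ as in \eqref{q-u-setup}, invoke \Cref{lemma-bb-limit-1} for the colliding pair, pass the apparency constraints at $x_3,\dots,x_d$ to the limit by continuity, and observe there is nothing to check at $x_0$ because $\Delta q^{(0)}_{x_0}$ is holomorphic. One small inaccuracy, in your optional re-derivation of that lemma: the polar part $-3u^2/(z^2-u^2)^2$ has double poles at $z=\pm u$, so the maximum principle only gives uniform convergence to $0$ on $\{|z|\ge r\}$ for fixed $r>0$, not on all of $U$; the extension across $z=0$ instead comes, as in the paper, from the limit $\frac34\check R^{(1)}-\frac32 R^{(2)}_0$ being holomorphic at $z=0$, and since the lemma can be cited this does not affect your argument.
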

\begin{proof}
	The proof follows directly from \Cref{lemma-bb-limit-1}.
{%
	The apparency constraints \eqref{SL-operator_residue-constraint-1} at $x_3, \dots, x_d$ hold for all $u \neq 0$ and pass to the limit $u \rightarrow 0$ by continuity. In contrast to \Cref{prop-db-limit-2}, there is no constraint to check at $x_0$, since $\Delta q^{(0)}_{x_0}$ is holomorphic.
}%
\end{proof}

{%
\begin{remark}[Holomorphic dependence of $g_u$]\label{rem-gu-holomorphic}
	For each $u\neq 0$, let $q^{reg}_u(z)$ be the regular part on $U$ of $q_u(z)$, i.e.
	\begin{align*}
		q^{reg}_u(z) 
		&\coloneqq q_u(z) -
		\left( - \frac{3}{4(z-u)^2} - \frac{3}{4(z+u)^2}
		+ \frac{\nu_+(u)}{z-u} + \frac{\nu_-(u)}{z+u} \right)  \nonumber \\
		&= q^{(0)}_u (z)
		+ \sum_{s \in \{\pm\}} \nu_s(u) R^{(1)}_{u,s}(z) 
		- \frac{3}{4} \sum_{s \in \{\pm\}} R^{(2)}_{u,s} (z) 
		+ \sum_{r=3}^{d} \nu_r(u) q_{u, x_r}^{(1)}(z) - \frac{3}{4} \sum_{r=3}^{d} q^{(2)}_{u, x_r} (z).
	\end{align*}
	As a function of both $u$ and $z$, the leading term in $u$ of $q^{reg}_u(z)$ 
	comes from the leading terms in $\nu_\pm(u) R^{(1)}_{u, \pm}(z)$ and is equal to $3(R^{(1)}_{u,+}(z) - R^{(1)}_{u,-}(z) )/( 4u)$.
	By part 2 of \Cref{lemma-linear-system-right-side}, 
	this has a well-defined $u\rightarrow 0$ limit; by the expansion in its proof, it is in fact holomorphic in $(u,z)$.
	Hence $q^{reg}_u(z)$ has a well-defined $u\rightarrow 0$ limit,
	and in fact $q^{reg}_u(z)  \overset{ u \rightarrow 0}{\longrightarrow} q^{reg}_0(z)$. Since moreover $q^{(0)}_u$ extends holomorphically to $u = 0$ (the coefficients \eqref{E1-E2} have removable singularities, cf. \Cref{lemma-tune-limit-hol-q}), and $\nu_r(u)$ and the building blocks depend holomorphically on $u$, the function $g_u(z) \coloneqq q^{reg}_u(z)$ in \eqref{eqn-qu-expansion} is holomorphic in both $u$ and $z$.
\end{remark}
}%

\begin{proposition}\label{prop-bb-limit-3}
	Let $\mathcal{D}$ be a branched projective structure with $\mathrm{div}(\mathcal{D}) = x_3+ ... + x_{d}$ for $d \leq 3g-3$,
	and $x_0\in C$ such that $2x_0 + x_3 + ... x_d$ is $Q$-generic.
	Then there exists a coordinate neighborhood $(U, z)$ centred at $x_0$
	and a family of branched projective structures $\{\mathcal{D}_u\}_{u\in z(U)}$ parametrized by $U$ such that
	\begin{enumerate}[(i)]
		\item $\mathcal{D}_0 = \mathcal{D}$;
		\item for $u\neq 0$, $\mathrm{div}(\mathcal{D}_u) = x_+ + x_- + x_3 + \dots x_d$ with $x_\pm \in U$ and $z(x_\pm) = \pm u$;
		\item for $u\neq 0$, the accessory parameters $\nu_\pm(u)$ of $x_\pm$ w.r.t. the local coordinate $z$ have Laurent expansions as functions of $u$ of Form (B) \eqref{eqn-form-B}.
	\end{enumerate}
	Furthermore, this family defines via taking monodromy a holomorphic map $U \rightarrow \Hom(\pi_1, \mathrm{PSL}_2)$, which is injective for $d < 2g-2$.    
\end{proposition}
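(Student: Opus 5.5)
The plan follows the proof of \Cref{prop-double-as} step by step, using the Form (B) machinery of \Cref{lemma-bb-limit-1} and \Cref{prop-bb-limit-2} in place of the Form (D) results.

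First I fix a holomorphic projective structure with oper $\mathcal{D}_{pr}$. I choose coordinate neighborhoods $(U_r, z_r)$ of $x_r$ for $3 \leq r \leq d$, and a coordinate neighborhood $(U,z)$ of $x_0$ with $z(x_0)=0$, all subordinate to this structure. I shrink $U$ so that \eqref{condition-neighborhood-2} holds, which is possible because $2x_0 + \mathbf{x'}$ is $Q$-generic. I then write $\mathcal{D} = \mathcal{D}_{pr} + q_\mathcal{D}$, where $q_\mathcal{D}$ has Laurent tails $-\frac{3}{4(z_r - z_r(x_r))^2} + \frac{\nu_r^\mathcal{D}}{z_r - z_r(x_r)} + \dots$ at the points $x_r$ and is holomorphic near $x_0$. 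Next I subtract the building blocks:
\[ q^{(0)}_0(\mathcal{D}) \coloneqq q_\mathcal{D} - \Delta q^{(0)}_{x_0} - \sum_{r=3}^d \nu^\mathcal{D}_r q^{(1)}_{0,x_r} + \frac34 \sum_{r=3}^d q^{(2)}_{0,x_r}. \]
This is a holomorphic quadratic differential, because the polar parts at each $x_r$ cancel and $\Delta q^{(0)}_{x_0}$ is holomorphic by \Cref{lemma-bb-limit-1}. Hence $q_\mathcal{D}$ has exactly the form \eqref{bb-limit}.

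Second, I apply \Cref{lemma-tune-limit-hol-q} with $q = q^{(0)}_0(\mathcal{D})$, in its Form (B) version. This gives holomorphic $\nu_r(u)$ with $\nu_r(0) = \nu_r^\mathcal{D}$, together with $\nu_\pm(u)$ of Form \eqref{eqn-form-B} and holomorphic $q^{(0)}_u$ that solve \eqref{linear-system-total} for $u \neq 0$ and tend to $q^{(0)}_0(\mathcal{D})$. I need the lemma to allow prescribing $\nu_r(0) = \nu_r^\mathcal{D}$. This should be automatic: the limit system \eqref{linear-system-total-limit} forces $q(z_r(x_r)) = -\nu_r(0)^2$, and $q^{(0)}_0(\mathcal{D})$ satisfies the apparency constraint at $x_r$ with $\nu_r^\mathcal{D}$, since $\Delta q^{(0)}_{x_0}$ and the blocks $q^{(i)}_{0,x_s}$ with $s \neq r$ vanish at $x_r$. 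I define $q_u$ by \eqref{q-u-setup}. For $u \neq 0$ it has simple apparent singularities at $x_\pm(u)$ and $x_3, \dots, x_d$ with the prescribed accessory parameters, which gives (ii) and (iii). By \Cref{prop-bb-limit-2}, $q_u \to q_0$ as given by \eqref{bb-limit}, and $q_0 = q_\mathcal{D}$, which gives (i). Holomorphy of the family in $u$ follows from \Cref{rem-gu-holomorphic}.

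Third, for the monodromy map I take a chart $(V,w)$ disjoint from $U \cup \{x_3,\dots,x_d\}$. There $q_u(w)$ is holomorphic in $(u,w)$, so by holomorphic dependence of solutions on parameters \cite{ilyashenko-yakovenko} the ratio of solutions, and therefore the projective monodromy, depends holomorphically on $u$. Injectivity for $d<2g-2$ I would cite from \cite[Corollary~5.4]{dinh-thesis}, as in \Cref{prop-double-as}. The idea behind it is that for fewer than $2g-2$ apparent singularities, the oper realizing a given monodromy is unique by \Cref{cor-isom-projective-connections}. Its divisor $x_+ + x_- + \mathbf{x'}$ therefore determines $u$ up to the sign $u \mapsto -u$, and $\nu_+ \neq \nu_-$ separates $u$ from $-u$.

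I expect the main obstacle to be the second step, namely verifying that \Cref{lemma-tune-limit-hol-q} really yields $q^{(0)}_u$ converging to the prescribed $q^{(0)}_0(\mathcal{D})$. The difficulty is that $E_1(u), E_2(u)$ come from the $2\times 2$ system \eqref{linear-system-decoupled-1}, whose determinant vanishes at $u=0$. So $C_+(u) - C_-(u)$ must vanish to order $u$ with the right coefficient, and I expect to tune $\nu'$ and the $\mathcal{O}(u^2)$ terms of $\nu_\pm$ to arrange this.
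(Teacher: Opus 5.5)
Your construction is the paper's route. The paper proves this proposition by referring to the proof of \Cref{prop-double-as}: write $q_{\mathcal{D}}$ in the form \eqref{bb-limit}, apply \Cref{lemma-tune-limit-hol-q}, pass to the limit with \Cref{prop-bb-limit-2}, get holomorphy of the monodromy from parameter dependence of solutions, and get injectivity from \cite[Corollary~5.4]{dinh-thesis}. Your check that $q^{(0)}_0(\mathcal{D})(z_r(x_r)) = -(\nu^{\mathcal{D}}_r)^2$, so that one may take $\nu_r(0)=\nu^{\mathcal{D}}_r$, fills in a point the paper leaves implicit. The step you flag as the main obstacle is exactly what the proof of \Cref{lemma-tune-limit-hol-q} does: the two order-$u$ conditions fix only $\nu'$ and the sum $\nu_{+,2}+\nu_{-,2}$ of the $u^2$ coefficients.

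The step that fails is your explanation of injectivity.
\begin{itemize}
\item The oper $\mathcal{D}_{-u}$ has the same divisor $x_+(u)+x_-(u)+\mathbf{x'}$ as $\mathcal{D}_u$. At the common point $z=u$, the accessory parameters to compare are $\nu_+(u)$ and $\nu_-(-u)$, not $\nu_+(u)$ and $\nu_-(u)$. Form (B) gives both of them the expansion $\frac{3}{4u}+\nu' u+\mathcal{O}(u^2)$, so it does not separate $u$ from $-u$.
\item Worse, since only $\nu_{+,2}+\nu_{-,2}$ is constrained, you may choose $\nu_-(u)=\nu_+(-u)$. The building blocks and the basis $q_{k,u}$ depend only on the unordered pair $\{x_+,x_-\}$: $q^{(i)}_{-u,\pm}=q^{(i)}_{u,\mp}$ and $q_{k,-u}=q_{k,u}$. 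Hence $C_\pm(-u)=C_\mp(u)$ and $E_k(-u)=E_k(u)$, so this symmetric choice gives $q_{-u}=q_u$.
\item Then $\pm u$ yield the same oper, hence the same monodromy up to conjugation, and the monodromy map is two-to-one.
\end{itemize}

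Uniqueness of the oper with fewer than $2g-2$ apparent singularities realizing a given monodromy therefore shows injectivity of the monodromy map only if $u\mapsto\mathcal{D}_u$ is itself injective. That uniqueness comes from the unique-destabilizing-subbundle argument; \Cref{cor-isom-projective-connections} as stated already assumes equal divisors, so it does not give it.

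To close the gap, build the family asymmetrically, for example with $\nu_{+,2}\neq\nu_{-,2}$. Then $\nu_+(u)-\nu_-(-u)=(\nu_{+,2}-\nu_{-,2})u^2+\mathcal{O}(u^3)\neq 0$ for small $u\neq 0$. Distinct parameters then give distinct opers in one of three ways:
\begin{itemize}
\item different divisors, when $u'\neq\pm u$;
\item the same divisor with different accessory parameters, when $u'=-u$;
\item fewer apparent singularities, at $u=0$.
\end{itemize}
Injectivity then follows. The paper's one-line proof passes over this point as well.
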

\begin{proof}
	The proof is the similar to that of \Cref{prop-double-as}.
\end{proof}

\subsection{Crossing Simpson's partial oper strata}
\begin{proposition}\label{prop-family-dR}
	Let $\{\mathcal{D}_u\}_{u \in z(U)}$ be a family as in \Cref{prop-bb-limit-3} with $d$ even, 
	and suppose that the projective monodromy of $\mathcal{D}_0$ is irreducible.
	Then, after shrinking $U$, there exist a holomorphic family $\{[E_u, \nabla_u]\}_{u \in z(U)}$ in $\mathcal{M}_{dR}$
	and line subbundles $L_u \hookrightarrow E_u$ with $\mathcal{D}_u = \mathcal{D}_{(L_u \hookrightarrow E_u, \nabla_u)}$.
	In particular, for the closed strata \eqref{eqn-closed-strata},
	\[ [E_u, \nabla_u] \in \mathcal{M}^{\leq d}_{dR} \quad \text{for all } u \in z(U), \qquad \qquad [E_0, \nabla_0] \in \mathcal{M}^{\leq d-2}_{dR}. \]
\end{proposition}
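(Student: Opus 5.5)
Fix $u$ and think of $\mathcal{D}_u$ as a branched projective structure. Its projective monodromy $\rho_u$ depends holomorphically on $u$ by \Cref{prop-bb-limit-3}. Irreducibility is an open condition, and $\rho_0$ is irreducible. So after shrinking $U$, every $\rho_u$ is irreducible.

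\emph{Step 1: lift the monodromy to $\SL_2(\C)$.} Every $\mathcal{D}_u$ has an even number of apparent singularities: $d-2$ at $u=0$, and $d$ for $u\neq 0$. The obstruction to lifting the projective monodromy of a branched projective structure to $\SL_2(\C)$ is the parity of its number of branch points. I will check this via the degree identity $\deg \Div = 2g-2-2\deg L$, or directly: the second Stiefel–Whitney class of the $\mathrm{PSL}_2$-bundle equals $\deg(\text{normal bundle of the section}) \bmod 2$, which equals $\deg \Div \bmod 2$. Hence $\rho_0$ lifts to some $\check\rho_0$.

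The lifts are obtained by choosing signs on the generators, and the relation \eqref{lift-condition} is a continuous $\{\pm1\}$-valued condition. Therefore the holomorphic family $\rho_u$ over the disc $z(U)$ lifts to a holomorphic family $\check\rho_u$ with the given $\check\rho_0$. Concretely, I lift each generator $\rho_u(A_i),\rho_u(B_j)$ continuously in $u$ (possible since $z(U)$ is simply connected); the product of commutators is then a continuous map into $\{\pm 1\}$ equal to $1$ at $u=0$. Via the homeomorphism $\mathcal{M}_{dR}\cong\mathcal{R}_{\SL_2(\C)}$, and since the Riemann–Hilbert map is biholomorphic on the irreducible locus, this gives a holomorphic family $[E_u,\nabla_u]$ with $(E_u,\nabla_u)=(F^{\check\rho_u},\partial)$.

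\emph{Step 2: realise $\mathcal{D}_u$ by a subbundle.} By the discussion preceding \Cref{lemma-twist-monodromy}, a projective connection $(P,s)$ whose monodromy lifts is $(\mathbb{P}(F^\nabla),\mathbb{P}(L^\nabla))$ for a suitable $(F,\nabla)$ and $L\hookrightarrow F$. I take $P_u=\mathbb{P}(F^{\check\rho_u})$ and let $s_u$ be the section given by the developing map of $\mathcal{D}_u$. Then $L_u$ is the tautological preimage line of $s_u$ in $F^{\check\rho_u}$, which is a holomorphic line subbundle. The developing maps vary holomorphically in $u$, since the local solutions are holomorphic in $(u,w)$ as in the proof of \Cref{prop-double-as}. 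Hence $L_u$ is a holomorphic family of line subbundles of the holomorphically varying $E_u$. By the proposition identifying associated opers with projectivizations, $\mathcal{D}_u=\mathcal{D}_{(L_u\hookrightarrow E_u,\nabla_u)}$.

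Since $\Div$ is the zero divisor of $s_{L_u}(\nabla_u)$, we get $\deg L_u = g-1-d/2$ for $u\neq0$ and $\deg L_0=g-d/2$. The strata statement then follows directly from the definition \eqref{eqn-closed-strata}. Indeed $\deg(KL_u^{-2})=d$ for $u\ne0$ and $\deg(KL_0^{-2})=d-2$, so $[E_u,\nabla_u]\in\mathcal{M}^{\le d}_{dR}$ and $[E_0,\nabla_0]\in \mathcal{M}^{\le d-2}_{dR}\subset\mathcal{M}^{\le d}_{dR}$.

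The main obstacle is the subbundle family in Step 2. The degree of $L_u$ jumps at $u=0$, so $L_u$ cannot be a holomorphic family of line bundles on $C\times U$ in the naive sense; only the pointwise statement is needed. I will define $L_u$ fiberwise for each $u$ and assert holomorphy only of $[E_u,\nabla_u]$, which follows from Step 1. Near $u=0$ the developing-map section develops a degeneration at $x_0$: the line bundle generated by $s_u$ converges to $L_0(-x_0)$ rather than to $L_0$, as in Simpson's picture. I will handle this by not requiring continuity of $L_u$ at $0$ and treating $u=0$ separately by the same construction.
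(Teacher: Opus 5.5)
Your proposal is correct and follows essentially the same route as the paper. Lift the holomorphically varying projective monodromy to $\SL_2(\C)$ using the even number of apparent singularities, shrink $U$ to keep irreducibility, and use that Riemann--Hilbert is biholomorphic on irreducibles. Then realize each $\mathcal{D}_u$ by a subbundle $L_u \hookrightarrow E_u$ and read off the strata from $\deg(KL_u^{-2}) = \deg \mathrm{div}(\mathcal{D}_u)$. Your continuity argument for the lift and your $w_2$ parity check supply details the paper only asserts.

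You should delete the sentence in Step 2 claiming that the developing maps vary holomorphically and hence that $L_u$ is a holomorphic family. This fails at $u=0$: a bubble forms near $(0,x_0)$ (cf.\ \Cref{sect-existence-bubble}) and $\deg L_u$ jumps there. As your last paragraph says, and as in the paper, only the fiberwise construction of $L_u$ is needed.
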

\begin{proof}
	As the number of apparent singularities is even throughout the family $\{\mathcal{D}_u\}$, 
	we can lift the projective monodromy $\rho_u$ of $\mathcal{D}_u$ 
	to a holomorphic family $\check{\rho}_u$ of $\SL_2(\C)$-representations.
	Since $\rho_0$ is irreducible and irreducibility is an open condition, $\check{\rho}_u$ is irreducible for all $u \in z(U)$ after shrinking $U$.
	Since the Riemann--Hilbert correspondence is biholomorphic on irreducible representations \cite{simpson-moduli2}, 
	the flat connections $[E_u, \nabla_u]$ with monodromy $\check{\rho}_u$ form a holomorphic family. 
	Since $(E_u, \nabla_u)$ realizes the monodromy of $\mathcal{D}_u$, the oper $\mathcal{D}_u$ is induced by a line subbundle $L_u \hookrightarrow E_u$, i.e.\ $\mathcal{D}_u = \mathcal{D}_{(L_u \hookrightarrow E_u, \nabla_u)}$ (cf. \Cref{subsec-projective-structures}).
	Note that 
	\[ \deg(L_u^{-2}K) = \deg \mathrm{div}(\mathcal{D}_u) = 
	\begin{cases} 
		d, & u \neq 0, \\ 
		d-2, & u = 0. 
	\end{cases} \]
\end{proof}

The following corollary gives a geometric intuition: 
one can define a family that starts from a partial oper stratum and crosses to a smaller stratum by colliding apparent singularities of Form (B) \eqref{eqn-form-B}.
Its proof follows immediately from \Cref{prop-rank2-limits}(ii), since 
opers with fewer than $2g-2$ apparent singularities are induced by triples of the form $(L_E \hookrightarrow E, \nabla)$
where $E$ is strictly unstable with unique destabilizing line subbundle $L_E$.
\begin{corollary}\label{cor-crossing-strata}
	Given $d < 2g-2$, the family $[E_u, \nabla_u]$ of \Cref{prop-family-dR} satisfies
	\[ [E_u, \nabla_u] \in (N_d)^+_1 \quad \text{for } u \neq 0, \qquad \qquad \text{ and } \qquad [E_0, \nabla_0] \in (N_{d-2})^+_1 \subset \overline{(N_d)^+_1}. \]
	Here the last inclusion is Simpson's, recalled in \Cref{subsubsec-natural-stratification}.
\end{corollary}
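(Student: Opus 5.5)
The plan is to combine \Cref{prop-family-dR} with the characterization of the partial oper strata in \Cref{prop-rank2-limits}(ii), as set out in the terminology paragraph of \Cref{subsec-projective-structures}. By \Cref{prop-family-dR}, after shrinking $U$ we have a holomorphic family $[E_u,\nabla_u]$ in $\mathcal{M}_{dR}$ with line subbundles $L_u\hookrightarrow E_u$ such that $\mathcal{D}_u=\mathcal{D}_{(L_u\hookrightarrow E_u,\nabla_u)}$. By \Cref{def-associated-SL-operator}, $D_{L_u}(\nabla_u)=\mathrm{div}(\mathcal{D}_u)$. So $\deg(KL_u^{-2})=d$ for $u\neq 0$ and $\deg(KL_0^{-2})=d-2$.

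The first step is to identify $L_u$ as the destabilizing subbundle. Since $d<2g-2$, we get $\deg L_u=g-1-d/2>0$, so $L_u$ destabilizes $E_u$. Hence $E_u$ is strictly unstable and $L_u$ is its unique maximal subbundle. The same holds at $u=0$, where $\deg L_0=g-d/2>0$. In particular $d(E_u)$, the label of \Cref{par-closed-strata}, equals $\deg(KL_u^{-2})$, because the maximal subbundle realizes $\min_L\deg(KL^{-2})$.

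The second step is to apply \Cref{prop-rank2-limits}(ii). With $M=L_u$ and $D=D_{L_u}(\nabla_u)$, it places $[E_u,\nabla_u]$ in the upward flow $\alpha^+_1$ of the Hodge bundle $\alpha=[E_{L_u},\phi_{D}]\in N_d$ for $u\neq0$. At $u=0$ it places $[E_0,\nabla_0]$ in the leaf over $[E_{L_0},\phi_{x_3+\dots+x_d}]\in N_{d-2}$. Stability of these Hodge bundles is automatic here: we need $d-2<d<2g-2$ and $s_{L}(\nabla)\neq 0$, and the latter follows from irreducibility.

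The final step is the inclusion $(N_{d-2})^+_1\subset\overline{(N_d)^+_1}$. This is Simpson's theorem, recalled in \Cref{subsubsec-natural-stratification}. It is also witnessed directly by our family, since $[E_0,\nabla_0]=\lim_{u\to0}[E_u,\nabla_u]$ by the holomorphy in \Cref{prop-family-dR}.

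The only point needing care is that the degree bookkeeping and the parity are consistent. Since $d$ is even, the $\mathrm{SL}_2$ lift exists, so $\Lambda=\mathcal{O}_C$ and the labels match. I do not expect any real obstacle beyond citing these results in order.
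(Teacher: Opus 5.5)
Your proposal is correct and follows the paper's own argument. The paper also deduces the corollary immediately from \Cref{prop-rank2-limits}(ii), using the fact that an oper with fewer than $2g-2$ apparent singularities is induced by the unique destabilizing subbundle of a strictly unstable bundle; this is exactly your degree computation $\deg L_u = g-1-d/2>0$ and $\deg L_0 = g-d/2>0$.

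One small caveat: $[E_0,\nabla_0]=\lim_{u\to 0}[E_u,\nabla_u]$ only shows that this one point lies in $\overline{(N_d)^+_1}$. It does not give the full inclusion $(N_{d-2})^+_1\subset\overline{(N_d)^+_1}$, and for that you rightly rely on Simpson's theorem, as the paper does.
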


For $d \geq 2g-2$, the subbundle $L_u$ is not destabilizing, and \Cref{prop-family-dR} does not exclude that $E_u$ admits another line subbundle of degree $\deg L_u + 1$ for all $u$, i.e.\ that $[E_u, \nabla_u] \in \mathcal{M}^{\leq d-2}_{dR}$ for all $u$.

\section{Relation to the isomonodromic bubbling surgery}
In this section, we let $\{ \mathcal{D}_u\}$ be a family of branched projective structures as in \Cref{prop-bb-limit-3}, 
with accessory parameters $\nu_\pm(u)$ of the colliding apparent singularities $x_\pm$ of Form (B) \eqref{eqn-form-B}.
We recall that the limit $\mathcal{D} = \mathcal{D}_0$ has two  apparent singularities fewer with $\mathrm{div}(\mathcal{D}) = x_3 + \dots + x_d$. 
The main result of this section is the proof that for all sufficiently small nonzero $u$, $\mathcal{D}_u$ is the output of a bubbling operation that creates the apparent singularities $x_\pm$

\subsection{Bubbling operation and twin paths}
In this section, we use the terminology \textit{branched projective structures} in place of opers with apparent singularities, 
as this notion does not presuppose a fixed complex structure and hence is more readily incorporated in the literature 
on isomonodromic bubbling \cite{CalsamigliaDeroinFrancaviglia2014}.

\paragraph{Developing maps and developed paths}
Let $\sigma$ be a branched projective structure on a smooth surface $\Sigma$.
Let $A_\sigma \colon \widetilde{\Sigma} \rightarrow \mathbb{P}^1$ be its developing map on the universal cover defined via analytic continuation of the ratio $w_\alpha$ of two independent local solutions of its oper (\Cref{subsubsec-projective-structures}).
By construction, $A_\sigma$ is equivariant with respect to the induced projective monodromy, i.e. $A_\sigma(\gamma \cdot \tilde{x}) = \rho(\gamma) A_\sigma(\tilde{x})$ for $\rho \colon \pi_1 \rightarrow \mathrm{PSL}_2(\C)$, and is unique up to post-composition with $\mathrm{PSL}_2(\C)$.

By construction, an apparent singularity $x$ defines a critical point of $A_\sigma$.
For example, near a lift $\tilde{x}$ of $x$, $A_\sigma = M(\zeta^2)$ for a local coordinate $\zeta$ centred at $\tilde{x}$ and some $M \in \mathrm{PSL}_2(\C)$.
Given a path $\alpha$ in $\Sigma$, we say that its developed path is
$\overline{\alpha} \coloneqq A_\sigma \circ \tilde{\alpha}$ for a lift $\tilde{\alpha}$ of $\alpha$ to $\widetilde{\Sigma}$; 
this developed path is well defined up to $\mathrm{PSL}_2(\C)$.
{%
More generally, for a subset $K \subset \Sigma$ contained in a simply connected open set, we write $A_\sigma(K) \coloneqq A_\sigma(\widetilde{K})$ for a lift $\widetilde{K}$ of $K$ to $\widetilde{\Sigma}$, well defined up to $\mathrm{PSL}_2(\C)$ \cite[Definition~2.4]{CalsamigliaDeroinFrancaviglia2014}; when several such subsets appear in one statement, such as $x_\pm$, $\gamma_1$, $\gamma_2$ and $\Delta$, they are lifted together inside a lift of a single simply connected open set containing them.
}%
For example, a small loop around a regular point develops onto a loop winding once around the image of that point, 
while a small loop around a simple apparent singularity $x$ develops onto a loop winding twice around $A_\sigma(\tilde{x})$.

\paragraph{Isomonodromic bubbling}
Let $\sigma$ be a branched projective structure on a Riemann surface $C'$ with underlying smooth surface $\Sigma$,
and $\gamma \subset \Sigma$ be an embedded arc along which the developing map $A_\sigma$ of $\sigma$ is injective. 
By cutting $\Sigma$ along $\gamma$ and gluing in a copy of $\mathbb{P}^1$ along the developed image $\overline{\gamma}$ of $\gamma$, 
we obtain a branched projective structure $\sigma' = \mathrm{Bub}(\sigma, \gamma)$ that 
\begin{enumerate}[(i)]
	\item has the same monodromy representation as $\sigma$;
	\item is subordinate to a different Riemann surface $C''$ with the same underlying smooth surface;
	\item has two more apparent singularities than $\sigma$ as the result of gluing extra $2\pi$ angles to the end of $\gamma$.
\end{enumerate}
We say that $\mathrm{Bub}(\sigma, \gamma)$ is the output of \textit{bubbling} of $\sigma$ along the arc $\gamma$. 
The inverse operation $\mathrm{Bub}(\sigma, \gamma) \mapsto \sigma$, obtained by collapsing the glued sphere, is called \textit{debubbling}. 

\begin{figure}[ht]
  \centering
  \includegraphics{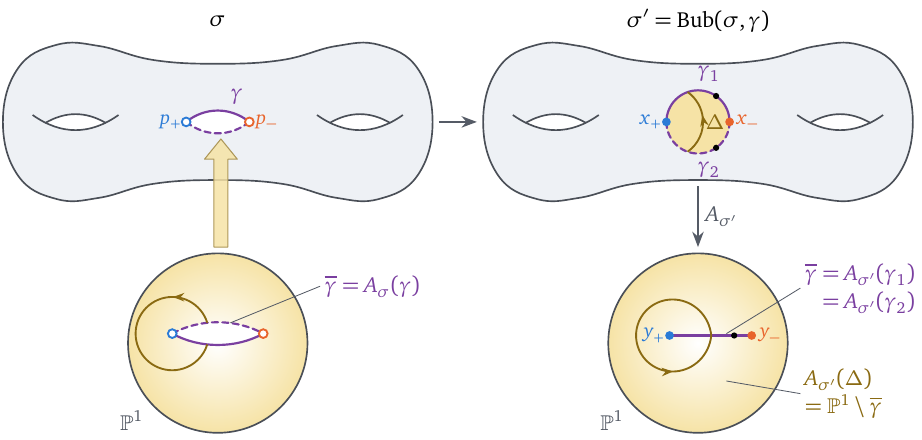}
  \caption{Bubbling of $\sigma$ along $\gamma$: 
  the complement of $\overline{\gamma}$ on $\mathbb{P}^1$ is glued in and becomes a disc bounded by the twin paths $\gamma_1$ and $\gamma_2$.
  The output is a new branched projective structure $\sigma'$ with two extra apparent singularities, 
  and generally subordinate to a different complex structure.}
  \label{fig-bubbling-surgery}
\end{figure}

\paragraph{Twin paths}
\begin{definition}[Definition 2.6 in \cite{CalsamigliaDeroinFrancaviglia2014}]
	Given a projective structure with developing map $A: \widetilde{\Sigma} \rightarrow \mathbb{P}^1$, we say that two paths $\gamma_1$ and $\gamma_2$ on $\Sigma$, both defined on the same interval $[0, T]$ are \textit{twin paths} if they overlap once developed, i.e.
	\begin{enumerate}
		\item $\gamma_1(0) = \gamma_2(0)$ is an apparent singularity;
		\item If $\alpha: [-T, T] \rightarrow \Sigma$ is given by $\alpha(t) = \gamma_1(t)$ for $t \geq 0$ and $\gamma_2(-t)$ for $t \leq 0$, then the developed $\overline{\alpha}$ of $\alpha$ is even: $\overline{\alpha}(t) = \overline{\alpha}(-t)$.
	\end{enumerate} 
	In particular, if $\gamma_1$ and $\gamma_2$ are embedded and disjoint except at $\gamma_1(0) = \gamma_2(0)$ then we call them \textit{embedded twin paths}.
\end{definition}

We will need the following configuration of twin paths to identify bubbling.
We will say that two twin paths $\gamma_1$ and $\gamma_2$ bound a bubble $\Delta$
if they are embedded and intersect precisely at two simple apparent singularities $x_+$ and $x_-$, 
and $\Delta$ is the disc bounded by $\gamma_1 \cup \gamma_2$ develops biholomorphically to $\mathbb{P}^1 \setminus \overline{\gamma}$ with $\overline{\gamma} = A(\gamma_1) = A(\gamma_2)$.
The following theorem, which identifies the output of bubbling in terms of twin paths,
follows from the proof of Theorem 5.1 in \cite{CalsamigliaDeroinFrancaviglia2014}.
\begin{theorem}\label{thm-twinpaths-bubble}
	Let $\sigma'$ be a branched projective structure on $\Sigma$, and let $\gamma_1$, $\gamma_2$ be twin paths bounding a bubble $\Delta$.
	Then $\sigma' = \mathrm{Bub}(\sigma, \gamma)$, where $\sigma$ is obtained from $\sigma'$ by collapsing $\Delta$ and identifying $\gamma_1(t)$ with $\gamma_2(t)$, and $\gamma$ is the resulting segment.
\end{theorem}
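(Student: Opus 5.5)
We will prove the statement by a cut-and-paste construction, checking the three defining properties of bubbling (same monodromy, new complex structure on the same smooth surface, two extra simple branch points) directly from the twin-path configuration. The approach mirrors the proof of Theorem 5.1 in \cite{CalsamigliaDeroinFrancaviglia2014}.

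First, we construct $\sigma$. Let $\Sigma' = \Sigma \setminus \Delta$ be the closure of the complement of the open bubble. Its boundary consists of $\gamma_1 \cup \gamma_2$, which meet only at $x_+$ and $x_-$. Parametrize $\gamma_1, \gamma_2$ on $[0,T]$ from $x_+$ to $x_-$ so that $A(\gamma_1(t)) = A(\gamma_2(t))$; this is possible by the twin-path condition. Let $\Sigma''$ be the quotient of $\Sigma'$ by the identification $\gamma_1(t) \sim \gamma_2(t)$. Topologically $\Sigma''$ is again a closed surface of the same genus, since we glue a disc-shaped slit shut. Let $\gamma$ be the image of the $\gamma_i$.

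Next, we equip $\Sigma''$ with a branched projective structure. Away from $\gamma$, we use the charts of $\sigma'$. At an interior point $\gamma_1(t) \sim \gamma_2(t)$ with $0<t<T$, take half-discs on either side, one bordering $\gamma_1$ and one bordering $\gamma_2$, lying in $\Sigma'$. Both develop by the restriction of a single local branch of $A$ to half-neighborhoods of the same arc $\overline{\gamma}$. The bubble $\Delta$ develops biholomorphically onto $\mathbb{P}^1 \setminus \overline{\gamma}$, so it occupies one side of $\overline\gamma$ at $\gamma_1$ and the other side at $\gamma_2$. Hence the two exterior half-discs develop onto complementary sides of $\overline{\gamma}$, and together they give an injective chart onto a full disc. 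At the endpoints $x_\pm$ the total cone angle of $\sigma'$ is $4\pi$, because the points are simple branch points. The bubble accounts for $2\pi$ of it, since $\Delta$ develops onto the complement of a slit, which subtends angle $2\pi$ at each endpoint. The remaining angle on $\Sigma'$ is $2\pi$, so after gluing the endpoint becomes a regular point. Concretely, the local model $\zeta \mapsto \zeta^2$ restricted to the exterior sector composes with a square root to give a chart. These charts differ from the original ones by Möbius maps, so they define $\sigma$ with developing map induced by $A$. It follows that $\sigma$ has the same monodromy, because $\pi_1$ of the surface is unaffected and loops can be pushed off $\Delta$. It also has two fewer branch points.

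Finally, we recover $\sigma'$. The developing map of $\sigma$ is injective on $\gamma$, since it is injective on $\gamma_1$, which lies in the boundary of a set mapped biholomorphically. So $\mathrm{Bub}(\sigma,\gamma)$ is defined. By construction, cutting $\Sigma''$ along $\gamma$ returns $\Sigma'$, and gluing in $\mathbb{P}^1 \setminus \overline{\gamma}$ is the same as gluing back $\Delta$ via its developing map, which is a biholomorphism onto that complement. The gluing maps agree because both are dictated by $A$. Hence $\mathrm{Bub}(\sigma,\gamma) = \sigma'$.

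We expect the main obstacle to be the local analysis at $x_\pm$: verifying that the angle bookkeeping really yields a smooth regular point. For this we would work in the coordinate $\zeta$ with $A = \zeta^2$ and check that $\gamma_1$, $\gamma_2$ leave $x_\pm$ in directions differing by angle $\pi$ in $\zeta$. This follows from evenness of $\overline{\alpha}$ and embeddedness, and it makes the complementary sector a half-plane in $\zeta$ whose boundary rays are identified by $\zeta \mapsto -\zeta$.
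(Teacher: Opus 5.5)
Your route is the paper's. You remove the open bubble, glue $\gamma_1(t)$ to $\gamma_2(t)$ (compatible with the charts because the twins have the same developed image), note that the cone angle at $x_\pm$ drops from $4\pi$ to $2\pi$, and recover $\sigma'$ by gluing $\Delta\cong\mathbb{P}^1\setminus\overline{\gamma}$ back along $\gamma$. You give more detail than the paper's two-line proof, and your local model $\zeta_2(t)=-\zeta_1(t)$ at $x_\pm$ is right. There is one minor slip: the chart at the new regular point is $\zeta^2$ itself. It is injective on the exterior region and identifies $\zeta$ with $-\zeta$ on its boundary. Composing with a square root would undo that identification.

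One step is justified incorrectly: injectivity of $A_\sigma$ on $\gamma$. This is needed for $\mathrm{Bub}(\sigma,\gamma)$ to be defined, and the paper does not check it.
\begin{itemize}
\item \emph{Why your reason fails.} Lying on the boundary of a domain that $A$ maps biholomorphically does not give injectivity. By \Cref{prop-Joukowsky}, $J$ maps the open unit disc biholomorphically onto $\mathbb{P}^1\setminus[-2,2]$. Yet $J(e^{i\theta})=2\cos\theta$ is not injective on any arc of $\mathbb{S}^1$ containing $\pm1$ in its interior. In your own configuration, $A$ is two-to-one on $\partial\Delta=\gamma_1\cup\gamma_2$.
\item \emph{Interior points.} The argument must use the twin pairing, through the two-sided picture you already set up. For $0<t<T$, the $\Delta$-sides at $\gamma_1(t)$ and $\gamma_2(t)$ develop onto the two complementary sides of the local arc of $\overline{\gamma}$ through $y=A(\gamma_1(t))$. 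So together they fill a neighbourhood of $y$ off that arc. If $A(\gamma_1(s))=y$ for some $s\neq t$, the $\Delta$-side at $\gamma_1(s)$ would develop into the same set, contradicting injectivity of $A|_{\Delta}$.
\item \emph{Endpoints.} At $x_\pm$, the $\Delta$-region in the coordinate $\zeta$ develops onto a full neighbourhood of $A(x_\pm)$ minus $\overline{\gamma}$. This rules out $A(\gamma_1(s))=A(x_\pm)$ for $0<s<T$, and also $A(x_+)=A(x_-)$.
\end{itemize}
This argument, like your interior-point gluing, uses that $A$ has no critical points on the open arcs $\gamma_i((0,T))$. That is not literally part of the paper's definition of a bubble, so you should state it. It does hold in the application, \Cref{thm-bubble}, where $w=\pm1$ are the only critical points on $\mathbb{D}_R$.
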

\begin{proof}
Since $\gamma_1$ and $\gamma_2$ have the same developed image, identifying $\gamma_1(t)$ with $\gamma_2(t)$ is compatible with the projective charts outside $\Delta$; at the two endpoints the cone angle drops from $4\pi$ to $2\pi$, so they become regular points of $\sigma$, and $\overline{\gamma}$ is the developed image of $\gamma$.
Gluing $\Delta$ back, a copy of $\mathbb{P}^1$ cut along $\overline{\gamma}$, along the arc $\gamma$ is the bubbling of $\sigma$ along $\gamma$.
\end{proof}

The main result of this section is the following theorem.
For simplicity, we will use interchangeably the notations for opers with apparent singularities $\mathcal{D}_u$ on $C$ 
and the corresponding branched projective structures.
\begin{theorem}\label{thm-bubble}
	Let $\{\mathcal{D}_u\}$ be a family of branched projective structures as in \Cref{prop-bb-limit-3}, with $\mathrm{div}(\mathcal{D}_u) = x_+ + x_- + x_3 + \dots + x_d$ for $u \neq 0$.
	Then there exists $\epsilon > 0$ such that for every $u$ with $0 < |u| < \epsilon$, the structure $\mathcal{D}_u$ has twin paths $\gamma_{u,1}$ and $\gamma_{u,2}$ from $x_+$ to $x_-$ bounding a bubble.
	Consequently, $\mathcal{D}_u = \mathrm{Bub}(\sigma_u, \gamma_u)$, where $\sigma_u$ is a branched projective structure with apparent singularities $x_3, \dots, x_d$ and the same monodromy as $\mathcal{D}_u$, on a Riemann surface with the same underlying smooth surface as $C$.
\end{theorem}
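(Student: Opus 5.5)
The plan is to blow up the collision point at scale $u$. In the rescaled coordinate, the developing map of $\mathcal{D}_u$ near $x_0$ should converge to a degree-two rational map with critical points at the two colliding apparent singularities, Möbius equivalent to the Joukowsky map $J(w)=w+w^{-1}$. For $J$ the unit circle is a pair of twin paths and the unit disc is a bubble. I would then transfer this picture to small $u\neq 0$ by a perturbation and degree argument, and finish with \Cref{thm-twinpaths-bubble}.

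\emph{Step 1 (rescaled limit).} On $(U,z)$ write $q_u$ as in \eqref{eqn-qu-expansion} and set $z=uw$. With $\nu_\pm(u)$ of Form (B) \eqref{eqn-form-B}, the two polar residue terms combine to $\frac{3}{2(z^2-u^2)}+\mathcal{O}(u)$. Hence
\[ u^2 q_u(uw) = Q_0(w) + u^2 h_u(w), \qquad Q_0(w) = -\frac{3}{4(w-1)^2}-\frac{3}{4(w+1)^2}+\frac{3}{2(w^2-1)}. \]
Here $h_u$ is holomorphic in $(u,w)$ away from $w=\pm 1$ and has at most simple poles there. This uses \Cref{rem-gu-holomorphic}, which says $g_u$ is holomorphic in $(u,z)$, together with the $\mathcal{O}(u)$ corrections to $\nu_\pm$. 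In the $w$-coordinate the apparent singularities sit at the fixed points $w=\pm1$. Thus $\{\partial_w^2+u^2q_u(uw)\}$ is a holomorphic family in $u$ of Fuchsian equations on a disc $|w|<R$. Each member has local exponents $-\tfrac12,\tfrac32$ at $\pm1$, and apparency (no logarithmic terms) holds for every $u$, including $u=0$ by \Cref{prop-bb-limit-2}. The main technical point, and the step I expect to be the real obstacle, is showing that the single-valued ratio of solutions $A_u(w)$ depends continuously, indeed holomorphically, on $u$ uniformly on compact subsets of $|w|<R$, including neighbourhoods of $w=\pm1$. I would do this by Frobenius expansions at $\pm1$ with parameters: since the exponents are fixed and there are no logarithms, the normalized Frobenius solutions depend holomorphically on $u$. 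These are then glued to solutions on the complement by the standard holomorphic dependence result already used in \Cref{prop-double-as}. Next, a direct check shows $\tfrac12\{J,w\}=Q_0$, since $\{J,w\}$ equals $2Q_0$. Hence, after post-composing with suitable $M_u\in\mathrm{PSL}_2(\C)$, $A_u\to J$ uniformly in the spherical metric on $|w|\le R$ for any fixed $R>1$.

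\emph{Step 2 (twin paths).} For small $u\neq0$, the critical values $c_\pm(u)=A_u(\pm1)$ lie near $\pm2$. I would let $\overline{\gamma}_u$ be the straight segment from $c_+(u)$ to $c_-(u)$. On an annulus $1-\delta<|w|<1+\delta$, Rouché/Hurwitz arguments applied to $A_u$ versus $J$ show the following. $A_u$ is a two-sheeted cover of a neighbourhood of $\overline{\gamma}_u$, branched only at $w=\pm1$. Therefore $A_u^{-1}(\overline{\gamma}_u)$ contains a Jordan curve through $\pm1$ close to $|w|=1$. This curve splits into two embedded arcs $\gamma_{u,1},\gamma_{u,2}$ from $x_+$ to $x_-$, meeting only at the endpoints, with the same developed image $\overline{\gamma}_u$. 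Parametrizing both arcs by the common image shows they are twin paths.

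\emph{Step 3 (bubble and conclusion).} Let $\Delta_u$ be the interior disc bounded by $\gamma_{u,1}\cup\gamma_{u,2}$, which lies near $|w|<1$. Then $A_u\colon\Delta_u\to\mathbb{P}^1\setminus\overline{\gamma}_u$ is proper, because the boundary maps into $\overline{\gamma}_u$. Its degree can be counted at a point near $\infty$: as $J$ has a single simple pole in $|w|<1$, Hurwitz gives exactly one simple preimage for $A_u$. A proper degree-one map is a biholomorphism, so $\Delta_u$ is a bubble. \Cref{thm-twinpaths-bubble} then yields $\mathcal{D}_u=\mathrm{Bub}(\sigma_u,\gamma_u)$. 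Here $\sigma_u$ has apparent singularities $x_3,\dots,x_d$, since the endpoints become regular. Bubbling preserves monodromy, and collapsing the disc and identifying the twin arcs does not change the underlying smooth surface.
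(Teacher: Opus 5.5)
Your proposal is correct and follows essentially the paper's route: rescale by $z=uw$, normalize the developing maps so that $A_u\to J$ uniformly in the chordal metric, use Rouch\'e to get a degree-two branched cover near the unit circle, take the two preimages of the segment between the critical values as twin paths and $\Delta_u$ as the bubble, and conclude with \Cref{thm-twinpaths-bubble}. Two points to tighten. First, properness of $A_u\colon\Delta_u\to\mathbb{P}^1\setminus\overline{\gamma}_u$ requires $A_u(\Delta_u)\cap\overline{\gamma}_u=\emptyset$, and your annulus argument does not cover the inner disc $|w|\le 1-\delta$; this follows immediately from uniform convergence to $J$ there (the paper's claim (a), $|A_u|>M'$ on $\mathcal{H}$). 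Second, apparency at $w=\pm1$ for $u=0$ comes from $\{J,w\}=2Q_0$ (or from continuity of the apparency constraint), not from \Cref{prop-bb-limit-2}.
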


We present the proof of \Cref{thm-bubble} in \Cref{sect-family-blown-up,sect-existence-bubble}. 
The following is a summary of the key steps.
\begin{enumerate}[(1)]
	\item \textit{Rescaling.} Passing to the blow-up, i.e.\ to the coordinate $w = z/u$ in which the apparent singularities $x_\pm$ are fixed at $w = \pm 1$, 
	scales the quadratic differential by $u^2$ \eqref{eqn-pullback-Q-1}.
	In particular, the accessory parameters from Form (B) now take the form $u \nu_\pm(u) = \pm \frac{3}{4} + \mathcal{O}(u^2)$ \eqref{eqn-pullback-Q-2}.
	Hence the rescaled family extends holomorphically to $u = 0$, with limit \eqref{eqn-limit-P1} on the exceptional divisor $\mathbb{E}$.
	\item \textit{Developing maps.} Since the rescaled family of oper equations depends holomorphically on $u$, so do its developing maps $A_u$ on the disc $\mathbb{D}_R \subset \mathbb{E}$, and $A_0$ is, up to a M\"obius transformation, the Joukowsky function $J(w) = w + w^{-1}$ (\Cref{prop-family-developingmaps-1}).
	Specifically, $A_0$ is a limit developing map on the exceptional divisor $\mathbb{E} \cong \mathbb{P}^1$ of the blown-up surface.
	Normalizing $A_0 = J$, we have $A_u \rightarrow J$ uniformly on a suitable compact domain (\Cref{prop-family-developingmaps-2}).
	\item \textit{Constant degree 2.} By Rouch\'e's theorem, uniform convergence implies that on a suitable domain $\mathcal{W}_u$ containing $w = \pm 1$, 
	$A_u$ is a branched double cover over its image and 
	ramifies exactly at the critical points $w = \pm 1$ (\Cref{lem-degree-2}).
	\item \textit{Twin paths.} The two preimages along $A_u$ of the straight segment $\overline{\gamma}_u$ 
	connecting the critical values $A_u(\pm 1)$ consists of two arcs joining the critical points $w = \pm 1$.
	They are twin paths bounding a bubble.
	For $u \neq 0$, $w = z/u$ is just a change of coordinate on $C$, so the same holds for $\mathcal{D}_u$ in the original coordinate $z$. 
	\Cref{thm-bubble} now follows from \Cref{thm-twinpaths-bubble}.
\end{enumerate}

\subsection{Families of branched projective structures on blown-up surfaces}
\label{sect-family-blown-up}
\paragraph{Blown up surfaces}
We now consider the family $\mathcal{D}_u$ as a family of quadratic differentials on the surface $S = \mathbb{D}_\epsilon \times C$. Let $\pi_1$ be the projection to the first factor. Let 
\[ \widetilde{S} = \mathrm{Bl}_{(0, x_0)} S \overset{\beta}{\longrightarrow} S \] 
be the blow up of $S$ at $(0, x_0)$, and $\widetilde{\pi}_1 \coloneqq \pi_1 \circ \beta$.
Then 
\begin{align}
\pi_1^{-1}(u) \simeq \widetilde{\pi}_1^{-1}(u) \simeq C, \qquad \qquad u \neq 0,  
\end{align}
while the central fiber of $\widetilde{\pi}_1$ acquires in addition the exceptional divisor $\mathbb{E}$,
\begin{align}
\widetilde{\pi}_1^{-1}(0) \simeq C \cup_{x_0} \mathbb{E}, \qquad \qquad \mathbb{E} \simeq \mathbb{P}^1.
\end{align}

Given a coordinate neighborhood $(U,z)$ centered at $x_0$, 
the preimage $\beta^{-1}(\mathbb{D}_\epsilon \times U)$ can be parametrized by
\[ \beta^{-1}(\mathbb{D}_\epsilon \times U) = \left\{ \left( (u, z), [a:b] \right) \in S \times \mathbb{P}^1  \mid a z = b u  \right\}. \]
For $a \neq 0$, consider an open subset of $\widetilde{S}$ coordinated by $(u, w)$ via 
\[ (u, w) \longmapsto \left( (u, z), [a:b] \right) = \left(  (u, uw), [1, w]  \right) \subset \widetilde{S}, \]
i.e. we let $z = wu$ and $b = wa$. 
We see explicitly that 
\( \beta^{-1}\left( \{u \} \times U \right) \simeq U \)
for $u \neq 0$,  and for $u = 0$, 
\[ \mathbb{P}^1 \simeq \mathbb{E} = \{w \in \mathbb{C}\} \cup \{w = \infty\}  \]
where $\{ w = \infty \}$ corresponds to the node $\mathbb{E} \cap C$.

Note that, for nonzero $u$, the relation $w = z/u$ is a M\"obius transform. In the coordinates $(u, w)$, for $u\neq 0$, the apparent singularities $z = \pm u$ of the family $\mathcal{D}_u$ are fixed at $w = \pm 1$.
See \Cref{fig-blowup} for an illustration.

\begin{figure}[ht]
  \centering
  \includegraphics[width=\textwidth]{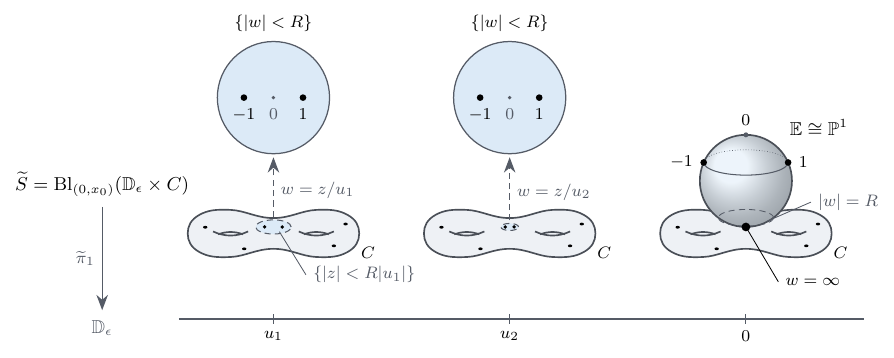}
  \caption{The blown-up family $\widetilde{\pi}_1 \colon \widetilde{S} \rightarrow \mathbb{D}_\epsilon$.
For $u \neq 0$, $\widetilde{\pi}_1^{-1}(u) \cong C$; the small disc $\{ |w| = |z/u| < R \}$ shrinks as $u \rightarrow 0$. 
The apparent singularities $z = \pm u$ are fixed at coordinates $w= \pm 1$ in the coordinates $(u, w)$.
At $u = 0$, $\widetilde{\pi}_1^{-1}(u) \cong C \cup \mathbb{E}$:
the node corresponds to $w = 0$ and $w \in \mathbb{C}$ parametrizes an open dense subset of the exceptional divisor $\mathbb{E} \cong \mathbb{P}^1$.}
  \label{fig-blowup}
\end{figure}

\paragraph{Pulling back families of branched projective structures}
Let $\{ \mathcal{D}_u \}_{u \in U}$ be a family of branched projective structures as in  \Cref{prop-bb-limit-3}, and let $\mathcal{Q}$ be the relative quadratic differential on $S$ that parametrizes $\mathcal{D}_u$, 
i.e. $\mathcal{Q} \mid_{\{u\} \times C} = q_u$ and $\mathcal{D}_u = \partial_z^2 + q_u(z)$ in local coordinates $z$ subordinate to a fixed reference holomorphic projective structure.
On the chart $(u, w)$ and for nonzero $u$, the local form of $\widetilde{\mathcal{Q}} \coloneqq \beta^\ast \mathcal{Q}$ is defined by the M\"obius transform $z \rightarrow w = z/u$,
\begin{equation}\label{eqn-pullback-Q-1}
	{\mathcal{Q}}(u, w) = u^2 q_u(uw).
\end{equation}

\begin{proposition}
	Let $\{\mathcal{D}_u\}$ be the family of projective structures as in \Cref{prop-bb-limit-3}, with accessory parameters $\nu_\pm(u)$ of Form B \eqref{eqn-form-B} 
	and $\mathcal{D} = \partial^2 + q_0$ as the $u \rightarrow 0$ limit. 
	Let $\mathcal{Q}$ be the corresponding relative quadratic differential on $S$.
	Then for nonzero $u$ and in local chart $(u, w)$, 
	\begin{equation}\label{eqn-pullback-Q-2}
		\beta^\ast \mathcal{Q}(u, w) =  \left( \frac{-3/4}{(w-1)^2} + \frac{-3/4}{(w+1)^2} + \frac{u \nu_+(u)}{w-1} + \frac{u \nu_-(u)}{w+1} + u^2 g_u(uw) \right) dw^2, 
		\qquad u\neq 0.
	\end{equation}
	The extension of $\beta^\ast \mathcal{Q}$ to $\pi^{-1}(0) \simeq C \cup_{x_0} \mathbb{E}$ restricts to $q_0$ on $C$ and
	\begin{equation}\label{eqn-limit-P1}
		\widetilde{\mathcal{Q}}_0 \mid_{\mathbb{E}} (w) = \frac{-3}{(w-1)^2 (w+1)^2} dw^2
	\end{equation}
	on the exceptional divisor $\mathbb{E} \simeq \mathbb{P}^1$.
\end{proposition}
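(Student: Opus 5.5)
The plan is to substitute $z = uw$ into the local expansion \eqref{eqn-qu-expansion} of $q_u$ on $U$, valid for $u \neq 0$. Because the reference coordinate $z$ is subordinate to the fixed holomorphic projective structure, and $w = z/u$ is an affine (hence M\"obius) change of coordinate for fixed $u \neq 0$, the Schwarzian term in \eqref{pre2_SL_operator_transit} vanishes. So $q_u$ transforms as a genuine quadratic differential, and $dz^2 = u^2\,dw^2$, which is \eqref{eqn-pullback-Q-1}. Each term of \eqref{eqn-qu-expansion} then scales as follows. The double poles give $u^2/(uw \mp u)^2 = 1/(w\mp 1)^2$. The simple poles give $u^2\nu_\pm(u)/(uw\mp u) = u\nu_\pm(u)/(w\mp1)$. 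The regular part gives $u^2 g_u(uw)$. Collecting these terms yields \eqref{eqn-pullback-Q-2}.

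For the extension to the central fibre I will split into the two components. On the exceptional divisor $\mathbb{E}$, Form (B) \eqref{eqn-form-B} gives $u\nu_\pm(u) = \pm\tfrac34 \pm \nu' u^2 + \mathcal{O}(u^3)$, which is holomorphic at $u=0$. By \Cref{rem-gu-holomorphic}, $g_u(z)$ is holomorphic in $(u,z)$, so $u^2 g_u(uw)$ is holomorphic in $(u,w)$ on the chart and vanishes at $u=0$. Hence $\beta^\ast\mathcal{Q}$ extends holomorphically across $u = 0$ on the $(u,w)$ chart, away from $w=\pm1$. Its restriction to $\mathbb{E}$ is
\[ -\tfrac34\Big(\tfrac{1}{(w-1)^2}+\tfrac{1}{(w+1)^2}\Big) + \tfrac34\Big(\tfrac{1}{w-1}-\tfrac{1}{w+1}\Big). \]
I will then check by partial fractions that this equals $-3/((w-1)^2(w+1)^2)$. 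The first bracket is $2(w^2+1)/(w^2-1)^2$ and the second is $2/(w^2-1)$, so the sum is $\tfrac34\cdot\frac{-2(w^2+1)+2(w^2-1)}{(w^2-1)^2} = \frac{-3}{(w^2-1)^2}$, which is \eqref{eqn-limit-P1}.

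On the component $C$, away from the node, the relevant chart is the original $(u,z)$ chart with $z$ bounded away from $0$, where $\beta$ is an isomorphism. There, by \Cref{prop-bb-limit-2}, $q_u \to q_0$ holomorphically; concretely, \Cref{lemma-bb-limit-1} together with \Cref{lemma-tune-limit-hol-q} gives that the family extends holomorphically in $u$. So the extension restricts to $q_0$ on $C$.

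The only delicate point is the holomorphy of the extension in $u$ jointly with $w$, and not merely the existence of a pointwise limit. This rests entirely on \Cref{rem-gu-holomorphic} and on the absence of an $\mathcal{O}(1)$ term in Form (B). A constant term $\nu_0$ would contribute $u\nu_0/(w\mp1)$, which vanishes at $u=0$ and so does no harm, but the vanishing of the $u^{-1}\cdot u$ corrections is what pins the limit on $\mathbb{E}$. I will also remark that near $w=\infty$ (the node) the extension is not claimed in the chart $(u,w)$. At $w=\infty$ one has $\widetilde{\mathcal{Q}}_0|_{\mathbb{E}} \sim -3\,dw^2/w^4$, which is holomorphic in the coordinate $1/w$, consistent with a regular point of $\mathbb{E}$.
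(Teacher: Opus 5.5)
Your proposal is correct and follows the paper's proof: substitute $z = uw$ into \eqref{eqn-qu-expansion} via \eqref{eqn-pullback-Q-1}, use Form (B) to get $u\nu_\pm(u) = \pm\frac34 + \mathcal{O}(u^2)$ and \Cref{rem-gu-holomorphic} for the joint holomorphy of $g_u$, then let $u \to 0$ on $\{w \in \mathbb{C}\} \subset \mathbb{E}$; your partial-fraction check, the check at $w = \infty$, and the remark about the $C$-component just spell out what the paper calls a direct computation. One small correction to your closing aside: the extension does not depend on Form (B) having no $\mathcal{O}(1)$ term (as you note yourself, such a term would contribute $u\nu_0/(w\mp1) \to 0$), and what fixes the limit on $\mathbb{E}$ is the leading coefficient $\pm\frac{3}{4u}$.
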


\begin{proof}
	The local form \eqref{eqn-pullback-Q-2} follows by plugging \eqref{eqn-qu-expansion} into \eqref{eqn-pullback-Q-1} and substituting $z = uw$.
{%
Here $g_u(z)$ is holomorphic in $(u, z)$ by \Cref{rem-gu-holomorphic}.
}%
	Since Form (B) implies
	\[ u \nu_\pm(u) = \pm \frac{3}{4} \pm u^2 \nu' + \mathcal{O}(u^3), \]
	we can compute the restriction of limit $\underset{u \rightarrow 0}{\lim} \mathcal{Q}(u, w)$ to the open dense subset $\{ w \in \mathbb{C} \} \subset \mathbb{E}$ directly by taking the $u \rightarrow 0$ limit of the expression in \eqref{eqn-pullback-Q-2}. 
	This yields
	\[ \frac{-3}{4} \left[ \frac{1}{(w-1)^2} + \frac{1}{(w+1)^2} \right] 
	+ \frac{3}{4} \left[ \frac{1}{w-1} - \frac{1}{w+1} \right] \]
	which is equal to \eqref{eqn-limit-P1}
\end{proof}

\paragraph{Joukowsky function as the limit}
Let us now consider the Joukowsky function on $\mathbb{E}$ defined by 
\[ J(w) = w + w^{-1}\]

The following properties of $J(w)$ follows immediately by direct computation.
\begin{proposition}\label{prop-Joukowsky}
	\begin{enumerate}[(i)]
		\item
$J: \mathbb{P}^1 \rightarrow \mathbb{P}^1$ has degree $2$: for $\xi \in \mathbb{C}$, $J(w) = \xi$ if and only if $w^2 - \xi w + 1 = 0$, and $J^{-1}(\infty) = \{0, \infty\}$.
		\item The critical points of $J(w)$ are $w = \pm 1$, and the corresponding critical values are $J(\pm 1) = \pm 2$.
		\item $J(\mathbb{S}^1) = [-2, 2]$ where $\mathbb{S}^1$ is the unit circle.
		\item The restriction of $J$ to $\mathrm{int} (\mathbb{D}_1)$ is a biholomorphism to $\mathbb{P}^1 \setminus [-2, 2]$.
	\end{enumerate}
\end{proposition}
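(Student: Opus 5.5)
The four items follow from elementary computation, in the order (i), (ii), (iii), (iv).

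For (i), write $J(w) = \frac{w^2+1}{w}$, a ratio of coprime polynomials with maximal degree $2$, so $J$ is a rational map of degree $2$. For finite $\xi$, clearing the denominator (legitimate since $w=0$ maps to $\infty$) gives $J(w)=\xi \iff w^2-\xi w+1=0$. The poles of $J$ are $w=0$ and $w=\infty$, so $J^{-1}(\infty)=\{0,\infty\}$.

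For (ii), compute $J'(w)=1-w^{-2}$, which vanishes exactly at $w=\pm1$, with $J(\pm1)=\pm2$. There are no critical points at $0$ or $\infty$: near $w=0$ the function $1/J = w/(1+w^2)$ has nonzero derivative, and the same holds at $\infty$ by the symmetry $J(1/w)=J(w)$. This is consistent with Riemann--Hurwitz, since a degree-$2$ map $\mathbb{P}^1\to\mathbb{P}^1$ has exactly $2$ ramification points.

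For (iii), for $w=e^{i\theta}$ we have $J(w)=2\cos\theta$, which sweeps out exactly $[-2,2]$.

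For (iv), which is the only step needing an argument, I would proceed as follows.
\begin{enumerate}
\item The roots of $w^2-\xi w+1=0$ have product $1$, so for each $\xi\in\mathbb{C}$ the two preimages are $w$ and $1/w$. The same pairing holds for $\xi=\infty$, whose preimages are $0$ and $\infty$.
\item If $\xi\notin[-2,2]$, then by (iii) neither preimage lies on $\mathbb{S}^1$. Since their product is $1$, exactly one of them lies in $\mathrm{int}(\mathbb{D}_1)$. For $\xi=\infty$ this preimage is $w=0$.
\item If $\xi\in[-2,2]$, then $\xi=2\cos\theta$ for some $\theta$, so the preimages are $e^{\pm i\theta}$. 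Both lie on $\mathbb{S}^1$, and hence none lies in the open disc.
\end{enumerate}
It follows that $J$ restricted to $\mathrm{int}(\mathbb{D}_1)$ is a bijection onto $\mathbb{P}^1\setminus[-2,2]$. This restriction is holomorphic as a map to $\mathbb{P}^1$, with a simple pole at $0$. A bijective holomorphic map between Riemann surfaces is a biholomorphism, and alternatively (ii) shows directly that $J$ has no critical points inside the disc.
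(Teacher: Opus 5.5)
Your proof is correct and takes essentially the paper's approach: the paper gives no details and only says that all four items follow by direct computation. Your checks of (i)--(iii), together with the pairing of preimages $w \leftrightarrow 1/w$ used for (iv), are exactly those computations written out.
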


The role of the Joukowsky function is that it is a developing map of the limiting oper equation \eqref{eqn-limit-P1}.

\begin{proposition}
	$J(w)$ is a developing map of the differential equation defined by \eqref{eqn-limit-P1},
	\[ \left( \partial_w^2 - \frac{3}{(w-1)^2 (w+1)^2} \right) f(w) = 0.  \]
	In other words, there exists a basis of solutions $f_1(w)$ and $f_2(w)$ with $J = f_1/f_2$.
\end{proposition}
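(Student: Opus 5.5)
The plan is to use the characterization, recalled in the correspondence paragraph of \Cref{subsec-projective-structures}, that a local holomorphic function $A$ is a developing map of $\partial_w^2 + q(w)$ exactly when $\frac{1}{2}\{A, w\} = q(w)$. Since Schwarzian derivatives are invariant under post-composition with M\"obius transformations, this condition does not depend on the normalization of $A$. Consequently, once the Schwarzian identity holds, any two independent solutions $f_1, f_2$ of the equation have ratio $f_1/f_2 = M \circ J$ for some $M \in \mathrm{PSL}_2(\C)$. Changing the basis of solutions by $M^{-1}$, which is possible because basis changes act on the ratio by M\"obius transformations, then gives a basis with $f_1/f_2 = J$ exactly.

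The main step is a direct computation. With $J = w + w^{-1}$ we have
\[ J' = 1 - w^{-2} = \frac{(w-1)(w+1)}{w^2}, \qquad J'' = 2w^{-3}, \qquad J''' = -6w^{-4}. \]
From these,
\[ \frac{J''}{J'} = \frac{2}{w(w^2-1)}, \qquad \frac{J'''}{J'} = \frac{-6}{w^2(w^2-1)}. \]
Substituting into the Schwarzian gives
\[ \{J, w\} = \frac{-6}{w^2(w^2-1)} - \frac{3}{2}\cdot\frac{4}{w^2(w^2-1)^2} = \frac{-6(w^2-1) - 6}{w^2(w^2-1)^2} = \frac{-6}{(w^2-1)^2}. \]
Therefore $\frac{1}{2}\{J, w\} = -\frac{3}{(w-1)^2(w+1)^2}$, which is precisely \eqref{eqn-limit-P1}.

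The identity above holds away from the zeros and poles of $J'$, and it extends by analytic continuation. At $w = \pm 1$, $J'$ has simple zeros, which is consistent with \Cref{prop-Joukowsky}(ii). There $J$ is holomorphic and the quadratic differential has the expected leading term $-\frac{3}{4}(w \mp 1)^{-2}$ of a simple apparent singularity, so the ratio of solutions stays holomorphic and single-valued. At $w = 0$, $J$ has a simple pole. This is harmless because the statement is projective: $1/J$ is a local coordinate there, and the Schwarzian is unaffected.

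The main obstacle is producing explicit solutions $f_1, f_2$ with $f_1/f_2 = J$ on all of $\mathbb{C} \setminus \{\pm 1\}$, since these solutions have local monodromy $-1$ around the apparent singularities $w = \pm 1$. I would handle this by taking $f_2 = (J')^{-1/2}$ and $f_1 = J (J')^{-1/2}$, which is the standard construction. Explicitly,
\[ f_2 = w\,(w^2-1)^{-1/2}, \qquad f_1 = (w^2+1)(w^2-1)^{-1/2}. \]
A direct check, or the general fact that $(J')^{-1/2}$ and $J (J')^{-1/2}$ solve $f'' + \frac{1}{2}\{J, w\} f = 0$, confirms that both are solutions. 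Their ratio $f_1/f_2$ is exactly $J$, and the common sign ambiguity of the square root cancels in the ratio.
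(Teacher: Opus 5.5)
Your proof is correct and follows the paper's argument: the paper likewise computes $J'$, $J''$, $J'''$ and verifies directly that $\frac{1}{2}\{J, w\} = -\frac{3}{(w-1)^2(w+1)^2}$. Your explicit basis $f_2 = (J')^{-1/2} = w(w^2-1)^{-1/2}$ and $f_1 = J(J')^{-1/2} = (w^2+1)(w^2-1)^{-1/2}$ is a correct additional detail that the paper leaves implicit; both functions do solve the equation.
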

\begin{proof}
We have $J' = 1 - w^{-2} = \frac{w^2-1}{w^2}$, $J'' = \frac{2}{w^3}$ and $J''' = -\frac{6}{w^4}$, so that
\[ \frac{J''}{J'} = \frac{2}{w(w^2-1)}, \qquad \frac{J'''}{J'} = -\frac{6}{w^2(w^2-1)}. \]
Hence
\[ \{ J, w \} = \frac{J'''}{J'} - \frac{3}{2} \left( \frac{J''}{J'} \right)^2 
= -\frac{6}{w^2(w^2-1)} - \frac{6}{w^2(w^2-1)^2} 
= -\frac{6}{(w-1)^2(w+1)^2}, \]
i.e.\ $\frac{1}{2}\{J, w\}$ is the potential of \eqref{eqn-limit-P1}.
\end{proof}

Since the family of oper differential equations defined by $\mathcal{Q}(u, w)$ depends on $u$ holomorphically, 
one can solve for their solutions and obtain a holomorphic family of holomorphic solutions in a domain 
away from the fixed apparent singularities at $w = \pm 1$. 
Taking ratio of these solutions defines a holomorphic family of developing maps, which via analytic continuation admit simple critical points at $w = \pm 1$.
The following result is immediate.

\begin{proposition}\label{prop-family-developingmaps-1}
Let $\{\mathcal{D}_u\}$ be the family of projective structures as in \Cref{prop-bb-limit-3}. 
Fix $R > 1$, and consider $\mathbb{D}_R = \{ \lvert w \rvert < R \}  \subset \mathbb{E}$.
Then there exist $\epsilon > 0$ and a holomorphic map 
\[ \mathbf{A}: \mathbb{D}_\epsilon \times \mathbb{D}_R \longrightarrow \mathbb{P}^1, \qquad \qquad (u, w) \longmapsto A_u(w) \]
such that for all $u \in \mathbb{D}_\epsilon$, $A_u$ is a developing map of the restriction of $\beta^\ast \mathcal{D}_u$ to $\mathbb{D}_R$.
In particular, $A_u$ simply ramifies at $w = \pm 1$ and nowhere else on $\mathbb{D}_R$, and $A_0$ is M\"obius equivalent to the Joukowsky function $J = w + w^{-1}$.
\end{proposition}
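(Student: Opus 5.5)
The plan is to treat the rescaled oper equation $(\partial_w^2 + \mathcal{Q}(u,w))f = 0$ as a linear ODE that depends holomorphically on the parameter $u$. Its solutions are built away from the two fixed singular points $w=\pm 1$ and then continued across them. By \eqref{eqn-pullback-Q-2} and \Cref{rem-gu-holomorphic}, the potential
\[ \mathcal{Q}(u,w) = \frac{-3/4}{(w-1)^2} + \frac{-3/4}{(w+1)^2} + \frac{u\nu_+(u)}{w-1} + \frac{u\nu_-(u)}{w+1} + u^2 g_u(uw) \]
has the following properties. For $|u|$ small and $|w|<R$ (so that $uw$ stays in $z(U)$), it is meromorphic in $w$ with poles only at $w=\pm1$. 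It is holomorphic in $(u,w)$ away from $w = \pm 1$, because $u\nu_\pm(u) = \pm\tfrac34 \pm \nu' u^2 + \mathcal{O}(u^3)$ is holomorphic at $u=0$. I first choose $\epsilon$ so that $\epsilon R$ is less than the radius of $z(U)$.

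Next, fix a base point $w_*\in\mathbb{D}_R\setminus\{\pm1\}$, say $w_* = i$. Take solutions $f_{1,u}, f_{2,u}$ with initial data $(1,0)$ and $(0,1)$ at $w_*$. By holomorphic dependence of ODE solutions on parameters (\cite{ilyashenko-yakovenko}), their continuations along paths in $\mathbb{D}_R\setminus\{\pm1\}$ are holomorphic in $(u,w)$ on the universal cover of $\mathbb{D}_\epsilon \times (\mathbb{D}_R\setminus\{\pm1\})$. The Wronskian is constant and equal to $1$, so these solutions are independent. Put $A_u = f_{1,u}/f_{2,u}$.

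The key step is to show that $A_u$ is single-valued and extends across $w=\pm1$. For every $u\neq0$ the points $w=\pm1$ are apparent singularities of $\beta^\ast\mathcal{D}_u$, since $w=z/u$ is Möbius and the apparency constraint \eqref{SL-operator_residue-constraint-1} is Möbius-invariant. Hence the local monodromy around each of $w=\pm1$ is $\pm\mathrm{Id}$. Here the exponents are $-\tfrac12$ and $\tfrac32$, so the monodromy is $-\mathrm{Id}$, and it acts trivially projectively. At $u=0$ the same holds by the preceding proposition on $J$, and in any case by continuity of monodromy matrices in $u$. It follows that $A_u$ descends to a meromorphic function on $\mathbb{D}_R\setminus\{\pm1\}$, which is holomorphic in $(u,w)$ as a map to $\mathbb{P}^1$.

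To extend across $w=\pm1$ holomorphically in $u$ as well, I would use the Frobenius basis at $w=1$. This basis is $(w-1)^{-1/2}\phi_u(w)$ and $(w-1)^{3/2}\psi_u(w)$, with $\phi_u,\psi_u$ holomorphic in $(u,w)$. A logarithmic term is absent precisely because of the apparency constraint, which holds for all $u$, including $u=0$ by continuity. This is where I expect the main technical care to lie: showing that the Frobenius normalization $\phi_u$ can be chosen holomorphically in $u$ uniformly near $u=0$. The indicial roots do not depend on $u$. The recursion for $\phi_u$ has its only resonance at step $2$, and there the obstruction is exactly the apparency polynomial, so it vanishes identically. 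The coefficients are therefore polynomials in the holomorphic Laurent data of $\mathcal{Q}$ at $w=1$.

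In this basis, $A_u$ is a Möbius image of $(w-1)^2\psi_u/\phi_u$. This exhibits $A_u$ as holomorphic in $(u,w)$ near $w=1$, with a simple critical point there. The same argument applies at $w=-1$. Away from $w = \pm 1$, $A_u$ is a local diffeomorphism, because a ratio of independent solutions of a regular equation has nonvanishing derivative (the Wronskian). Finally, at $u=0$ the equation is exactly $\partial_w^2 + \tfrac12\{J,w\}$. Two developing maps of the same equation differ by a Möbius transformation, so $A_0$ is Möbius equivalent to $J$.
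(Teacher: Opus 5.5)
Your proposal is correct and follows the same route as the paper, which treats this proposition as immediate: the rescaled oper equation depends holomorphically on $u$, solutions are taken away from $w=\pm1$, and their ratio is continued across the apparent singularities. You supply the details the paper leaves implicit. These are the projectively trivial local monodromy $-\mathrm{Id}$, which makes the ratio single-valued, and a Frobenius basis holomorphic in $u$, whose only resonance (at step $2$) is killed by the apparency constraint $\nu^2+q_0=0$. That constraint holds for all $u$, including $u=0$.
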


\subsection{Proof of existence of bubble for nonzero $u$}
\label{sect-existence-bubble}
Let us now fix $M > 2$, and let
\[ \mathcal{A} \coloneqq J^{-1}(\mathbb{D}_M). \]
It follows from straightforward computation using polar coordinates $w = r e^{i\theta}$ that $\mathcal{A}$ contains the unit circle, is homeomorphic to an annulus and can be parametrized by 
\[ \mathcal{A} = \{ \rho(\theta)^{-1} < r < \rho(\theta)  \} \]
where $\rho(\theta) > 1$ satisfies
\[ \rho(\theta)^2 = \frac{1}{2}\left( c(\theta) + \sqrt{c(\theta)^2-4} \right) \qquad \text{ and }
\qquad  c(\theta) = M^2 - 2\cos 2\theta.  \] 
Let $\mathcal{B} \coloneqq \{ r < \rho \}$ and $\mathcal{H} \coloneqq \{ r < \rho^{-1} \}$.
Note that $\overline{\mathcal{B}} \setminus \mathcal{A} = \mathcal{H} \sqcup \partial \mathcal{A}$ where the boundary $\partial \mathcal{A}$ has two connected components, i.e. 
\[ \partial \mathcal{A} = J^{-1}(\partial \mathbb{D}_M) = \{ r = \rho^{-1} \} \sqcup \{ r = \rho \}.  \]
Fix $M' \in (2, M)$. For $u \in (0, \epsilon)$ in \Cref{prop-family-developingmaps-1}, let $\mathcal{W}_u \coloneqq A_u^{-1}(\mathbb{D}_{M'}) \cap \mathcal{B}$.
These regions are illustrated in \Cref{fig-regions}.

\begin{figure}[ht]
  \centering
  \includegraphics[width=\textwidth]{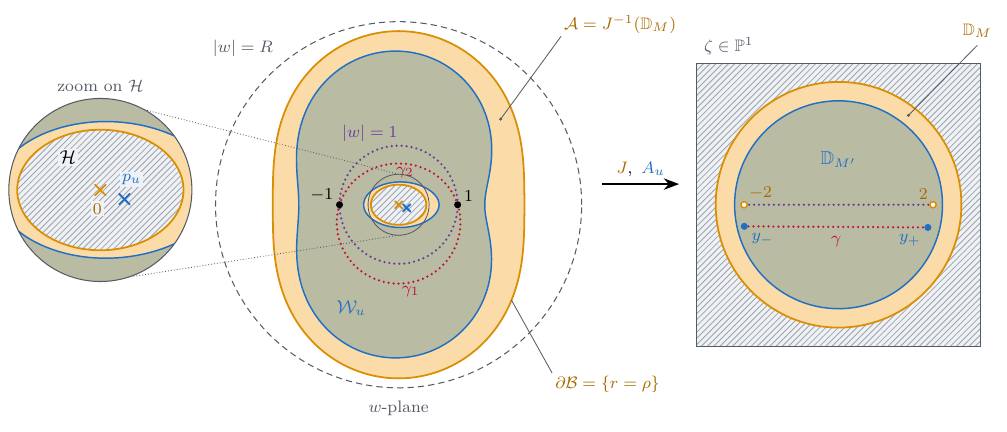}
  \caption{
The domains of \Cref{sect-existence-bubble} for one fixed small $u$. 
The domains and their boundaries defined by $J$ and $A_u$ are drawn in orange and blue respectively.
Twin paths and their developed image via $J$ and $A_u$ are drawn in purple and red respectively.
The poles of $J$ at $w=0$ and of $A_u$ at $w=p_u$ are contained in $\mathcal{H}$.}
  \label{fig-regions}
\end{figure}

The following lemma follows from straightforward computation and helps compare the domain $\mathbb{D}_R$ in \Cref{prop-family-developingmaps-1} 
with the annulus $\mathcal{A} = J^{-1}(\mathbb{D}_M)$.

\begin{lemma}\label{lem-B-in-DR}
	For $R > 0$, if $R - R^{-1} > M$ then $\overline{\mathcal{B}} \subset \mathbb{D}_R$.
\end{lemma}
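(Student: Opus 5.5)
Since $\mathcal{B} = \{ r < \rho(\theta) \}$ in polar coordinates $w = re^{i\theta}$, the claim $\overline{\mathcal{B}} \subset \mathbb{D}_R$ is equivalent to $\max_\theta \rho(\theta) < R$. The plan is therefore to bound $\rho$ uniformly in $\theta$ and compare the bound with $R$.

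First, I identify the maximum of $\rho$. The map $c \mapsto \frac12\bigl(c + \sqrt{c^2-4}\bigr)$ is increasing for $c \geq 2$. Since $c(\theta) = M^2 - 2\cos 2\theta \geq M^2 - 2 > 2$ for $M > 2$, the maximum of $\rho(\theta)^2$ is attained where $\cos 2\theta = -1$, i.e.\ at $\theta = \pm\pi/2$, where $c = M^2 + 2$.

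At $c = M^2+2$ we have $c^2 - 4 = M^2(M^2+4)$, so
\[ \rho_{\max}^2 = \tfrac12\bigl(M^2 + 2 + M\sqrt{M^2+4}\bigr) = \Bigl(\tfrac{M + \sqrt{M^2+4}}{2}\Bigr)^2 . \]
Hence $\rho_{\max} = \frac{M+\sqrt{M^2+4}}{2}$, which is exactly the positive root $t$ of $t - t^{-1} = M$. This agrees with the geometric picture: on the imaginary axis, $|J(it)| = |t - t^{-1}|$.

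To conclude, note that $f(t) = t - t^{-1}$ is strictly increasing on $(0,\infty)$. The hypothesis $R - R^{-1} > M = f(\rho_{\max})$ therefore gives $R > \rho_{\max} \geq \rho(\theta)$ for all $\theta$. Consequently $\overline{\mathcal{B}} = \{ r \leq \rho(\theta) \} \subset \{ r < R \} = \mathbb{D}_R$.

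There is no real obstacle in this argument. The only point needing care is checking that the maximum of $\rho$ over $\theta$ is attained at $\cos 2\theta = -1$, which follows from the monotonicity of $c \mapsto \frac12\bigl(c + \sqrt{c^2-4}\bigr)$ noted in the first step.
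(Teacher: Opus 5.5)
Your proof is correct, but it takes a different route from the paper's. You work directly with the polar description $\mathcal{B} = \{ r < \rho(\theta) \}$. You locate the maximum of $\rho$ at $\cos 2\theta = -1$ using monotonicity of $c \mapsto \tfrac12(c + \sqrt{c^2-4})$. You then identify $\rho_{\max} = \tfrac12(M + \sqrt{M^2+4})$ as the positive root of $t - t^{-1} = M$.

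The paper never computes $\rho_{\max}$. It uses the reverse triangle inequality $|J(w)| \geq |w| - |w|^{-1} \geq R - R^{-1} > M$ for $|w| \geq R$, which gives $J^{-1}(\overline{\mathbb{D}}_M) \subset \mathbb{D}_R$ and hence $\overline{\mathcal{A}} \subset \mathbb{D}_R$. It treats the inner piece separately: $\rho > 1$ gives $\overline{\mathcal{H}} \subset \overline{\mathbb{D}}_1 \subset \mathbb{D}_R$, and then $\overline{\mathcal{B}} = \overline{\mathcal{A}} \cup \overline{\mathcal{H}} \subset \mathbb{D}_R$.

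The paper's estimate uses only the definition $\mathcal{A} = J^{-1}(\mathbb{D}_M)$, not the explicit level-curve formula for $\rho$, at the cost of splitting $\overline{\mathcal{B}}$ into two pieces. Your computation buys sharpness. The point $w = i\rho_{\max}$ lies on $\partial\mathcal{B}$, and $t \mapsto t - t^{-1}$ is increasing on $(0,\infty)$. Hence your argument shows that $R - R^{-1} > M$ is not only sufficient but also necessary for $\overline{\mathcal{B}} \subset \mathbb{D}_R$.
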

\begin{proof}
	Note that $R > 2$ since $R - R^{-1} > M > 2$.
	For $|w| \geq R$ we have
	\[ |J(w)| = |w + w^{-1}| \geq |w| - |w|^{-1} \geq R - R^{-1} > M,\]
	since $t \mapsto t - t^{-1}$ is increasing.
	Hence $J^{-1}(\overline{\mathbb{D}}_M) \subset \mathbb{D}_R$, and in particular $\overline{\mathcal{A}} \subset \mathbb{D}_R$.
	Since $c(\theta) \geq M^2 - 2 > 2$, we have $\rho > 1$, and so $\overline{\mathcal{H}} \subset \overline{\mathbb{D}}_1 \subset \mathbb{D}_R$.
	Therefore $\overline{\mathcal{B}} = \overline{\mathcal{A}} \cup \overline{\mathcal{H}} \subset \mathbb{D}_R$.
\end{proof}

\paragraph{Uniform convergence}
Observe that the central developing map $J$ has a pole at $w = 0 \in \mathcal{H} \subset \overline{\mathcal{B}}$.
In the normalized family of developing maps in the following result, we will set $A_0 = J$,
and hence $A_u$ also has a pole in $\overline{\mathcal{B}}$ for sufficiently small $u$.
To measure the distance between $A_u$ and $J$ on $\overline{\mathcal{B}}$, we will hence need to work with the chordal metric on $\mathbb{P}^1$,
i.e. the Euclidean distance in $\mathbb{R}^3$ between stereographic images,
\[ d(a, b) = \frac{2|a-b|}{\sqrt{(1+|a|^2)(1+|b|^2)}}, 
\qquad d(a, \infty) = \frac{2}{\sqrt{1+|a|^2}}. \]
Note that $(\mathbb{P}^1, d)$ is a compact metric space inducing the usual topology.

\begin{proposition}\label{prop-family-developingmaps-2} 
There exists a normalized family $\mathbf{A}$ as in \Cref{prop-family-developingmaps-1} with $A_0 = J$ and such that $A_u$ converges uniformly to $J$ as $u \rightarrow 0$ on $\overline{\mathcal{B}}$ with respect to the chordal metric. 
\end{proposition}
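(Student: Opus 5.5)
Starting from the family $\mathbf{A}$ of \Cref{prop-family-developingmaps-1}, built on $\mathbb{D}_R$ with $R$ chosen so that $R - R^{-1} > M$, \Cref{lem-B-in-DR} gives $\overline{\mathcal{B}} \subset \mathbb{D}_R$. The normalization will be done with a holomorphic family of Möbius transformations. First I fix three distinct points $w_1, w_2, w_3 \in \mathbb{D}_R \setminus \{0, \pm 1\}$ whose $J$-values $J(w_j)$ are distinct and finite; for instance $w_j$ near $1.5 e^{i\theta_j}$ with generic $\theta_j$. By \Cref{prop-family-developingmaps-1}, $A_0 = M_0 \circ J$ for some $M_0 \in \mathrm{PSL}_2(\C)$. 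Replacing $\mathbf{A}$ by $M_0^{-1} \circ \mathbf{A}$, which is still a family of developing maps, we may assume $A_0 = J$.

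To get a clean uniform statement, I then let $N_u$ be the unique Möbius transformation sending $A_u(w_j)$ to $J(w_j)$ for $j = 1,2,3$. The cross-ratio formula shows that $N_u$ depends holomorphically on $u$ near $0$, because the three values $A_u(w_j)$ stay distinct by continuity. Moreover $N_0 = \mathrm{id}$. This step is optional, since $A_0 = J$ already, but it makes the normalization canonical. The normalized family $N_u \circ A_u$ is again holomorphic on $\mathbb{D}_\epsilon \times \mathbb{D}_R$ as a map to $\mathbb{P}^1$, and each member is a developing map of $\beta^\ast \mathcal{D}_u$.

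Uniform convergence comes from compactness. The map $\mathbf{A}: \mathbb{D}_\epsilon \times \mathbb{D}_R \to \mathbb{P}^1$ is holomorphic, hence continuous for the chordal metric $d$. Its restriction to the compact set $\overline{\mathbb{D}}_{\epsilon/2} \times \overline{\mathcal{B}}$ is therefore uniformly continuous. So for every $\eta > 0$ there is $\delta > 0$ with $d(A_u(w), A_0(w)) < \eta$ for all $|u| < \delta$ and all $w \in \overline{\mathcal{B}}$. That is exactly uniform chordal convergence $A_u \to J$ on $\overline{\mathcal{B}}$.

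The one point needing care is that $\mathbf{A}$ really is holomorphic, and hence continuous, into $\mathbb{P}^1$ across the pole of $J$ at $w = 0$ and across the critical points $w = \pm 1$. Near these points one cannot argue with the ratio $f_1/f_2$ of solutions as $\C$-valued functions. I would handle this as follows.
\begin{itemize}
\item Near $w = 0$, the solutions $f_1, f_2$ of the rescaled equation are holomorphic in $(u,w)$ and have no common zero, since their Wronskian is a nonzero constant. So $[f_1 : f_2]$ is a holomorphic map to $\mathbb{P}^1$.
\item Near $w = \pm 1$, the solutions are singular, but the ratio extends holomorphically in $(u,w)$. This follows from the fixed apparency form of the Laurent tails \eqref{eqn-pullback-Q-2}, together with the holomorphic-parameter result of \cite{ilyashenko-yakovenko} applied to the standard local form $M(\zeta^2)$.
\end{itemize}
Once this joint holomorphy is granted, as asserted in \Cref{prop-family-developingmaps-1}, the uniform convergence is immediate.
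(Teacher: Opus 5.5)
Your proposal is correct and follows the paper's proof. You choose $R$ with $R - R^{-1} > M$ so that $\overline{\mathcal{B}} \subset \mathbb{D}_R$, post-compose with the fixed M\"obius map $M_0^{-1}$ to get $A_0 = J$, and conclude from uniform continuity of $\mathbf{A}$ on the compact set $\overline{\mathbb{D}}_{\epsilon/2} \times \overline{\mathcal{B}}$; the extra three-point normalization $N_u$ and your remarks on joint holomorphy near $w = 0, \pm 1$ are harmless but unnecessary, since the paper likewise takes the holomorphy of $\mathbf{A}$ from \Cref{prop-family-developingmaps-1}.
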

\begin{proof}
	Choose $R$ in \Cref{prop-family-developingmaps-1} with $R - R^{-1} > M$, so that $\overline{\mathcal{B}} \subset \mathbb{D}_R$ by \Cref{lem-B-in-DR}.
	Let $T$ be the M\"obius transformation with $T \circ A_0 = J$.
	Since developing maps are defined up to post-composition with M\"obius transformations, $(u, w) \mapsto T \circ A_u(w)$ is again a family as in \Cref{prop-family-developingmaps-1};
	replacing $A_u$ by $T \circ A_u$, we may assume $A_0 = J$.
	The map $\mathbf{A}$ is continuous on the compact set $\overline{\mathbb{D}}_{\epsilon/2} \times \overline{\mathcal{B}}$, hence uniformly continuous there with respect to $d$ by the Heine--Cantor theorem \cite[Theorem~4.19]{Rudin76}.
	In other words, for every $\eta > 0$ there is $\tau > 0$ with $d(A_u(w), J(w)) = d(A_u(w), A_0(w)) < \eta$ for all $|u| < \tau$ and $w \in \overline{\mathcal{B}}$.
\end{proof}

We will use Rouch\'e's theorem (\Cref{thm-Rouche}), which needs Euclidean estimates instead, in the proof of the main \Cref{thm-bubble}. 
The following \Cref{lem-gap} translates chordal estimates into Euclidean ones.

\begin{lemma}\label{lem-gap}
	Let $0 \leq s < r$, and $a, b \in \mathbb{P}^1$ with $|a| \leq s$ and $|b| \geq r$. Then $d(a, b) \geq d(s, r)$.
\end{lemma}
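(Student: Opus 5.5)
The plan is to reduce to the case where neither point is $\infty$. Then the claim becomes a monotonicity statement about the chordal distance between two concentric circles, which is decided by a one-variable calculation.

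\textbf{The point $b = \infty$.} Here $d(a,\infty) = 2/\sqrt{1+|a|^2}$ is decreasing in $|a|$, so $d(a,\infty) \geq 2/\sqrt{1+s^2}$. We also have $d(s,r) < d(s,\infty)$: since $d(s,t)$ is continuous in $t$ and increasing (shown below), $d(s,\infty)$ is its limit as $t \rightarrow \infty$. So this case follows from the finite one. Alternatively, one can note directly that $d(s,r) = 2(r-s)/\sqrt{(1+s^2)(1+r^2)} < 2r/\sqrt{(1+s^2)r^2}$.

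\textbf{Finite $a$ and $b$.} Write $|a| = \alpha \leq s$ and $|b| = \beta \geq r$. First fix the moduli and minimize over arguments. The denominator of $d(a,b)$ depends only on $\alpha$ and $\beta$, while $|a-b| \geq \beta - \alpha$. Hence
\[ d(a,b) \geq F(\alpha,\beta) \coloneqq \frac{2(\beta-\alpha)}{\sqrt{(1+\alpha^2)(1+\beta^2)}} = d(\alpha,\beta), \]
which is the chordal distance between the positive reals $\alpha < \beta$.

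\textbf{Monotonicity.} It remains to show that $F$ is nonincreasing in $\alpha$ on $[0,\beta)$ and nondecreasing in $\beta$ on $(\alpha,\infty)$. Then $F(\alpha,\beta) \geq F(s,\beta) \geq F(s,r)$, using $\alpha \leq s < r \leq \beta$. Both monotonicity statements follow by differentiating the logarithm.

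For $\beta$:
\[ \partial_\beta \log F = \frac{1}{\beta-\alpha} - \frac{\beta}{1+\beta^2} = \frac{1+\alpha\beta}{(\beta-\alpha)(1+\beta^2)} > 0. \]

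For $\alpha$:
\[ \partial_\alpha \log F = -\frac{1}{\beta-\alpha} - \frac{\alpha}{1+\alpha^2} < 0. \]

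This settles the finite case. It also gives the monotonicity in $t$ used in the case $b = \infty$.

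\textbf{Main obstacle.} The only subtle point is the case $b = \infty$. The rest is elementary. An alternative route is geometric: $d$ is invariant under the rotations $w \mapsto e^{i\theta}w$, and stereographic projection sends $|w| \leq s$ and $|w| \geq r$ to two disjoint polar caps, whose Euclidean distance in $\mathbb{R}^3$ is attained on a meridian at the boundary circles. I would nevertheless present the calculus argument, since it is self-contained.
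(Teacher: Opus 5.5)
Your proof is correct and follows the same route as the paper. It treats $b=\infty$ separately via $d(s,r) < 2/\sqrt{1+s^2}$, reduces the finite case to the moduli using $|a-b|\ge |b|-|a|$, and then uses monotonicity in each modulus. The only cosmetic difference is in that last step: the paper rewrites the bound as an increasing function of $\tau = (y-x)/(1+xy)$ via $(1+x^2)(1+y^2) = (1+xy)^2+(y-x)^2$ and checks the monotonicity of $\tau$ by explicit differences, whereas you differentiate $\log F$ directly.
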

{%
\begin{proof}
	Put $x = |a|$ and $y = |b|$, so that $x \leq s < r \leq y$.
	If $b = \infty$, then $d(a, b) = 2/\sqrt{1+x^2} \geq 2/\sqrt{1+s^2} > d(s, r)$, since $r - s < \sqrt{1+r^2}$.
	Otherwise $|a - b| \geq y - x > 0$, and $(1+x^2)(1+y^2) = (1+xy)^2 + (y-x)^2$ gives
	\[ d(a, b) \geq \frac{2(y-x)}{\sqrt{(1+xy)^2 + (y-x)^2}} = h\big(\tau(x,y)\big), \qquad 
	\tau(x, y) \coloneqq \frac{y-x}{1+xy}, \qquad h(\tau) \coloneqq \frac{2}{\sqrt{1+\tau^{-2}}}. \]
	Since $h$ is increasing and $d(s, r) = h(\tau(s, r))$, it remains to note that $\tau$ decreases in $x$ and increases in $y$:
	\[ \tau(x, y) - \tau(s, y) = \frac{(s-x)(1+y^2)}{(1+xy)(1+sy)} \geq 0, \qquad 
	\tau(s, y) - \tau(s, r) = \frac{(y-r)(1+s^2)}{(1+sy)(1+sr)} \geq 0. \]
\end{proof}
}%

\paragraph{From uniform convergence to constant degree 2}
We recall the relevant form of Rouch\'e theorem before applying it in the proof of \Cref{lem-degree-2}.
\begin{theorem}\label{thm-Rouche} (Rouch\'e) 
	Let $\Omega \subset \mathbb{C}$ be a regular domain, and $f$, $g$ holomorphic on an open neighborhood of $\overline{\Omega}$ with $|f-g| < |f|$ on $\partial \Omega$.
	Then the number of zeroes of $f$ on $\Omega$, counted with multiplicity, is equal to that of $g$.
\end{theorem}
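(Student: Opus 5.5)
We would prove this classical statement via the argument principle; nothing from the paper is needed. First we check that neither function vanishes on $\partial \Omega$. If $f(w_0) = 0$ for some $w_0 \in \partial\Omega$, the hypothesis would read $|g(w_0)| < 0$, which is impossible. If $g(w_0) = 0$, it would read $|f(w_0)| < |f(w_0)|$, also impossible. Since $\Omega$ is a regular domain, $\overline{\Omega}$ is compact and $\partial\Omega$ consists of finitely many piecewise smooth Jordan curves, oriented positively with respect to $\Omega$. Zeros of a holomorphic function that is not identically zero near $\overline{\Omega}$ are isolated, and neither $f$ nor $g$ vanishes identically on any component meeting $\partial\Omega$. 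Hence both have finitely many zeros in $\Omega$, and the argument principle gives
\[ N(h) = \frac{1}{2\pi i} \oint_{\partial \Omega} \frac{h'(w)}{h(w)} \, dw, \qquad h \in \{f, g\}, \]
where $N(h)$ is the number of zeros in $\Omega$ counted with multiplicity.

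Next we compare the two integrals using the homotopy $h_t \coloneqq f + t(g - f)$ for $t \in [0,1]$. On $\partial\Omega$,
\[ |h_t| \geq |f| - t|g-f| \geq |f| - |g-f| > 0. \]
So $h_t$ has no zeros on $\partial\Omega$ for any $t \in [0,1]$. By compactness of $\partial\Omega \times [0,1]$, $|h_t|$ is bounded below there by a positive constant. Consequently $t \mapsto \oint_{\partial\Omega} h_t'/h_t \, dw$ is continuous, since the integrand is jointly continuous in $(t,w)$ and uniformly bounded. The quantity $N(h_t)$ is integer-valued by the argument principle and continuous in $t$, so it is constant on $[0,1]$. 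This gives $N(f) = N(h_0) = N(h_1) = N(g)$.

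An equivalent route, which we would mention as a check, uses the quotient $F \coloneqq g/f$, which is holomorphic near $\partial\Omega$. The hypothesis says $|F - 1| < 1$ on $\partial \Omega$, so $F(\partial\Omega)$ lies in the disc $\{|\xi - 1| < 1\}$. This disc omits $0$, so the principal branch $\log F$ is defined and holomorphic near $\partial\Omega$. Hence $\oint_{\partial\Omega} F'/F \, dw = 0$, and since $F'/F = g'/g - f'/f$, we get $N(g) - N(f) = 0$. The only point requiring any care is the validity of the argument principle on a regular (possibly multiply connected) domain. This is the standard residue theorem applied to $h'/h$ on $\overline{\Omega}$ with positively oriented boundary, and we would simply cite it.
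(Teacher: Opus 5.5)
Your proof is correct. The homotopy $h_t = f + t(g-f)$ and the alternative check via $\log(g/f)$ near $\partial\Omega$ are both standard argument-principle proofs of Rouch\'e's theorem.

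The paper gives no proof of its own; it simply recalls the statement as a classical tool. Your attention to the argument principle on multiply connected regular domains is the point that actually matters here. The theorem is applied in the proof of \Cref{lem-degree-2} on the doubly connected region $\mathcal{A} = J^{-1}(\mathbb{D}_M)$, whose boundary has two components.
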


\begin{lemma}\label{lem-degree-2}
Let $\mathbf{A}$ be as in \Cref{prop-family-developingmaps-2}. Then there exists $\epsilon$ such that for all $u \in \mathbb{D}_\epsilon$, 
\[A_u: \mathcal{W}_u \rightarrow \mathbb{D}_{M'} \]  
is a branched double cover ramified exactly at $w = \pm 1$.
\end{lemma}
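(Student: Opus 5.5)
The plan is to count preimages with Rouch\'e's theorem, applied to $A_u - \xi$ and $J - \xi$ on a fixed domain whose boundary maps far from $\mathbb{D}_{M'}$. Throughout I use the normalization $A_0 = J$ and the uniform convergence of \Cref{prop-family-developingmaps-2} on $\overline{\mathcal{B}}$.

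\textbf{Step 1 (boundary estimate).} On the outer boundary $\{r = \rho\}$ of $\mathcal{B}$ we have $|J| = M$. Choose an intermediate $M'' \in (M', M)$. By \Cref{lem-gap}, if $|A_u| \leq M''$ then $d(A_u, J) \geq d(M'', M) > 0$. Uniform chordal convergence therefore gives $|A_u| > M''$ on $\{r=\rho\}$ for all small $u$. Since $\partial\mathcal{B}$ is compact and both $J$ and $A_u$ are finite there, chordal closeness upgrades to Euclidean closeness on $\partial \mathcal{B}$: $\sup_{\partial\mathcal{B}}|A_u - J| \to 0$.

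\textbf{Step 2 (poles).} $J$ has exactly one simple pole in $\mathcal{B}$, at $w=0$. Apply Rouch\'e (\Cref{thm-Rouche}) to $1/A_u$ and $1/J$ on a small disc around $0$ contained in $\mathcal{H}$. On its boundary $|J|$ is large, and by \Cref{lem-gap} so is $|A_u|$. Hence $A_u$ has exactly one pole $p_u$ in that disc. Away from this disc, \Cref{lem-gap} keeps $|A_u|$ bounded on the remaining compact part of $\overline{\mathcal{B}}$, so $A_u$ has no other poles in $\mathcal{B}$.

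\textbf{Step 3 (degree count).} For each $\xi \in \mathbb{D}_{M'}$, the function $J-\xi$ has exactly two zeros in $\mathcal{B}$, since $J^{-1}(\xi) \subset \mathcal{A} \subset \mathcal{B}$ by \Cref{prop-Joukowsky}(i). I apply the argument principle to $A_u - \xi$ and $J - \xi$ on $\mathcal{B}$. Because each has one pole there, I work with the meromorphic version of Rouch\'e: zeros minus poles agree whenever $|(A_u-\xi)-(J-\xi)| < |J-\xi|$ on $\partial\mathcal{B}$. This inequality holds uniformly in $\xi \in \mathbb{D}_{M'}$ once $\sup_{\partial\mathcal{B}}|A_u-J| < M - M'$, because $|J-\xi| \geq M - M' > 0$ there. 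Alternatively, I would multiply by $(w - p_u)$ and $w$ respectively to reduce to the holomorphic statement \Cref{thm-Rouche}. Either way, $A_u - \xi$ has exactly two zeros in $\mathcal{B}$ counted with multiplicity.

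\textbf{Step 4 (conclusion).} All these zeros lie in $\mathcal{W}_u = A_u^{-1}(\mathbb{D}_{M'}) \cap \mathcal{B}$. Step 1 shows $\overline{\mathcal{W}_u}$ stays away from $\{r=\rho\}$, so $A_u: \mathcal{W}_u \to \mathbb{D}_{M'}$ is proper. A proper holomorphic map whose fibres have exactly two points with multiplicity is a branched double cover. By \Cref{prop-family-developingmaps-1}, the only critical points of $A_u$ in $\mathbb{D}_R \supset \overline{\mathcal{B}}$ (\Cref{lem-B-in-DR}) are $w=\pm 1$. Both lie in $\mathcal{W}_u$, since $A_u(\pm1) \to \pm 2 \in \mathbb{D}_{M'}$. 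So the ramification is exactly at $w = \pm 1$.

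The main obstacle is Step 3, namely handling the pole inside $\mathcal{B}$ so that Rouch\'e applies to meromorphic functions. Step 2 is exactly what makes this work, by pinning down a single pole that moves continuously with $u$.
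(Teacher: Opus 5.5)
Your argument is correct. Its skeleton is the paper's: uniform chordal convergence to $J$, \Cref{lem-gap} to turn it into Euclidean information, Rouch\'e against $J-\xi$, properness from a boundary estimate, and the critical points from \Cref{prop-family-developingmaps-1}. The difference lies in how the pole of $J$ at $w=0$ is handled.

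The paper never counts poles of $A_u$. On $\overline{\mathcal{B}}\setminus\mathcal{A}=\mathcal{H}\sqcup\partial\mathcal{A}$ one has $|J|\geq M$ (the pole included), and on $\overline{\mathcal{A}}$ one has $|J|\leq M$. So \Cref{lem-gap} gives $|A_u|>M'$ on $\mathcal{H}\sqcup\partial\mathcal{A}$ and $|A_u|<M+1$ on $\overline{\mathcal{A}}$. Hence $\mathcal{W}_u\subset\mathcal{A}$, $A_u$ is holomorphic near $\overline{\mathcal{A}}$, and the holomorphic Rouch\'e theorem of \Cref{thm-Rouche} applies on the annulus $\mathcal{A}$, both of whose boundary curves lie on $\{|J|=M\}$.

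You instead apply Rouch\'e on the simply connected region $\mathcal{B}$. This costs you the extra work of Step~2, namely a second Rouch\'e argument for $1/A_u$ versus $1/J$. That argument tacitly uses the invariance $d(1/a,1/b)=d(a,b)$ of the chordal metric, which is worth stating. It also costs you the zeros-minus-poles version of Rouch\'e, which is not the version quoted in the paper. In exchange, you learn that $A_u$ has a unique simple pole $p_u\to 0$ in $\mathcal{B}$. Your count in $\mathcal{B}$ also captures $\mathcal{W}_u$ directly, without proving $\mathcal{W}_u\subset\mathcal{A}$. The paper's route replaces the whole pole count by the single estimate $|A_u|>M'$ on $\mathcal{H}\sqcup\partial\mathcal{A}$.

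One small point in Step~1: finiteness of $A_u$ on $\partial\mathcal{B}$ for each $u$ does not by itself upgrade chordal to Euclidean convergence. You need a uniform bound such as $|A_u|<M+1$ on $\partial\mathcal{B}$, again from \Cref{lem-gap}. After that, $|A_u-J|=\tfrac12\sqrt{(1+|A_u|^2)(1+|J|^2)}\,d(A_u,J)$ gives the estimate, exactly as in the paper's claim (c).
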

\begin{proof}
Let $E(u) \coloneqq \underset{w \in \overline{\mathcal{B}}}{\sup} d\left( A_u(w), J(w) \right)$. 
By uniform convergence on $\overline{\mathcal{B}}$ (cf. \Cref{prop-family-developingmaps-2}), there exists $\epsilon$ such that for all $u \in \mathbb{D}_\epsilon$ we have 
\begin{equation}\label{eqn-E(u)-bound}
	E(u) < \min \left\{d(M', M), d(M, M+1), \frac{M-M'}{\kappa} \right\}, 
\qquad \kappa = \frac{1}{2}\left( 1 + (M+1)^2 \right).
\end{equation}
For a fixed $u \in \mathbb{D}_\epsilon$, we claim that
\begin{enumerate}[(a)]
	\item $|A_u| > M'$ on $\overline{\mathcal{B}} \setminus \mathcal{A} = \mathcal{H} \sqcup \partial \mathcal{A}$;
	\item $|A_u| < M+1$ on $\overline{\mathcal{A}}$;
	\item $\underset{\overline{\mathcal{A}}}{\sup}|A_u - J| < M-M'$.
\end{enumerate}
To see (a), suppose that there exists $w \in \overline{\mathcal{B}} \setminus \mathcal{A}$ with $|A_u(w)| \leq M'$. 
Since $|J(w)| \geq M$, it follows from \Cref{lem-gap} that 
$d\left( A_u(w), J(w) \right) \geq d(M', M)$, which contradicts the bound \eqref{eqn-E(u)-bound}.
To see (b), suppose that there exists $w \in \overline{\mathcal{A}}$ with $|A_u(w)| \geq M+1$. Since $|J(w)| \leq M$, it follows again from \Cref{lem-gap} that $d\left( A_u(w), J(w) \right) \geq d(M, M+1)$, which contradicts the bound \eqref{eqn-E(u)-bound}.
To see (c), note that one can write
\[ |A_u - J| = \frac{1}{2} \sqrt{(1+|A_u|^2) (1+ |J|^2)} \; d(A_u, J). \]
It follows from (b) that $|A_u - J| \leq \kappa \, d(A_u, J)$ on $\overline{\mathcal{A}}$. 
Part (c) now follows from the fact that we have chosen $\epsilon$ such that $d(A_u(w), J(w)) \leq E(u) < (M-M')/\kappa$ for all $w \in \overline{\mathcal{A}}$.  

Observe that it follows from (a) that $\mathcal{W}_u \subset \overline{\mathcal{W}_u} \subset \mathcal{A}$. 
Properness of the map $A_u: \mathcal{W}_u \rightarrow \mathbb{D}_{M'}$ follows from (a) and the compactness of $\overline{\mathcal{B}}$.
Indeed, let $K \subset \mathbb{D}_{M'}$ be compact. 
By (a), we have $A_u^{-1}(K) \cap \mathcal{W}_u = A_u^{-1}(K) \cap \overline{\mathcal{B}}$, which is a closed subset of $\overline{\mathcal{B}}$ and hence compact.
To compute the degree of $A_u$, for $\xi \in \mathbb{D}_{M'}$, let $f = J - \xi$ and $g = A_u - \xi$. On $\partial \mathcal{A}$ where $|J| = M$, we have
\[ |f - g| = |A_u - J| < M - M' < M - |\xi| = |J| - |\xi| \leq |J - \xi| = |f|. \]
By (b), $A_u$ has no poles near $\overline{\mathcal{A}}$, so $f$ and $g$ are holomorphic on a neighborhood of $\overline{\mathcal{A}}$.
We therefore can apply Rouch\'e Theorem \ref{thm-Rouche} to conclude that the number of preimages of $\xi$ via $A_u$ is equal to that via $J$ on $\mathcal{A}$, which is $2$ by \Cref{prop-Joukowsky}(i).
Since $\mathcal{W}_u = A_u^{-1}(\mathbb{D}_{M'}) \cap \mathcal{B} = A_u^{-1}(\mathbb{D}_{M'}) \cap \mathcal{A}$ by (a), the preimages of $\xi \in \mathbb{D}_{M'}$ in $\mathcal{A}$ are contained in $\mathcal{W}_u$. 
Finally, by continuity of $\mathbf{A}$ and since the branch locus of $A_0 = J$ is $J(\pm 1) = \pm 2$, 
we can further shrink $\epsilon$ if necessary to have $|A_u(\pm 1)| < M'$.
Then the ramification loci $\{ w = \pm 1 \} \subset \mathcal{A}$ are contained in $\mathcal{W}_u$. 
By construction of the family (\Cref{prop-family-developingmaps-1}), $A_u$ ramifies at $\pm 1$ and nowhere else.
\end{proof}

We now give the proof of existence of twin paths and hence of bubble for $u \neq 0$.

\vspace{5pt}
\noindent	
\textit{Proof of \Cref{thm-bubble}}
We consider the holomorphic family of developing maps in \Cref{prop-family-developingmaps-2} with $\epsilon$ given in \Cref{lem-degree-2}.
Fix nonzero $u \in \mathbb{D}_\epsilon$, and let $y_{u, \pm} \coloneqq A_u(\pm 1) \in \mathbb{D}_{M'}$ be the branch points of $A_u$.
Let $\overline{\gamma}_u$ be the straight segment connecting $y_{u, \pm}$: 
by construction, we have $\overline{\gamma}_u \subset \mathbb{D}_{M'}$.
Then the preimage $A_u^{-1}(\overline{\gamma}_u)$ consists of exactly two segments $\gamma_{u,1}$ and $\gamma_{u,2}$ that connect the critical points $w = \pm 1$
and develop to the same image $A_u(\gamma_{u,1}) = A_u(\gamma_{u,2}) = \overline{\gamma}_u$.

Let $\Delta_u$ be the bounded component of $\mathbb{C} \setminus (\gamma_{u,1} \cup \gamma_{u,2})$.
Then $\overline{\Delta_u} \subset \mathcal{B}$ and $A_u(\Delta_u) \subset \mathbb{P}^1 \setminus \overline{\gamma}_u$.
The map $A_u: \Delta_u \rightarrow \mathbb{P}^1 \setminus \overline{\gamma}_u$ is proper and a local homeomorphism, 
since $w = \pm 1$ are the only critical points of $A_u$ on $\mathbb{D}_R$. 
This means $\Delta_u$ is a covering of $\mathbb{P}^1 \setminus \overline{\gamma}_u$. 
Since both of these domains are connected and simply connected, $A_u$ must map $\Delta_u$ biholomorphically onto $\mathbb{P}^1 \setminus \overline{\gamma}_u$. 

We now recall that the coordinates change $z \rightarrow w = z/u$ is a M\" obius transform for nonzero $u$. 
In other words, $A_u(z/u)$ is a developing map of $\mathcal{D}_u$. 
Hence the images of $\gamma_{u,1}$ and $\gamma_{u,2}$ under the inverse M\" obius transform are twin paths connecting the apparent singularities $x_\pm$ of $\mathcal{D}_u$, bounding a disc that develops biholomorphically onto $\mathbb{P}^1 \setminus \overline{\gamma}_u$. 
Let $\sigma_u$ be the projective structure obtained from $\mathcal{D}_u$ by collapsing the disc $\Delta_u$, 
and $\gamma_u$ be the segment on $\sigma_u$ along which the two twin paths are glued, which develops onto $\overline{\gamma}_u$.
By \Cref{thm-twinpaths-bubble}, we have $\mathcal{D}_u = \mathrm{Bub}(\sigma_u, \gamma_u)$.
\qed

\appendix
{%
\section{Proofs of technical lemmas}\label{app-technical-proofs}
This appendix contains the proofs of the technical lemmas in \Cref{sect-degeneration-apparent-singularities}.
}%

\paragraph{%
{%
Proof of \Cref{lemma-linear-system-right-side}.%
}%
}
\begin{proof}
	1. We need to show that the $u\rightarrow 0$ limit of $\frac{1}{u} R^{(i)}_{u,+}(-u)$ and $-\frac{1}{u} R^{(i)}_{u,-}(u)$ exist and are equal.
	Consider the Taylor expansion of $R^{(i)}_{0}(z_1, z) = \sum_{m, n \geq 0} R^{(i)}_{m,n} z_1^m z^n$ at its zero $(z_1, z) = (0,0)$.
	Since $R^{(i)}_{0,0} = 0$, 
	\begin{align*}
		R^{(i)}_{u,-}(u) &\coloneqq R^{(i)}_{0}(-u, u) = - \left( R_{1,0} - R_{0,1} \right) u + \mathcal{O}(u^2), \\
		R^{(i)}_{u,+}(-u) &\coloneqq R^{(i)}_{0}(u, -u) =  \left( R_{1,0} - R_{0,1} \right) u + \mathcal{O}(u^2).
	\end{align*}
	It follows that
	\begin{align}
		\underset{u \rightarrow 0}{\lim} \frac{1}{u} R^{(i)}_{u,+}(-u) 
		= - \underset{u \rightarrow 0}{\lim} \frac{1}{u} R^{(i)}_{u,-}(u).
	\end{align}
	2. For each fixed $z$, it follows from the Taylor expansion $R^{(i)}_{0}(z_1, z) = \sum_{m, n \geq 0} R^{(i)}_{m,n} z_1^m z^n$ that, 
	\begin{align*}
		R^{(i)}_{u,+}(z) - R^{(i)}_{u,-}(z) 
		= 2u \sum_{n\geq 0} R^{(i)}_{1,n}z^n + 2u^3 \sum_{n\geq 0} R^{(i)}_{3,n} z^n + 2u^5 \sum_{n\geq 0} R^{(i)}_{5,n} z^n + ...
	\end{align*}
	is a holomorphic function on $(U,u)$ that vanishes at $u = 0$; we can calculate
	\begin{align}
		\underset{u \rightarrow 0}{\lim} \frac{1}{u}\left( R^{(i)}_{u,+}(z) - R^{(i)}_{u,-}(z) \right) = 2 \sum_{n\geq 0} R^{(i)}_{1,n}z^n.
	\end{align}
\end{proof}

\paragraph{%
{%
Proof of \Cref{lemma-basis-of-quadratic-differentials}.%
}%
}
\begin{proof}
	For each $k$ in the range $3 \leq k \leq 3g-3$ and each $u \in z(U)$,
	there exists a unique up to scaling quadratic differential 
	that vanishes at $x_1(u)$, $x_2(u)$ and all $x_{r}$ with $r \neq k$.
	By our assumption on $x_0$ and $\mathbf{x'}$,
	this quadratic differential does not vanish at $x_k$.
	By scaling, for $3 \leq k \leq 3g-3$, we define a unique quadratic differential $q_{k,u}$ with $q_{k, u} \left( z_r(x_r) \right) = \delta_{kr}$.
	We now define $q_1 \equiv q_{1,u}$ and $q_2 \equiv q_{2,u}$ independent of $u$ as follows.
	Let $q_1$ be a (unique up to scaling) holomorphic quadratic differential that vanishes at $x_0 + \mathbf{x'}$.
	By our assumption on $x_0$ and $\mathbf{x'}$,
	its zero at $x_0$ is simple.
	For some $x_0' \in U$ with $x_0' \neq x_0$,
	let $q_2$ be a (unique up to scaling) holomorphic quadratic differential that vanishes at $x_0' + \mathbf{x'}$.
	By our assumption on $x_0$ and $\mathbf{x'}$, $q_2$ does not vanish at $x_0$ and hence is linearly independent from $q_1$.
	Then for each $u \in z(U)$, let $ \left( q_{k, u} \right) \coloneqq \left( q_1, q_2, q_{3, u}, ..., q_{3g-3, u} \right)$: this forms a basis of $H^0(K^2)$ for each $u\in z(U)$.
	It follows that the function
	\begin{align}
		\det (\mathbf{q_{r,k}})(u)
		&= \begin{vmatrix}
			&q_1(z(x_1)) &  &q_2(z(x_1)) & & 0  \\
			&q_1(z(x_2)) &  &q_2(z(x_2)) & & 0  \\
			& 0 &  & 0 & & I_{(3g-5) \times (3g-5)} 
		\end{vmatrix} \nonumber \\
		&= q_1(z(x_1))  q_2(z(x_2)) - q_1(z(x_2)) q_2(z(x_1)) \nonumber \\
		&= 2 q_{1}'(z(x_0)) q_{2}(z(x_0)) u + \mathcal{O}(u^2)
	\end{align}
	has a simple zero at $u=0$.  
\end{proof}

\paragraph{%
{%
Proof of \Cref{lemma-limit-linear-system}.%
}%
}
\begin{proof}
	This can be checked explicitly by plugging \eqref{eqn-form-B} and \eqref{eqn-form-D} in \eqref{linear-eqn-1}.
	Specifically, the coefficients at the orders $u^{-1}$ and $u^0$ of $\nu_\pm(u)$ ensure that $C_\pm(u)$ are regular at $u=0$, 
	while the coefficients at the order $u^1$ of $\nu_\pm(u)$ 
	together with part 1 of \Cref{lemma-linear-system-right-side} ensure that 
	$C_+(0) = C_-(0) = C_0$, where
	\begin{align}\label{limit-RHS}
		C_0 \coloneqq 
		\begin{cases}
			 -\nu' - \frac{3}{4} \underset{u \rightarrow 0}{\lim} \frac{1 }{u} R^{(1)}_{u,-}(u) 
			 &\text{ for } \nu_\pm(u) = \pm \frac{3}{4u} \pm \nu' u + \mathcal{O}(u^2), \\
			 -\nu_0^2 + \nu' - \frac{1}{4} \underset{u \rightarrow 0}{\lim} \frac{1}{u} R^{(1)}_{u, -}(u)
			 &\text{ for } \nu_\pm(u) = \mp \frac{1}{4u} + \nu_0 \pm \nu' u + \mathcal{O}(u^2).
		\end{cases}
	\end{align}
	Then the linear system \eqref{linear-system-total} limits to
	\begin{align}\label{linear-system-total-limit}
			&q_u^{(0)}(0) 
			= C_0, 
			&q_{u}^{(0)} (z_r(x_r)) 
			= - \nu_r(0)^2, 
	\end{align}
	for $3 \leq r \leq d$.
	Note that the dimension of $Q_{x_0 + \mathbf{x'}}$
	must have the minimal value $3g- 2 - d$
	since otherwise condition \eqref{condition-neighborhood} will be violated.
	This yields the rank $d-1$ of the system \eqref{linear-system-total-limit}.
\end{proof}

\paragraph{%
{%
Proof of \Cref{lemma-tune-limit-hol-q}.%
}%
}
\begin{proof}
We will make use of the representation \eqref{linear-system-decoupled} of \eqref{linear-system-total}
using a family $\mathbf{q}_{u} = \{ (q_{k, u})_{k = 1}^{3g-3} \}_{u \in z(U)}$ of basis of $H^0(K^2)$ 
constructed in \Cref{lemma-basis-of-quadratic-differentials}.
Let us use the basis corresponding to $u=0$ and expand $q = \sum_{k=1}^{3g-3} E^0_k q_{k, 0}$.
We need to show that there exist $\nu_\pm(u)$ of the prescribed forms such that the solutions to \eqref{linear-system-decoupled-1},
\begin{align}\label{E1-E2}
	&E_1(u) = \frac{C_+(u) q_2(-u) -  C_-(u) q_2(u)}{q_1(u) q_2(-u) -  q_1(-u) q_2(u)},
	&E_2(u) = \frac{ q_1(u) C_-(u)  -  q_1(-u) C_+(u)  }{q_1(u) q_2(-u) -  q_1(-u) q_2(u)},
\end{align}
where $C_\pm(u)$ are defined as in \eqref{linear-system-total},
have well-defined $u \rightarrow 0$ limits which are equal to $E_1^0$ and $E_2^0$ respectively.
To this end, denote by $N_1(u)$ and $N_2(u)$ the respective numerators in \eqref{E1-E2}.
Since $\underset{u\rightarrow 0}{\lim} C_\pm(u) = C_0$ (cf. \eqref{limit-RHS}), both $N_1(u)$ and $N_2(u)$ are holomorphic and vanish at $u = 0$.
Since the denominators of \eqref{E1-E2} have simple zero at $u=0$ by \Cref{lemma-basis-of-quadratic-differentials},
the $u\rightarrow 0$ limits of $E_1(u)$ and $E_2(u)$ are well-defined by L'Hôpital's rule.
These limits are determined by the coefficient at order $u^1$ of $N_1(u)$ and $N_2(u)$.
With $R^{(i)}_{u, \pm}(z) = \sum_{m,n} R^{(i)}_{m,n} (\pm u)^m z^n$, one can compute explicitly these coefficients to be
\begin{align*}
	N_1(u) &= \frac{q_2(0)}{2}\left[ -4 (\nu_{+,2} + \nu_{-,2}) 
	+ 3 \left( R^{(2)}_{0,1} - R^{(2)}_{1,0} + R^{(1)}_{2,0} - R^{(1)}_{1,1} + R^{(1)}_{0,2} \right) \right] u \nonumber \\
	& \hspace{6mm} + \frac{q_2'(0)}{2} \left( 4\nu_1 + 3 R^{(1)}_{1,0} - 3 R^{(1)}_{0,1} \right)u + \mathcal{O}(u^2),	 \nonumber \\
	N_2(u)  &= - \frac{q_1'(0)}{2} \left[ 4\nu_1 + 3 R^{(1)}_{1,0} - 3 R^{(1)}_{0,1} \right] u + \mathcal{O}(u^2)
\end{align*}
for the ansatz $\nu_\pm(u) = \pm \frac{3}{4u} \pm \nu_1 u + \nu_{\pm, 2}u^2 + \mathcal{O}(u^3)$,
and 
\begin{align*}
	N_1(u) &= -\frac{q_2(0)}{2} \left[ 8\nu_0 \nu_1 
	+ 4\nu_0 \left( R^{(1)}_{0,1} - R^{(1)}_{1,0} \right) 
	+ R^{(1)}_{2,0} - R^{(1)}_{1,1} + R^{(1)}_{2,0} 
	+ 3 R^{(2)}_{1,0} - 3 R^{(2)}_{0,1}  \right]u \nonumber \\
	&\hspace{6mm}
	+ \frac{q_2'(0)}{2} \left[ 4\nu_0^2 - 4\nu_1 + R^{(1)}_{0,1} - R^{(1)}_{1,0} \right]u +\mathcal{O}(u^2) \nonumber \\
	N_2(u)  &=  \frac{q_1'(0)}{2} \left( -4 \nu_0^2 + 4 \nu_1 - R^{(1)}_{0,1} + R^{(1)}_{1,0}  \right) u + \mathcal{O}(u^2)
\end{align*}
for the ansatz $\nu_\pm(u) = \mp \frac{1}{4u} + \nu_0 \pm \nu_1 u + \nu_{\pm, 2}u^2 + \mathcal{O}(u^3)$.

Since $q_2(0)$ and $q_1'(0)$ are non-zero, we can tune 
$(\nu_1, \nu_{\pm, 2})$ in the first case
and $(\nu_0, \nu_1)$ in the second case
to tune the coefficients of order $u^1$ of $N_1(u)$ and $N_2(u)$ as we want.
Hence there exist holomorphic functions $\nu_\pm(u): U\setminus \{x_0\} \rightarrow \C$ with the Laurent expansion at $u = 0$ of the form \eqref{eqn-form-B} or \eqref{eqn-form-D},
so that $\underset{u \rightarrow 0}{\lim} E_1(u) = E_1^0$ and $\underset{u \rightarrow 0}{\lim} E_2(u) = E_2^0$.
For $3 \leq r \leq d$, let $\nu_r \in \C$ be such that $E^0_{r} = -\nu_r^2$, and define $\nu_r(u) = \nu_r$ for all $u\in z(U)$.
The tuple $(E_1(u), E_2(u), E_3(u), ..., E_{3g-3}(u) ) = (E_1(u), E_2(u), E^0_3, ..., E^0_{3g-3})$ then defines for each $u \neq 0$ a holomorphic quadratic differential $q^{(0)}_u \coloneqq \sum_{k=1}^{3g-3} E_k(u) q_{k,u}$
that solves the linear system \eqref{linear-system-decoupled} 
defined by $(\nu_\pm(u), \nu_3(u), ..., \nu_d(u))$,
with $\underset{u \rightarrow 0}{\lim} q^{(0)}_u = q$ a solution to the corresponding limit linear system \eqref{linear-system-total-limit}.
\end{proof}

\paragraph{%
{%
Proof of \Cref{lemma-db-limit-1}.%
}%
}
\begin{proof}
	We will show explicitly that,
	as a multi-variable function with variables $u$ and $z$,
	\eqref{family-merom-q-db} limits to \eqref{q_x_0-local-form} as $u \rightarrow 0$.
	To this end, it suffices to show that given any $z \neq 0$,
	the $u \rightarrow 0$ limit of the evaluation of \eqref{family-merom-q-db} at $z$ 
	is the evaluation of \eqref{q_x_0-local-form} at $z$.	
	Plugging $\nu_{\pm}(u) = \mp \frac{1}{4u} + \nu_0 \pm \nu' u + \mathcal{O}(u^2)$ into \eqref{family-merom-q-db}
	yields
	\begin{align}\label{plug-in-db}
		\frac{-2z^2 - u^2}{(z-u)^2 (z+u)^2} 
		+ \frac{2\nu_0 z}{(z-u)(z+u)}
		- \frac{R^{(1)}_{u,+}(z) - R^{(1)}_{u,-}(z) }{ 4u }
		+ \nu_0 \left( R^{(1)}_{u,+}(z) + R^{(1)}_{u,-}(z) \right) & \nonumber \\
		 - \frac{3}{4} \left( R^{(2)}_{u,+}(z) + R^{(2)}_{u,-}(z) \right)
		+ \mathcal{O}(u)(z) &.
	\end{align}
	Here we have ignored the factor $dz^2$
	and denoted by $\mathcal{O}(u)(z)$ a function with variables $u$ and $z$ such that,
	given any $z' \neq 0$,
	the function $F_{z'}(u) \coloneqq \mathcal{O}(u)(z) \mid_{z = z'}$ is holomorphic on $\{ u \mid 0 \leq |u| \leq |z'|/2 \}$
	with $F_{z'}(u) \sim \mathcal{O}(u)$.
	It follows that $\underset{u \rightarrow 0}{\lim} \mathcal{O}(u)(z) = \mathcal{O}(u)(z)\mid_{u = 0}$, 
	which a priori defines a function in $z$,
	is the zero function. 
	By a similar argument, we can take the $u \rightarrow 0$ limit of the other terms in \eqref{plug-in-db} explicitly 
	and obtain $q^{(2)}_{x_0}$ as in \eqref{q_x_0-local-form} as the limit.
	
	To show that $q^{(2)}_{x_0}$ is holomorphic on $C \setminus \{x_0\}$, it suffices to show holomorphicity on $C\setminus U$.
	Given $p\in C\setminus U$ and a coordinate neighborhood $(V,w)$ of $p$, 
	let $q^{(i)}_{u,\pm}(w) dw^2$ be the local form of $q^{(i)}_{u,\pm}$ on $V$.
	We want to show that the evaluation of \eqref{family-merom-q-db} at $p$, which defines the function
	\begin{align}\label{evaluation-at-p}
		\sum_{s\in \{\pm\}} \nu_s(u) q^{(1)}_{u, s}(w(p)) - \frac{3}{4} \sum_{s\in \{\pm\}} q^{(2)}_{u, s}(w(p))
		= \frac{1}{4u}\left( -q^{(1)}_{u,+}(w(p)) + q^{(1)}_{u,-} (w(p))  \right) + \mathcal{O}(1)
	\end{align}
	in $u$ which is holomorphic at $u \neq 0$
	has a well-defined $u \rightarrow 0$ limit.
	Since $q^{(i)}_{u,\pm} \overset{u \rightarrow 0}{\rightarrow} q^{(i)}_0$, 
	which is holomorphic on $C \setminus \{x_0\}$,
	it suffices to show that the leading term $\frac{1}{4u}\left( -q^{(1)}_{u,+}(w(p)) + q^{(1)}_{u,-} (w(p))  \right)$
	of \eqref{evaluation-at-p} has a well-defined $u \rightarrow 0$ limit.
	This is achieved by an argument similar to the proof of part 2 of \Cref{lemma-linear-system-right-side}.
	Furthermore, for $p \in \{x_3, ..., x_d\}$, since \eqref{evaluation-at-p} vanishes for $u\neq 0$, it is identically zero for all $u \in z(U)$.	
\end{proof}

\paragraph{%
{%
Proof of \Cref{lemma-bb-limit-1}.%
}%
}
{%
\begin{proof}
	The proof is the same as that of \Cref{lemma-db-limit-1}, with $\nu_\pm(u)$ of Form (B) in place of Form (D).
	The only difference is the polar part on $U$: plugging $\nu_\pm(u) = \pm \frac{3}{4u} + \mathcal{O}(u)$ into \eqref{family-merom-q-bb} gives, up to $dz^2$,
	\[ -\frac{3}{4}\left[ \frac{1}{(z-u)^2} + \frac{1}{(z+u)^2} \right] + \frac{3}{4u}\left[ \frac{1}{z-u} - \frac{1}{z+u} \right] + \mathcal{O}(u)
	= - \frac{3u^2}{(z^2-u^2)^2} + \mathcal{O}(u), \]
	which tends to $0$ for every fixed $z \neq 0$, instead of to $-2/z^2$ as in \eqref{plug-in-db}.
	Hence the limit \eqref{delta-q} has no pole at $x_0$, and it is holomorphic on $U$ by \Cref{lemma-linear-system-right-side}.
\end{proof}
}%

\printbibliography
	
\end{document}